\theoremstyle{plain}
\newtheorem{theorem}{Theorem}[section]
\newtheorem{corollary}[theorem]{Corollary}
\newtheorem{subclaim}[theorem]{Subclaim}
\newtheorem{conjecture}[theorem]{Conjecture}
\newtheorem{proposition}[theorem]{Proposition}
\newtheorem{fact}[theorem]{Fact}
\newtheorem{claim}[theorem]{Claim}
\newtheorem{question}[theorem]{Question}
\theoremstyle{definition} 
\newtheorem{definition}[theorem]{Definition}
\newtheorem{example}[theorem]{Example}
\newtheorem{remark}[theorem]{Remark}
\theoremstyle{remark}
\newtheorem{statement}[theorem]{Statement}
\newtheorem{hypothesis}[theorem]{Hypothesis}
\newtheorem{axioms}[theorem]{Axioms}
\renewcommand{\phi}{\varphi}
\newcommand{\initial}\lessdot
\def\?{?\vadjust

{\vbox to 0pt{\vskip-7pt\hbox to 1.1\hsize{\hfill\huge ?!}}}}
\newcommand{\be}{\begin{enumerate}}
\newcommand{\ee}{\end{enumerate}}
\renewcommand{\epsilon}{\varepsilon}
 \def\nfork{\setbox0\hbox{$\bigcup$}%
 \setbox1=\hbox to \wd0{\hfil\vrule width 0.7pt depth 2pt height 7.5pt\hfil}%
 \wd1=0cm\relax\box1\box0}
 \def\dnf{\mathop{\nfork}\limits}
\begin{document}
\title{Tameness, Uniqueness Triples and Amalgamation}
\author{Adi Jarden}
\email[Adi Jarden]{jardena@ariel.ac.il}
\address{Department of Computer Science and Mathematics \\ Ariel University \\ Ariel, Israel}

\begin{abstract}
We combine two approaches to the study of classification theory of AECs:
\begin{enumerate}
\item that of Shelah: studying non-forking frames without assuming the amalgamation property but assuming the existence of uniqueness triples and \item that of Grossberg and VanDieren \cite{grva25}: (studying non-splitting) assuming the amalgamation property and tameness.
\end{enumerate}

In \cite{jrsh875} we derive a good non-forking $\lambda^+$-frame from a semi-good non-forking $\lambda$-frame. But the classes $K_{\lambda^+}$ and $\preceq \restriction K_{\lambda^+}$ are replaced: $K_{\lambda^+}$ is restricted to the saturated models and the partial order $\preceq \restriction K_{\lambda^+}$ is restricted to the partial order $\preceq^{NF}_{\lambda^+}$. 

Here, we avoid the restriction of the partial order $\preceq \restriction K_{\lambda^+}$, assuming that every saturated model (in $\lambda^+$ over $\lambda$) is an amalgamation base and $(\lambda,\lambda^+)$-tameness for non-forking types over saturated models, (in addition to the hypotheses of \cite{jrsh875}): Theorem \ref{the main theorem of the paper} states that $M \preceq M^+$ if and only if $M \preceq^{NF}_{\lambda^+}M^+$, provided that $M$ and $M^+$ are saturated models.

We present sufficient conditions for three good non-forking $\lambda^+$-frames: one relates to all the models of cardinality $\lambda^+$ and the two others relate to the saturated models only. By an `unproven claim' of Shelah, if we can repeat this procedure $\omega$ times, namely, `derive' good non-forking $\lambda^{+n}$ frame for each $n<\omega$ then the categoricity conjecture holds.

In \cite{vas6}, Vasey applies Theorem \ref{we can use NF}, proving the categoricity conjecture under the above `unproven claim' of Shelah. 
 
In \cite{jrprime}, we apply Theorem \ref{the main theorem of the paper}, proving the existence of primeness triples. 

\end{abstract}

\today 

\maketitle

\tableofcontents

\maketitle

\tableofcontents

\section{Introduction}

The notion of a good non-forking $\lambda$-frame was introduced by Shelah \cite[II]{shh}. It is an axiomatization of the non-forking relation in superstable first order theories. The goal of the study of good non-forking frames is to classify AECs. If the amalgamation property does not hold then the definition of a galois-type is problematic. So Shelah added the amalgamation property to the axioms of a good non-forking frame. 

Shelah \cite[II.3]{shh} found cases, where we can prove the amalgamation property in a specific cardinality $\lambda$ and to prove the existence of a non-forking relation, relating to models of cardinality $\lambda$. This is the reason, why Shelah defined the non-forking relation in a good non-forking frame as relating to models of a specific cardinality, $\lambda$, only (so the amalgamation in $\lambda$ property is one of the axioms of a good non-forking $\lambda$-frame, but the amalgamation property is not!). 

Shelah \cite[II]{shh} presented a way to extend a good non-forking $\lambda$-frame to models of cardinality greater than $\lambda$ and proved that several axioms are preserved. But the amalgamation property and Axioms \ref{4 axioms} are hard to be proved even for models of cardinality $\lambda^+$.
\begin{axioms}\label{4 axioms}
\mbox{}
\begin{enumerate}
\item Extension, \item Uniqueness, \item Basic stability and \item Symmetry.
\end{enumerate}
\end{axioms}

We now consider models of cardinality $\lambda^+$ only. In order to get the amalgamation property and Axioms \ref{4 axioms}, there were introduced two approaches:
\begin{enumerate}
\item Shelah's approach: to change the AEC, such that the amalgamation and Axioms \ref{4 axioms} will be satisfied, \item the tameness approach for non-forking frames: to add the tameness property to the hypotheses.
\end{enumerate}

In Shelah's approach, the relation $\preceq \restriction K_{\lambda^+}$ is restricted to the relation $\preceq^{NF}_{\lambda^+}$ (see Definition \ref{definition of preceq^{NF}}). One advantage of the relation $\preceq^{NF}_{\lambda^+}$ is that $(K_{\lambda^+},\preceq^{NF}_{\lambda^+})$ satisfies the amalgamation property (even if $(K_{\lambda^+},\preceq \restriction K_{\lambda^+})$ does not satisfy the amalgamation property). So we get artificially the amalgamation in $\lambda^+$ property. But a new problem arises: the pair $(K_{\lambda^+},\preceq^{NF}_{\lambda^+})$ may not satisfy smoothness (one of the axioms of AEC). In order to solve this problem, the class of models of cardinality $\lambda^+$ is restricted to the saturated models of cardinality $\lambda^+$ over $\lambda$ (and we assume that there are not many models of cardinality $\lambda^{++}$).  

Shelah \cite{shh}.II derived a good non-forking $\lambda^+$-frame, using Shelah's approach: he proved that in the new AEC (the class of saturated models with the relation $\preceq^{NF}_{\lambda^+}$) all the axioms of a good non-forking $\lambda^+$-frame are satisfied, assuming additional hypotheses. Jarden and Shelah \cite[Theorem 11.1.5]{jrsh875} generalized the work done in \cite{shh}.II: they introduced the notion of a semi-good non-forking $\lambda$-frame. It is a generalization of a good non-forking $\lambda$-frame, where the stability hypothesis is weakened. Jarden and Shelah proved that we can derive a good non-forking $\lambda^+$-frame, from a semi-good non-forking $\lambda$-frame, assuming similar additional hypotheses. 

In order to clarify the importance of Shelah's approach to the solution of the categoricity conjecture, we have to recall the following definition:
Roughly, we say that a good non-forking $\lambda$ frame is $n$-successful when we can derive a good non-forking $\lambda^{+m}$-frame for each $m \leq n$ (for a precise definition, see \cite[Definition 10.1.1]{jrsh875}). $\omega$-successful means $n$-successful for every $n<\omega$.

Shelah \cite[III.12.40]{shh} claims the following (he did not publish a proof yet):
\begin{conjecture}
Assume that $2^\lambda<2^{\lambda^+}$ for each cardinal $\lambda$. Let $(K,\preceq)$ be an AEC such that there is an $\omega$-successful good non-forking $\lambda$-frame with underlying class $K_\lambda$. Then $K$ is categorical in some $\mu>\lambda^{+\omega}$ if and only if $K$ is categorical in each $\mu>\lambda^{+\omega}$.
\end{conjecture}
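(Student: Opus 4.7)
The plan is to exploit the $\omega$-successful structure to obtain enough control over $K_\mu$ for every $\mu > \lambda^{+\omega}$ that a Morley-style categoricity transfer becomes feasible, and then run it.

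First, I would iterate the derivation of Theorem~\ref{the main theorem of the paper} (together with the three-frame sufficient conditions mentioned in the abstract, and the weak-diamond machinery of \cite{jrsh875}, where the hypothesis $2^\lambda < 2^{\lambda^+}$ is consumed at each step) to obtain, for each $n<\omega$, a good non-forking $\lambda^{+n}$-frame $\mathfrak{s}_n$ whose underlying class is either all of $K_{\lambda^{+n}}$ or its saturated-over-$\lambda^{+(n-1)}$ part. The crucial bookkeeping is coherence of consecutive frames: non-forking at level $n+1$ must extend non-forking at level $n$ along $\preceq^{NF}_{\lambda^{+n+1}}$-chains, which the successor construction of \cite{jrsh875} together with the $\omega$-successfulness hypothesis already delivers.

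Second, at the $\omega$-limit I would assemble a $\lambda^{+\omega}$-level frame by declaring $\tp(a/M)$ to not fork over $M_0 \preceq M$ iff for some (equivalently, every) presentation $M = \bigcup_{n<\omega} M_n$ with $M_n \in K_{\lambda^{+n}}$ and $M_0 \preceq M_n$, the restrictions $\tp(a/M_n)$ do not fork in $\mathfrak{s}_n$ for cofinally many $n$. The axioms of a good $\lambda^{+\omega}$-frame then have to be verified one by one; local character, existence and extension are straightforward from the $\mathfrak{s}_n$'s, while uniqueness and symmetry should follow from the existence of uniqueness and primeness triples delivered by \cite{jrprime}, which provide the analogue of a stationary non-forking calculus at the limit.

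Third, given categoricity in some $\mu > \lambda^{+\omega}$, I would push downward by observing that the $\lambda^{+\omega}$-frame yields stability below $\mu$, forcing the unique model of cardinality $\mu$ to be saturated; a prime-model chain inside it then produces a saturated model of every intermediate cardinality in $(\lambda^{+\omega}, \mu]$, proving categoricity there. For the upward direction, starting from the now-saturated unique model of cardinality $\mu$, I would build, inside a suitable ambient model, a Morley sequence with respect to the $\lambda^{+\omega}$-frame and use uniqueness of such sequences together with primeness to assemble a saturated (hence, by stability, unique) model in any prescribed $\mu' > \mu$.

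The main obstacle is the $\omega$-limit step: verifying that the componentwise non-forking relation at $\lambda^{+\omega}$ really does satisfy uniqueness and symmetry. This is precisely where Shelah's unpublished argument, as well as Vasey's alternative proof in \cite{vas6} under the added $(\lambda,\lambda^+)$-tameness hypothesis of the present paper, concentrate their effort, since one must rule out galois-types at $\lambda^{+\omega}$ that see the $\mathfrak{s}_n$ incoherently. Without a tameness-like continuity assumption for galois-types across the $\omega$-limit, closing this gap appears to require a genuinely new idea beyond the constructions assembled above, which is presumably why Shelah has not yet published a proof.
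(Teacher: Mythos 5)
The statement you are asked to prove is not proved in the paper at all: it appears there as a \emph{Conjecture}, explicitly attributed to Shelah [III.12.40 of his book] with the remark that ``he did not publish a proof yet,'' and both the abstract and the introduction refer to it as an ``unproven claim.'' There is therefore no proof of record to compare your attempt against, and any complete argument would be a new theorem, not a reconstruction of something in this paper.

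Your proposal is, by your own closing admission, not a complete proof, and the gaps are located exactly where the conjecture is hard. In your first step, deriving a good non-forking $\lambda^{+n}$-frame for each $n<\omega$ is essentially what the hypothesis ``$\omega$-successful'' already packages, so nothing substantive is gained there; the genuinely open content begins at your second step, where you must produce a good $\lambda^{+\omega}$-frame whose non-forking relation satisfies uniqueness and symmetry. Your proposed componentwise definition (``does not fork in $\mathfrak{s}_n$ for cofinally many $n$'') is precisely the point at which incoherent galois-types across the $\omega$-limit can defeat uniqueness, and you offer no mechanism --- tameness-like or otherwise --- to exclude them; the appeal to primeness triples from \cite{jrprime} does not help, since those are established at a single cardinal under the hypotheses of Theorem \ref{the main theorem of the paper}, not at the limit $\lambda^{+\omega}$. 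Your third step then presupposes the limit frame together with a Morley-sequence and prime-model calculus at $\lambda^{+\omega}$ that neither this paper nor the cited literature supplies in the required generality. As it stands, the proposal is a reasonable strategy outline that reproduces the known shape of the problem, but the statement remains open, consistent with the paper's own treatment of it as a conjecture.
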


The main advantage of Shelah's approach is that we do not assume that the amalgamation property holds.

Shelah's approach has two disadvantages:
\begin{enumerate}
\item We change the class $K$ and the partial order $\preceq$ and \item not every good non-forking $\lambda$-frame is $1$-successful.
\end{enumerate}

We now consider the tameness approach. Grossberg and VanDieren \cite{grva25} introduced the tameness property. They used this notion to prove an upward categoricity theorem. 

The assumption of tameness is reasonable, not only because the natural AECs are tame, but also because Boney and Unger proved that there is a connection between tameness and large cardinal axioms.

The following fact is due to Boney:
\begin{fact}\cite[Theorem 1.3]{bo2}:
If $(K,\preceq)$ is an AEC with $LST$-number less than $\kappa$ and $\kappa$ is strongly compact then $(K,\preceq)$ is $\kappa$-tame.
\end{fact}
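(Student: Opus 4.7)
The plan is to prove $\kappa$-tameness contrapositively: given two distinct galois-types over $M$ that agree on every ${<}\kappa$-sized submodel of $M$, I would glue the witnessing small amalgams into a single global amalgam via an ultraproduct powered by strong compactness. The overall strategy is: (i) present $K$ as a $\mathrm{PC}$ class via Shelah's presentation theorem; (ii) use strong compactness of $\kappa$ to fix a $\kappa$-complete fine ultrafilter on $P_\kappa(|M|)$; (iii) show that ultraproducts modulo such ultrafilters preserve the AEC structure; (iv) ultraproduce the small amalgams into a global one.

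First I would invoke Shelah's presentation theorem to realize $(K,\preceq)$ as a $\mathrm{PC}$ class: models of $K$ are reducts of models of some theory $T^*$ in an expansion $L^*$ of $L(K)$ omitting a set $\Gamma$ of types, with $|L^*| \leq \LS(K) < \kappa$. By strong compactness of $\kappa$, for every index set $I$ with $|I|\geq \kappa$ there exists a $\kappa$-complete fine ultrafilter $U$ on $P_\kappa(I)$, where fineness means $\{s\in P_\kappa(I):i\in s\}\in U$ for each $i\in I$. \L{}o\'{s}'s theorem for $\kappa$-complete ultrafilters and $L_{\kappa,\kappa}$-formulas then guarantees that ultraproducts modulo such $U$ preserve $T^*$ and the omission of $\Gamma$, hence yield models in $K$; applied to diagrams of pairs, it shows that $M_s \preceq N_s$ for $U$-almost every $s$ implies $\prod_s M_s/U \preceq \prod_s N_s/U$, and in particular the diagonal embedding $M \to \prod_s M/U$ is a $K$-embedding and $K$-embeddings lift to the ultraproducts.

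Then, suppose $p = \tp(a/M, N_1) \ne q = \tp(b/M, N_2)$ in $\gaS(M)$, and for contradiction assume $p \restriction M_0 = q \restriction M_0$ for every $M_0 \preceq M$ with $|M_0| < \kappa$. For each such $M_0$, equality of restrictions furnishes an amalgam $N_{M_0}\in K$ together with $K$-embeddings $f_1^{M_0}, f_2^{M_0}$ of suitable small subextensions of $N_1, N_2$ (containing $M_0$ and the relevant element) into $N_{M_0}$, that agree on $M_0$ and satisfy $f_1^{M_0}(a) = f_2^{M_0}(b)$. Fixing a $\kappa$-complete fine ultrafilter $U$ on $P_\kappa(|M|)$ and setting $N^* = \prod_{M_0} N_{M_0}/U$, fineness ensures that for each $c \in |M|$ the set $\{M_0: c \in M_0\}$ lies in $U$, so the induced maps $F_j : N_j \to N^*$ satisfy $F_1 \restriction M = F_2 \restriction M$ and $F_1(a) = F_2(b)$, witnessing $p = q$, a contradiction.

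The main obstacle is step (iii): justifying that ultraproducts by $\kappa$-complete fine ultrafilters on $P_\kappa(|M|)$ correctly interact with the AEC structure, including that $N^*$ lies in $K$, that the diagonal gives a $K$-embedding of $M$, that the $K$-substructure relation is preserved, and that $K$-embeddings lift to the ultraproduct. It is precisely here that strong compactness (rather than just measurability) is essential: one needs fine $\kappa$-complete ultrafilters on arbitrarily large index sets, together with \L{}o\'{s}-style preservation for the infinitary sentences arising from Shelah's presentation. Once these preservation properties are secured, the patching of small amalgams into a global amalgam follows essentially formally from fineness and the diagonal construction.
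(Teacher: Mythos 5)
Your proposal is essentially the argument of the cited source: the paper itself gives no proof of this Fact, merely quoting \cite[Theorem 1.3]{bo2}, and Boney's proof there proceeds exactly as you describe, via Shelah's presentation theorem, \L{}o\'{s}'s theorem for $\kappa$-complete ultraproducts of AECs, and a fine ultrafilter on $P_\kappa(|M|)$ to glue the small amalgams. The only point to tidy is that an arbitrary $s\in P_\kappa(|M|)$ need not carry a $\preceq$-submodel, so one should replace each $s$ by a hull $M_s\preceq M$ of size $<\kappa$ containing it (possible since $\LS(K)<\kappa$), after which fineness still yields agreement of the induced maps on all of $M$.
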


Boney and Unger \cite[Corollary 4.13]{bo13un} proved the following:
\begin{fact}
The following are equivalent:
\begin{enumerate}
\item Every AEC is tame. \item There are class many almost strongly compact cardinals.
\end{enumerate}
\end{fact}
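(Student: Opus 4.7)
The plan is to prove the two directions separately, with the forward direction (1)$\Rightarrow$(2) being substantially harder because it extracts a large cardinal from a purely model-theoretic hypothesis.

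For (2)$\Rightarrow$(1), I would adapt the ultraproduct argument behind Fact 1.2 (Boney's theorem) in the excerpt. Given an AEC $(K,\preceq)$ with $\LS$-number $\mu$, it suffices to show that $(K,\preceq)$ is $\kappa$-tame for every almost strongly compact cardinal $\kappa>\mu$. Suppose $p\ne q$ are galois types over some model $M$ that agree on every subset of $M$ of size less than $\kappa$. For each small $A\subseteq M$, fix local amalgams witnessing $p\restriction A = q\restriction A$. Now use the ultrafilter-theoretic content of almost strong compactness to obtain, for each $\theta<\kappa$, a $\theta$-complete fine ultrafilter on $[M]^{<\kappa}$ and form the corresponding ultraproduct of the local amalgams. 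The extra subtlety compared with full strong compactness is that one only has $\theta$-completeness for $\theta<\kappa$ rather than a single $\kappa$-complete filter; this is handled by passing to a directed system and taking a limit (or by running the argument at every $\theta$ and amalgamating the outcomes), after which Los's theorem for AECs yields $p=q$, contradicting the choice.

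For (1)$\Rightarrow$(2), I would argue the contrapositive: assuming there is a cardinal $\mu_0$ above which no cardinal is almost strongly compact, I would construct an AEC that fails to be tame. The idea is to design an AEC whose models encode data about $\mu$-complete filters attempting to extend to $\kappa$-complete ultrafilters, so that the combinatorial failure of compactness is witnessed inside the AEC by the coexistence of two galois types that agree on every small subdomain (reflecting the existence of partial extensions) but differ globally (reflecting the absence of a full extension). Verifying the AEC axioms (coherence, smoothness, L\"owenheim--Skolem) for this encoding is routine but must be done carefully so that the $\LS$-number sits strictly below the least putative almost strongly compact cardinal.

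The main obstacle is in (1)$\Rightarrow$(2): one must engineer the encoding AEC so that non-tameness is detected at precisely the level where almost strong compactness fails, and so that the reverse chain of implications --- non-extendibility of filters $\Rightarrow$ presence of incompatible types $\Rightarrow$ failure of tameness --- is tight. This is where the Boney--Unger argument does its real work, and it is also where the gap between "almost" and full strong compactness has to be handled with delicacy; it is not enough to produce some non-tame AEC, one must produce one whose non-tameness is a faithful translation of the failure of almost strong compactness at a prescribed height.
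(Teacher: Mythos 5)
This statement is imported verbatim from Boney and Unger (cited as \cite{bo13un}, Corollary 4.13); the paper gives no proof of it, so there is no internal argument to compare yours against. Judged on its own, your proposal is a correct identification of the shape of the known proof --- two directions, with $(1)\Rightarrow(2)$ carrying the real weight --- but neither direction is actually carried out, and the gap in the forward direction is not a detail but the whole theorem. Saying you would ``design an AEC whose models encode data about $\mu$-complete filters attempting to extend to $\kappa$-complete ultrafilters'' names the goal, not a construction: you do not specify the vocabulary, the class of structures, the strong substructure relation, why two global galois types disagree exactly when no full extension of the filter exists, or why agreement on small subdomains corresponds to the existence of partial extensions. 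Every one of these is a nontrivial verification in Boney--Unger, and without them the direction $(1)\Rightarrow(2)$ is an intention rather than a proof. You also need the construction to be uniform in the bound $\mu_0$, since ``class many'' fails exactly when the almost strongly compacts are bounded, and the non-tame AEC must live above that bound with a controlled L\"owenheim--Skolem number, as you note but do not execute.

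In the direction $(2)\Rightarrow(1)$ your outline is closer to complete, since it genuinely is the ultraproduct argument behind Fact 1.2 of the paper, but the step where you propose to handle almost (as opposed to full) strong compactness by ``passing to a directed system and taking a limit'' or ``running the argument at every $\theta$ and amalgamating the outcomes'' is misdirected. For a fixed pair of types $p\ne q$ over a fixed $M$ one only needs a \emph{single} fine ultrafilter on $[M]^{<\kappa}$ whose completeness exceeds $\LS(K)$ and the relevant index sizes; almost strong compactness supplies such an ultrafilter directly for each sufficiently large $\theta<\kappa$, and no limit over $\theta$ is taken. One must also check the hypotheses of the AEC version of \L o\'s's theorem (closure of the class under the relevant ultraproducts and coherence of the diagonal embeddings with the chosen local amalgams), which your sketch asserts rather than verifies. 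In short: the plan is the standard one, but as written it contains no proof of $(1)\Rightarrow(2)$ and an incompletely justified, slightly confused reduction in $(2)\Rightarrow(1)$.
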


In 2006, \footnote{in a private conversation, during the American Institute of Mathematics Workshop on the Classification Theory for
Abstract Elementary Classes on Palo Alto} Grossberg raised the conjecture that one main values of tameness allows one to go from a non-forking $\lambda$-frame to a non-forking $\lambda^+$-frame without changing the AEC (in contrast to Shelah's approach). Several years ago, we checked the conjecture and saw that only the symmetry axiom does not hold by the known proofs.\footnote{On June 18 2013, we wrote in an e.mail to Grossberg, that we have almost proved the following thing: if we have a good non-forking $\lambda$-frame and the amalgamation and tameness properties hold for the eight successive cardinals then we have a good non-forking frames in these eight cardinals.}

This was the beginning of the tameness approach for non-forking frames. It is a combination of Grossberg and VanDieren's approach and Shelah's approach. 

Let us explain the idea of this conjecture. Since we do not want to change the AEC, we must assume the amalgamation in $\lambda^+$  property. Uniqueness (of the non-forking extension of a type) for models of cardinality $\lambda^+$ is implied easily by $(\lambda,\lambda^+)$-tameness (relating to basic types, see below). The proofs of the extension property and basic stability in $\lambda^+$ in \cite{jrsh875} can be applied here. So our main challenge is to get symmetry. We conjectured that symmetry holds as well.

Recently, the symmetry axiom was proved under three different hypotheses (in chronological order):
\begin{enumerate}
\item Boney \cite{bo3} proved it, assuming a strong version of tameness: tameness for two elements, \item in Proposition \ref{continuity implies symmetry}, it is proved, assuming the $(\lambda,\lambda^+)$-continuity of independence of sequences of length $2$ property and \item Boney and Vasey \cite{bo6va4} proved that tameness implies tameness for two elements, so actually they proved the symmetry axiom assuming tameness, solving our conjecture.
\end{enumerate}

[While VanDieren and Vasey \cite{vavas10} proved a downward transfer of the symmetry propertry for splitting, here the issue is an \emph{upward} transfer of the symmetry property of a \emph{general} non-forking notion].


It is desirable to avoid assuming the amalgamation in $\lambda^+$ property, whenever it is possible. So we first derive a good non-forking $\lambda^+$-frame minus amalgamation, assuming the $(\lambda,\lambda^+)$-continuity of serial independence property (`minus amalgamation' means that the amalgamation property in $\lambda^+$ may hold and may not hold). Then we present sufficient conditions for the $(\lambda,\lambda^+)$-continuity of serial independence property.

We prefer to assume that every saturated model is an amalgamation base than to assume that $(K,\preceq)$ satisfies the amalgamation property. This is the motivation for defining the second candidate of a good non-forking $\lambda^+$-frame: $\frak{s}^{sat}$. The AEC of $\frak{s}^{sat}$ is the class of saturated models in $\lambda^+$ over $\lambda$ with the relation $\preceq$. 

In Section 10, we recall the definition of the third candidate, $\frak{s^+}$, for a good non-forking $\lambda^+$-frame. The AEC of $\frak{s^+}$ is the class of saturated models in $\lambda^+$ over $\lambda$ with the relation $\preceq^{NF}_{\lambda^+}$.

The theorems in this paper are divided into two kinds:
\begin{enumerate}
\item Sufficient conditions for the equivalence between the relations $\preceq \restriction K_{\lambda^+}$ and $\preceq^{NF}_{\lambda^+}$ and \item sufficient conditions, under which $\frak{s}_{\lambda^+}$, $\frak{s}^{sat}$ or $\frak{s^+}$ is a good non-forking $\lambda^+$-frame.
\end{enumerate} 

Recall \cite[Definition 1.0.25]{jrsh875}:
\begin{definition}
$K^{sat}$ is the class of saturated models in $\lambda^+$ over $\lambda$.
\end{definition}

The main theorems of the paper are:
\begin{theorem}[Theorem \ref{the main theorem of the paper}]\label{0}
Suppose:
\begin{enumerate}
\item $K$ is categorical in $\lambda$, \item $\frak{s}$ is a semi-good non-forking $\lambda$-frame, \item $\frak{s}$ satisfies the conjugation property, \item the class of uniqueness triples satisfies the existence property, \item every saturated model in $\lambda^+$ is an amalgamation base and \item $(K,\preceq)$ satisfies the $(\lambda,\lambda^+)$-tameness for non-forking types over saturated models property.
\end{enumerate}
Then for every two models $M,M^+ \in K^{sat}$ the following holds: $$M \preceq M^+ \Leftrightarrow M \preceq^{NF}_{\lambda^+}M^+.$$
\end{theorem}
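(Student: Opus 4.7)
The direction $M \preceq^{NF}_{\lambda^+} M^+ \Rightarrow M \preceq M^+$ is built into the definition of $\preceq^{NF}_{\lambda^+}$: any witnessing filtration by non-forking amalgamations of models in $K_\lambda$ is a $\preceq$-chain in $K_{\lambda^+}$, and at level $\lambda$ one has $\preceq^{NF}_\lambda \subseteq \preceq \restriction K_\lambda$. So the real content is the converse.

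For the converse, assume $M \preceq M^+$ with $M, M^+ \in K^{sat}$. The plan is to construct, by induction on $\alpha < \lambda^+$, continuous increasing filtrations $\langle M_\alpha : \alpha < \lambda^+ \rangle$ of $M$ and $\langle M^+_\alpha : \alpha < \lambda^+ \rangle$ of $M^+$ in $K_\lambda$, with $\bigcup_\alpha M_\alpha = M$, $\bigcup_\alpha M^+_\alpha = M^+$, and $M_\alpha \preceq^{NF}_\lambda M^+_\alpha$ at every stage. Once such filtrations are in hand, the definition of $\preceq^{NF}_{\lambda^+}$ gives $M \preceq^{NF}_{\lambda^+} M^+$ immediately.

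At limit stages, continuity of $\preceq^{NF}_\lambda$, which is part of $\frak{s}$ being a semi-good non-forking $\lambda$-frame, delivers what is needed. At successor stage $\alpha+1$, given $M_\alpha \preceq^{NF}_\lambda M^+_\alpha$, first choose $M^+_{\alpha+1} \preceq M^+$ in $K_\lambda$ absorbing the next prescribed element of $M^+$. Using the existence property of uniqueness triples (hypothesis~(4)), enumerate the fresh elements of $M^+_{\alpha+1}$ and build an \emph{external} extension $N \supseteq M_\alpha$ in $K_\lambda$ with $M_\alpha \preceq^{NF} N$, realizing the non-forking extensions of the types these elements have over $M_\alpha$. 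Since $M$ is saturated and, by hypothesis~(5), an amalgamation base, there is an embedding of $N$ over $M_\alpha$ into $M$; take its image as $M_{\alpha+1}$.

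The key obstacle, and the place where $(\lambda,\lambda^+)$-tameness enters, is verifying that this identification is compatible with $M^+$, i.e.\ that the types over $M_{\alpha+1}$ realized by the fresh elements inside $M^+$ actually coincide with those produced by the uniqueness triples. The uniqueness part of `uniqueness triple', combined with the conjugation property (hypothesis~(3)), gives the required equality of types at the local level of $M_\alpha$; tameness over saturated models (hypothesis~(6)) then upgrades this to equality of non-forking types over $M$, so that the two parallel constructions — one inside $M$ and one inside $M^+$ — can be synchronised inside a common amalgam. Any spurious forking could only enter at the $\lambda^+$-level, and this is precisely what tameness rules out. Careful bookkeeping over $\lambda^+$ stages, using categoricity in $\lambda$ (hypothesis~(1)) to keep successive approximations isomorphic, ensures both chains exhaust $M$ and $M^+$; this bookkeeping I expect to be routine compared with the tameness-based alignment, which is the real heart of the argument.
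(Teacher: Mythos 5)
Your plan for the nontrivial direction has a genuine gap at the successor step, and it is not one that bookkeeping can repair. To witness $M \preceq^{NF}_{\lambda^+} M^+$ you would need filtrations $\langle M_\alpha\rangle$ of $M$ and $\langle M^+_\alpha\rangle$ of $M^+$ with $NF(M_\alpha,M_{\alpha+1},M^+_\alpha,M^+_{\alpha+1})$ at each step (there is no binary relation $\preceq^{NF}_\lambda$ in this framework; $NF$ is the $4$-ary relation of Definition \ref{definition of NF}, and the limit stages are handled by its long transitivity, not by the continuity axiom of $\frak{s}$, which concerns types). Your construction builds an external $N \succeq M_\alpha$ realizing non-forking extensions and embeds it into $M$ over $M_\alpha$ using saturation. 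But (i) realizing non-forking types element by element does not yield the relation $NF$ --- that implication is precisely what uniqueness triples encode, and it fails for arbitrary extensions; and (ii) even granting $NF$ externally, the embedding into $M$ over $M_\alpha$ gives no control over how the image $M_{\alpha+1}$ sits relative to $M^+_\alpha$ inside the ambient $M^+$: the elements of $M_{\alpha+1}$ are already elements of $M^+$ and their configuration over $M^+_\alpha$ is whatever it is. Tameness cannot rescue this, because the types that must be controlled are types over $M^+_\alpha$, a model of cardinality $\lambda$, where $(\lambda,\lambda^+)$-tameness says nothing. By Fact \ref{preceq^{NF}-properties}(a), the existence of such synchronized filtrations is \emph{equivalent} to the conclusion $M \preceq^{NF}_{\lambda^+} M^+$, so the ``alignment'' you defer to tameness is the whole theorem, not a verification step.

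The paper's proof takes a different architecture that sidesteps this. It works with the class $B$ of pairs $(M_1,M_1^+)$ with $M_1 \in K^{sat}$ and $M_1 \preceq M_1^+$, ordered by $<_B$ (Definition \ref{our pairs where the first is saturated}), and shows two things: a $<_B$-maximal pair must have equal coordinates (Proposition \ref{importance of maximal for saturated models}, where strong tameness over saturated models and the $\prec^+_{\lambda^+}$-game are used to merge an externally built $\widehat{NF}$-extension of $N$ with $N^+$, absorbing one new element of $N^+$); and a maximal pair above $(M,M^+)$ exists, by a $\lambda^{++}$-length chain whose union would otherwise produce two filtrations agreeing on a club, contradicting clause (3) of $<_B$. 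Crucially, the resulting $N$ with $(M,M^+)<_B\dots<_B(N,N)$ is a proper $\preceq^{NF}_{\lambda^+}$-extension sitting \emph{above} $M^+$, not a filtration of it; the conclusion $M \preceq^{NF}_{\lambda^+} M^+$ is then extracted by the coherence property Fact \ref{preceq^{NF}-properties}(c) from $M \preceq^{NF}_{\lambda^+} N$ and $M \preceq M^+ \preceq N$. That final downward step is the device your plan is missing: it lets one certify the $NF$-relationship in a large external amalgam and then transfer it to the given pair, rather than trying to build it inside $M$ and $M^+$ directly.
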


\begin{theorem}[Theorem \ref{corollary a good non-forking frame assuming tameness for non-forking types over saturated models}]\label{1}
Suppose:
\begin{enumerate}
\item $\frak{s}=(K,\preceq,S^{bs},\dnf)$ is a semi-good non-forking $\lambda$-frame,
\item $\frak{s}$ satisfies the conjugation property,  
\item every saturated model in $\lambda^+$ over $\lambda$ is an amalgamation base,
\item $(K,\preceq)$ satisfies the $(\lambda,\lambda^+)$-tameness for non-forking types over saturated models property and 
\item the class of uniqueness triples satisfies the existence property.
\end{enumerate}
Then $\frak{s^+}$ is a good non-forking $\lambda^+$-frame.
\end{theorem}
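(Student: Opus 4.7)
The plan is to verify the axioms of a good non-forking $\lambda^+$-frame for $\mathfrak{s}^+$ one by one, reducing each either to its $\lambda$-counterpart in $\mathfrak{s}$ via the lifting machinery of \cite{jrsh875}, or to the new hypotheses (3) and (4), which play the role that the full amalgamation-in-$\lambda^+$ assumption plays in the original derivation. The underlying AEC of $\mathfrak{s}^+$ is $(K^{sat}, \preceq^{NF}_{\lambda^+})$; its basic types over a saturated $M$ are those whose restriction to some (equivalently, every cofinally large) $\lambda$-submodel is basic in $\mathfrak{s}$; and its non-forking relation is defined by declaring $p \in S^{bs}(M)$ non-forking over $M_0 \preceq^{NF}_{\lambda^+} M$ iff $p \restriction N$ does not fork over $M_0$ in $\mathfrak{s}$ for every $N \in K_\lambda$ with $M_0 \preceq N \preceq M$. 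Hypothesis (4), $(\lambda,\lambda^+)$-tameness for non-forking types over saturated models, is precisely what makes this lifted definition coherent and guarantees that types are determined by their $\lambda$-restrictions.

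That $(K^{sat},\preceq^{NF}_{\lambda^+})$ is an AEC in $\lambda^+$ is proved in \cite{jrsh875}, as is the fact that $\preceq^{NF}_{\lambda^+}$ has the amalgamation property; hypothesis (3) is what allows one to identify $\preceq^{NF}_{\lambda^+}$-embeddings with $\preceq$-embeddings on $K^{sat}$, and Theorem \ref{the main theorem of the paper} makes this identification precise. The frame axioms that are local to $\mathfrak{s}$---density of basic types, monotonicity, local character, conjugation, and continuity along chains---lift from $\lambda$ to $\lambda^+$ by the chain-and-restriction arguments of \cite{jrsh875}. Uniqueness is then immediate: if $p,q$ are two non-forking extensions over $M \in K^{sat}$ of some $r \in S^{bs}(M_0)$, then for every $\lambda$-submodel $N$ with $M_0 \preceq N \preceq M$, uniqueness in $\mathfrak{s}$ yields $p \restriction N = q \restriction N$, and hypothesis (4) then forces $p=q$. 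Basic stability follows by a counting argument: each basic type over $M \in K^{sat}$ is determined by its restriction to some $N \in K_\lambda$ with $N \preceq M$, and there are $\lambda^+$ such $N$ each carrying at most $\lambda$ basic types. Extension is built by a chain-of-length-$\lambda^+$ construction, at each $\lambda$-step taking the unique $\mathfrak{s}$-non-forking extension and using hypothesis (3) to amalgamate at limits.

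The hard part will be symmetry, as the introduction emphasizes. I would establish it in one of two ways: either by first deriving the $(\lambda,\lambda^+)$-continuity of serial (length-two) independence from tameness and the amalgamation-base hypothesis, and then invoking Proposition \ref{continuity implies symmetry}; or by citing the Boney--Vasey theorem, mentioned in the introduction, that $1$-tameness implies tameness for pairs and hence symmetry. Once symmetry is in hand, together with the axioms handled above, all four items of Axioms \ref{4 axioms} are settled, and combining them with the inherited axioms yields that $\mathfrak{s}^+$ is a good non-forking $\lambda^+$-frame.
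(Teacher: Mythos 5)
Your plan follows essentially the same route as the paper's (first) proof: use Theorem \ref{the main theorem of the paper} to identify $\preceq\restriction K^{sat}$ with $\preceq^{NF}_{\lambda^+}\restriction K^{sat}$ (so that $\frak{s}^{sat}=\frak{s}^+$ and smoothness/closure under unions transfers), obtain uniqueness, extension and basic stability from the tameness hypothesis as in Claims \ref{uniqueness in s sat}--\ref{stability in s sat}, and get symmetry by first establishing the $(\lambda,\lambda^+)$-continuity of independence of sequences of length $2$ (Theorem \ref{continuity of serial independence for saturated models}) and then invoking Proposition \ref{continuity implies symmetry}, with hypothesis (3) supplying amalgamation at the end. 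The only caveats are minor: in the stability count a semi-good frame gives $|S^{bs}(N)|\le\lambda^+$ (not $\le\lambda$) over each $N\in K_\lambda$ and one should count along a filtration rather than over all $\lambda$-submodels, and your fallback appeal to Boney--Vasey would need full tameness rather than the restricted tameness over saturated models assumed here.
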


Let us describe the structure of the paper. In Section 2, we present sufficient conditions for $\frak{s}_{\lambda^+}$ and for $\frak{s}^{sat}$ being good non-forking $\lambda^+$-frames. In Sections 3 and 4, we show the contribution of tameness. In Section 5, we show the connection between the $(\lambda,\lambda^+)$-continuity of serial independence property and the symmetry axiom. In Section 6, we study the relation $\preceq^{NF}_{\lambda^+}$ as a replacement of $\preceq \restriction K_{\lambda^+}$. In Section 7, we present sufficient conditions for the equivalence between the relations $\preceq^{NF}_{\lambda^+}$ and $\preceq$. In Section 8, we apply the results of Section 7, proving the $(\lambda,\lambda^+)$-continuity of serial independence property. In Section 9, we apply results from previous sections, presenting sufficient conditions for $\frak{s}_{\lambda^+}$ and of $\frak{s^+}$ being good non-forking $\lambda^+$-frames. In Section 10, we show that even if the relations $\preceq^{NF}_{\lambda^+}$ and $\preceq$ are not equivalent, the definitions of `type' in the different AECs considered here, coincide.  
 
\section{Non-forking Frames}

Shelah \cite{shh}.III introduced the notion of a good non-forking $\lambda$-frame. It is an axiomatization of the non-forking relation in superstable first-order theories. In \cite[Definition 2.1.3]{jrsh875}, good non-forking frames generalized to semi-good non-forking frames: the stability hypothesis is weakened. 

Recall,
\begin{definition}\label{definition of a semi-good non-forking frame}
\emph{A (semi-)good non-forking $\lambda$-frame} is a quadruple, $(K,\preceq,S^{bs},\dnf)$ such that the following hold:
\begin{enumerate}
\item $(K,\preceq)$ is an AEC with $LST$-number $\lambda$ at most, satisfying the joint embedding in $\lambda$ and amalgamation in $\lambda$ properties, and $(K,\preceq)$ has no maximal model of cardinality $\lambda$, \item $S^{bs}$ is a function of $K_\lambda$ such that $S^{bs}(M)$ is a set of non-algebraic types; $S^{bs}$ is closed under isomorphisms; it satisfies density and basic (almost) stability [$|S^{bs}(M)|\leq \lambda^+$ for each model $M$ of cardinality $\lambda$] and \item $\dnf$ is closed under isomorphisms and satisfies the monotonicity, local character, uniqueness, symmetry, extension and continuity axioms.
\end{enumerate}
\end{definition}

\begin{definition}
An AEC in $\lambda$ is defined similarly to AEC, but its models are of cardinality $\lambda$. So it is closed under unions of increasing continuous sequences of length less then $\lambda^{+}$ only and the existence of a $LST$-number is irrelevant.
\end{definition}

\begin{definition} 
A (semi-)good non-forking $\lambda$-frame \emph{of the second version} is defined similarly to a (semi-)good non-forking $\lambda$-frame, except the following difference: $(K,\preceq)$ is an AEC in $\lambda$ (in place of an AEC).
\end{definition}

\begin{proposition}
Let $(K,\preceq)$ be an AEC. The following are equivalent:
\begin{enumerate}
\item there are $S^{bs},\dnf$ such that $(K,\preceq,S^{bs},\dnf)$ is a good non-forking $\lambda$-frame.
\item there are $S^{bs},\dnf$ such that $(K_\lambda,\preceq \restriction K_\lambda,S^{bs},\dnf)$ is a good non-forking $\lambda$-frame of the second version.
\end{enumerate}
\end{proposition}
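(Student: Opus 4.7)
The plan is to observe that the data of a good non-forking $\lambda$-frame in either version lives entirely on $K_\lambda$, and that under the standing assumption that $(K,\preceq)$ is an AEC the pair $(K_\lambda,\preceq \restriction K_\lambda)$ is automatically an AEC in $\lambda$. Once both of these points are noted, one takes the \emph{same} pair $(S^{bs},\dnf)$ to witness both directions.

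For $(1)\Rightarrow(2)$, assume $(K,\preceq,S^{bs},\dnf)$ is a good non-forking $\lambda$-frame and define the candidate second-version frame by restriction. First I would verify that $(K_\lambda,\preceq \restriction K_\lambda)$ is an AEC in $\lambda$: closure under isomorphism, the coherence axiom and the partial order properties are inherited directly from $(K,\preceq)$; the only nontrivial clause is closure under unions of $\preceq$-increasing continuous chains of length $<\lambda^+$, which follows from the corresponding AEC axiom for $(K,\preceq)$ together with the observation that such a chain of models of cardinality $\lambda$ has union of cardinality $\lambda$, hence in $K_\lambda$. The remaining clauses of Definition~\ref{definition of a semi-good non-forking frame} — joint embedding and amalgamation in $\lambda$, no maximal model in $\lambda$, density, basic (almost) stability, and the axioms on $\dnf$ (monotonicity, local character, uniqueness, symmetry, extension, continuity) — refer only to models in $K_\lambda$ and types over and elements realized in such models, so they transfer verbatim.

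For $(2)\Rightarrow(1)$, we are given $(K,\preceq)$ is an AEC and $(K_\lambda,\preceq\restriction K_\lambda,S^{bs},\dnf)$ is a second-version good non-forking $\lambda$-frame. Here I would simply take the same $S^{bs}$ and $\dnf$ to witness a good non-forking $\lambda$-frame on $(K,\preceq)$. No extra verification is needed beyond pointing out that the axioms in Definition~\ref{definition of a semi-good non-forking frame} for $(K,\preceq,S^{bs},\dnf)$ that concern the AEC structure (LST-number at most $\lambda$, etc.) hold by hypothesis, while those concerning $S^{bs}$ and $\dnf$ are literally the clauses already assumed on the second-version frame.

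I expect no genuine obstacle here: the content of the proposition is that the restriction/extension between AECs and AECs-in-$\lambda$ is trivial once the ambient $(K,\preceq)$ is fixed to be an AEC, and the frame data is by design confined to $K_\lambda$. The only point requiring care is the cardinality bookkeeping for the chain-closure axiom in the passage to $K_\lambda$, which is immediate from the length bound $<\lambda^+$.
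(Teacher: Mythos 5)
Your argument is correct in substance, but it is worth noting that the paper does not actually verify anything here: its entire proof is a citation of \cite[Fact 1.0.18]{jrsh875}, whereas you carry out the verification directly. What your direct route buys is self-containment, and your identification of the chain-closure clause (union of a $\preceq$-increasing continuous chain of length $<\lambda^+$ of models of cardinality $\lambda$ again has cardinality $\lambda$) as the only clause with any content on the AEC side is exactly right.

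Two points deserve more care than ``transfer verbatim.'' First, the clauses on $S^{bs}$ and $\dnf$ (density, basic stability, uniqueness, extension, \dots) are all phrased in terms of galois-types over models in $K_\lambda$, and the equivalence relation $E$ of Definition \ref{definition of E^*} a priori depends on which ambient class one amalgamates in: in $(K,\preceq)$ the witnessing amalgam $M_3$ may have cardinality $>\lambda$, while in $(K_\lambda,\preceq\restriction K_\lambda)$ it must stay in $K_\lambda$. One must check the two notions of type coincide; this follows from amalgamation in $\lambda$ together with $LST(K,\preceq)\leq\lambda$ (shrink any large amalgam to a $\preceq$-submodel of cardinality $\lambda$ containing $f_1[M_1]\cup f_2[M_2]$ and use coherence), but it is a step, not a tautology, and it is presumably part of what the cited fact encapsulates. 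Second, in the direction $(2)\Rightarrow(1)$ you write that $LST(K,\preceq)\leq\lambda$ ``holds by hypothesis,'' but the proposition only hypothesizes that $(K,\preceq)$ is an AEC; the LST bound is part of the \emph{conclusion} (clause (1) of Definition \ref{definition of a semi-good non-forking frame}) and does not follow from the existence of a second-version frame on $K_\lambda$. This is a gap in the literal statement that the paper shares, so you should either flag it or record $LST(K,\preceq)\leq\lambda$ as a standing assumption for that direction.
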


\begin{proof}
By Fact \cite[1.0.18]{jrsh875}.
\end{proof}

From now on, we assume:
\begin{hypothesis}\label{hypothesis 1}
\mbox{}
\begin{enumerate} 
\item $(K,\preceq)$ is an AEC and
\item $\frak{s}=(K,\preceq,S^{bs},\dnf)$ is a semi-good non-forking $\lambda$-frame.
\end{enumerate}
\end{hypothesis}
 
\begin{remark}\label{local character almost implies continuity}
By \cite{jrsh940},
 without loss of generality, for each $M \in K_\lambda$ $S^{bs}(M)=S^{na}(M)$, namely, the basic types are the non-algebraic types. Anyway, we do not use it.
\end{remark}

We recall \cite[Definition 2.6.1]{jrsh875}, where we extend the non-forking relation to include models of
cardinality greater than $\lambda$.

\begin{definition}\label{preparation for forking for big models}
\label{2.9} $\dnf^{\geq \lambda}$ is the class of quadruples
$(M_0,a,M_1,M_2)$ such that:
\begin{enumerate}
\item $\lambda \leq ||M_i||$ for each $i<3$.\item $M_0 \preceq M_1
\preceq M_2$ and $a \in M_2-M_1$. \item For some model $N_0 \in
K_\lambda$ with $N_0 \preceq M_0$ for each model $N \in
K_\lambda$, $N_0 \preceq N \preceq M_1 \Rightarrow
\dnf(N_0,a,N,M_2)$.
\end{enumerate}
\end{definition}

\begin{definition}\label{forking for big models}
Let $M_0,M_1$ be models in $K_{\geq \lambda}$ with $M_0 \preceq
M_1$ and $p \in S(M_1)$. We say that $p$ does not fork over $M_0$,
when for some triple $(M_1,M_2,a) \in p$ we have $\dnf^{\leq
\lambda}(M_0,a,M_1,M_2)$.
\end{definition}

\begin{remark}
We can replace the quantification `for some' ($M_1,M_2,a$) in
Definition \ref{forking for big models} by `for each'.
\end{remark}

\begin{definition}\label{basic for big models}\label{definition of basic types over models of greater cardinality}
Let $M \in K_{>\lambda},\ p \in S(M)$. $p$ is said to be
\emph{basic} when there is $N \in K_\lambda$ such that $N \preceq
M$ and $p$ does not fork over $N$. For every $M \in K_{>\lambda},\
S^{bs}_{> \lambda}(M)$ is the set of basic types over $M$.
Sometimes we write $S^{bs}_{\geq \lambda}(M)$, meaning $S^{bs}(M)$
or $S^{bs}_{> \lambda}(M)$ (the unique difference is the
cardinality of $M$). Similarly, we define $S^{bs}_{\lambda^+}$ and $\dnf_{\lambda^+}$.
\end{definition}

We now define the first candidate for a good non-forking $\lambda^+$-frame (of the second version). 
\begin{definition}
Let $\frak{s}$ be a semi-good non-forking $\lambda$-frame. We define $\frak{s}_{\lambda^+}=(K_{\lambda^+},\preceq \restriction K_{\lambda^+},S^{bs}_{\lambda^+}, \dnf_{\lambda^+})$.
\end{definition}

We restate \cite[Proposition 3.1.9(1)]{jrsh875} as:
\begin{fact}\label{fact no maximal in s lambda +}
Let $\frak{s}$ be a semi-good non-forking $\lambda$-frame. Then there is no maximal model of cardinality $\lambda^+$.
\end{fact}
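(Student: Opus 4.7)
The plan is to exhibit, for any $M\in K_{\lambda^+}$, a strictly larger $M^+\in K_{\lambda^+}$ by realizing a basic type over a $\lambda$-sized approximation of $M$ and then propagating that realization up through a filtration, using the machinery of $\frak{s}$.

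First, I would filter: write $M=\bigcup_{i<\lambda^+}M_i$ as a $\preceq$-increasing continuous chain with $M_i\in K_\lambda$ (possible by the $LST$-number being $\leq\lambda$ and closure under chains of length $<\lambda^+$). By the density clause for $S^{bs}$ in a semi-good $\lambda$-frame, the set $S^{bs}(M_0)$ is nonempty, so pick some $p\in S^{bs}(M_0)$. Since $K_\lambda$ has no maximal model and basic types are (in particular) galois types, fix $N_0\in K_\lambda$ with $M_0\preceq N_0$ and an element $a\in N_0$ realizing $p$.

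Next, construct a $\preceq$-increasing continuous sequence $(N_i:i<\lambda^+)$ in $K_\lambda$ with $M_i\preceq N_i$ and $a\in N_0\subseteq N_i$ for every $i$. At a successor step $i+1$, I have $M_i\preceq N_i$ and $M_i\preceq M_{i+1}$, both in $K_\lambda$, so I invoke amalgamation in $\lambda$ (an axiom of $\frak{s}$) and, after the standard renaming trick, obtain $N_{i+1}\in K_\lambda$ with $N_i\preceq N_{i+1}$ and $M_{i+1}\preceq N_{i+1}$. At a limit $i<\lambda^+$, set $N_i=\bigcup_{j<i}N_j$; since $|i|\leq\lambda$, $N_i\in K_\lambda$, and by smoothness $M_i=\bigcup_{j<i}M_j\preceq\bigcup_{j<i}N_j=N_i$.

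Finally, let $M^+:=\bigcup_{i<\lambda^+}N_i$. Then $\|M^+\|=\lambda^+$, and by the chain axioms of the AEC $M=\bigcup_{i<\lambda^+}M_i\preceq\bigcup_{i<\lambda^+}N_i=M^+$. It remains to verify $a\notin M$, which will give $M\neq M^+$ and hence that $M$ is not maximal. By the extension and uniqueness axioms in $\frak{s}$, the type $\tp(a/M_i,N_i)$ is the unique non-forking extension of $p$ to $M_i$, hence is basic and in particular non-algebraic; so $a\notin M_i$ for every $i<\lambda^+$, and therefore $a\notin M$.

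The only mildly delicate point is the successor step of the construction, where I need $N_i\preceq N_{i+1}$ \emph{and} $M_{i+1}\preceq N_{i+1}$ simultaneously; this is handled by amalgamating $N_i$ and $M_{i+1}$ over $M_i$ and then renaming the amalgam so that $N_i$ and $M_{i+1}$ sit inside it as $\preceq$-substructures. Everything else is a direct appeal to the axioms of a semi-good $\lambda$-frame: density of $S^{bs}$, no maximal model in $K_\lambda$, amalgamation in $K_\lambda$, and uniqueness/extension of non-forking.
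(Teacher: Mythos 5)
Your overall strategy is the standard one (the paper itself gives no proof of this Fact, citing \cite[Proposition 3.1.9(1)]{jrsh875}), but as written the argument has a genuine gap at the successor step, and the gap is exactly where the real work lies. You build $N_{i+1}$ by an \emph{arbitrary} amalgamation of $N_i$ and $M_{i+1}$ over $M_i$, and only afterwards assert that ``by the extension and uniqueness axioms, $tp(a,M_i,N_i)$ is the unique non-forking extension of $p$ to $M_i$.'' Nothing in your construction makes this true: the extension and uniqueness axioms tell you that a unique non-forking extension \emph{exists}, not that the type your amalgam happens to assign to $a$ over $M_{i+1}$ is that extension. An adversarial choice of amalgams can make $tp(a,M_{i+1},N_{i+1})$ fork over $M_i$, and in particular make it algebraic; indeed, if at each stage $N_i$ happens to embed into $M_{i+1}$ over $M_i$, one may take $N_{i+1}=M_{i+1}$, in which case $M^+=\bigcup_i N_i=M$ and you have produced no proper extension at all. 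So the final claim $a\notin M$ does not follow from what you built.

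The fix is to replace bare amalgamation by a type-controlled amalgamation: at stage $i+1$, first use the extension axiom of $\dnf$ to get the non-forking extension $q\in S^{bs}(M_{i+1})$ of $tp(a,M_i,N_i)$, realize $q$ by some $b$ in some $N'\succeq M_{i+1}$, and then use the equality $tp(b,M_i,N')=tp(a,M_i,N_i)$ together with amalgamation in $\lambda$ to amalgamate $N'$ and $N_i$ over $M_i$ identifying $b$ with $a$; after renaming, this gives $N_{i+1}$ with $N_i\preceq N_{i+1}$, $M_{i+1}\preceq N_{i+1}$ and $tp(a,M_{i+1},N_{i+1})$ not forking over $M_i$ (hence, by transitivity, over $M_0$). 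At limits you then need the continuity axiom of $\dnf$, not just smoothness, to conclude that $tp(a,M_i,N_i)$ still does not fork over $M_0$. Once this is in place, each $tp(a,M_i,N_i)$ is basic, hence non-algebraic, so $a\notin M_i$ for all $i$ and $M\prec M^+$ properly. This controlled construction is precisely the one carried out in the paper's Claim \ref{extension from lambda to lambda^+}; with that claim in hand your argument closes.
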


The following fact  is an immediate consequence of \cite[Theorem 2.6.8]{jrsh875}.

\begin{fact}\label{theorem 2.6.8 of 875}
Let $\frak{s}$ be a semi-good non-forking $\lambda$-frame. Then $\frak{s}_{\lambda^+}$ satisfies the density, monotonicity, local character and continuity axioms.
\end{fact}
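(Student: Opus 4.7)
The plan is to verify each of the four axioms (monotonicity, density, local character, continuity) for $\frak{s}_{\lambda^+}$ by reducing to the corresponding axiom of $\frak{s}$ itself, using Definition \ref{forking for big models} which characterizes $\dnf_{\lambda^+}$ via $\lambda$-sized witnesses inside the $\lambda^+$-sized models. Since $\frak{s}$ is already a semi-good non-forking $\lambda$-frame by Hypothesis \ref{hypothesis 1}, each axiom is available in $K_\lambda$ and the task is purely to transfer it upward.

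First I would handle monotonicity, which is the easiest: if $\dnf_{\lambda^+}(M_0,a,M_1,M_2)$ with witness $N_0 \preceq M_0$ in $K_\lambda$, and $M_0 \preceq M_0' \preceq M_1' \preceq M_1$, then the same $N_0$ serves as a witness for the restricted configuration; any $N \in K_\lambda$ with $N_0 \preceq N \preceq M_1'$ satisfies $N_0 \preceq N \preceq M_1$, so $\dnf(N_0,a,N,M_2)$ by assumption. Next, for density: given $M \prec N$ in $K_{\lambda^+}$, pick $a_0 \in N \setminus M$ and use the $LST$-number of $(K,\preceq)$ to find $M_0 \preceq N_0$ in $K_\lambda$ with $M_0 \preceq M$, $N_0 \preceq N$, and $a_0 \in N_0$. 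If some such $a_0$ already has basic type over $M$, we are done; otherwise density of $S^{bs}$ in $\lambda$ yields some $a \in N_0 \setminus M_0$ with $\tp(a/M_0) \in S^{bs}(M_0)$, and then $\tp(a/M)$ is basic by Definition \ref{definition of basic types over models of greater cardinality}.

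For local character, given an increasing continuous chain $\langle M_i : i < \delta \rangle$ in $K_{\lambda^+}$ of cofinality at least $\lambda^+$ with union $M_\delta$ and a type $p \in S^{bs}_{\lambda^+}(M_\delta)$, I would take the $\lambda$-sized witness $N_0 \preceq M_\delta$ guaranteeing that $p$ is basic, and use $\cf(\delta) \geq \lambda^+ > |N_0|$ to conclude $N_0 \preceq M_i$ for some $i<\delta$; then the definition of $\dnf_{\lambda^+}$ directly yields that $p$ does not fork over $M_i$. Continuity of $\dnf_{\lambda^+}$ is similar: a chain-wise non-forking type has, by its basicness together with uniqueness, a single $\lambda$-sized base $N_0$ witnessing non-forking, and the same cofinality argument places $N_0$ in some $M_i$, allowing monotonicity in $\lambda$ to glue the pieces.

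The main obstacle is the bookkeeping in local character and continuity, specifically verifying that the $\lambda$-witnesses chosen at different stages of the chain can be taken coherently, and that the uniqueness axiom of $\frak{s}$ is invoked properly to ensure that different choices of witnesses yield the same non-forking statement at level $\lambda^+$. Everything else is essentially a direct unwinding of Definitions \ref{preparation for forking for big models}--\ref{definition of basic types over models of greater cardinality}, which is why the conclusion can be cited as immediate from \cite[Theorem 2.6.8]{jrsh875}.
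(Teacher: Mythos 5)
The paper does not actually prove this statement: it is recorded as a Fact and justified solely by citing \cite[Theorem 2.6.8]{jrsh875}, so you are in effect reproving the cited result from scratch. Your reduction works for monotonicity, but two of your four arguments have genuine gaps. For density, after finding $a\in N_0-M_0$ with $tp(a,M_0,N_0)$ basic in the $\lambda$-frame, you conclude that $tp(a,M,N)$ is basic ``by Definition \ref{definition of basic types over models of greater cardinality}''. This does not follow. First, $a\in N_0-M_0$ does not give $a\notin M$ unless you have arranged $N_0\cap M=M_0$. Second, and more seriously, basicness of $tp(a,M,N)$ requires that it not fork over some $\lambda$-sized $M_0\preceq M$, and by Definition \ref{preparation for forking for big models} this is the universally quantified statement that $\dnf(M_0,a,N',N)$ holds for \emph{every} $N'\in K_\lambda$ with $M_0\preceq N'\preceq M$. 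The mere fact that the restricted type $tp(a,M_0,N_0)$ is basic says nothing about what happens over intermediate $\lambda$-sized models $N'$ not contained in $N_0$; the type could perfectly well fork over $M_0$ once one passes to a larger submodel of $M$. Closing this gap is the actual content of the density clause of \cite[Theorem 2.6.8]{jrsh875} and needs a chain construction (or an appeal to local character at level $\lambda$), not a one-line unwinding of the definitions.

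The second gap is in local character and continuity. You treat only chains $\langle M_i:i<\delta\rangle$ with $\cf(\delta)\geq\lambda^+$, where the $\lambda$-sized base $N_0$ is swallowed by some $M_i$. But these axioms are required for every limit $\delta$, and for models of cardinality $\lambda^+$ the relevant chains include those of cofinality $\omega$ or $\lambda$, where $N_0\preceq\bigcup_{i<\delta} M_i$ need not embed into any single $M_i$. For those cases one has to build an increasing continuous sequence of models $N_i\in K_\lambda$ with $N_i\preceq M_i$ whose union catches a base of the type, apply local character (respectively continuity) of $\frak{s}$ at level $\lambda$ to that sequence, and then use transitivity and monotonicity to pull the conclusion back up to the $M_i$. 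Your sketch mentions ``bookkeeping'' about coherent witnesses, but does not register that the high-cofinality argument simply does not apply in the small-cofinality cases. So the proposal is a correct outline for monotonicity and for the easy halves of the remaining axioms, but as written it does not establish density, nor local character and continuity at cofinality $\leq\lambda$.
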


\begin{proposition}\label{fact the remain axioms}\label{the remain axioms}
If $\frak{s}_{\lambda^+}$ satisfies Axioms \ref{4 axioms} then it is a good non-forking $\lambda^+$-frame minus the joint embedding and amalgamation properties. 
\end{proposition}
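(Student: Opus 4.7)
The approach is essentially a checklist verification: the definition of a good non-forking $\lambda^+$-frame (of the second version, restricting to $K_{\lambda^+}$) decomposes into three families of requirements -- the AEC-in-$\lambda^+$ axioms for $(K_{\lambda^+},\preceq \restriction K_{\lambda^+})$, the properties of $S^{bs}_{\lambda^+}$, and the axioms of $\dnf_{\lambda^+}$ -- and every single one of them is either already granted by the hypotheses, already proved in the results quoted just above, or excluded from the statement. The plan is simply to walk through the list and point to where each item comes from.

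First I would handle the AEC data. Since Hypothesis \ref{hypothesis 1}(1) says $(K,\preceq)$ is a genuine AEC, its restriction to models of a fixed cardinality $\lambda^+$ automatically yields an AEC in $\lambda^+$ in the sense of the paper's definition: closure under $\preceq$-increasing continuous chains of length $<\lambda^{++}$ whose union has cardinality $\lambda^+$ is inherited from the ambient AEC, while the LST-number requirement is declared irrelevant in the second version. The clause ``no maximal model of cardinality $\lambda^+$'' is exactly Fact \ref{fact no maximal in s lambda +}. Joint embedding and amalgamation in $\lambda^+$ are precisely the properties excluded by the phrase ``minus the joint embedding and amalgamation properties'', so nothing needs to be said about them.

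Next I would turn to $S^{bs}_{\lambda^+}$. Its closure under isomorphisms is inherited from the corresponding property of $S^{bs}$ in $\frak{s}$ via the definition of $S^{bs}_{\lambda^+}$ (Definition \ref{definition of basic types over models of greater cardinality}), since a type over $M\in K_{\lambda^+}$ is basic iff some basic $\lambda$-witness exists, and this notion is preserved under isomorphisms in the obvious way. Density of $S^{bs}_{\lambda^+}$ is part of Fact \ref{theorem 2.6.8 of 875}. Basic stability is one of the four axioms assumed in the hypothesis. Then for $\dnf_{\lambda^+}$: closure under isomorphisms follows from closure of $\dnf$ under isomorphisms in $\frak{s}$ together with Definition \ref{preparation for forking for big models}; monotonicity, local character, and continuity are supplied by Fact \ref{theorem 2.6.8 of 875}; and uniqueness, extension, and symmetry are exactly the remaining three axioms in Axioms \ref{4 axioms}.

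Since every required clause has thereby been accounted for, the proof is complete. There is no genuine obstacle here: the only thing requiring the slightest care is checking that ``closure under isomorphisms'' for the extended objects $S^{bs}_{\lambda^+}$ and $\dnf_{\lambda^+}$ genuinely follows from the corresponding closure of $S^{bs}$ and $\dnf$ in $\frak{s}$, which is a direct unpacking of Definitions \ref{preparation for forking for big models} and \ref{definition of basic types over models of greater cardinality}.
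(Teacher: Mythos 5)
Your proposal is correct and follows essentially the same route as the paper: the paper's proof is the one-line citation ``By Fact \ref{theorem 2.6.8 of 875} and Fact \ref{fact no maximal in s lambda +}'', which is exactly the checklist you carry out (density, monotonicity, local character and continuity from the first fact; no maximal model from the second; the four remaining axioms from the hypothesis; the routine isomorphism-closure and AEC-in-$\lambda^+$ clauses left implicit). You have merely made explicit the bookkeeping the paper suppresses.
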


\begin{proof}
By Fact \ref{theorem 2.6.8 of 875} and Fact \ref{fact no maximal in s lambda +}.
\end{proof}

We now define the second candidate for a good non-forking $\lambda^+$-frame (of the second version): the `restriction' of $\frak{s}_{\lambda^+}$ to the saturated models in $\lambda^+$ over $\lambda$.
\begin{definition}
Let $\frak{s}$ be a semi-good non-forking $\lambda$-frame. We define $\frak{s}^{sat}=(K^{sat},\preceq \restriction K^{sat},S^{bs}_{\lambda^+} \restriction K^{sat}, \dnf_{\lambda^+} \restriction K^{sat})$, where $\dnf_{\lambda^+} \restriction K^{sat}$ is the class of quadruples in $\dnf_{\lambda^+}$ such that the three models are in $K^{sat}$.
\end{definition}

\begin{remark}\label{K sat is AEC iff}
The pair $(K^{sat},\preceq \restriction K^{sat})$ is an AEC in $\lambda^+$ if and only if for every limit ordinal $\delta<\lambda^{++}$ and every increasing continuous sequence, $\langle M_\alpha:\alpha<\delta \rangle$, of models in $K^{sat}$, we have $\bigcup_{\alpha<\delta} M_\alpha \in K^{sat}$.
\end{remark}

\begin{proposition}
Let $\delta$ be a limit ordinal with $\lambda<cf(\delta)$ and let $\langle N_\alpha:\alpha<\delta \rangle$ be an increasing continuous sequence of models in $K^{sat}$. Then $\bigcup_{\alpha<\delta}N_\alpha \in K^{sat}$.
\end{proposition}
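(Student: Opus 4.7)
The plan is the standard cofinality/pigeonhole argument for saturation transferring through chains: set $N := \bigcup_{\alpha<\delta} N_\alpha$, check that $N \in K_{\lambda^+}$, and then realize every basic type over every $\lambda$-sized submodel of $N$ inside $N$. Without loss of generality one may assume $|\delta| \leq \lambda^+$, since otherwise $\|N\| > \lambda^+$ and the conclusion is vacuous.

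First I would invoke the chain axiom of AECs to obtain $N \in K$ with $N_\alpha \preceq N$ for every $\alpha<\delta$, and observe that $\|N\| = \lambda^+$. To verify $N \in K^{sat}$, I would fix any $M \preceq N$ with $M \in K_\lambda$ and any basic type $p \in S^{bs}(M)$ (which is the appropriate notion of saturation in the semi-good frame context) and aim to realize $p$ in $N$.

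The core step uses the cofinality hypothesis. For each $a \in M$, set $\beta(a) := \min\{\beta < \delta : a \in N_\beta\}$; this is well-defined because $M \subseteq N$. The set $\{\beta(a) : a \in M\}$ has cardinality at most $\lambda < \cf(\delta)$, so it is bounded in $\delta$ by some ordinal $\alpha < \delta$. Hence $M \subseteq N_\alpha$, and by the coherence axiom of AECs (applied to $M \preceq N$, $N_\alpha \preceq N$, and $M \subseteq N_\alpha$) we conclude $M \preceq N_\alpha$. Since $N_\alpha \in K^{sat}$ and $M \in K_\lambda$, the type $p$ is realized in $N_\alpha \preceq N$, as required.

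The argument has no serious obstacle; it reduces to a cofinality bookkeeping plus AEC coherence. The only point requiring some care is being explicit about which notion of saturation is in force (realization of all basic types over $\lambda$-sized submodels), so that the invocation of the saturation of each $N_\alpha$ is meaningful inside the frame $\frak{s}$.
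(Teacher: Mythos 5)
Your proof is correct and follows essentially the same route as the paper's: use $\lambda<\cf(\delta)$ to bound $M$ inside some $N_\beta$, apply coherence to get $M\preceq N_\beta$, and then realize the type in the saturated model $N_\beta$. The only cosmetic difference is that you restrict attention to basic types while the paper realizes every type over $M$ in $N_\beta$; the argument is identical either way.
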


\begin{proof}
Let $M$ be a model of cardinality $\lambda$ such that $M \preceq \bigcup_{\alpha<\delta}N_\alpha$. Since $\lambda<cf(\delta)$, for some $\beta<\delta$, $M \subseteq N_\beta$. Since $M \preceq \bigcup_{\alpha<\delta}N_\alpha$ and $N_\beta \preceq \bigcup_{\alpha<\delta}N_\alpha$, we have $M \preceq N_{\beta}$. But $N_\beta$ is saturated in $\lambda^+$ over $\lambda$. So every type over $M$ is realized in $N_\beta$.  
\end{proof}

In Fact \ref{fact existence of sat}, Proposition \ref{extending from lambda to sat} and Proposition \ref{extending to sat}, we do not have to assume Hypothesis \ref{hypothesis 1}, but only the following hypothesis:
\begin{hypothesis}\label{hypothesis basic types}
\mbox{}
\begin{enumerate}
\item $(K,\preceq)$ is an AEC and \item $K,\preceq$ and $S^{bs}$ satisfy Items (1) and (2) of Definition \ref{definition of a semi-good non-forking frame} (in particular, $|S^{bs}(M)| \leq \lambda^+$ holds for every $M$ of cardinality $\lambda$). 
\end{enumerate}
\end{hypothesis}

The following fact is a restatement of \cite[Theorem 2.5.8(2)]{jrsh875}. It presents a way to construct a saturated model in $\lambda^+$ over $\lambda$. Since we use only basic types, this fact is not trivial.
\begin{fact}\label{fact existence of sat} 
Suppose:
\begin{enumerate}
\item Hypothesis \ref{hypothesis basic types} holds,
\item $\langle M_\alpha:\alpha<\lambda^+ \rangle$ is an increasing continuous sequence of models of cardinality $\lambda$ and \item for every $\alpha<\lambda^+$ and every basic type $p$ over $M_\alpha$, $p$ is realized in some $M_\beta$ with $\alpha<\beta<\lambda^+$.
\end{enumerate}
Then $\bigcup_{\alpha<\lambda^+}M_\alpha$ is a saturated model in $\lambda^+$ over $\lambda$. 
\end{fact}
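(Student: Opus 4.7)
The plan is to fix an arbitrary $\preceq$-submodel $N$ of $M^\ast := \bigcup_{\alpha<\lambda^+} M_\alpha$ with $\|N\|=\lambda$ and a basic type $p \in S^{bs}(N)$, and show that $p$ is realized in $M^\ast$; this is what saturation in $\lambda^+$ over $\lambda$ demands in the frame-theoretic sense (realization of non-basic types reduces to this via density of $S^{bs}$). The strategy is to locate a stage $M_\alpha$ of the chain containing $N$, lift $p$ to a basic type $q$ over $M_\alpha$, and then invoke hypothesis~(3) to realize $q$ further up the chain.

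First, since $\|N\|=\lambda$, $\lambda^+$ is regular, and the sequence $\langle M_\alpha : \alpha < \lambda^+ \rangle$ is increasing continuous, the universe of $N$ is contained in some $M_\alpha$. Applying the AEC coherence axiom to $N \preceq M^\ast$ and $M_\alpha \preceq M^\ast$ with $N \subseteq M_\alpha$ upgrades this inclusion to $N \preceq M_\alpha$. Next, I would produce a basic type $q \in S^{bs}(M_\alpha)$ whose restriction to $N$ equals $p$. To do this without invoking a non-forking relation (which is absent from Hypothesis~\ref{hypothesis basic types}), I would take a realization $b$ of $p$ in some $N' \succeq N$ of cardinality $\lambda$, amalgamate $N'$ with $M_\alpha$ over $N$ to obtain $M^\dagger \succeq M_\alpha$ containing a copy $b^\ast$ of $b$, and use density of basic types in the pair $M_\alpha \prec M^\dagger$ (together with the fact that $b^\ast$ witnesses $p$ over $N$) to exhibit a basic $q \in S^{bs}(M_\alpha)$ extending $p$. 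Hypothesis~(3) then furnishes $\beta$ with $\alpha<\beta<\lambda^+$ and an element $a \in M_\beta$ realizing $q$. Since $M_\beta \preceq M^\ast$, the element $a$ still realizes $q$ over $M_\alpha$ in $M^\ast$, and restriction to $N$ shows that $a$ realizes $p$ in $M^\ast$, as required.

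The main obstacle — and the reason this deserves its own proof rather than being routine — is the second step: manufacturing the basic extension $q$ of $p$. Hypothesis~(3) only yields realizations of basic types, so one cannot afford to lose basicity when lifting from $N$ to $M_\alpha$; a bare non-algebraic extension will not do. Under the full semi-good frame structure this step would be a one-line application of the extension axiom, but under the weaker Hypothesis~\ref{hypothesis basic types} one must argue purely from density of $S^{bs}$ and amalgamation in $\lambda$, and this is precisely where the frame structure (as opposed to the bare hypotheses on the chain $\langle M_\alpha \rangle$) genuinely enters the argument.
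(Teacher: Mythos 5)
First, a point of reference: the paper does not prove this statement at all. It is stated as a \emph{Fact} with the proof deferred to \cite[Theorem 2.5.8(2)]{jrsh875}, and the surrounding text explicitly warns that ``since we use only basic types, this fact is not trivial.'' So your proposal cannot be compared to an in-paper argument; judged on its own terms, it has two genuine gaps, both sitting exactly at the points you yourself flag as delicate and then do not resolve.

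The first gap is the opening reduction. You assert parenthetically that realizing all \emph{basic} types over $N$ suffices for saturation ``via density of $S^{bs}$.'' Density only says that every proper pair $M \prec N'$ contains \emph{some} element whose type over $M$ is basic; it gives no control over a \emph{prescribed} non-basic type $p = tp(a,N,N_1)$. The natural back-and-forth (at each step use density to pick a basic element of the outside model, realize its pulled-back type in $M^\ast$, extend the embedding) never guarantees that $a$ itself is captured after $\lambda$ steps, so the passage from ``all basic types are realized'' to ``saturated'' is precisely the nontrivial content of the cited theorem, not a one-line remark. The second gap is the manufacture of a basic $q \in S^{bs}(M_\alpha)$ with $q \restriction N = p$. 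After amalgamating $N'$ with $M_\alpha$ over $N$ to obtain $M^\dagger \ni b^\ast$, density applied to $M_\alpha \prec M^\dagger$ yields some $c \in M^\dagger - M_\alpha$ with $tp(c,M_\alpha,M^\dagger)$ basic, but nothing forces $c = b^\ast$ or $tp(c,N,M^\dagger) = p$; and Hypothesis~\ref{hypothesis basic types} deliberately omits the extension axiom for basic types (that axiom belongs to $\dnf$ in the full frame). This matters because clause (3) of the Fact only realizes basic types over the particular models $M_\alpha$, so without a genuine basic lift from $N$ to $M_\alpha$ your argument does not even establish the realization of all \emph{basic} types over an arbitrary $\lambda$-sized $N \preceq M^\ast$. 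Both steps require the actual machinery of \cite[\S 2.5]{jrsh875} rather than density and amalgamation alone.
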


\begin{proposition}\label{extending from lambda to sat}
Assume Hypothesis \ref{hypothesis basic types}. 
Every model of cardinality $\lambda$ can be extended to a saturated model in $\lambda^+$ over $\lambda$.
\end{proposition}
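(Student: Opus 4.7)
The plan is to build $M_0 \in K_\lambda$ into a saturated model by a standard bookkeeping construction of length $\lambda^+$, then invoke Fact \ref{fact existence of sat}. Given the initial $M_0$, I would construct a $\preceq$-increasing continuous sequence $\langle M_\alpha : \alpha < \lambda^+ \rangle$ of models of cardinality $\lambda$ such that every basic type over $M_\alpha$ is realized in some $M_\beta$ with $\alpha < \beta < \lambda^+$. Once this is done, Fact \ref{fact existence of sat} immediately yields that $\bigcup_{\alpha < \lambda^+} M_\alpha$ is a saturated model in $\lambda^+$ over $\lambda$ extending $M_0$.

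First I would set up the bookkeeping. Basic stability gives $|S^{bs}(M_\alpha)| \leq \lambda^+$ for every $\alpha < \lambda^+$, so the total number of pairs $(\alpha, p)$ with $p \in S^{bs}(M_\alpha)$ is at most $\lambda^+ \cdot \lambda^+ = \lambda^+$. Using a standard pairing function $\pi : \lambda^+ \to \lambda^+ \times \lambda^+$ with $\pi(\beta) = (\alpha_\beta, i_\beta)$ and $\alpha_\beta \le \beta$, I would build the sequence recursively: at stage $\beta + 1$, look at $M_{\alpha_\beta}$, enumerate $S^{bs}(M_{\alpha_\beta})$ as $\langle p^{\alpha_\beta}_i : i < \lambda^+ \rangle$, and choose $M_{\beta + 1} \succeq M_\beta$ of cardinality $\lambda$ realizing the (non-forking, hence well-defined) extension of $p^{\alpha_\beta}_{i_\beta}$ to $M_\beta$. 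Such a realization exists because $(K,\preceq)$ has amalgamation in $\lambda$ and no maximal model in $\lambda$ (both part of Hypothesis \ref{hypothesis basic types}), so a type over $M_{\alpha_\beta}$ can be realized in an extension of $M_\beta$ of size $\lambda$.

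At limit stages $\delta < \lambda^+$ I would take $M_\delta := \bigcup_{\alpha < \delta} M_\alpha$, which belongs to $K_\lambda$ since $\delta < \lambda^+$ and each $M_\alpha$ has cardinality $\lambda$, and lies in $K$ by the axioms of an AEC in $\lambda$. Starting the sequence with the given $M_0$ ensures the resulting saturated model is an extension of $M_0$.

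The main concern is not any single step but the bookkeeping itself: I must check that every pair $(\alpha, p)$ with $p \in S^{bs}(M_\alpha)$ is handled. This is where a surjective pairing $\pi$ with $\alpha_\beta \le \beta$ is crucial, since at stage $\beta$ the model $M_{\alpha_\beta}$ has already been built and its basic type set has already been enumerated at an earlier stage of the meta-planning. Once this clerical point is arranged, every basic type over every $M_\alpha$ is addressed at unboundedly many $\beta < \lambda^+$, and Fact \ref{fact existence of sat} closes the proof.
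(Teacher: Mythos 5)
Your proposal is correct and follows essentially the same route as the paper: the paper's proof likewise constructs an increasing continuous sequence starting at $M_0$ satisfying the hypotheses of Fact \ref{fact existence of sat} (deferring the bookkeeping to the proof of \cite[Theorem 2.5.8(2)]{jrsh875}) and then applies that Fact. One small caution: Hypothesis \ref{hypothesis basic types} does not include the non-forking relation, so you should drop the parenthetical appeal to the ``non-forking extension'' of $p^{\alpha_\beta}_{i_\beta}$ and rely solely on the justification you already give, namely that amalgamation in $\lambda$ lets you realize a type over $M_{\alpha_\beta}$ inside a $\lambda$-sized extension of $M_\beta$.
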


\begin{proof}
Let $M_0$ be a model of cardinality $\lambda$. We choose $M_\alpha$ by induction on $\alpha \in (0,\lambda^+)$, such that the sequence $\langle M_\alpha:\alpha<\lambda^+ \rangle$ satisfies Conditions (1) and (2) of Fact \ref{fact existence of sat} (as in the proof of $(2) \rightarrow (3)$ in \cite[Theorem 2.5.8(2)]{jrsh875}). By Fact \ref{fact existence of sat}, $\bigcup_{\alpha<\lambda^+}M_\alpha$ is saturated in $\lambda^+$ over $\lambda$.
\end{proof}

\begin{proposition}\label{extending to K sat}\label{extending to sat}
Assume Hypothesis \ref{hypothesis basic types}.
Every model of cardinality $\lambda^+$ can be extended to a saturated model in $\lambda^+$ over $\lambda$.
\end{proposition}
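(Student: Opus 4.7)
The plan is to extend $M$ by parallel-constructing a chain in cardinality $\lambda$, reducing the problem to Proposition \ref{extending from lambda to sat} via amalgamation in $\lambda$. Since $(K,\preceq)$ is an AEC with $\LS$-number at most $\lambda$ and $M$ has cardinality $\lambda^+$, the first step is to write $M$ as an increasing continuous union $M=\bigcup_{\alpha<\lambda^+} M_\alpha$ with each $M_\alpha \in K_\lambda$. The goal is then to build an increasing continuous chain $\langle N_\alpha:\alpha<\lambda^+ \rangle$ in $K_\lambda$ satisfying $M_\alpha \preceq N_\alpha$ for every $\alpha$, such that every basic type over each $N_\beta$ is realized in some $N_{\alpha+1}$ with $\beta<\alpha<\lambda^+$. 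Applying Fact \ref{fact existence of sat} to $\langle N_\alpha:\alpha<\lambda^+ \rangle$ will yield that $N^\ast:=\bigcup_{\alpha<\lambda^+} N_\alpha$ is saturated in $\lambda^+$ over $\lambda$, while smoothness combined with $M_\alpha \preceq N_\alpha \preceq N^\ast$ for all $\alpha$ forces $M=\bigcup_{\alpha<\lambda^+} M_\alpha \preceq N^\ast$, as required.

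The construction proceeds by induction on $\alpha<\lambda^+$, guided by a bookkeeping function $F:\lambda^+ \to \lambda^+\times\lambda^+$, $F(\alpha)=(\beta_\alpha,\gamma_\alpha)$ with $\beta_\alpha\leq\alpha$, arranged so that every pair is hit cofinally often (possible since $|\lambda^+ \times \lambda^+|=\lambda^+$). Set $N_0 := M_0$. At a limit $\delta<\lambda^+$, let $N_\delta := \bigcup_{\alpha<\delta} N_\alpha$; smoothness together with the inductive hypothesis $M_\alpha \preceq N_\alpha \preceq N_\delta$ for all $\alpha<\delta$ yields $M_\delta \preceq N_\delta$. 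At a successor stage $\alpha+1$, the amalgamation in $\lambda$ property (part of Hypothesis \ref{hypothesis basic types}) provides $N'_\alpha \in K_\lambda$ extending both $M_{\alpha+1}$ and $N_\alpha$ over $M_\alpha$. Using an a priori fixed enumeration $\langle p^{\beta_\alpha}_\gamma : \gamma<\lambda^+ \rangle$ of $S^{bs}(N_{\beta_\alpha})$ (of length at most $\lambda^+$ by basic almost stability), take any representation $(N_{\beta_\alpha},N'',a)$ of $p^{\beta_\alpha}_{\gamma_\alpha}$ with $N''\in K_\lambda$ (possible by the $\LS$-property), and amalgamate $N''$ with $N'_\alpha$ over $N_{\beta_\alpha}$ to produce $N_{\alpha+1}\in K_\lambda$ in which $p^{\beta_\alpha}_{\gamma_\alpha}$ is realized.

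Because every pair $(\beta,\gamma)$ appears as $F(\alpha+1)$ for cofinally many $\alpha$, every basic type over every $N_\beta$ is realized at some later stage, and Fact \ref{fact existence of sat} then gives that $N^\ast$ is saturated in $\lambda^+$ over $\lambda$. The only delicate point I anticipate is maintaining $M_\alpha \preceq N_\alpha$ throughout the construction while simultaneously absorbing all basic types: at successor stages this is handled by the single amalgamation performed over $M_\alpha$, and at limits by smoothness, so no hypothesis beyond Hypothesis \ref{hypothesis basic types} is needed.
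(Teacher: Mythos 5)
Your construction is correct in outline but takes a genuinely different route from the paper. The paper's proof is a short reduction: pick any $M^- \preceq M_1$ of cardinality $\lambda$, extend $M^-$ to a saturated model $M_2$ by Proposition \ref{extending from lambda to sat}, and then invoke Shelah's ``saturativity $=$ model homogeneity'' lemma to embed $M_1$ into $M_2$ over $M^-$; closure of the AEC under isomorphisms finishes the argument. You instead redo the saturation construction of Proposition \ref{extending from lambda to sat} relative to a filtration of $M$, interleaving the bookkeeping of basic types with the absorption of the pieces $M_{\alpha+1}$. What your route buys is independence from the model-homogeneity lemma (you use only amalgamation in $\lambda$, smoothness, and Fact \ref{fact existence of sat}); what it costs is length and one step that needs repair.

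That step is the successor stage, where you assert that amalgamation in $\lambda$ ``provides $N'_\alpha$ extending both $M_{\alpha+1}$ and $N_\alpha$ over $M_\alpha$''. Amalgamation gives a model $N'$ with embeddings $f_1:M_{\alpha+1}\to N'$ and $f_2:N_\alpha\to N'$ agreeing on $M_\alpha$; by renaming $N'$ you may take one of them to be the identity, but in general not both, since the amalgam may identify an element of $M_{\alpha+1}-M_\alpha$ with an element of $N_\alpha-M_\alpha$, in which case no isomorphic copy of $N'$ contains both $M_{\alpha+1}$ and $N_\alpha$ as literal $\preceq$-submodels (this would require disjoint amalgamation, which is not among the hypotheses). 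You cannot rename $M_{\alpha+1}$, since it is part of the given $M$, and you cannot rename $N_\alpha$ after the fact, since it already carries the types realized at earlier stages. The standard repair is to drop the literal inclusions $M_\alpha \preceq N_\alpha$ and carry instead an increasing chain of embeddings $g_\alpha:M_\alpha\to N_\alpha$ with $g_{\alpha+1}\restriction M_\alpha=g_\alpha$; at the end $g:=\bigcup_{\alpha<\lambda^+}g_\alpha$ embeds $M$ into the saturated model $N^{\ast}$ with $g[M]\preceq N^{\ast}$ by smoothness, and closure under isomorphisms lets you transport $N^{\ast}$ back over $M$ --- which is exactly the final device in the paper's own proof. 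With that adjustment, the bookkeeping, the limit stages, and the appeal to Fact \ref{fact existence of sat} all go through.
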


\begin{proof}
Let $M_1 \in K_{\lambda^+}$. Take a model $M^- \in K$ of cardinality $\lambda$, such that $M^- \preceq M_1$. By Proposition \ref{extending from lambda to sat}, we can find a saturated model in $\lambda^+$ over $\lambda$, $M_2$ with $M^- \preceq M_2$. By \cite[Lemma II.1.14 (saturativity=model homogeneity)]{shh}, there is an embedding $f$ of $M_1$ into $M_2$ fixing $M^-$ pointwise. So $f[M_1] \preceq M_2$. Since every AEC is closed under isomorphisms, we conclude that $M_1$ can be extended to a saturated model in $\lambda^+$ over $\lambda$.
\end{proof}

\begin{remark}\label{remark meanings of types in section 2}
By Proposition \ref{a completion for section 2}, for every two types $p,q$ over a saturated model in $\lambda^+$ over $\lambda$, $p=q$ in the context of $(K,\preceq)$ if and only $p=q$ in the context of $(K^{sat},\preceq)$. We use it freely. 
\end{remark}

Proposition \ref{the remain axioms for sat} 
is the analog of Proposition \ref{the remain axioms} for $\frak{s}^{sat}$. A comparison shows that while in $\frak{s}_{\lambda^+}$, we have always an AEC in $\lambda^+$, in $\frak{s}^{sat}$ we have always the joint embedding property. 
\begin{proposition}\label{the remain axioms for sat}
Assume that $(K^{sat},\preceq \restriction K^{sat})$ is an AEC in $\lambda^+$. If $\frak{s}^{sat}$ satisfies Axioms \ref{4 axioms} then it is a good non-forking $\lambda^+$-frame minus amalgamation in $\lambda^+$.
\end{proposition}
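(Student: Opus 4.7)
The plan is to verify each ingredient of a good non-forking $\lambda^+$-frame for $\frak{s}^{sat}$, excluding amalgamation, mostly by transferring the corresponding ingredients from $\frak{s}_{\lambda^+}$ and from the hypotheses. Since $(K^{sat},\preceq \restriction K^{sat})$ is assumed to be an AEC in $\lambda^+$, the structural axioms of an AEC in $\lambda^+$ are handled. Joint embedding in $\lambda^+$ is trivial: any two members of $K^{sat}$ are saturated of cardinality $\lambda^+$ over $\lambda$, so by uniqueness of saturated models (available under Hypothesis \ref{hypothesis 1}) any two are isomorphic, which gives joint embeddings into either of them. The non-existence of a maximal model in $K^{sat}$ follows by combining Fact \ref{fact no maximal in s lambda +} with Proposition \ref{extending to K sat}: given $M \in K^{sat}$, pick $M' \in K_{\lambda^+}$ with $M \prec M'$, then extend $M'$ to some $M'' \in K^{sat}$; we have $M \prec M' \preceq M''$ and $M \neq M''$ on cardinality grounds.

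For the basic-type assignment, closure under isomorphism and density are inherited from $\frak{s}_{\lambda^+}$ via Fact \ref{theorem 2.6.8 of 875} (density holds in $\frak{s}_{\lambda^+}$ and applies to any $\preceq$-pair in $K^{sat} \subseteq K_{\lambda^+}$), and basic stability for $\frak{s}^{sat}$ is part of the assumed Axioms \ref{4 axioms}. For $\dnf_{\lambda^+} \restriction K^{sat}$, closure under isomorphism, monotonicity, local character, and continuity all transfer by restriction from Fact \ref{theorem 2.6.8 of 875}. Continuity in particular transfers cleanly precisely because the AEC-in-$\lambda^+$ hypothesis guarantees that $\bigcup_{\alpha<\delta} M_\alpha \in K^{sat}$ whenever the $M_\alpha$ form a $\preceq$-increasing continuous chain in $K^{sat}$, so the colimit witness supplied by continuity in $\frak{s}_{\lambda^+}$ lies inside $K^{sat}$. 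The remaining axioms---uniqueness, symmetry, and extension---are three of the four hypotheses Axioms \ref{4 axioms}.

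The main subtlety is the extension axiom: when we assume "$\frak{s}^{sat}$ satisfies extension," the conclusion demands a non-forking extension whose witnesses live entirely in $K^{sat}$, not merely in $K_{\lambda^+}$. This is not automatic from extension in $\frak{s}_{\lambda^+}$, since the realized extension might a priori live in a non-saturated member of $K_{\lambda^+}$; however, Proposition \ref{extending to K sat} provides a way to push any such witness into $K^{sat}$, which is exactly what makes the hypothesis "$\frak{s}^{sat}$ satisfies extension" reasonable and usable here. Once this point is accepted (as it is by hypothesis), no further work is needed: the verification amounts to assembling the pieces above, parallel to the proof of Proposition \ref{the remain axioms}, with the sole additional bookkeeping being to remain inside $K^{sat}$ at each step.
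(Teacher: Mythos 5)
Your proposal is correct and follows essentially the same route as the paper: no maximal model via Fact \ref{fact no maximal in s lambda +} together with Proposition \ref{extending to K sat}, joint embedding from categoricity of $K^{sat}$, density/monotonicity/local character/continuity transferred from Fact \ref{theorem 2.6.8 of 875}, and the remaining four axioms taken as hypotheses. The only point the paper makes explicit that you handle only implicitly is the invocation of Remark \ref{remark meanings of types in section 2} (equality of types over saturated models means the same in $(K,\preceq)$ and $(K^{sat},\preceq)$), which is the formal justification behind the "remain inside $K^{sat}$" bookkeeping you describe for extension and continuity.
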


\begin{proof}
By Fact \ref{fact no maximal in s lambda +} and Proposition \ref{extending to K sat}, $K^{sat}$ has no maximal model. Since $K^{sat}$ satisfies categoricity, it satisfies the joint embedding property. By Fact \ref{theorem 2.6.8 of 875} (and Remark \ref{remark meanings of types in section 2}), $\frak{s}^{sat}$ satisfies the density, monotonicity, local character and continuity axioms.
\end{proof}

\begin{proposition}\label{if s lambda + is a good nf frame and K sat is AEC then s sat is a good nf frame}
Assume that $(K^{sat},\preceq \restriction K^{sat})$ is an AEC in $\lambda^+$. If $\frak{s}_{\lambda^+}$ is a good non-forking $\lambda^+$-frame then $\frak{s}^{sat}$ is a good non-forking $\lambda^+$-frame. 
\end{proposition}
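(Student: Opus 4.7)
The plan is to verify each axiom of a good non-forking $\lambda^+$-frame for $\frak{s}^{sat}$, leveraging that the same axioms already hold for $\frak{s}_{\lambda^+}$ and that $(K^{sat}, \preceq \restriction K^{sat})$ is an AEC in $\lambda^+$ by assumption. The basic types and non-forking relation of $\frak{s}^{sat}$ are defined as restrictions of those of $\frak{s}_{\lambda^+}$ to $K^{sat}$, and by Remark \ref{remark meanings of types in section 2} the notion of ``type'' is compatible across the two contexts, so we can freely transport statements back and forth.

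First, I would invoke Proposition \ref{the remain axioms for sat} to obtain, at no extra cost, joint embedding, the absence of a maximal model, and the density, monotonicity, local character, and continuity axioms. It then remains to verify amalgamation in $\lambda^+$ together with Axioms \ref{4 axioms}. For amalgamation, given $M_0 \preceq M_\ell$ ($\ell = 1, 2$) in $K^{sat}$, I would first amalgamate them inside $\frak{s}_{\lambda^+}$ to produce $(N, f_1, f_2)$ with $N \in K_{\lambda^+}$, and then apply Proposition \ref{extending to K sat} to extend $N$ to some $N' \in K^{sat}$; the pair $(f_1, f_2)$ together with $N'$ provides the desired amalgam within $K^{sat}$.

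For Axioms \ref{4 axioms}, the arguments are nearly mechanical because $K^{sat} \subseteq K_{\lambda^+}$: basic stability ($|S^{bs}(M)| \leq \lambda^{++}$) and uniqueness of non-forking extensions over saturated models are inherited outright from $\frak{s}_{\lambda^+}$. For the extension axiom, given $M \preceq N$ in $K^{sat}$ and $p \in S^{bs}(M)$, the extension axiom of $\frak{s}_{\lambda^+}$ produces a non-forking $q \in S^{bs}(N)$, which is automatically the desired extension inside $\frak{s}^{sat}$ since $N \in K^{sat}$.

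The only axiom with a subtle point is symmetry, since its statement asserts the existence of certain intermediate models that, a priori, live in $K_{\lambda^+}$ but not necessarily in $K^{sat}$. The plan is to apply symmetry in $\frak{s}_{\lambda^+}$ to obtain witnessing models $M_2, M_3^* \in K_{\lambda^+}$, then use Proposition \ref{extending to K sat} to extend $M_3^*$ to some $M_3^{**} \in K^{sat}$, and finally to upgrade the intermediate witness $M_2$ to a saturated model sitting inside $M_3^{**}$ (using saturation/model-homogeneity of $M_3^{**}$ to embed a saturated extension of $M_2$ over the parameters). Monotonicity of $\dnf$ then transports the non-forking statement to the saturated witnesses. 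This is the step I expect to require the most care, but it is purely a matter of routine model-theoretic bookkeeping given the tools already in Section 2.
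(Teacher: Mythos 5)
Your overall decomposition matches the paper's: both reduce everything except amalgamation and Axioms \ref{4 axioms} to Proposition \ref{the remain axioms for sat}, handle amalgamation by extending the amalgam via Proposition \ref{extending to K sat}, and inherit extension, uniqueness and basic stability from $\frak{s}_{\lambda^+}$ using the compatibility of types over saturated models. The problem is the symmetry step, which you correctly flag as the delicate one but then resolve with two moves that do not work. First, ``using saturation/model-homogeneity of $M_3^{**}$ to embed a saturated extension of $M_2$ over the parameters'' fails: a model saturated in $\lambda^+$ over $\lambda$ is only $\lambda^+$-model-homogeneous, i.e.\ it absorbs extensions of cardinality $\lambda$ (this is exactly how \cite[Lemma II.1.14]{shh} is used in Proposition \ref{extending to sat}); it gives you no way to embed a $\lambda^+$-sized saturated extension of the intermediate witness into $M_3^{**}$ over that witness. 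Second, and more seriously, ``monotonicity of $\dnf$ then transports the non-forking statement to the saturated witnesses'' uses monotonicity in the wrong direction. What you know is that $tp(a,M_2',M_3)$ does not fork over $M_0$; what you need is that $tp(a,M_2'',\cdot)$ does not fork over $M_0$ for some \emph{larger} saturated $M_2'' \supseteq M_2'$. Monotonicity only lets you shrink the middle model toward the base; enlarging it is a genuine extension step, and for an arbitrary saturated $M_2''$ sitting above $M_2'$ the type of $a$ over $M_2''$ may well fork over $M_0$ (nothing prevents $M_2''$ from containing elements entangled with $a$; you also have no guarantee that $a \notin M_2''$).

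The paper closes this gap by \emph{choosing} the enlargement compatibly with $a$: extend $M_2'$ to a saturated $M_4$ (Proposition \ref{extending to sat}), then use the \emph{extension axiom} of $\frak{s}_{\lambda^+}$ to amalgamate $M_3$ and $M_4$ over $M_2'$ via $(id_{M_3},f,M_5)$ so that $tp(a,f[M_4],M_5)$ does not fork over $M_2'$ by construction; transitivity then yields non-forking over $M_0$, and a final application of Proposition \ref{extending to sat} replaces $M_5$ by a saturated $M_6$. Note that the top model can be made saturated by a plain extension (types over a fixed base are preserved upward), but the intermediate model cannot: it must be produced together with a new amalgam chosen so that the non-forking of $a$'s type is the non-forking extension. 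If you replace your homogeneity-plus-monotonicity step with this extension-plus-transitivity argument, the rest of your proof goes through.
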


\begin{proof}
Since $\frak{s}_{\lambda^+}$ is a good non-forking $\lambda^+$-frame, the amalgamation in $\lambda^+$ property holds. In particular, every saturated model in $\lambda^+$ over $\lambda$ is an amalgamation base. By Proposition \ref{extending to K sat}, it is an amgalgamation base in the sense of $K^{sat}$.

Since $\frak{s}_{\lambda^+}$ is a good non-forking $\lambda^+$-frame, it satisfies Axioms \ref{4 axioms}. Hence,
by Remark \ref{remark meanings of types in section 2}, it is easy to prove that $\frak{s}^{sat}$ satisfies the extension, uniqueness and basic stability axioms.

$\frak{s}_{\lambda^+}$ satisfies the symmetry axiom and we have to prove that $\frak{s}^{sat}$ satisfies the symmetry axiom. Let $M_0,M_1,M_2$ be saturated models in $\lambda^+$ over $\lambda$, such that $M_0 \preceq M_1 \preceq M_2$. Let $a \in M_1-M_0$ such that $tp(a,M_0,M_1)$ is basic. Let $b \in M_2-M_1$ such that $tp(b,M_1,M_2)$ does not fork over $M_0$. Since $\frak{s}_{\lambda^+}$ satisfies the symmetry axiom,  we can find $M_2'$ and $M_3$ of cardinality $\lambda^+$ such that the following hold:
\begin{enumerate}
\item $M_0 \preceq M_2' \preceq M_3$, \item $M_2 \preceq M_3$, \item $b \in M_2'$ and \item $tp(a,M_2',M_3)$ does not fork over $M_0$.  
\end{enumerate}

\begin{displaymath}
\xymatrix{M_4 \ar[rr]^{f} && M_5 \ar[r]^{id} & M_6 \\
b \in M_2' \ar[rr]^{id} \ar[u]^{id} && M_3 \ar[u]^{id} \ni a \\
M_0 \ar[r]^{id} \ar[u]^{id} & a \in M_1 \ar[r]^{id} & M_2 \ar[u]^{id} \ni b
}
\end{displaymath}

It is not necessarily that the models $M_2'$ and $M_3$ are saturated in $\lambda^+$ over $\lambda$. But
by Proposition \ref{extending to sat}, there is a model $M_4 \in K^{sat}$ such that $M_2' \preceq M_4$. Since $\frak{s}_{\lambda^+}$ satisfies the extension axiom, we can find an amalgamation $(id_{M_3},f,M_5)$ of $M_3$ and $M_4$ over $M_2'$ (in particular, $f(b)=b$), such that $tp(a,f[M_4],M_5)$ does not fork over $M_2'$. Since $\frak{s}_{\lambda^+}$ satisfies \cite[Proposition 2.5.6 (the transitivity proposition)]{jrsh875}, $tp(a,f[M_4],M_5)$ does not fork over $M_0$. By Proposition \ref{extending to sat}, we can find a model $M_6 \in K^{sat}$ such that $M_5 \preceq M_6$.

We have the following:
\begin{enumerate} 
\item $f[M_4]$ and $M_6$ are saturated in $\lambda^+$ over $\lambda$,
\item $M_0 \preceq f[M_4] \preceq M_6$, \item $M_2 \preceq M_6$, \item $b \in f[M_4]$ and \item $tp(a,f[M_4],M_6)$ does not fork over $M_0$.  
\end{enumerate} 
The proof of the symmetry axiom is completed. Now by Proposition \ref{the remain axioms for sat}, $\frak{s}^{sat}$ is a good non-forking $\lambda^+$-frame.
\end{proof}

If the amalgamation property in $\lambda^+$ does not hold then $\frak{s}_{\lambda^+}$ is not a good non-forking $\lambda^+$-frame. In \cite{jrsh875}, we replace the relation $\preceq \restriction K_{\lambda^+}$ by $\preceq^{NF}_{\lambda^+}$, so the amalgamation property in $\lambda^+$ holds (this is Shelah's approach). The good non-forking $\lambda^+$-frame that is constructed in \cite{jrsh875} is called $\frak{s}^+$. The theorems in \cite{shh}.III relate to $\frak{s}^+$ and most of them are unknown for $\frak{s}_{\lambda^+}$. For example, the existence of primeness triples is proved in $\frak{s}^+$ (see \cite{jrprime}), but unknown for $\frak{s}_{\lambda^+}$, even if it is a good non-forking $\lambda^+$-frame.  

But in our main results, we present cases, where the relations $\preceq \restriction K_{\lambda^+}$ and $\preceq^{NF}_{\lambda^+}$ coincide. So we have the advantages of $\preceq$ and of $\preceq^{NF}_{\lambda^+}$. In particular, our results enable to apply the results and methods of \cite{shh}.III.

\section{Variants of Tameness}

Given that it is not clear how to define tameness in three aspects of our context, we could offer
$2^3=8$ variants of tameness. Actually, we study five variants. In order to clarify the first aspect, we recall known facts about the connection between the amalgamation property and the definition of galois-types (due to Shelah). Note that since all the types in this paper are galois-types, we call them types, omitting `galois'. The following definition of types sums up \cite[Definitions 1.0.22 and 1.0.24]{jrsh875}:
\begin{definition}\label{definition of E^*}
Let $M_0,M_1,M_2$ be models in $K$ and let $a_1 \in M_1-M_0$ and $a_2 \in M_2-M_0$. We say that $(M_0,M_1,a_1)E^*(M_0,M_2,a_2)$ when there is an amalgamation $(f_1,f_2,M_3)$ of $M_1$ and $M_2$ over $M_0$ such that $f_1(a_1)=f_2(a_2)$. We define $E$ as the transitive closure of $E^*$. $tp(a_1,M_0,M_1)=tp(a_2,M_0,M_2)$ if and only if $(M_0,M_1,a_1)E(M_0,M_2,a_2)$. 
\end{definition}
$(M_0,M_1,a_1)E^*(M_0,M_2,a_2)$ says that `$a_1$ and $a_2$ strongly realize the same type over $M_0$ (in the definitions of \cite{babook}). When the amalgamation property in $\lambda$ holds, the restriction of $E^*$ to models of cardinality $\lambda$ is a transitive partial order, so if $||M_0||=\lambda$ then $tp(a_1,M_0,M_1)=tp(a_2,M_0,M_2)$ if and only if the triples $(M_0,M_1,a_1)$ and $(M_0,M_2,a_2)$ are $E^*$-equivalent. 

\begin{definition}\label{definition of strong tameness}
$(K,\preceq)$ is said to satisfy \emph{the strong $(\lambda,\lambda^+)$-tameness property} when for every $M_0,M_1,M_2 \in K_{\lambda^+}$ with $M_0 \preceq M_1$ and $M_0 \preceq M_2$, the following condition holds: For every $a_1 \in M_1-M_0$ and $a_2 \in M_2-M_0$, if for every $M_0^- \in K_\lambda$ with $M_0^- \preceq M_0$, we have $tp(a_1,M_0^-,M_1)=tp(a_2,M_0^-,M_2)$ then $(M_0,M_1,a_1)E^*(M_0,M_2,a_2)$.
\end{definition} 

\begin{remark}\label{tameness + amalgamation implies strong tameness}
Obviously, if the $(\lambda,\lambda^+)$-tameness property and the amalgamation property in $\lambda^+$ hold then the strong $(\lambda,\lambda^+)$-tameness property holds. 
\end{remark}

In the following definition, the quantifier of $M_0^-$ is `for some' in place of `for every'. But see Proposition \ref{strong tameness implies for non-forking}.  
\begin{definition}\label{definition of strong tameness for non-forking types}
$(K,\preceq)$ is said to satisfy \emph{the strong $(\lambda,\lambda^+)$-tameness for non-forking types property} when for every $M_0,M_1,M_2 \in K_{\lambda^+}$ with $M_0 \preceq M_1$ and $M_0 \preceq M_2$, the following condition holds: For every $a_1 \in M_1-M_0$ and $a_2 \in M_2-M_0$, if for some $M_0^- \in K_\lambda$ with $M_0^- \preceq M_0$, we have $tp(a_1,M_0^-,M_1)=tp(a_2,M_0^-,M_2)$ and the types $tp(a_1,M_0,M_1)$ and $tp(a_2,M_0,M_2)$ do not fork over $M_0^-$ then $(M_0,M_1,a_1)E^*(M_0,M_2,a_2)$.
\end{definition} 

[Definition \ref{definition of strong tameness for non-forking types} is similar to the uniqueness property in a semi-good non-forking frame. But the models are not of the same cardinality: while $||M_0^-||=\lambda$, $||M_0||=\lambda^+$].

\begin{proposition}\label{strong tameness implies for non-forking}\label{strong tameness implies strong tameness (for non-forking types)}
If $(K,\preceq)$ satisfies the strong $(\lambda,\lambda^+)$-tameness property, then it satisfies the strong $(\lambda,\lambda^+)$-tameness for non-forking types property.
\end{proposition}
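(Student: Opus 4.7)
The plan is to verify the stronger hypothesis of Definition \ref{definition of strong tameness}. Start from the assumptions of Definition \ref{definition of strong tameness for non-forking types}: models $M_0,M_1,M_2 \in K_{\lambda^+}$ with $M_0 \preceq M_1$ and $M_0 \preceq M_2$, elements $a_1 \in M_1 - M_0$ and $a_2 \in M_2 - M_0$, and a witness $M_0^- \in K_\lambda$ with $M_0^- \preceq M_0$, such that $tp(a_1,M_0^-,M_1) = tp(a_2,M_0^-,M_2)$ and neither $tp(a_1,M_0,M_1)$ nor $tp(a_2,M_0,M_2)$ forks over $M_0^-$. The goal is to conclude $(M_0,M_1,a_1) E^* (M_0,M_2,a_2)$.

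Fix an arbitrary $N^- \in K_\lambda$ with $N^- \preceq M_0$. I will show $tp(a_1,N^-,M_1) = tp(a_2,N^-,M_2)$; once this is done for every such $N^-$, strong $(\lambda,\lambda^+)$-tameness applied to $(M_0,M_1,a_1)$ and $(M_0,M_2,a_2)$ delivers the desired $E^*$-equivalence. Using that $(K,\preceq)$ has Löwenheim--Skolem number at most $\lambda$ and that $\|M_0\| = \lambda^+$, choose $N^{**} \in K_\lambda$ with $M_0^- \preceq N^{**}$, $N^- \preceq N^{**}$, and $N^{**} \preceq M_0$. By the monotonicity axiom of $\dnf$ in the extended sense (Definition \ref{forking for big models}), the restricted types $tp(a_1,N^{**},M_1)$ and $tp(a_2,N^{**},M_2)$ are basic and do not fork over $M_0^-$. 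Applying Löwenheim--Skolem once more, pick $M_i^{**} \in K_\lambda$ with $N^{**} \cup \{a_i\} \subseteq M_i^{**} \preceq M_i$ for $i=1,2$, which transfers the whole configuration into the $\lambda$-frame $\frak{s}$.

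Now $tp(a_1,N^{**},M_1^{**})$ and $tp(a_2,N^{**},M_2^{**})$ are two basic types of $\frak{s}$ that both do not fork over $M_0^-$ and that share the common restriction $tp(a_1,M_0^-,M_1) = tp(a_2,M_0^-,M_2)$ to $M_0^-$. The uniqueness axiom of $\frak{s}$ therefore forces $tp(a_1,N^{**},M_1^{**}) = tp(a_2,N^{**},M_2^{**})$; any amalgamation witnessing this equality also witnesses $tp(a_1,N^-,M_1) = tp(a_2,N^-,M_2)$, since $N^- \preceq N^{**}$. I expect no real obstacle, but the points that need to be handled with care are (i) justifying that the monotonicity of the extended $\dnf$ lets us shrink the base model from $M_0$ to $N^{**}$ while retaining the $\lambda$-sized witness $M_0^-$, and (ii) translating the resulting non-forking statements from the extended frame back into $\frak{s}$ via Definitions \ref{preparation for forking for big models}--\ref{basic for big models} so that the uniqueness axiom of $\frak{s}$ applies cleanly.
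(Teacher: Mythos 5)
Your proposal is correct and follows essentially the same route as the paper's proof: reduce to showing equality of the restrictions to every $\lambda$-sized $N^- \preceq M_0$, enlarge $N^-$ and $M_0^-$ to a common $N^{**} \in K_\lambda$ via the L\"owenheim--Skolem property, use monotonicity to see both restricted types do not fork over $M_0^-$, and invoke uniqueness of non-forking before restricting back down. The only difference is that you explicitly descend to $\lambda$-sized models $M_i^{**}$ before applying the uniqueness axiom of $\frak{s}$, a step the paper leaves implicit.
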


The proof of Proposition \ref{strong tameness implies for non-forking} is rather easy, but for completeness we give it.
\begin{proof}
Let $M_0,M_1,M_2 \in K_{\lambda^+}$ such that $M_0 \preceq M_1$ and $M_0 \preceq M_2$. Let $a_1 \in M_1-M_0$ and $a_2 \in M_2-M_0$ be given. Assume that for some $M_0^- \in K_\lambda$ with $M_0^- \preceq M_0$, we have $tp(a_1,M_0^-,M_1)=tp(a_2,M_0^-,M_2)$ and the types $tp(a_1,M_0,M_1)$ and $tp(a_2,M_0,M_2)$ do not fork over $M_0^-$. We should prove that $(M_0,M_1,a_1)E^*(M_0,M_2,a_2)$.

Since $(K,\preceq)$ satisfies the strong $(\lambda,\lambda^+)$-tameness property, it is sufficient to prove that for every $M^* \in K_\lambda$ with $M^* \preceq M_0$, we have $tp(a_1,M^*,M_1)=tp(a_2,M^*,M_2)$ (by Definition \ref{definition of strong tameness}, where $M^*$ stands for the $M_0^-$). 

Since $LST(K,\preceq) \leq \lambda$, we can find $N \in K_\lambda$ satisfying $M_0^- \cup M^* \subseteq N \preceq M_0$. Clearly, $M_0^- \preceq N$ and $M^* \preceq N$. Since the types $tp(a_1,M_0,M_1)$ and $tp(a_2,M_0,M_2)$ do not fork over $M_0^-$, the types $tp(a_1,N,M_1)$ and $tp(a_2,N,M_2)$ do not fork over $M_0^-$ . So by uniqueness of non-forking, 
$tp(a_1,N,M_1)=tp(a_2,N,M_2)$. So $tp(a_1,M^*,M_1)=tp(a_2,M^*,M_2)$.
\end{proof}

\begin{definition}\label{definition of tameness for non-forking types}
\emph{The $(\lambda,\lambda^+)$-tameness for non-forking types property} is similar to the strong $(\lambda,\lambda^+)$-tameness for non-forking types property, but we conclude only $tp(a_1,M_0,M_1)=tp(a_2,M_0,M_2)$.
\end{definition}

\begin{remark}\label{tameness for non-forking types + amalgamation implies strong tameness for non-forking types}
Obviously, if the $(\lambda,\lambda^+)$-tameness for non-forking types property and the amalgamation property in $\lambda^+$ hold then the strong $(\lambda,\lambda^+)$-tameness for non-forking types property holds. 
\end{remark}

Proposition \ref{strong tameness implies amalgamation (for non-forking types)} is the converse of Remark \ref{tameness for non-forking types + amalgamation implies strong tameness for non-forking types}. Claim \ref{strong tameness implies E^*} is a preparation for Proposition \ref{strong tameness implies amalgamation (for non-forking types)}.

\begin{claim}\label{strong tameness implies E^*}
Suppose that the strong $(\lambda,\lambda^+)$-tameness for non forking types property holds. Then for every two models $N_0,N_1 \in K_{\lambda^+}$ with $N_0 \preceq N_1$ and every two elements $a,b \in N_1-N_0$, if $tp(a,N_0,N_1)=tp(b,N_0,N_1)$ and it is basic then $(N_0,N_1,a)E^*(N_0,N_1,b)$. 
\end{claim}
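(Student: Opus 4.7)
The plan is to reduce this directly to the hypothesis by choosing the correct small witness model. Since $\tp(a,N_0,N_1)=\tp(b,N_0,N_1)$ is basic and $N_0 \in K_{\lambda^+}$, Definition \ref{definition of basic types over models of greater cardinality} supplies a model $N_0^- \in K_\lambda$ with $N_0^- \preceq N_0$ such that this common type does not fork over $N_0^-$. This single $N_0^-$ will play the role of $M_0^-$ in Definition \ref{definition of strong tameness for non-forking types}.

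Next I would verify that $\tp(a,N_0^-,N_1)=\tp(b,N_0^-,N_1)$. This is immediate from the equality $\tp(a,N_0,N_1)=\tp(b,N_0,N_1)$: any amalgamation witnessing $E$-equivalence over $N_0$ also witnesses it over the smaller $N_0^-$, so the restriction of a type to a $\preceq$-smaller base is well defined and preserved under equality (this is the usual monotonicity of types under base reduction, which does not require amalgamation at $\lambda^+$).

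Now I would apply the strong $(\lambda,\lambda^+)$-tameness for non-forking types with $M_0 := N_0$, $M_1 := M_2 := N_1$, $a_1 := a$, $a_2 := b$, and $M_0^- := N_0^-$. The hypotheses of Definition \ref{definition of strong tameness for non-forking types} are met: $N_0^- \in K_\lambda$ with $N_0^- \preceq N_0$, the types $\tp(a,N_0^-,N_1)$ and $\tp(b,N_0^-,N_1)$ coincide by the previous step, and both $\tp(a,N_0,N_1)$ and $\tp(b,N_0,N_1)$ do not fork over $N_0^-$ since they are the same basic type and $N_0^-$ was chosen precisely as the witness for non-forking. The conclusion gives $(N_0,N_1,a)\,E^*\,(N_0,N_1,b)$, which is what is asked.

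There is no real obstacle here: the statement is essentially a specialization of the strong tameness for non-forking types axiom to the case where $M_1 = M_2$ and the small base model is chosen to be the non-forking witness for a basic type. The only thing to double-check is that the ambient cardinality assumptions in Definition \ref{definition of strong tameness for non-forking types} (namely $M_0,M_1,M_2 \in K_{\lambda^+}$) match our setting, and they do since $N_0,N_1 \in K_{\lambda^+}$ by hypothesis.
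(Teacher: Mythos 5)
Your proof is correct and follows essentially the same route as the paper's: choose a model in $K_\lambda$ over which the common basic type does not fork, observe that the restrictions of the two types to that small model agree, and apply the strong $(\lambda,\lambda^+)$-tameness for non-forking types property with $M_1=M_2=N_1$. The paper's proof is just a terser version of the same argument.
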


\begin{proof}
Take a model $M$ of cardinality $\lambda$ such that the type $tp(a,N_0,N_1)$ does not fork over $M$. Clearly, $tp(a,M,N_1)=tp(b,M,N_1)$. So by strong $(\lambda,\lambda^+)$-tameness for non-forking types, $(N_0,N_1,a)E^*(N_0,N_1,b)$.
\end{proof}

The following proposition is due to Boney.\footnote{in a private communication, during the ICM (International Congress of Mathematicians) 2014 satellite meeting on Classification theory and its applications on Seoul, Korea}

\begin{proposition}\label{strong tameness implies amalgamation (for non-forking types)}
Assume that the amalgamation property in $\lambda$ holds. Then the strong $(\lambda,\lambda^+)$-tameness for non-forking types property yields the amalgamation property in $\lambda^+$.
\end{proposition}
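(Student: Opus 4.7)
The plan is to amalgamate $N_1$ and $N_2$ over $N_0$ by realizing a single basic type from $N_1$ inside a suitable extension of $N_2$ and then applying strong tameness for non-forking types to identify the two realizations. Assume $N_0 \neq N_1$ (otherwise amalgamation is trivial). By density of basic types in the derived structure on $K_{\lambda^+}$ (Fact \ref{theorem 2.6.8 of 875}), I would pick $a \in N_1 \setminus N_0$ with $tp(a, N_0, N_1) \in S^{bs}_{\lambda^+}(N_0)$; by the definition of basic types over models of greater cardinality (Definition \ref{basic for big models}) and local character, I would fix $M_0 \preceq N_0$ in $K_\lambda$ over which this type does not fork, and set $q := tp(a, M_0, N_1) \in S^{bs}(M_0)$.

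The heart of the argument is to construct a $\preceq$-extension $N_2' \in K_{\lambda^+}$ of $N_2$ together with an element $b \in N_2' \setminus N_0$ such that $tp(b, M_0, N_2') = q$ and $tp(b, N_0, N_2')$ does not fork over $M_0$ in the sense of Definition \ref{forking for big models}. I would resolve $N_2 = \bigcup_{\alpha < \lambda^+} M'_\alpha$ as an increasing continuous chain in $K_\lambda$ with $M_0 \preceq M'_0$, and then build by induction an increasing continuous chain $\langle N'_\alpha : \alpha < \lambda^+ \rangle$ in $K_\lambda$ together with $\preceq$-embeddings $g_\alpha: M'_\alpha \to N'_\alpha$ extending $\id_{M_0}$, and a single fixed element $b \in N'_0$, such that $tp(b, g_\alpha[M'_\alpha], N'_\alpha)$ is the unique non-forking extension of $q$ to $g_\alpha[M'_\alpha]$. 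At successor stages this combines the extension and uniqueness axioms of $\frak{s}$ with amalgamation in $\lambda$: amalgamate $N'_\alpha$ with $M'_{\alpha+1}$ over $g_\alpha[M'_\alpha]$, pushing the non-forking extension of $tp(b, g_\alpha[M'_\alpha], N'_\alpha)$ onto $g_{\alpha+1}[M'_{\alpha+1}]$; at limits take unions. Absorbing $\bigcup_\alpha g_\alpha: N_2 \to \bigcup_\alpha N'_\alpha$ as an identification gives $N_2 \preceq N_2' := \bigcup_\alpha N'_\alpha$ with the desired $b \in N_2'$.

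Finally, applying strong $(\lambda,\lambda^+)$-tameness for non-forking types to $N_0, N_1, N_2', a, b$, and $M_0^- := M_0$---the hypotheses being $tp(a, M_0, N_1) = q = tp(b, M_0, N_2')$ together with the non-forking of both $tp(a, N_0, N_1)$ and $tp(b, N_0, N_2')$ over $M_0$---yields $(N_0, N_1, a) E^* (N_0, N_2', b)$. Unfolding the definition of $E^*$, this produces an amalgamation $(h_1, h_2, N_3)$ of $N_1$ and $N_2'$ over $N_0$ with $h_1(a) = h_2(b)$; then $(h_1, h_2 \restriction N_2, N_3)$ is the desired amalgamation of $N_1, N_2$ over $N_0$. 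The main obstacle is the middle step---propagating the non-forking extension of $q$ along a $\lambda^+$-cofinal filtration of $N_2$ while maintaining a single realizing element $b$---where uniqueness of non-forking in $\frak{s}$ is used crucially at each successor stage to ensure that the newly realized type coincides with the pre-chosen non-forking extension.
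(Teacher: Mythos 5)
Your proposal is correct and follows essentially the same route as the paper: realize the basic type of some $a \in N_1 - N_0$ (via its non-forking restriction to a small $M_0 \preceq N_0$) in a suitable extension of $N_2$, invoke strong $(\lambda,\lambda^+)$-tameness for non-forking types to get $E^*$-equivalence of the two triples, and read off the amalgamation from the witnessing diagram. The only differences are cosmetic: the paper packages the realization step as "take a non-forking extension $q \in S^{bs}(N_2)$ of $p$ and realize it" (resting on Claims \ref{extension from lambda to lambda^+} and \ref{strong tameness implies E^*}), whereas you unwind that chain construction explicitly and apply the tameness hypothesis directly.
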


\begin{proof}
Let $N_0,N_1,N_2$ be models in $K$ of cardinality $\lambda^+$ such that $N_0 \prec N_1$ and $N_0 \prec N_1$. We should find an amalgamation of $N_1$ and $N_2$ over $N_0$. Take an element $a \in N_1-N_0$ such that $p:=tp(a,N_0,N_1)$ is basic. Take a non-forking extension $q \in S^{bs}(N_2)$ of $p$. Take a model $N_3$ and an element $b \in N_3$ such that $N_2 \preceq N_3$ and $tp(b,N_2,N_3)=q$. But $q \restriction N_0=p$. So by Claim \ref{strong tameness implies E^*}, $(N_0,N_3,b)E^*(N_0,N_1,a)$. Hence, there is an amalgamation $(f,g,N_4)$ of $N_3$ and $N_1$ over $N_0$. So $(f \restriction N_2,g,N_4)$ is an amalgamation of $N_2$ and $N_1$ over $N_0$. 
\end{proof}

\begin{proposition}\label{strong iff amalgamation (for non-forking types)}
Assume that the amalgamation property in $\lambda$ holds.
Strong $(\lambda,\lambda^+)$-tameness for non-forking types is equivalent to the conjunction of amalgamation in $\lambda^+$ and $(\lambda,\lambda^+)$-tameness for non-forking types.
\end{proposition}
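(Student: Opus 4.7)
The plan is to prove the equivalence by reducing each direction to results already established in the preceding discussion, so no new technical machinery is needed.

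For the forward direction, assume strong $(\lambda,\lambda^+)$-tameness for non-forking types. I would first invoke Proposition \ref{strong tameness implies amalgamation (for non-forking types)} verbatim (which uses amalgamation in $\lambda$) to obtain amalgamation in $\lambda^+$. Then for $(\lambda,\lambda^+)$-tameness for non-forking types, I would note that the hypotheses of Definitions \ref{definition of strong tameness for non-forking types} and \ref{definition of tameness for non-forking types} are identical; strong tameness yields $(M_0,M_1,a_1)E^*(M_0,M_2,a_2)$, and by Definition \ref{definition of E^*} an $E^*$-amalgamation is a witness that $E$ holds, hence $tp(a_1,M_0,M_1) = tp(a_2,M_0,M_2)$.

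For the backward direction, this is essentially the content of Remark \ref{tameness for non-forking types + amalgamation implies strong tameness for non-forking types}, but I would write it out explicitly. Assume amalgamation in $\lambda^+$ and $(\lambda,\lambda^+)$-tameness for non-forking types, and suppose the hypothesis of strong tameness for non-forking types holds for some $M_0^-$. Tameness yields $tp(a_1,M_0,M_1) = tp(a_2,M_0,M_2)$, i.e., $(M_0,M_1,a_1)E(M_0,M_2,a_2)$. Under amalgamation in $\lambda^+$, the observation quoted from the paragraph just before Definition \ref{definition of strong tameness} shows that $E^*$ restricted to triples with base of cardinality $\lambda^+$ is transitive, so $E = E^*$ there; consequently $(M_0,M_1,a_1)E^*(M_0,M_2,a_2)$, which is the conclusion of strong tameness.

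There is no real obstacle: both directions are essentially bookkeeping on top of Proposition \ref{strong tameness implies amalgamation (for non-forking types)}, Remark \ref{tameness for non-forking types + amalgamation implies strong tameness for non-forking types}, and the standard fact that amalgamation collapses $E$ to $E^*$. The only point deserving a line of care is making sure that, in the forward direction, the same $M_0^-$ appears in both the hypothesis of strong tameness and the conclusion we extract via tameness, which is automatic since the hypotheses coincide.
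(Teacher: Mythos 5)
Your proposal is correct and follows essentially the same route as the paper, whose proof is simply the citation of Remark \ref{tameness for non-forking types + amalgamation implies strong tameness for non-forking types} for one direction and Proposition \ref{strong tameness implies amalgamation (for non-forking types)} for the other; you merely spell out the two "obvious" ingredients (that $E^*$ implies $E$, and that amalgamation in $\lambda^+$ collapses $E$ to $E^*$ over bases of cardinality $\lambda^+$). No gaps.
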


\begin{proof}
By Remark \ref{tameness for non-forking types + amalgamation implies strong tameness for non-forking types} and Proposition \ref{strong tameness implies amalgamation (for non-forking types)}.
\end{proof}

In all the variants of tameness appearing below, adding the word `strong' is equivalent to assuming that every `relevant' model in $\lambda^+$ is an amalgamation base.

The following proposition is an analog of Proposition \ref{strong tameness implies amalgamation (for non-forking types)}.
\begin{proposition}\label{strong tameness implies amalgamation}
Assume that the amalgamation property in $\lambda$ holds. Then the strong $(\lambda,\lambda^+)$-tameness property yields the amalgamation property in $\lambda^+$.
\end{proposition}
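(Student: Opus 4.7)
The plan is to observe that the statement is essentially an immediate corollary of the material already developed, and then to record the direct parallel argument to the proof of Proposition \ref{strong tameness implies amalgamation (for non-forking types)}, which explains why ``strong'' was the right hypothesis to impose.

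The shortest route: Proposition \ref{strong tameness implies for non-forking} shows that strong $(\lambda,\lambda^+)$-tameness implies strong $(\lambda,\lambda^+)$-tameness for non-forking types, and Proposition \ref{strong tameness implies amalgamation (for non-forking types)} then yields amalgamation in $\lambda^+$ from the latter, given the hypothesis of amalgamation in $\lambda$. So the proof reduces to a two-step citation, and I would open the argument this way.

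For a parallel direct argument, I would take $N_0,N_1,N_2 \in K_{\lambda^+}$ with $N_0 \preceq N_1$ and $N_0 \preceq N_2$, pick $a \in N_1 - N_0$ realising a basic type $p := tp(a,N_0,N_1)$ by density, fix a base $M_0^- \in K_\lambda$ with $M_0^- \preceq N_0$ for the non-forking of $p$, extend $p$ non-forking to some $q \in S^{bs}(N_2)$, and realise $q$ as $tp(b,N_2,N_3)$ for some $N_3 \succeq N_2$. Then $b \in N_3 - N_0$ because $q$ is basic, hence non-algebraic.

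The step I expect to require the most care --- and the one that distinguishes this argument from the proof of Proposition \ref{strong tameness implies amalgamation (for non-forking types)} --- is verifying the full universal quantifier in Definition \ref{definition of strong tameness}: for \emph{every} $M^- \in K_\lambda$ with $M^- \preceq N_0$ I must establish $tp(a,M^-,N_1) = tp(b,M^-,N_3)$. My plan is to absorb $M^-$ and $M_0^-$ into a common $M^{--} \in K_\lambda$ with $M^{--} \preceq N_0$ via the $LST$-number hypothesis, apply uniqueness of non-forking over $M^{--}$ (since both types are non-forking extensions of $p \restriction M_0^-$ by monotonicity of $\dnf$), and then restrict down to $M^-$. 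With this in hand, strong $(\lambda,\lambda^+)$-tameness supplies $(N_0,N_1,a) E^* (N_0,N_3,b)$, which unpacks to an amalgamation $(f_1,f_2,N_4)$ of $N_1$ and $N_3$ over $N_0$ with $f_1(a)=f_2(b)$; restricting $f_2$ to $N_2$ then yields an amalgamation of $N_1$ and $N_2$ over $N_0$, completing the proof.
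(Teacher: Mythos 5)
Your proposal is correct and its primary route is exactly the paper's proof: the paper proves this proposition precisely by citing Proposition \ref{strong tameness implies strong tameness (for non-forking types)} followed by Proposition \ref{strong tameness implies amalgamation (for non-forking types)}. The supplementary direct argument you sketch is a sound unwinding of those two citations, but it adds nothing beyond the two-step reduction the paper already uses.
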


\begin{proof}
By Propositions \ref{strong tameness implies strong tameness (for non-forking types)} and \ref{strong tameness implies amalgamation (for non-forking types)}.
\end{proof}

The following proposition is an analog of Proposition \ref{strong iff amalgamation (for non-forking types)}.
\begin{proposition}\label{strong iff amalgamation}
Assume that the amalgamation property in $\lambda$ holds.
Strong $(\lambda,\lambda^+)$-tameness is equivalent to the conjunction of amalgamation in $\lambda^+$ and $(\lambda,\lambda^+)$-tameness.
\end{proposition}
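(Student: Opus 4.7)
The plan is to prove the two implications separately, each time leveraging results already established in the paper.

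For the forward direction, I assume strong $(\lambda,\lambda^+)$-tameness and derive both conjuncts on the right-hand side. First, amalgamation in $\lambda^+$ follows immediately from Proposition \ref{strong tameness implies amalgamation} (which in turn was reduced to Proposition \ref{strong tameness implies amalgamation (for non-forking types)} via Proposition \ref{strong tameness implies strong tameness (for non-forking types)}). Second, $(\lambda,\lambda^+)$-tameness follows trivially by weakening the conclusion of Definition \ref{definition of strong tameness}: if under the hypothesis ``$tp(a_1,M_0^-,M_1) = tp(a_2,M_0^-,M_2)$ for every $M_0^- \in K_\lambda$ with $M_0^- \preceq M_0$'' we obtain $(M_0,M_1,a_1)E^*(M_0,M_2,a_2)$, then in particular $(M_0,M_1,a_1)E(M_0,M_2,a_2)$ (since $E$ is the transitive closure of $E^*$), and hence by Definition \ref{definition of E^*} the types $tp(a_1,M_0,M_1)$ and $tp(a_2,M_0,M_2)$ coincide.

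For the backward direction, I simply invoke Remark \ref{tameness + amalgamation implies strong tameness}, which already asserts that the conjunction of $(\lambda,\lambda^+)$-tameness with amalgamation in $\lambda^+$ yields strong $(\lambda,\lambda^+)$-tameness. The underlying reason is that once amalgamation in $\lambda^+$ holds, the relation $E^*$ restricted to triples over models of cardinality $\lambda^+$ is already transitive and coincides with $E$, so ``$E$-equivalence'' and ``$E^*$-equivalence'' are the same, turning the conclusion of $(\lambda,\lambda^+)$-tameness into the conclusion of strong $(\lambda,\lambda^+)$-tameness.

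There is no real obstacle here; the proposition is a bookkeeping consequence of Proposition \ref{strong tameness implies amalgamation} together with Remark \ref{tameness + amalgamation implies strong tameness}. The only point worth being explicit about is why strong tameness implies tameness, which, as explained above, is because $E^*$-equivalence always implies $E$-equivalence, regardless of whether amalgamation in $\lambda^+$ holds. Thus the proof reduces to a two-line citation:

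\begin{proof}[Proof sketch]
The forward direction combines Proposition \ref{strong tameness implies amalgamation} (giving amalgamation in $\lambda^+$) with the observation that $E^*$-equivalence always implies equality of types, so strong $(\lambda,\lambda^+)$-tameness immediately implies $(\lambda,\lambda^+)$-tameness. The backward direction is Remark \ref{tameness + amalgamation implies strong tameness}.
\end{proof}
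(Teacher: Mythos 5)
Your proposal is correct and follows essentially the same route as the paper, whose entire proof reads ``Mainly, by Proposition \ref{strong tameness implies amalgamation}''; you have simply made explicit the two routine remaining pieces (that strong tameness implies tameness because $E^*$-equivalence implies $E$-equivalence, and that the converse direction is Remark \ref{tameness + amalgamation implies strong tameness}). No issues.
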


\begin{proof}
Mainly, by Proposition \ref{strong tameness implies amalgamation}.
\end{proof}

The saturated models in $\lambda^+$ over $\lambda$ are natural candidates for base amalgamations: If we restrict ourselves to the saturated models in $\lambda^+$ over $\lambda$ then we have categoricity in $\lambda^+$ \cite[Theorem 1.0.32]{jrsh875}. By a well-known theorem of Shelah, if an AEC is categorical in $\lambda^+$ but does not satisfy the amalgamation property in $\lambda^+$ then under plausible set theoretic assumptions, we can prove the existence of $2^{\lambda^{++}}$ models of cardinality $\lambda^{++}$. So it is natural to assume that every saturated model in $\lambda^+$ over $\lambda$ is an amalgamation base. This leads to the next definition.

\begin{definition}\label{definition of tameness over saturated models}
$(K,\preceq)$ is said to satisfy \emph{the $(\lambda,\lambda^+)$-tameness for non-forking types over saturated models property} when for every $M_0,M_1,M_2$ of cardinality $\lambda^+$ such that $M_0$ is saturated in $\lambda^+$ over $\lambda$, $M_0 \preceq M_1$ and $M_0 \preceq M_2$ the condition in Definition \ref{definition of tameness for non-forking types} holds. 
\end{definition}

\begin{definition}
The strong $(\lambda,\lambda^+)$-tameness for non-forking types over saturated models is defined as expected.
\end{definition} 

The proof of Proposition \ref{... implies strong for saturated} is easy. We apply Proposition \ref{... implies strong for saturated} in the proof of Theorem \ref{the main theorem of the paper}. 
\begin{proposition}\label{... implies strong for saturated}
 Suppose:
\begin{enumerate}
\item the amalgamation property in $\lambda$ holds,
\item Every saturated model in $\lambda^+$ over $\lambda$ is an amalgamation base and \item the $(\lambda,\lambda^+)$-tameness for non-forking types over saturated models property.
\end{enumerate}
Then the strong $(\lambda,\lambda^+)$-tameness for non-forking types over saturated models property holds.
\end{proposition}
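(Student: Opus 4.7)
The plan is to reduce to equality of types over $M_0$ via the tameness hypothesis, and then upgrade from $E$-equivalence to $E^*$-equivalence using the amalgamation base property of $M_0$. Let $M_0, M_1, M_2, a_1, a_2, M_0^-$ be as in the hypothesis of the strong $(\lambda,\lambda^+)$-tameness for non-forking types over saturated models property; the aim is to exhibit a single amalgamation $(h_1, h_2, N)$ of $M_1$ and $M_2$ over $M_0$ with $h_1(a_1) = h_2(a_2)$.

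First I would apply hypothesis (3) directly to the triples $(M_0, M_1, a_1)$ and $(M_0, M_2, a_2)$: because $M_0$ is saturated in $\lambda^+$ over $\lambda$, the $(\lambda,\lambda^+)$-tameness for non-forking types over saturated models property is applicable, and it yields $tp(a_1, M_0, M_1) = tp(a_2, M_0, M_2)$, equivalently $(M_0, M_1, a_1) E (M_0, M_2, a_2)$.

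Next I would invoke hypothesis (2), that $M_0$ (being saturated) is an amalgamation base in $K_{\lambda^+}$, to upgrade $E$ to $E^*$. This step parallels Remark \ref{tameness for non-forking types + amalgamation implies strong tameness for non-forking types}, where full amalgamation in $\lambda^+$ gives $E = E^*$ everywhere; here amalgamation is only guaranteed over $M_0$, but this is enough to collapse $E$-chains with first coordinate $M_0$. The cleanest implementation is a continuous chain construction: filtrate $M_0 = \bigcup_{\alpha < \lambda^+} M_0^\alpha$ and $M_i = \bigcup_{\alpha < \lambda^+} M_i^\alpha$ with $M_0^\alpha \preceq M_i^\alpha$ in $K_\lambda$, $M_0^- \preceq M_0^0$, and $a_i \in M_i^0$, and inductively build amalgamations $(h_1^\alpha, h_2^\alpha, N^\alpha)$ of $M_1^\alpha, M_2^\alpha$ over $M_0^\alpha$ with $h_1^\alpha(a_1) = h_2^\alpha(a_2)$, continuous in $\alpha$. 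At every stage, frame uniqueness inside $K_\lambda$---applied to the two basic types which both restrict to the common $p \in S^{bs}(M_0^-)$ and do not fork over $M_0^-$---gives $tp(a_1, M_0^\alpha, M_1^\alpha) = tp(a_2, M_0^\alpha, M_2^\alpha)$, and AP in $\lambda$ (hypothesis (1)) together with $E = E^*$ in $K_\lambda$ delivers the required amalgamation at stage $0$. Successor stages extend the previous amalgamation by amalgamating $N^\alpha$ first with $M_0^{\alpha+1}$ over $M_0^\alpha$ and then with $M_i^{\alpha+1}$ over the appropriate $\lambda$-sized common submodel, all within $K_\lambda$; limit stages are handled by taking unions. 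The direct limit $(h_1, h_2, N) := \bigcup_{\alpha < \lambda^+} (h_1^\alpha, h_2^\alpha, N^\alpha)$ is then an amalgamation of $M_1, M_2$ over $M_0 = \bigcup_\alpha M_0^\alpha$ with $h_1(a_1) = h_2(a_2)$, establishing the required $E^*$-equivalence.

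The main obstacle I anticipate is the bookkeeping at the successor steps: one must extend the amalgamation so that it continues to fix the enlarged base $M_0^{\alpha+1}$ while still identifying $a_1$ with $a_2$ and restricting correctly to the previous stage. With a coherently chosen filtration this reduces to two successive applications of AP in $\lambda$, which is the reason the excerpt describes the proof as easy.
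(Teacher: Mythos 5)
Your first step is fine and is surely the intended one: hypothesis (3) applies because $M_0$ is saturated, and it gives $tp(a_1,M_0,M_1)=tp(a_2,M_0,M_2)$. (For the record, the paper offers no proof of this proposition at all; it is merely asserted to be easy.) The problem is your second step. The continuous chain construction breaks at the successor stage, and not for bookkeeping reasons. After you amalgamate $N^\alpha$ with $M_1^{\alpha+1}$ over $M_1^\alpha$ and then with $M_2^{\alpha+1}$ over $M_2^\alpha$, you obtain embeddings $k_1:M_1^{\alpha+1}\to Q$ and $k_2:M_2^{\alpha+1}\to Q$ extending $h_1^\alpha$ and $h_2^\alpha$; these agree on $M_0^\alpha$, but nothing forces them to agree on $M_0^{\alpha+1}$, so $(k_1,k_2,Q)$ is not an amalgamation over $M_0^{\alpha+1}$ and the union at the end is not an amalgamation over $M_0$. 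This coherence failure is precisely the obstruction to lifting amalgamation from $\lambda$ to $\lambda^+$, and overcoming it is what the uniqueness-triple/$NF$ machinery of Sections 6--8 exists for. A telling symptom: your construction uses neither the saturation of $M_0$ nor hypothesis (2); if it worked, then together with Proposition \ref{strong tameness implies amalgamation (for non-forking types)} it would show that a semi-good $\lambda$-frame plus $(\lambda,\lambda^+)$-tameness for non-forking types already yields amalgamation in $\lambda^+$, which would make condition (b) of Corollary \ref{the relations are equivalent iff amalgamation holds} automatic and trivialize Question \ref{equivalent?}.

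The preliminary claim you lean on --- that amalgamation over $M_0$ alone suffices to collapse $E$-chains with first coordinate $M_0$ --- is also unjustified. To collapse $(M_0,N_1,b_1)\,E^*\,(M_0,N_2,b_2)\,E^*\,(M_0,N_3,b_3)$ one must amalgamate the two witnessing amalgams over $N_2$, so that the two images of $b_2$ stay identified; amalgamating them over $M_0$ loses that identification, and $N_2$ need not be an amalgamation base. What hypothesis (2) really buys is a single amalgamation $(id_{M_1},g,N)$ of $M_1$ and $M_2$ over $M_0$, which reduces the problem to identifying $a_1$ and $g(a_2)$ inside the one model $N$ over $M_0$; but passing from the resulting type equality (the relation $E$) to one identifying amalgamation (the relation $E^*$) is exactly the content of Claim \ref{strong tameness implies E^*}, whose proof assumes the strong tameness you are trying to establish. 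Any correct proof must supply this missing $E$-to-$E^*$ step by some other means; as written, yours does not.
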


Proposition \ref{strong iff amalgamation (for non-forking types over saturated models)} is an analog of Proposition \ref{strong iff amalgamation (for non-forking types)}.
\begin{proposition}\label{strong iff amalgamation (for non-forking types over saturated models)}
Assume that the amalgamation property in $\lambda$ holds.
Then the strong $(\lambda,\lambda^+)$-tameness for non-forking types over saturated models property is equivalent to the conjunction of the following two properties:
\begin{enumerate}
\item Every saturated model in $\lambda^+$ over $\lambda$ is an amalgamation base and \item the $(\lambda,\lambda^+)$-tameness for non-forking types over saturated models property.
\end{enumerate}
\end{proposition}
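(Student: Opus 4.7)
The plan is to split the equivalence into its two directions, imitating the structure of Proposition \ref{strong iff amalgamation (for non-forking types)}. The ``(1)+(2)$\Rightarrow$strong'' direction is already at hand: it is precisely the content of Proposition \ref{... implies strong for saturated}. So the work is all on the forward direction: assuming strong $(\lambda,\lambda^+)$-tameness for non-forking types over saturated models, deduce both that every saturated model in $\lambda^+$ over $\lambda$ is an amalgamation base, and that ordinary $(\lambda,\lambda^+)$-tameness for non-forking types over saturated models holds.

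The tameness clause (2) is immediate from the definitions: $E^*$-equivalence of triples implies $E$-equivalence, which by Definition \ref{definition of E^*} is the same as equality of types, so the conclusion of Definition \ref{definition of strong tameness for non-forking types} formally strengthens that of Definition \ref{definition of tameness for non-forking types}. The clause that saturated models in $\lambda^+$ over $\lambda$ are amalgamation bases is the substantive part. Here I would adapt the argument of Proposition \ref{strong tameness implies amalgamation (for non-forking types)} verbatim, only restricting the base to a saturated model: given $M_0\in K^{sat}$ and $N_1,N_2\in K_{\lambda^+}$ extending $M_0$, pick $a\in N_1-M_0$ with $p:=\operatorname{tp}(a,M_0,N_1)\in S^{bs}(N_0)$ (using density in $\lambda^+$ from Fact \ref{theorem 2.6.8 of 875}), extend $p$ to a non-forking $q\in S^{bs}(N_2)$ (using the extension axiom available in $\lambda^+$ in the sense of Definition \ref{forking for big models}), realize $q$ by some $b\in N_3\succeq N_2$, and then apply an analog of Claim \ref{strong tameness implies E^*} to produce $(M_0,N_3,b)\,E^*\,(M_0,N_1,a)$. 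The $E^*$-relation yields an amalgamation $(f,g,N_4)$ of $N_3$ and $N_1$ over $M_0$, and restricting $f$ to $N_2$ gives the desired amalgamation of $N_2$ and $N_1$ over $M_0$.

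The only point requiring care is that Claim \ref{strong tameness implies E^*} was stated under the unrestricted strong tameness hypothesis, so I need its ``over saturated models'' analog. This is automatic: since $M_0$ is saturated, choose $M\in K_\lambda$ with $M\preceq M_0$ over which $\operatorname{tp}(a,M_0,N_1)$ does not fork; then $\operatorname{tp}(a,M,N_1)=\operatorname{tp}(b,M,N_1)$ (both non-forking extensions of $q\restriction M_0=p$ restricted further to $M$, using uniqueness in the $\lambda$-frame), and the saturated base $M_0$ lets us invoke the strong hypothesis to conclude $(M_0,N_3,b)\,E^*\,(M_0,N_1,a)$. I do not anticipate any genuine obstacle: every ingredient (density, extension, and uniqueness in $\lambda^+$) is supplied by Hypothesis \ref{hypothesis 1} together with Fact \ref{theorem 2.6.8 of 875}, and the ``amalgamation in $\lambda$'' hypothesis of the proposition is what lets us speak of types in $\lambda$ in the first place.
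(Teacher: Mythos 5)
Your proposal is correct and follows essentially the same route as the paper, whose proof is simply a pointer to the proof of Proposition \ref{strong iff amalgamation (for non-forking types)}, i.e.\ the trivial implication from $E^*$ to $E$, the analog of Proposition \ref{strong tameness implies amalgamation (for non-forking types)} with a saturated base, and Proposition \ref{... implies strong for saturated} for the converse. The only blemish is notational: in your adaptation of Claim \ref{strong tameness implies E^*} the element $b$ lives in $N_3$, not $N_1$, so the displayed equality should read $\operatorname{tp}(a,M,N_1)=\operatorname{tp}(b,M,N_3)$ (obtained by taking $M$ to be the $M^-\in K_\lambda$ over which both types do not fork).
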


\begin{proof}
Similar to the proof of Proposition \ref{strong iff amalgamation (for non-forking types)}.
\end{proof}

\begin{remark}
In Proposition \ref{strong iff amalgamation (for non-forking types over saturated models)}, we may omit the words `non-forking' (appearing twice).
\end{remark}

\section{Deriving Non-Forking Frames Using Tameness}

\begin{theorem}\label{the main theorem}\label{symmetry implies good-frame}\label{symmetry implies good non-forking frame}
Suppose:
\begin{enumerate}
\item $\frak{s}$ is a semi-good non-forking $\lambda$-frame, \item the $(\lambda,\lambda^+)$-tameness for non-forking types holds and \item $\frak{s}_{\lambda^+}$ satisfies symmetry.
\end{enumerate} 
Then $\frak{s}_{\lambda^+}$ is a good non-forking $\lambda^+$-frame minus the joint embedding and amalgamation properties.
\end{theorem}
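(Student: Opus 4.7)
The plan is to apply Proposition~\ref{fact the remain axioms}, which reduces the theorem to verifying the four Axioms~\ref{4 axioms} for $\frak{s}_{\lambda^+}$: extension, uniqueness, basic stability, and symmetry. Symmetry is exactly hypothesis~(3), so the work consists in establishing the other three from hypothesis~(2) together with the semi-good $\lambda$-frame structure of $\frak{s}$.

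For uniqueness I would argue directly from the tameness hypothesis. Suppose $M_0 \preceq M$ in $K_{\lambda^+}$ and $p,q \in S^{bs}_{\lambda^+}(M)$ both do not fork over $M_0$ with $p \restriction M_0 = q \restriction M_0$. By Definition~\ref{preparation for forking for big models} together with $LST(K,\preceq) \leq \lambda$, pick a common $N_0 \in K_\lambda$ with $N_0 \preceq M_0$ over which both $p$ and $q$ do not fork; then also $p \restriction N_0 = q \restriction N_0$. Realize $p$ and $q$ as $tp(a_1,M,M_1)$ and $tp(a_2,M,M_2)$ respectively, with $M \preceq M_j$. The $(\lambda,\lambda^+)$-tameness for non-forking types property (Definition~\ref{definition of tameness for non-forking types}), applied with $M_0^- := N_0$, then yields $tp(a_1,M,M_1) = tp(a_2,M,M_2)$, i.e.\ $p = q$.

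Extension and basic stability in $\lambda^+$ I would obtain by invoking the corresponding transfer proofs of~\cite{jrsh875}, which the paper's introduction explicitly signals as applicable here once uniqueness is available. For extension, given $p \in S^{bs}_{\lambda^+}(M)$ not forking over $N_0 \in K_\lambda$ and $M \preceq N$ in $K_{\lambda^+}$, the argument is the standard tower: resolve $N$ as an increasing continuous chain $\langle N_i : i<\lambda^+\rangle$ of $K_\lambda$-submodels with $N_0 \preceq N_i$, and inductively pick a coherent chain of basic non-forking extensions $p_i \in S^{bs}(N_i)$ using extension in $\lambda$ at successors, continuity at limits, and the uniqueness in $\lambda$ to glue successive choices. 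Basic stability in $\lambda^+$ then follows because uniqueness forces each $p \in S^{bs}_{\lambda^+}(N)$ to be determined by a pair $(M_0,\, p \restriction M_0)$ with $M_0 \in K_\lambda$, $M_0 \preceq N$ and $p$ not forking over $M_0$, giving $|S^{bs}_{\lambda^+}(N)| \leq \lambda^{++} \cdot \lambda^+ = \lambda^{++}$.

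The genuinely hard axiom in any tameness-based $\lambda \to \lambda^+$ transfer is symmetry, since no direct tameness argument for it is known at this level of generality — that is precisely the obstacle which has been bypassed here by building symmetry into the hypotheses. The only essential new input beyond Proposition~\ref{fact the remain axioms} and the existing machinery of~\cite{jrsh875} is the single appeal to $(\lambda,\lambda^+)$-tameness in the uniqueness step; everything else is a routine transfer from the $\lambda$-case.
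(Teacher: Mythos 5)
Your overall strategy is the paper's: reduce via Proposition \ref{fact the remain axioms} to verifying Axioms \ref{4 axioms}, take symmetry from hypothesis (3), and derive uniqueness, extension and basic stability from $(\lambda,\lambda^+)$-tameness for non-forking types. Your uniqueness argument is essentially Claim \ref{uniqueness in s lambda +} (the paper likewise passes to a single common base in $K_\lambda$ using $LST(K,\preceq)\leq\lambda$, monotonicity and transitivity), and your extension sketch matches Claims \ref{extension from lambda to lambda^+} and \ref{extension in s lambda +}, provided the final step --- identifying the restriction to $M$ of the constructed non-forking extension with $p$ via tameness for non-forking types --- is made explicit rather than left inside ``once uniqueness is available.''

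There is, however, a genuine gap in your basic stability step. You bound $|S^{bs}_{\lambda^+}(N)|$ by $\lambda^{++}\cdot\lambda^+=\lambda^{++}$ by parametrizing a type through an arbitrary pair $(M_0,\,p\restriction M_0)$ with $M_0\in K_\lambda$, $M_0\preceq N$. First, the number of such submodels is $(\lambda^+)^\lambda$, which need not equal $\lambda^{++}$ without cardinal-arithmetic assumptions. More importantly, even $\lambda^{++}$ is one cardinal too large: the conclusion of the theorem is a \emph{good} (not semi-good) non-forking $\lambda^+$-frame, so basic stability requires $|S^{bs}_{\lambda^+}(N)|\leq\lambda^+$, and ``almost stability in $\lambda^+$'' does not suffice. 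The repair is exactly Claim \ref{stability in s lambda +}: fix one filtration $\langle M_\alpha:\alpha<\lambda^+\rangle$ of $N$; every basic $p$ over $N$ does not fork over some $K_\lambda$-submodel, which is contained in (hence, by coherence, a $\preceq$-submodel of) some $M_\alpha$, so by monotonicity $p$ does not fork over $M_{\alpha_p}$ for a least $\alpha_p<\lambda^+$. The map $p\mapsto(\alpha_p,\,p\restriction M_{\alpha_p})$ is injective by $(\lambda,\lambda^+)$-tameness for non-forking types, and its range has size at most $\lambda^+\cdot\lambda^+=\lambda^+$ by basic almost stability in $\frak{s}$.
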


\begin{proof}
By Proposition \ref{the remain axioms} and Claims \ref{uniqueness in s lambda +},  \ref{extension in s lambda +} and \ref{stability in s lambda +}. Note that these claims are not new: similar claims appear in \cite{jrsh875}. But for completeness, we give their proofs. 
\end{proof}

Theorem \ref{symmetry implies good-frame for sat} is the analog of Theorem \ref{symmetry implies good-frame} for $\frak{s}^{sat}$.
\begin{theorem}\label{symmetry implies good-frame for sat}
Suppose:
\begin{enumerate}
\item $\frak{s}$ is a semi-good non-forking $\lambda$-frame, \item $(K^{sat},\preceq \restriction K^{sat})$ is an AEC, \item the $(\lambda,\lambda^+)$-tameness for non-forking types over saturated models holds and \item $\frak{s}^{sat}$ satisfies symmetry.
\end{enumerate} 
Then $\frak{s}^{sat}$ is a good non-forking $\lambda^+$-frame minus amalgamation in $\lambda^+$.
\end{theorem}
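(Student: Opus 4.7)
The plan is to apply Proposition \ref{the remain axioms for sat}, which under assumption (2) reduces the problem to verifying that $\frak{s}^{sat}$ satisfies the four Axioms \ref{4 axioms}: Extension, Uniqueness, Basic stability, and Symmetry. Symmetry is hypothesis (4), so the work is to establish the other three axioms. The proofs will run parallel to Claims \ref{uniqueness in s lambda +}, \ref{extension in s lambda +} and \ref{stability in s lambda +} used to prove Theorem \ref{symmetry implies good non-forking frame}, but with $K_{\lambda^+}$ replaced by $K^{sat}$ and tameness for non-forking types replaced by its restriction to saturated bases.

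For uniqueness, I would let $M_0 \preceq M_1$ be saturated models in $\lambda^+$ over $\lambda$ and $p,q \in S^{bs}(M_1)$ two non-forking extensions of the same $p_0 \in S^{bs}(M_0)$. By the definition of $\dnf_{\lambda^+}$, there is a model $M_0^- \in K_\lambda$ with $M_0^- \preceq M_0$ over which both $p$ and $q$ do not fork and such that $p \restriction M_0^- = q \restriction M_0^- = p_0 \restriction M_0^-$, using uniqueness in $\frak{s}$ and the fact that $p_0$ also does not fork over $M_0^-$ (after possibly enlarging $M_0^-$ inside $M_0$ by a local character argument). Since $M_0$ is saturated, hypothesis (3) then gives $p = q$ as types in $(K,\preceq)$, and by Remark \ref{remark meanings of types in section 2} also in $(K^{sat},\preceq)$.

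For extension, given $M_0 \preceq M_1$ in $K^{sat}$ and $p_0 \in S^{bs}(M_0)$, I would build a $\preceq$-increasing continuous chain $\langle N_\alpha : \alpha < \lambda^+\rangle$ of models of cardinality $\lambda$ whose union is $M_1$, with $N_0 \preceq M_0$ chosen so that $p_0$ does not fork over $N_0$. At successor stages one extends the non-forking type along $N_\alpha$ using the extension axiom of $\frak{s}$, picks a realization in an auxiliary model, and amalgamates inside an extension of $M_1$; the union of the resulting realizations determines a type over $M_1$ that, by local character and the continuity axiom of $\frak{s}$, is a non-forking extension of $p_0$. By Proposition \ref{extending to sat}, one can enlarge the final ambient model to a member of $K^{sat}$, producing the required extension inside $\frak{s}^{sat}$.

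For basic stability, I would bound $|S^{bs}_{\lambda^+}(M)|$ for $M \in K^{sat}$. Every basic type over $M$ does not fork over some $N \in K_\lambda$ with $N \preceq M$; by hypothesis (3) combined with uniqueness in $\frak{s}$, such a type is determined by the pair $(N, p \restriction N)$. Counting such pairs gives at most $\lambda^+ \cdot \lambda^+ = \lambda^+$ types, yielding basic almost-stability in $\lambda^+$. Finally, one invokes Proposition \ref{the remain axioms for sat} to assemble the four axioms into the full conclusion. The main obstacle I anticipate is extension: gluing the chain of $\lambda$-size extensions into a single type on the saturated $M_1$ requires care to verify that the resulting type is well-defined and does not fork over the original base, and this is exactly where hypothesis (3) is needed to identify types whose $\lambda$-sized restrictions agree.
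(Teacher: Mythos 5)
Your proposal is correct and takes essentially the same route as the paper: reduce via Proposition \ref{the remain axioms for sat} to Axioms \ref{4 axioms}, take symmetry as hypothesis (4), and establish uniqueness, extension and basic stability by the saturated-model analogues of Claims \ref{uniqueness in s lambda +}, \ref{extension in s lambda +} and \ref{stability in s lambda +} (which is exactly what Claims \ref{uniqueness in s sat}, \ref{extension in s sat} and \ref{stability in s sat} do). One small point: in the stability count the base model $N$ must be taken from a fixed filtration $\langle M_\alpha:\alpha<\lambda^+\rangle$ of $M$ (as in Claim \ref{stability in s lambda +}), since letting $N$ range over all $\lambda$-sized $\preceq$-substructures of $M$ gives potentially more than $\lambda^+$ pairs $(N,p\restriction N)$.
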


\begin{proof}
By Proposition \ref{the remain axioms for sat} and Claims \ref{uniqueness in s sat},  \ref{extension in s sat} and \ref{stability in s sat}. 
\end{proof}

\begin{claim}\label{uniqueness in s lambda +}
Assume that the $(\lambda,\lambda^+)$-tameness for non-forking types holds.
Then $\frak{s}_{\lambda^+}$ satisfies uniqueness.
\end{claim}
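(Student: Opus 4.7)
The plan is to reduce uniqueness in $\frak{s}_{\lambda^+}$ to uniqueness in the underlying frame $\frak{s}$, and then bridge the cardinality gap using tameness. Suppose $M_0 \preceq M_1$ are both in $K_{\lambda^+}$ and $p,q \in S^{bs}_{\lambda^+}(M_1)$ both do not fork over $M_0$ with $p \restriction M_0 = q \restriction M_0$. I will produce a single $\lambda$-sized witness $N_0 \preceq M_0$ over which both $p$ and $q$ do not fork, observe that $p$ and $q$ agree on $N_0$, and then apply tameness to conclude $p=q$.

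First I would realize $p = \tp(a_1,M_1,M_1^*)$ and $q = \tp(a_2,M_1,M_2^*)$ for suitable extensions $M_1 \preceq M_1^*, M_2^*$. By Definitions \ref{preparation for forking for big models} and \ref{forking for big models}, the non-forking of $p$ (resp.\ $q$) over $M_0$ is witnessed by some $N_0^p \in K_\lambda$ (resp.\ $N_0^q \in K_\lambda$) with $N_0^p,N_0^q \preceq M_0$. Using that $LST(K,\preceq)\leq\lambda$, pick $N_0 \in K_\lambda$ with $N_0^p \cup N_0^q \subseteq N_0 \preceq M_0$. Monotonicity of $\dnf$ (applied inside $\frak{s}$) together with the definition of $\dnf^{\geq\lambda}$ shows that $N_0$ is also a witness for both non-forking statements: for every $N \in K_\lambda$ with $N_0 \preceq N \preceq M_1$, monotonicity upgrades $\dnf(N_0^p,a_1,N,M_1^*)$ to $\dnf(N_0,a_1,N,M_1^*)$, and symmetrically for $q$.

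Second, since $N_0 \preceq M_0$ and $p\restriction M_0 = q\restriction M_0$, restriction yields $p\restriction N_0 = q\restriction N_0$; unpacking, $\tp(a_1,N_0,M_1^*) = \tp(a_2,N_0,M_2^*)$. Now I am exactly in the situation of Definition \ref{definition of tameness for non-forking types}, with the $M_0^-$ of that definition played by $N_0$ and the $M_0$ of that definition played by our $M_1$: for the small submodel $N_0 \preceq M_1$ the types $\tp(a_1,N_0,M_1^*)$ and $\tp(a_2,N_0,M_2^*)$ coincide, and both $\tp(a_1,M_1,M_1^*)$ and $\tp(a_2,M_1,M_2^*)$ do not fork over $N_0$. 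The $(\lambda,\lambda^+)$-tameness for non-forking types hypothesis therefore gives $\tp(a_1,M_1,M_1^*) = \tp(a_2,M_1,M_2^*)$, i.e.\ $p=q$.

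No step is a genuine obstacle: the construction of the common witness $N_0$ is routine use of LST plus monotonicity inside $\frak{s}$, the agreement of restrictions is formal, and tameness for non-forking types is crafted precisely to allow the final pull-back from a $\lambda$-sized base to a $\lambda^+$-sized base. The role of the hypothesis is exactly to replace, in the $\lambda^+$-setting, the uniqueness axiom of $\frak{s}$ that is not a priori available at cardinality $\lambda^+$.
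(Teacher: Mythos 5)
Your proposal is correct and follows essentially the same route as the paper: both arguments use the LST property to merge the two $\lambda$-sized non-forking witnesses into a single model $N_0\preceq M_0$, check via monotonicity (the paper additionally routes through the restrictions to $M_0$ and invokes transitivity) that both types do not fork over $N_0$ and agree there, and then apply $(\lambda,\lambda^+)$-tameness for non-forking types with $N_0$ in the role of the small base. The only difference is the bookkeeping of how the common witness is certified, which is immaterial.
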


\begin{proof}
Let $N_0,N_1$ be two models in $K$ of cardinality $\lambda^+$ with $N_0 \preceq N_1$. Let $p,q$ be two types over $N_1$ which do not fork over $N_0$, such that $p \restriction N_0=q \restriction N_0$. We should prove that $p=q$. Let $M_p,M_q$ be models in $K$ of cardinality $\lambda$ such that $M_p \preceq N_0$, $M_q \preceq N_0$ and the types $p \restriction N_0,q \restriction N_0$ do not fork over $M_p,M_q$ respectively. Since $LST(K,\preceq) \leq \lambda$, we can find a model $M$ in $K$ of cardinality $\lambda$ such that $M_p \cup M_q \subseteq M \preceq N_0$. So $M_p \preceq N$ and $M_q \preceq N$. Therefore by monotonicity, the types $p \restriction N_0$ and $q \restriction N_0$ do not fork over $M$. Hence, by transitivity, the types $p$ and $q$ do not fork over $M$. Since $p \restriction M=q \restriction M$, by $(\lambda,\lambda^+)$-tameness for non-forking types, we have $p=q$.
\end{proof}

\begin{claim}\label{uniqueness in s sat}
Assume that the $(\lambda,\lambda^+)$-tameness for non-forking types over saturated models holds.
Then $\frak{s}^{sat}$ satisfies uniqueness.
\end{claim}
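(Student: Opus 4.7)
The plan is to mirror the proof of Claim \ref{uniqueness in s lambda +} essentially verbatim, with the only change being that now the base model $N_0$ is guaranteed to lie in $K^{sat}$, which is exactly what is needed to invoke the weaker tameness hypothesis (tameness for non-forking types \emph{over saturated models}) that is available here.

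In detail: I would start with models $N_0, N_1 \in K^{sat}$ with $N_0 \preceq N_1$ and two types $p,q \in S^{bs}_{\lambda^+}(N_1)$ that do not fork over $N_0$ with $p \restriction N_0 = q \restriction N_0$. First I would invoke Remark \ref{remark meanings of types in section 2} to identify $p$ and $q$ with their counterparts in the context of $(K,\preceq)$, so that the ambient notion of type causes no trouble. Next, by Definition \ref{basic for big models}, I would pick models $M_p, M_q \in K_\lambda$ with $M_p, M_q \preceq N_0$ such that $p \restriction N_0$ does not fork over $M_p$ and $q \restriction N_0$ does not fork over $M_q$. Using $\operatorname{LS}(K,\preceq) \le \lambda$, I would find $M \in K_\lambda$ with $M_p \cup M_q \subseteq M \preceq N_0$. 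By monotonicity, $p \restriction N_0$ and $q \restriction N_0$ do not fork over $M$, and then by transitivity of non-forking (over the tower $M \preceq N_0 \preceq N_1$), both $p$ and $q$ do not fork over $M$. In particular $p \restriction M = (p \restriction N_0) \restriction M = (q \restriction N_0) \restriction M = q \restriction M$.

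Now the key step: since $N_0$ is saturated in $\lambda^+$ over $\lambda$, the hypothesis of $(\lambda,\lambda^+)$-tameness for non-forking types over saturated models applies with base $N_0$ and witness $M$, and concludes $p = q$, as required.

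I do not expect any real obstacle: once one observes that $N_0 \in K^{sat}$ makes the over-saturated-models tameness hypothesis directly applicable, the argument is just the proof of Claim \ref{uniqueness in s lambda +} restricted to $K^{sat}$. The only conceptual point worth flagging (and it is handled by Remark \ref{remark meanings of types in section 2}) is that equality of types should not change meaning when we pass from the AEC $(K,\preceq)$ to the sub-AEC $(K^{sat},\preceq \restriction K^{sat})$; without this one could worry that the hypothesis $p \restriction N_0 = q \restriction N_0$ (interpreted in $K^{sat}$) is weaker than the analogous equality in $K$, but Remark \ref{remark meanings of types in section 2} rules this out.
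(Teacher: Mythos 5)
Your proposal is correct and is exactly what the paper intends: its proof of this claim is literally ``By the proof of Claim \ref{uniqueness in s lambda +},'' and your writeup spells out the same argument, correctly noting that the saturation of the base model $N_0$ is what licenses the weaker tameness hypothesis, with Remark \ref{remark meanings of types in section 2} handling the identification of types across the two contexts.
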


\begin{proof}
By the proof of Claim \ref{uniqueness in s lambda +}.
\end{proof}

The following claim is a preparation for extension. Note that it holds even without any remnant of tameness.
\begin{claim}\label{extension from lambda to lambda^+}
If $M$ is a model of cardinality $\lambda$, $N$ is a model of cardinality $\lambda^+$, $M \preceq N$ and $p \in S^{bs}(M)$ then there is a non-forking extension of $p$ to a type over $N$.
\end{claim}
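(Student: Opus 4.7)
The plan is to filtrate $N$ as an increasing continuous $\preceq$-chain of models of cardinality $\lambda$ starting from $M$, and in parallel build an auxiliary chain together with a single witness element $a$ whose type over each stage is a non-forking extension of $p$. Taking unions then produces a model containing $N$ in which the type of $a$ over $N$ is the desired non-forking extension of $p$.

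Concretely, first write $N=\bigcup_{\alpha<\lambda^+}N_\alpha$ as an increasing continuous sequence with $N_0=M$ and $N_\alpha\in K_\lambda$ for every $\alpha<\lambda^+$ (this is possible since $(K,\preceq)$ is an AEC with $LST$-number at most $\lambda$). Next, build by induction on $\alpha<\lambda^+$ an increasing continuous sequence $\langle N_\alpha^*:\alpha<\lambda^+\rangle$ in $K_\lambda$ together with an element $a\in N_0^*$ such that, for every $\alpha<\lambda^+$, $N_\alpha\preceq N_\alpha^*$ and $tp(a,N_\alpha,N_\alpha^*)$ is a non-forking extension of $p$. At $\alpha=0$, take $N_0^*$ to be any extension of $M$ in $K_\lambda$ realizing $p$, and let $a$ be such a realization (this exists because $p\in S^{bs}(M)$ is by definition realized in some extension, and amalgamation in $\lambda$ keeps us in $K_\lambda$). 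At a successor $\alpha+1$, apply the extension axiom of $\frak{s}$ to amalgamate $N_\alpha^*$ with $N_{\alpha+1}$ over $N_\alpha$, obtaining $N_{\alpha+1}^*\in K_\lambda$ with $N_\alpha^*\preceq N_{\alpha+1}^*$, $N_{\alpha+1}\preceq N_{\alpha+1}^*$ and $tp(a,N_{\alpha+1},N_{\alpha+1}^*)$ not forking over $N_\alpha$; then transitivity (\cite[Proposition 2.5.6]{jrsh875}) gives non-forking over $M$. At a limit $\delta<\lambda^+$, set $N_\delta^*=\bigcup_{\alpha<\delta}N_\alpha^*$; this lies in $K_\lambda$ since $|\delta|\leq\lambda$, and by the continuity axiom of $\frak{s}$, $tp(a,N_\delta,N_\delta^*)$ does not fork over $M$.

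Finally, let $N^*=\bigcup_{\alpha<\lambda^+}N_\alpha^*$. Then $N\preceq N^*$ and $a\in N^*$; I claim $q:=tp(a,N,N^*)$ is the required non-forking extension of $p$. Clearly $q\restriction M=p$. To check that $q$ does not fork over $M$ in the sense of Definitions \ref{preparation for forking for big models} and \ref{forking for big models}, take $M\in K_\lambda$ as the witness: given any $N'\in K_\lambda$ with $M\preceq N'\preceq N$, the set $N'$ has cardinality $\lambda$, so $N'\subseteq N_\alpha$ for some $\alpha<\lambda^+$; by coherence of the AEC ($N'\preceq N$, $N_\alpha\preceq N$, $N'\subseteq N_\alpha$), we obtain $N'\preceq N_\alpha$. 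Monotonicity applied to $\dnf(M,a,N_\alpha,N_\alpha^*)$ then yields $\dnf(M,a,N',N^*)$, as required.

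The only genuinely delicate point is the limit stage: passing from ``$tp(a,N_\alpha,N_\alpha^*)$ does not fork over $M$ for all $\alpha<\delta$'' to ``$tp(a,N_\delta,N_\delta^*)$ does not fork over $M$''. This is exactly the content of the continuity axiom of $\frak{s}$, which is available by Hypothesis \ref{hypothesis 1}; no appeal to tameness is needed anywhere, which is consistent with the remark preceding the claim.
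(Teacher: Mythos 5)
Your overall strategy is the same as the paper's: filtrate $N$ starting from $M$, realize $p$ by an element $a$ in a parallel chain of models of cardinality $\lambda$, push the type up using the extension axiom and transitivity at successors and the continuity axiom at limits, and verify non-forking over $M$ at the end via monotonicity and coherence. The limit step and the final verification against Definitions \ref{preparation for forking for big models} and \ref{forking for big models} are fine and match the paper.

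The gap is at the successor step. You require $N_{\alpha+1}^*\in K_\lambda$ with \emph{both} $N_\alpha^*\preceq N_{\alpha+1}^*$ and $N_{\alpha+1}\preceq N_{\alpha+1}^*$ as literal inclusions: you need the first so that $\langle N_\alpha^*:\alpha<\lambda^+\rangle$ is a genuine increasing continuous chain containing $a$, and the second so that $N=\bigcup_{\alpha}N_\alpha\preceq\bigcup_{\alpha}N_\alpha^*$ at the end. But amalgamation in $\lambda$ only lets you rename the amalgam so that \emph{one} of the two embeddings becomes the identity; to make both of them inclusions simultaneously you would need the amalgam to be disjoint over $N_\alpha$, i.e.\ $f[N_\alpha^*]\cap g[N_{\alpha+1}]=f[N_\alpha]$, and disjoint amalgamation is not among the axioms of a (semi-)good non-forking $\lambda$-frame and does not follow from them. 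If every amalgam of $N_\alpha^*$ and $N_{\alpha+1}$ over $N_\alpha$ identifies some element of $N_\alpha^*-N_\alpha$ with some element of $N_{\alpha+1}-N_\alpha$, your requirement is unsatisfiable. This is exactly the point the paper's proof is engineered around: it keeps the $a$-side chain $\langle N_\alpha\rangle$ increasing by actual inclusion and instead carries \emph{embeddings} $f_\alpha\colon M_\alpha\to N_\alpha$ of the filtration of $N$, proves that $tp(a,f_\alpha[M_\alpha],N_\alpha)$ does not fork over $M$, and only at the very end transfers the conclusion from $f[N]$ back to $N$ using the closure of $\dnf$ under isomorphisms (available because $f$ fixes $M$ pointwise). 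Your argument becomes correct once you make the same adjustment; as written, the successor step does not go through.
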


\begin{proof}
Take a filtration $\langle M_\alpha:\alpha<\lambda^+ \rangle$ of $N$ with $M_0=M$. Let $N_0$ be a model of cardinality $\lambda$ such that $M_0 \preceq N_0$ and for some $a \in N_0-M_0$ $tp(a,M_0,N_0)=p$. We choose by induction on $\alpha<\lambda^+$ a model $N_\alpha$ and an embedding $f_\alpha:M_\alpha \to N_\alpha$ such that: 
\begin{enumerate} 
\item $f_0$ is the identity from $M$ to $N_0$,
\item if $\alpha=\beta+1$ then $f_\beta \subseteq f_\alpha$ and $tp(a,f_\alpha[M_\alpha],N_\alpha)$ does not fork over $f_\beta[M_\beta]$ (it is possible by the extension property in $\frak{s}$) and \item if $\alpha$ is a limit ordinal then $N_\alpha=\bigcup_{\beta<\alpha}N_\alpha$ and $f_\alpha=\bigcup_{\beta<\alpha}f_\beta$.  
\end{enumerate}
We can prove by induction on $\alpha<\lambda^+$, that $tp(a,f_\alpha[M_\alpha],N_\alpha)$ does not fork over $M$: Assume that $tp(a,f_\beta[M_\beta],N_\beta)$ does not fork over $M$ for every $\beta<\alpha$. We have to prove that it holds for $\alpha$. If $\alpha=0$ then it holds by definition. If $\alpha=\beta+1$ for some $\beta$ then by Clause (2), $tp(a,f_\alpha[M_\alpha],N_\alpha)$ does not fork over $f_\beta[M_\beta]$. So by transitivity (in $\frak{s}$), $tp(a,f_\alpha[M_\alpha],N_\alpha)$ does not fork over $M$. If $\alpha$ is limit then it holds by continuity (in $\frak{s}$).  

Define $N_{\lambda^+}=:\bigcup_{\alpha<\lambda^+}N_\alpha$ and $f=:\bigcup_{\alpha<\lambda^+}f_\alpha$. 
Since $\dnf$ is closed under isomorphisms, it is sufficient to prove that $tp(a,f[N],N_{\lambda^+})$ does not fork over $M$. Let $M'$ be a model of cardinality $\lambda$ with $M \preceq M' \preceq f[N]$. For some $\alpha<\lambda^+$, we have $M' \subseteq f[M_\alpha]$. But $tp(a,f[M_\alpha],f[N])$ does not fork over $M$. So by monotonicity, $tp(a,M',f[N])$ does not fork over $M$.
\end{proof}

\begin{claim}\label{extension in s lambda +}
Assume that the $(\lambda,\lambda^+)$-tameness for non-forking types holds.
Then $\frak{s}_{\lambda^+}$ satisfies extension.
\end{claim}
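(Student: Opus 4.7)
The plan is to reduce to Claim \ref{extension from lambda to lambda^+}, which already provides a non-forking extension when the base model sits in $K_\lambda$, and then to invoke $(\lambda,\lambda^+)$-tameness for non-forking types to check that this extension agrees with the given type on $N_0$.

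Concretely, let $N_0, N_1 \in K_{\lambda^+}$ with $N_0 \preceq N_1$ and let $p \in S^{bs}_{\lambda^+}(N_0)$. Since $p$ is basic, pick $M \in K_\lambda$ with $M \preceq N_0$ such that $p$ does not fork over $M$; then $p \restriction M \in S^{bs}(M)$. Applying Claim \ref{extension from lambda to lambda^+} to $p \restriction M$ and the inclusion $M \preceq N_1$ yields a type $q \in S(N_1)$ which extends $p \restriction M$ and does not fork over $M$. By Definition \ref{basic for big models}, $q$ is basic, and since monotonicity of the extended non-forking relation is immediate from its definition (and is included in Fact \ref{theorem 2.6.8 of 875}), $q$ does not fork over $N_0$ either.

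The main obstacle, and the only step that genuinely uses tameness, is to check that $q \restriction N_0 = p$. Both are types over $N_0 \in K_{\lambda^+}$, both do not fork over $M$, and they agree on $M$ by construction. Hence the $(\lambda,\lambda^+)$-tameness for non-forking types property, with $M$ playing the role of $M_0^-$ in Definition \ref{definition of tameness for non-forking types}, directly yields $q \restriction N_0 = p$. Representing $p$ and $q \restriction N_0$ by realizations in appropriate ambient models over $N_0$ to match the formal statement is routine. The overall structure is parallel to Claim \ref{uniqueness in s lambda +}: once the small-model case is handled by the work of \cite{jrsh875}, tameness supplies the final step of lifting equality from $M$ to $N_0$.
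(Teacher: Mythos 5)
Your proposal is correct and follows essentially the same route as the paper's own proof: extend $p \restriction M$ to $N_1$ via Claim \ref{extension from lambda to lambda^+}, use monotonicity to see the extension does not fork over $N_0$, and then apply $(\lambda,\lambda^+)$-tameness for non-forking types to the pair $p$, $q \restriction N_0$ (both non-forking over $M$ and agreeing there) to conclude $q \restriction N_0 = p$. No gaps.
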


\begin{proof}
Let $M,N$ be models in $K$ of cardinality $\lambda^+$ with $M \prec N$. Let $p$ be a basic type over $M$. We should find a type over $N$, extending $p$, that does not fork over $M$. Since $p$ is basic, there is a model $M_0$ of cardinality $\lambda$ such that $M_0 \preceq M$ and $p$ does not fork over $M_0$. By Claim \ref{extension from lambda to lambda^+}, there is a basic type $q$ over $N$, extending $p \restriction M_0$ such that $q$ does not fork over $M_0$. By monotonicity, $q$ does not fork over $M$ and $q \restriction M$ does not fork over $M_0$. So by tameness for non-forking types, $q \restriction M=p$. Hence, $q$ is a non-forking extension of $p$ to $N$.
\end{proof}

\begin{claim}\label{extension in s sat}
Assume that the $(\lambda,\lambda^+)$-tameness for non-forking types over saturated models holds.
Then $\frak{s}^{sat}$ satisfies extension.
\end{claim}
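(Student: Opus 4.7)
The plan is to follow exactly the strategy of Claim \ref{extension in s lambda +}, with the single observation that here the base model $M$ lies in $K^{sat}$, which is precisely what is needed to invoke the saturated-models version of tameness. I would like to stress that Claim \ref{extension from lambda to lambda^+} was stated in full generality (it does not mention saturation), so it is available here as well.

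\begin{proof}
Let $M, N \in K^{sat}$ with $M \prec N$, and let $p \in S^{bs}(M)$. Since $p$ is basic, pick a model $M_0 \in K_\lambda$ with $M_0 \preceq M$ such that $p$ does not fork over $M_0$. By Claim \ref{extension from lambda to lambda^+}, there is a basic type $q \in S^{bs}(N)$ extending $p \restriction M_0$ such that $q$ does not fork over $M_0$. By monotonicity of $\dnf$, $q \restriction M$ does not fork over $M_0$, and by construction $p$ does not fork over $M_0$. Moreover, the restrictions satisfy $(q \restriction M) \restriction M_0 = q \restriction M_0 = p \restriction M_0$. Since $M \in K^{sat}$, we may apply the $(\lambda,\lambda^+)$-tameness for non-forking types over saturated models property to the triples witnessing $p$ and $q \restriction M$ to conclude $q \restriction M = p$. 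Hence $q$ is a non-forking extension of $p$ to $N$, as required.
\end{proof}

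The only conceptual step beyond the proof of Claim \ref{extension in s lambda +} is the verification that the tameness hypothesis applies — that is, that $M$ is saturated in $\lambda^+$ over $\lambda$ — which is immediate from the assumption that we are working in $\frak{s}^{sat}$. I do not anticipate any genuine obstacle; the argument is essentially a verbatim transfer of the earlier proof.
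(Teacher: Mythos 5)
Your proof is correct and is essentially the paper's own argument: the paper proves this claim simply by the words ``By the proof of Claim \ref{extension in s lambda +}'', and your write-up is exactly that transfer, with the (correct) observation that $M \in K^{sat}$ is what licenses the saturated-models version of tameness.
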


\begin{proof}
By the proof of Claim \ref{extension in s lambda +}.
\end{proof}

\begin{claim}\label{stability in s lambda +}
Assume that the $(\lambda,\lambda^+)$-tameness for non-forking types holds.
Then $\frak{s}_{\lambda^+}$ satisfies basic stability.
\end{claim}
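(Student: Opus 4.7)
The plan is to count basic types over an arbitrary $M \in K_{\lambda^+}$ by showing that each is determined (up to the ambient tameness hypothesis) by its restriction to a model of cardinality $\lambda$ in a fixed filtration. Fix $M \in K_{\lambda^+}$ and choose a filtration $\langle M_\alpha : \alpha < \lambda^+ \rangle$ of $M$ by models in $K_\lambda$, increasing continuous. For every $p \in S^{bs}(M)$, the definition of basic (Definition \ref{definition of basic types over models of greater cardinality}) supplies some $N \in K_\lambda$ with $N \preceq M$ over which $p$ does not fork. By regularity of $\lambda^+$ there is $\alpha(p) < \lambda^+$ with $|N| \subseteq |M_{\alpha(p)}|$; since $N \preceq M$ and $M_{\alpha(p)} \preceq M$, the coherence axiom of AEC yields $N \preceq M_{\alpha(p)}$. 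Monotonicity then gives that $p$ does not fork over $M_{\alpha(p)}$.

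Next I would define the assignment
\[
\Phi : S^{bs}(M) \longrightarrow \bigsqcup_{\alpha < \lambda^+} S^{bs}(M_\alpha), \qquad \Phi(p) = \bigl(p \restriction M_{\alpha(p)},\,\alpha(p)\bigr).
\]
The key step is injectivity. Suppose $\Phi(p) = \Phi(q)$; then $\alpha(p)=\alpha(q)=:\alpha$ and $p \restriction M_\alpha = q \restriction M_\alpha$, while both $p$ and $q$ are basic types over $M$ that do not fork over $M_\alpha$. The $(\lambda,\lambda^+)$-tameness for non-forking types property is exactly what is needed to conclude $p = q$ (this is the same uniqueness-style argument used in Claim \ref{uniqueness in s lambda +}, and indeed one could invoke that claim directly after noting that $p, q$ agree on $M_\alpha \preceq M$ and both fail to fork over it).

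Finally, I would bound the cardinality:
\[
|S^{bs}(M)| \;\leq\; \sum_{\alpha < \lambda^+} |S^{bs}(M_\alpha)| \;\leq\; \lambda^+ \cdot \lambda^+ \;=\; \lambda^+,
\]
where each inner bound $|S^{bs}(M_\alpha)| \leq \lambda^+$ is the almost basic stability clause built into the definition of a semi-good non-forking $\lambda$-frame $\frak{s}$.

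No step is truly hard here; the only point requiring a moment of care is producing $N \preceq M_{\alpha(p)}$ rather than merely the set-theoretic inclusion $|N| \subseteq |M_{\alpha(p)}|$, which is settled by the coherence axiom. The conceptual load is entirely carried by $(\lambda,\lambda^+)$-tameness for non-forking types, which upgrades the $\lambda$-level stability bound to the $\lambda^+$-level bound by making the restriction map to $K_\lambda$ injective on $S^{bs}(M)$.
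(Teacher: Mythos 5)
Your proof is correct and follows essentially the same route as the paper's: fix a filtration, send each basic type $p$ to the pair $(\alpha_p,\,p\restriction M_{\alpha_p})$ where $p$ does not fork over $M_{\alpha_p}$, use $(\lambda,\lambda^+)$-tameness for non-forking types to get injectivity, and bound the range by $\lambda^+\cdot\lambda^+$ via basic almost stability in $\frak{s}$. Your extra care in producing $N\preceq M_{\alpha(p)}$ via coherence is a detail the paper leaves implicit, but the argument is the same.
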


There is no significant difference between the following proof and the proof of \cite[Proposition 10.1.10]{jrsh875}. 
\begin{proof}
By basic stability in $\frak{s}$ and $(\lambda,\lambda^+)$-tameness for non-forking types. We elaborate: Let $N$ be a model in $K$ of cardinality $\lambda^+$. We should prove that the number of basic types over $N$ is $\lambda^+$ at most. Let $\langle M_\alpha:\alpha<\lambda^+ \rangle$ be a filtration of $N$. For every basic type $p$ over $N$ we define $\alpha_p$ as the minimal ordinal $\alpha<\lambda^+$ such that $p$ does not fork over $M_\alpha$. We define $q_p:=p \restriction M_{\alpha_p}$. By basic almost stability (in $\frak{s}$), $||S^{bs}(M_\alpha)|| \leq \lambda^+$. So $|\{(\alpha_p,q_p):p$ is a basic type over $N\}| \leq \lambda^+ \times \lambda^+=\lambda^+$. So it is sufficient to prove that the function $p \to (\alpha_p,q_p)$ is an injection. Let $p_1,p_2$ be two basic types over $N$ such that $\alpha_{p_1}=\alpha_{p_2}$ and $q_{p_1}=q_{p_2}$. Denote $\alpha:=\alpha_{p_1}$. The types $p_1$ and $p_2$ do not fork over $\alpha$. But $p_1 \restriction M_\alpha=q_{p_1}=q_{p_2}=p_2 \restriction M_{\alpha}$. Hence, by $(\lambda,\lambda^+)$-tameness for non-forking types, $p_1=p_2$.
\end{proof}

\begin{claim}\label{stability in s sat}
Assume that the $(\lambda,\lambda^+)$-tameness for non-forking types over saturated models holds.
Then $\frak{s}^{sat}$ satisfies basic stability.
\end{claim}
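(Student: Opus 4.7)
The plan is to mimic the proof of Claim \ref{stability in s lambda +} almost verbatim, replacing references to $\frak{s}_{\lambda^+}$ by references to $\frak{s}^{sat}$ and invoking tameness over saturated models in the single spot where tameness for non-forking types enters. The key observation is that any $N \in K^{sat}$ has cardinality $\lambda^+$, so it admits a filtration $\langle M_\alpha:\alpha<\lambda^+ \rangle$ by models of cardinality $\lambda$; moreover, $N$ itself is saturated in $\lambda^+$ over $\lambda$, which is precisely the base hypothesis required to apply $(\lambda,\lambda^+)$-tameness for non-forking types over saturated models with $N$ as the base.

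Fix $N \in K^{sat}$ and such a filtration. For each basic type $p$ over $N$ (in the sense of $\frak{s}^{sat}$), the local character of $\frak{s}_{\lambda^+}$ established in Fact \ref{theorem 2.6.8 of 875} produces a minimal $\alpha_p<\lambda^+$ such that $p$ does not fork over $M_{\alpha_p}$. Set $q_p:=p \restriction M_{\alpha_p}$; monotonicity places $q_p \in S^{bs}(M_{\alpha_p})$. Basic almost stability for $\frak{s}$ bounds $|S^{bs}(M_{\alpha_p})|$ by $\lambda^+$, so the number of possible pairs $(\alpha_p,q_p)$ is at most $\lambda^+\cdot \lambda^+=\lambda^+$. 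The remaining step is to show $p\mapsto (\alpha_p,q_p)$ is injective: if $p_1,p_2\in S^{bs}(N)$ share the same pair $(\alpha,q)$, then neither forks over $M_\alpha$ and they agree on $M_\alpha$; applying $(\lambda,\lambda^+)$-tameness for non-forking types over saturated models, with the saturated model $N$ playing the role of the base of cardinality $\lambda^+$ and $M_\alpha$ playing the role of the small submodel, gives $p_1=p_2$.

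I do not expect any significant obstacle, as the argument is a direct transcription of the proof of Claim \ref{stability in s lambda +}; the only mild subtlety is to confirm that the notion of ``basic type over $N$'' agrees between $\frak{s}_{\lambda^+}$ and $\frak{s}^{sat}$, which is exactly the content of Remark \ref{remark meanings of types in section 2} together with the very definition $S^{bs}_{\lambda^+}\restriction K^{sat}$ of the basic-type function of $\frak{s}^{sat}$. No further axioms of $\frak{s}^{sat}$ beyond monotonicity, local character, and the fact that saturated models in $\lambda^+$ over $\lambda$ have cardinality $\lambda^+$ need to be invoked, and since we bound $|S^{bs}(N)|$ for every $N\in K^{sat}$ by $\lambda^+$, basic stability in $\frak{s}^{sat}$ follows at once.
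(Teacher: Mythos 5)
Your proposal is correct and is essentially the paper's own argument: the paper proves this claim simply "by the proof of Claim \ref{stability in s lambda +}", and your write-up is precisely that transcription, with the saturated model $N$ serving as the base so that $(\lambda,\lambda^+)$-tameness for non-forking types over saturated models applies in the injectivity step. The extra care you take with Remark \ref{remark meanings of types in section 2} is exactly the point the paper relies on implicitly.
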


\begin{proof}
By the proof of Claim \ref{stability in s lambda +}.
\end{proof}

\section{Continuity Yields Symmetry}

We generalize a bit the definition of independence 
\cite[Definition 3.2]{jrsi3}: here, we do not limit the cardinalities of the models and of the set $J$.

\begin{definition} \label{1.6} \label{definition of independence}
Let $\alpha^*$ be an ordinal.

(a) $\langle M_\alpha,a_\alpha:\alpha<\alpha^* \rangle ^\frown
\langle M_{\alpha^*} \rangle$ is said to be \emph{independent}
over $M$ when:
\begin{enumerate}
\item $\langle M_\alpha:\alpha \leq \alpha^* \rangle$ is an
increasing continuous sequence of models in $K$. \item $M
\preceq M_0$. \item For every $\alpha<\alpha^*$, $a_\alpha \in
M_{\alpha+1}-M_\alpha$ and the type
$tp(a_\alpha,M_\alpha,M_{\alpha+1})$ does not fork over $M$.
\end{enumerate}

(b) $\langle a_\alpha:\alpha<\alpha^* \rangle$ is said to be
\emph{independent} in $(M,M_0,N)$ when $M \preceq M_0 \preceq N$,
$\{a_\alpha:\alpha<\alpha^*\} \subseteq N-M$ and for some
increasing continuous sequence $\langle M_\alpha:0<\alpha \leq
\alpha^* \rangle$ and a model $N^+$ the sequence $\langle
M_\alpha,a_\alpha:\alpha<\alpha^* \rangle ^\frown \langle
M_{\alpha^*} \rangle$ is independent over $M$, $N \preceq N^+$ and
$M_{\alpha^*} \preceq N^+$.

(c) $\langle a_\alpha:\alpha<\alpha^* \rangle$ is said to be
 $K^{sat}$-\emph{independent} in $(M,M_0,N)$, when in addition, the models $M,N$ and $M_\alpha$ for each $\alpha<\alpha^*$ are in $K^{sat}$.

(d) When $M=M_0$, we may omit it.

\end{definition}

Using the independence terminology, we present two reformulations of symmetry:
\begin{remark}\label{remark reformulations of symmetry using independence}
The following are equivalent:
\begin{enumerate} 
\item $\frak{s}$ satisfies the symmetry axiom. 
\item For every $M,N \in K_\lambda$ with $M \preceq N$ and for every two elements $a,b \in N$ the sequence $\langle a,b \rangle$ is independent in $(M,N)$ if and only if the sequence $\langle b,a \rangle$ is independent in $(M,N)$. \item For every $M_0,M_1,M_2 \in K_\lambda$ with $M_0 \preceq M_1 \preceq M_2$ and for every two elements $a_0,a_1 \in M_2$ if $tp(a_0,M_0,M_1)$ is basic and $tp(a_1,M_1,M_2)$ does not fork over $M_0$ then the sequence $\langle a_1,a_0 \rangle$ is independent in $(M_0,M_2)$.
\end{enumerate}
\end{remark}

\begin{definition}\label{definition of the continuity of serial independence}
Let $\beta^*<\lambda^+$. 

(1) \emph{The $(\lambda,\lambda^+)$-continuity of independence of sequences of length $\beta^*$ property} is the following property:
Let $M \in K_{\lambda^+}$, $M \preceq N \in K_{\lambda^+}$ and let $\langle M_\alpha:\alpha<\lambda^+ \rangle$ be a filtration of $M$. If $\langle a_\beta:\beta<\beta^* \rangle$ is independent in $(M_\alpha,N)$ for each $\alpha<\lambda^+$ then it is independent in $(M,N)$.

(2) \emph{The $(\lambda,\lambda^+)$-continuity of serial independence property} means the $(\lambda,\lambda^+)$-continuity of independence of sequences of length $\beta^*$ property for every $\beta^*<\lambda^+$.

(3) if we say in (1) or (2) $K^{sat}$-independence then independent is replaced by $K^{sat}$-independent. 
\end{definition}

\begin{theorem}\label{continuity implies good frame}
Suppose:
\begin{enumerate}
\item $\frak{s}$ is a semi-good non-forking $\lambda$-frame, \item the $(\lambda,\lambda^+)$-tameness for non-forking types property holds, and \item the $(\lambda,\lambda^+)$-continuity of independence of sequences of length $2$ property holds.
\end{enumerate}
Then $\frak{s}_{\lambda^+}$  is a good non-forking $\lambda^+$-frame minus the joint embedding and amalgamation properties.
\end{theorem}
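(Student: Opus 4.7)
The plan is to derive this from Theorem \ref{symmetry implies good non-forking frame}: Hypotheses (1) and (2) are already given, so the only thing left to verify is that $\frak{s}_{\lambda^+}$ satisfies the symmetry axiom. Thus the real content of the theorem is to show that the $(\lambda,\lambda^+)$-continuity of independence of sequences of length $2$ upgrades the symmetry of $\frak{s}$ to the symmetry of $\frak{s}_{\lambda^+}$.

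Concretely, I would work with the reformulation in Remark \ref{remark reformulations of symmetry using independence}(3). So assume $M_0 \preceq M_1 \preceq M_2$ lie in $K_{\lambda^+}$, $a \in M_1$ with $\operatorname{tp}(a,M_0,M_1)$ basic, and $b \in M_2$ with $\operatorname{tp}(b,M_1,M_2)$ non-forking over $M_0$. Set $N:=M_2$. I choose coherent filtrations $\langle M_i^\alpha:\alpha<\lambda^+\rangle$ of $M_i$ for $i=0,1,2$ with $M_0^\alpha\preceq M_1^\alpha\preceq M_2^\alpha$, $a\in M_1^0$, $b\in M_2^0$, and (by local character of forking for models of cardinality $\geq \lambda$, Fact \ref{theorem 2.6.8 of 875}, together with a harmless renumbering of the filtrations) with $\operatorname{tp}(a,M_0,M_1)$ and $\operatorname{tp}(b,M_1,M_2)$ non-forking over $M_0^0$. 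Then for every $\alpha<\lambda^+$, monotonicity gives that $\operatorname{tp}(a,M_0^\alpha,M_1^\alpha)$ is basic and $\operatorname{tp}(b,M_1^\alpha,M_2^\alpha)$ does not fork over $M_0^\alpha$.

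Now for each $\alpha$, the symmetry axiom of $\frak{s}$ (at cardinality $\lambda$) produces $\lambda$-sized models $M^{b,\alpha}$ and $N^\alpha$ with $M_0^\alpha\preceq M^{b,\alpha}\preceq N^\alpha$, $M_2^\alpha\preceq N^\alpha$, $b\in M^{b,\alpha}$, $\operatorname{tp}(b,M_0^\alpha,M^{b,\alpha})$ basic, and $\operatorname{tp}(a,M^{b,\alpha},N^\alpha)$ non-forking over $M_0^\alpha$. I then amalgamate $N^\alpha$ with the fixed $N$ over $M_2^\alpha$ (this is an amalgamation of a $\lambda$-sized model with a $\lambda^+$-sized one over a $\lambda$-sized base, done by iterated amalgamation along a filtration of $N$ using amalgamation in $\lambda$), producing $N^+_\alpha\succeq N$ into which $N^\alpha$ embeds over $M_2^\alpha$. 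Identifying $N^\alpha$ with its image inside $N^+_\alpha$, the witnesses $(M^{b,\alpha},N^\alpha,N^+_\alpha)$ show that $\langle b,a\rangle$ is independent in $(M_0^\alpha,N)$. Hypothesis (3) of the theorem, applied to the filtration $\langle M_0^\alpha:\alpha<\lambda^+\rangle$ of $M_0$, then yields that $\langle b,a\rangle$ is independent in $(M_0,N)$, which is precisely the symmetry of $\frak{s}_{\lambda^+}$. Then Theorem \ref{symmetry implies good non-forking frame} delivers the conclusion.

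The main subtlety I expect is the amalgamation step where we glue the small symmetry witness $N^\alpha$ onto the fixed large model $N$: since amalgamation in $\lambda^+$ is \emph{not} part of the hypotheses (indeed, the conclusion is ``minus amalgamation''), one cannot invoke it directly and must instead perform a transfinite construction along a filtration of $N$ that uses only amalgamation in $\lambda$. The rest is bookkeeping: ensuring that the filtrations can be chosen coherently, that the continuity assumption is applied to \emph{some} filtration (which is all the statement requires), and that the basicness of $\operatorname{tp}(b,M_0^\alpha,M^{b,\alpha})$ — needed for the independence of $\langle b,a\rangle$ to be well-posed — follows from the non-forking of $\operatorname{tp}(b,M_1^\alpha,M_2^\alpha)$ over $M_0^\alpha$ via monotonicity.
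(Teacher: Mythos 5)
Your proposal is correct and follows essentially the same route as the paper: the paper also reduces the theorem to Theorem \ref{symmetry implies good non-forking frame} plus the fact that length-$2$ continuity yields symmetry of $\frak{s}_{\lambda^+}$ (its Proposition \ref{continuity implies symmetry}), proved by choosing coherent filtrations, pushing the configuration down to each $M_0^\alpha$ by monotonicity, applying symmetry of $\frak{s}$ in $\lambda$ via Remark \ref{remark reformulations of symmetry using independence}, and then invoking the continuity hypothesis. Your explicit treatment of the amalgamation of the small symmetry witness onto the fixed $\lambda^+$-sized model over a $\lambda$-sized base (using only amalgamation in $\lambda$) is a detail the paper leaves implicit, but it is the same argument.
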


\begin{proof}
By Proposition \ref{continuity implies symmetry}, $\frak{s}_{\lambda^+}$ satisfies the symmetry axiom. So by Theorem \ref{symmetry implies good non-forking frame}, $\frak{s}_{\lambda^+}$ is a good non-forking $\lambda^+$-frame minus amalgamation in $\lambda^+$.
\end{proof}

Theorem \ref{continuity implies good frame for sat} is the analog of Theorem \ref{continuity implies good frame} for $\frak{s}^{sat}$.
\begin{theorem}\label{continuity implies good frame for sat}
Suppose:
\begin{enumerate} 
\item $\frak{s}$ is a semi-good non-forking $\lambda$-frame, \item $(K^{sat},\preceq \restriction K^{sat})$ is an AEC in $\lambda^+$, \item the $(\lambda,\lambda^+)$-tameness for non-forking types over saturated models in $\lambda^+$ over $\lambda$ property holds and \item the $(\lambda,\lambda^+)$-continuity of $K^{sat}$-independence of sequences of length $2$ property holds.
\end{enumerate}
Then $\frak{s}^{sat}$  is a good non-forking $\lambda^+$-frame minus amalgamation in $\lambda^+$.
\end{theorem}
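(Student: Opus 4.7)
The plan is to follow the proof of Theorem \ref{continuity implies good frame} verbatim, translated to the saturated setting. First, I would establish the symmetry axiom for $\frak{s}^{sat}$ from hypothesis (4), by running the argument of Proposition \ref{continuity implies symmetry} with ``independent'' replaced throughout by ``$K^{sat}$-independent''. Having obtained symmetry, I would then invoke Theorem \ref{symmetry implies good-frame for sat} --- whose hypotheses (1)--(3) match ours and whose fourth hypothesis is precisely the symmetry just established --- to conclude that $\frak{s}^{sat}$ is a good non-forking $\lambda^+$-frame minus amalgamation in $\lambda^+$.

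For the first step, I would apply the reformulation of symmetry in Remark \ref{remark reformulations of symmetry using independence}(3). Given $M_0 \preceq M_1 \preceq M_2$ in $K^{sat}$ with $tp(a_0,M_0,M_1)$ basic and $tp(a_1,M_1,M_2)$ non-forking over $M_0$, fix a filtration $\langle M_0^\alpha : \alpha<\lambda^+ \rangle$ of $M_0$ by models of cardinality $\lambda$. At each level $\alpha$, symmetry in the $\lambda$-frame $\frak{s}$ gives that $\langle a_1,a_0\rangle$ is independent over $M_0^\alpha$ inside some $\lambda$-sized witness; Proposition \ref{extending to sat} lets me extend the ambient models to members of $K^{sat}$ and thereby upgrade the statement to $K^{sat}$-independence over $M_0^\alpha$. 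Hypothesis (4) then glues these level-by-level statements together into $K^{sat}$-independence of $\langle a_1,a_0 \rangle$ in $(M_0, M_2')$ for some $M_2' \in K^{sat}$ extending $M_2$, which by Remark \ref{remark reformulations of symmetry using independence} is exactly the symmetry axiom for $\frak{s}^{sat}$.

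The main obstacle I expect is in that first step: ensuring that the level-by-level independence statements can genuinely be made into $K^{sat}$-independence statements so that hypothesis (4) is applicable. This requires repeatedly enlarging $\lambda$-sized ambient models to elements of $K^{sat}$ via Proposition \ref{extending to sat} while preserving the relevant non-forking statements using monotonicity and transitivity in $\frak{s}$. Hypothesis (2), that $(K^{sat},\preceq \restriction K^{sat})$ is an AEC in $\lambda^+$, is essential because it allows filtrations and continuous unions of saturated models to stay inside $K^{sat}$, so that the $K^{sat}$-continuity hypothesis of (4) indeed applies to the sequence constructed across the filtration. Once symmetry is secured, the remaining step is an immediate citation of Theorem \ref{symmetry implies good-frame for sat}.
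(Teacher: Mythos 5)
Your proposal is correct and follows essentially the same route as the paper: the paper's proof consists precisely of noting that the argument of Proposition \ref{continuity implies symmetry} carries over to yield symmetry for $\frak{s}^{sat}$ and then citing Theorem \ref{symmetry implies good-frame for sat}. Your additional remarks on upgrading the $\lambda$-sized witnesses to members of $K^{sat}$ via Proposition \ref{extending to sat} simply spell out details the paper leaves implicit.
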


\begin{proof}
By the proof of Proposition \ref{continuity implies symmetry}, the symmetry axiom holds. So by Theorem \ref{symmetry implies good-frame for sat}, $\frak{s}^{sat}$ is a good non-forking $\lambda^+$-frame minus amalgamation in $\lambda^+$.
\end{proof}
\begin{proposition}\label{continuity implies symmetry}
If the $(\lambda,\lambda^+)$-continuity of independence of sequences of length $2$ property holds then $\frak{s}_{\lambda^+}$ satisfies the symmetry axiom.
\end{proposition}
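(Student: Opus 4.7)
The plan is to lift the symmetry axiom of $\frak{s}$ to $\frak{s}_{\lambda^+}$ by applying the continuity hypothesis to a filtration of the base model. By Remark \ref{remark reformulations of symmetry using independence}(3), symmetry in $\frak{s}_{\lambda^+}$ is equivalent to the following: given $M_0 \preceq M_1 \preceq M_2$ in $K_{\lambda^+}$ with $a_0 \in M_1 - M_0$, $tp(a_0, M_0, M_1)$ basic, $a_1 \in M_2 - M_1$ and $tp(a_1, M_1, M_2)$ not forking over $M_0$, the sequence $\langle a_1, a_0 \rangle$ is independent in $(M_0, M_2)$. So I will aim to prove this conclusion, using the $(\lambda,\lambda^+)$-continuity of independence of sequences of length $2$ to transfer information from a filtration of $M_0$.

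Invoking Definition \ref{preparation for forking for big models}, I first fix $N_0, N_0' \in K_\lambda$ with $N_0, N_0' \preceq M_0$ witnessing the non-forking of $tp(a_1, M_1, M_2)$ over $M_0$ and the basicness of $tp(a_0, M_0, M_1)$ respectively. Then I pick filtrations $\langle M_i^\alpha : \alpha < \lambda^+ \rangle$ of $M_i$ for $i=0,1,2$, arranged so that $N_0 \cup N_0' \subseteq M_0^0$, $a_0 \in M_1^0$, $a_1 \in M_2^0$, and $M_0^\alpha \preceq M_1^\alpha \preceq M_2^\alpha$ for every $\alpha$. For each $\alpha$, monotonicity of $\dnf$ gives that $tp(a_0, M_0^\alpha, M_1^\alpha)$ is basic and $tp(a_1, M_1^\alpha, M_2^\alpha)$ is basic and does not fork over $M_0^\alpha$. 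Applying the symmetry axiom of $\frak{s}$ to $M_0^\alpha \preceq M_1^\alpha \preceq M_2^\alpha$ (and the elements $a_0, a_1$) produces $N_1^\alpha, N_2^\alpha \in K_\lambda$ with $M_0^\alpha \preceq N_1^\alpha \preceq N_2^\alpha$, $M_2^\alpha \preceq N_2^\alpha$, $a_1 \in N_1^\alpha$, and $tp(a_0, N_1^\alpha, N_2^\alpha)$ not forking over $M_0^\alpha$; the latter is automatically basic, hence non-algebraic, so $a_0 \in N_2^\alpha - N_1^\alpha$ as required by Definition \ref{definition of independence}.

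To convert this into the assertion that $\langle a_1, a_0 \rangle$ is independent in $(M_0^\alpha, M_2)$, I need a model $N^{+,\alpha}$ containing $M_2$ and $N_2^\alpha$ as $\preceq$-submodels over $M_2^\alpha$. I construct such an $N^{+,\alpha}$ by a chain argument that uses only amalgamation in $\lambda$: inductively build an increasing continuous chain $\langle P^\beta : \alpha \leq \beta < \lambda^+ \rangle$ with $P^\alpha = N_2^\alpha$ and $M_2^\beta \preceq P^\beta$, amalgamating $P^\beta$ and $M_2^{\beta+1}$ over $M_2^\beta$ at each successor stage. Since $|\beta| \leq \lambda$ for $\beta < \lambda^+$, each $P^\beta$ stays of cardinality $\lambda$, so only $\lambda$-amalgamations are used; the union $N^{+,\alpha} := \bigcup_{\alpha \leq \beta < \lambda^+} P^\beta$ has cardinality $\lambda^+$ and, by the chain axiom of the AEC, satisfies $M_2 = \bigcup_\beta M_2^\beta \preceq N^{+,\alpha}$ and $N_2^\alpha \preceq N^{+,\alpha}$. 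Monotonicity of $\dnf$ then certifies $\langle a_1, a_0 \rangle$ independent in $(M_0^\alpha, M_2)$ via the witnesses $N_1^\alpha \preceq N_2^\alpha$ and $N^{+,\alpha}$.

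Since this holds for every $\alpha < \lambda^+$, the $(\lambda,\lambda^+)$-continuity of independence of sequences of length $2$ delivers $\langle a_1, a_0 \rangle$ independent in $(M_0, M_2)$, finishing the proof via Remark \ref{remark reformulations of symmetry using independence}(3). The main obstacle is precisely the amalgamation in the previous paragraph: producing a common ambient $N^{+,\alpha}$ for $M_2$ and $N_2^\alpha$ without the amalgamation property in $\lambda^+$ requires the chain construction, and care must be taken that the AEC coherence axioms really let the limit extend $M_2$ when only its $\lambda$-sized filtration pieces are directly controlled.
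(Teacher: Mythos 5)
Your proof is correct and follows essentially the same route as the paper's: filter all three models, reduce to a single $\lambda$-sized base via local character, apply the symmetry axiom of $\frak{s}$ level by level to reverse the pair, and finish with the $(\lambda,\lambda^+)$-continuity hypothesis via Remark \ref{remark reformulations of symmetry using independence}. The one place you go beyond the paper is the explicit chain of $\lambda$-amalgamations producing the ambient model $N^{+,\alpha}$ containing both $M_2$ and the symmetry witnesses; the paper hides this step inside its citation of Remark \ref{remark reformulations of symmetry using independence}$(1\rightarrow 2)$, so your added care is welcome but does not change the argument.
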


\begin{proof}
\begin{displaymath}
\xymatrix{a_1 \in M_{2,0} \ar[rr]^{id} && M_{2,\alpha} \ar[r]^{id} & M_{2,\alpha+1} \ar[rr]^{id} && N_2 \ni a_1 \\  
a_0 \in M_{1,0} \ar[rr]^{id} \ar[u]^{id} && M_{1,\alpha} \ar[r]^{id} \ar[u]^{id} & M_{1,\alpha+1} \ar[rr]^{id} \ar[u]^{id} && N_1 \ni a_0 \ar[u]^{id}\\
M_{0,0} \ar[rr]^{id} \ar[u]^{id} && M_{0,\alpha} \ar[r]^{id} \ar[u]^{id} & M_{0,\alpha+1} \ar[rr]^{id} \ar[u]^{id} && N_0 \ar[u]^{id}
}
\end{displaymath}

Let $N_0,N_1,N_2$ be three models in $K$ of cardinality $\lambda^+$. Let $a_0$ be an element in $N_1-N_0$ and let $a_1$ be an element in $N_2-N_1$. Suppose $tp(a_0,N_0,N_1)$ is basic and $tp(a_1,N_1,N_2)$ does not fork over $N_0$. By Remark \ref{remark reformulations of symmetry using independence}($3 \rightarrow 1$), it is sufficient to prove that the sequence $\langle a_1,a_0 \rangle$ is independent in $(N_0,N_2)$. Let $\langle M_{0,\alpha}:\alpha<\lambda^+ \rangle$, $\langle M_{1,\alpha}:\alpha<\lambda^+ \rangle$, $\langle M_{2,\alpha}:\alpha<\lambda^+ \rangle$ be filtrations of $N_0,N_1,N_2$ respectively, such that $a_0 \in M_{1,0}$ and $a_1 \in M_{2,0}$. For some club $E$ of $\lambda^+$, for every $\alpha \in E$, we have $M_{0,\alpha} \preceq M_{1,\alpha} \preceq M_{2,\alpha}$. By renaming, without loss of generality, it holds for $E=\lambda^+$.

For some $\alpha_0<\lambda^+$ the type $tp(a_0,N_0,N_1)$ does not fork over $M_{0,\alpha_0}$. Similarly, for some $\alpha_1<\lambda^+$ the type $tp(a_1,N_1,N_2)$ does not fork over $M_{0,\alpha_1}$. Define $\alpha^*=:max\{\alpha_1,\alpha_2\}$. By monotonicity, $tp(a_0,N_0,N_1)$ does not fork over $M_{0,\alpha^*}$ and $tp(a_1,N_1,N_2)$ does not fork over $M_{0,\alpha^*}$.  By renaming, without loss of generality $\alpha^*=0$. Let $\alpha<\lambda^+$. By monotonicity, $tp(a_0,M_{0,\alpha},M_{1,\alpha})$ is basic and $tp(a_1,M_{1,\alpha},M_{2,\alpha})$ does not fork over $M_{0,\alpha}$. So $\langle a_0,a_1 \rangle$ is independent in $(M_{0,\alpha},N_2)$. Since $\frak{s}$ satisfies the symmetry axiom, by Remark \ref{remark reformulations of symmetry using independence}($1 \rightarrow 2$), the sequence  $\langle a_1,a_0 \rangle$ is independent in $(M_{0,\alpha},N_2)$ (for every $\alpha<\lambda^+$). Hence, by $(\lambda,\lambda^+)$-continuity of sequences of length $2$, the sequence $\langle a_1,a_0 \rangle$ is independent in $(N_0,N_2)$.
\end{proof}

\section{The Relation $\preceq^{NF}_{\lambda^+}$}

The main point in Shelah's approach is to construct a relation, $\preceq^{NF}_{\lambda^+}$, on $K_{\lambda^+}$, such that $(K_{\lambda^+},\preceq^{NF}_{\lambda^+})$ satisfies the amalgamation property. The construction of $\preceq^{NF}_{\lambda^+}$ is done by the following steps:
\begin{enumerate}
\item Assume that the class of uniqueness triples, $K^{3,uq}$, satisfies the existence property, \item use $K^{3,uq}$ to construct a non-forking relation, $NF$, on quadruples of models of cardinality $\lambda$, \item use $NF$ to construct a non-forking relation, $\widehat{NF}$ on quadruples of models: two of cardinality $\lambda$ and two of cardinality $\lambda^+$, \item use $\widehat{NF}$ to construct a binary relation, $\preceq^{NF}_{\lambda^+}$ on $K_{\lambda^+}$.
\end{enumerate}

In this section, we define $\preceq^{NF}_{\lambda^+}$ and present its basic properties.  
   
In Definition \ref{definition of NF}, we list the axioms for a relation $NF$ for a model of size $\lambda$ is independent from a model of size $\lambda$ over a model of size $\lambda$ in a model of size $\lambda$. We  denote `the relation $NF$ satisfies the list of the axioms' by $\bigotimes_{NF}$. Fact \ref{if the uniqueness triples satisfy the existence property then NF} presents sufficient conditions for the existence of a relation $NF$ for which $\bigotimes_{NF}$ holds and respecting $\frak{s}$.
  
\begin{definition}\label{definition of non-forking relation on models}\label{definition of NF}
Let $NF \subseteq \ ^4K_\lambda$. \emph{$\bigotimes_{NF}$} means that the following hold:
\begin{enumerate}[(a)]
\item If $NF(M_0,M_1,M_2,M_3)$ then for each $n\in \{1,2\}$
$M_0\leq M_n\leq M_3$ and $M_1 \cap M_2=M_0$. \item Monotonicity:
if $NF(M_0,M_1,M_2,M_3)$, $N_0=M_0$ and for each $n<3$ $N_n\leq
M_n\wedge N_0\leq N_n\leq N_3, (\exists N^{*})[M_3\leq N^{*}\wedge
N_3\leq N^{*}]$ then $NF(N_0 \allowbreak ,N_1,N_2,N_3)$. \item
Extension: For every $N_0,N_1,N_2 \in K_\lambda$, if for each
$l\in \{1,2\}$ $N_0 \leq N_l$ and $N_1\bigcap \allowbreak
N_2=N_0$, then for some $N_3 \in K_\lambda$,
$NF(N_0,N_1,N_2,N_3)$. \item Weak Uniqueness: Suppose for $x=a,b$,
$NF(N_0,N_1,N_2,N^{x}_3)$. Then there is a joint embedding of
$N^a,N^b \ over \ N_1 \bigcup N_2$. \item  Symmetry: For every
$N_0,N_1,N_2,N_3 \in K_\lambda$, $NF(N_0,N_1,N_2,N_3)
\Leftrightarrow NF(N_0,N_2,N_1,N_3)$. \item Long transitivity: For
$x=a,b$, let $\langle  M_{x,i}:i\leq \alpha^* \rangle$ an
increasing continuous sequence of models in $K_\lambda$. Suppose
that for each $i<\alpha^*$, $NF(M_{a,i},\allowbreak
M_{a,i+1},M_{b,i},\allowbreak M_{b,i+1})$. Then
$NF(M_{a,0},M_{a,\alpha^{*}},M_{b,0},M_{b,\alpha^{*}})$. \item $NF$ is closed under isomorphisms: if $NF(M_0,M_1,M_2,M_3)$ and
$f:M_3 \to N_3$ is an isomorphism then
$NF(f[M_0],f[M_1],f[M_2],f[M_3])$.
\end{enumerate}
\end{definition}

The next two definitions are needed in order to state Fact \ref{if the uniqueness triples satisfy the existence property then NF}.

\begin{definition}\label{NF respects the frame}
Let $NF$ be a relation such that $\bigotimes_{NF}$ holds. The relation $NF$ \emph{respects the frame} $\frak{s}$ means that if $NF(M_0,M_1,M_2,M_3)$, $a \in M_1-M_0$ and $tp(a,M_0,M_1)$ is basic then $tp(a,M_2,M_3)$ does not fork over $M_0$.
\end{definition}

\begin{definition}
Let $\frak{s}$ be a semi-good non-forking $\lambda$-frame. $\frak{s}$ is said to satisfy the \emph{conjugation} property when $K$ is categorical in $\lambda$ in the following strong sense: for every $M_1$ and $M_2$ of cardinality $\lambda$ and types $p_1 \in S^{bs}(M_1)$ and $p_2 \in S^{bs}(M_2)$, if $M_1 \preceq M_2$ and $p_2$ is the non-forking extension of $M_1$ then there is an isomorphism $f:M_1 \to M_2$ such that $f(p_1)=p_2$. 
\end{definition}

By \cite[Theorem 5.5.4]{jrsh875}
(and \cite[Definitions 5.2.1,5.2.6]{jrsh875}):
\begin{fact} \label{jrsh875.5.15}\label{if the uniqueness triples satisfy the existence property then NF}
If the class of uniqueness triples satisfies the existence property and $\frak{s}$ satisfies the conjugation property (see \cite[Definition 2.5.5]{jrsh875}) then there is a (unique) relation $NF \subseteq \ ^4K_\lambda$ for which $\bigotimes_{NF}$ holds and $NF$ respects the frame $\frak{s}$.
\end{fact}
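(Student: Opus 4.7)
The plan is to construct $NF$ explicitly by \emph{decomposing} $M_1$ over $M_0$ into a chain of uniqueness triples and then \emph{lifting} this chain over $M_2$, using the existence and uniqueness properties of uniqueness triples at each step. Given $M_0 \preceq M_1$ with $M_1 \cap M_2 = M_0$ in $K_\lambda$, the existence property of uniqueness triples yields an increasing continuous sequence $\langle N_i : i \leq \alpha \rangle$ with $N_0 = M_0$, $N_\alpha = M_1$ and elements $a_i$ such that each $(N_i, a_i, N_{i+1})$ is a uniqueness triple. Inductively along $i$, one builds a parallel sequence $\langle N'_i : i \leq \alpha \rangle$ with $N'_0 = M_2$, $(N'_i, a_i, N'_{i+1})$ a uniqueness triple, and $tp(a_i, N'_i, N'_{i+1})$ the non-forking extension of $tp(a_i, N_i, N_{i+1})$. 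Setting $M_3 := N'_\alpha$, declare $NF(M_0, M_1, M_2, M_3)$ to hold; the respect of $\frak{s}$ is wired into the non-forking condition at each step.

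The axioms of $\bigotimes_{NF}$ are then checked one by one. Clause (a) and closure under isomorphisms (g) are immediate from the construction. Extension (c) is the content of the construction itself. For weak uniqueness (d), given two putative outputs $M_3^a, M_3^b$ from the same data, inductively build a joint embedding over $N_i \cup M_2$ by applying the uniqueness property of $(N'_i, a_i, N'_{i+1})$ at successor steps and taking unions at limits; at $i = \alpha$ this yields the required joint embedding over $N_1 \cup N_2 = M_1 \cup M_2$. For long transitivity (f), concatenate the decompositions of each $M_{a,i+1}$ over $M_{a,i}$ into one long decomposition of $M_{a,\alpha^*}$ over $M_{a,0}$, and observe that the corresponding $M_{b,\cdot}$-sequence is a valid lifting, directly witnessing $NF(M_{a,0}, M_{a,\alpha^*}, M_{b,0}, M_{b,\alpha^*})$. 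Monotonicity (b) is obtained by refining a decomposition of $M_1$ so that $N_1$ appears as some $N_j$, and restricting the lifting accordingly.

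The hard part will be \emph{independence from the chosen decomposition} together with \emph{symmetry} (e). For the first, I would argue that any two decompositions of $M_1$ over $M_0$ admit a common refinement (by interleaving basic types, using density and existence of uniqueness triples), and then weak uniqueness (d) combined with long transitivity (f) forces the two outputs to be isomorphic over $M_1 \cup M_2$; this also secures uniqueness of the relation $NF$. For symmetry, the natural strategy is to \emph{simultaneously} decompose both $M_1$ and $M_2$ over $M_0$ into uniqueness triples and to carry out the lifting along both coordinates; the issue is that swapping the roles of $M_1$ and $M_2$ produces an a priori different amalgam, and matching them requires identifying non-forking extensions of a basic type across different base models. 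This is exactly where the conjugation property enters: it provides, for each uniqueness triple $(N, a, N^+)$ along one axis, an isomorphism with its non-forking extension over a larger base, and these isomorphisms can be pasted together inductively (using weak uniqueness to control coherence) to produce the required symmetric witness.

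Uniqueness of $NF$ then follows from weak uniqueness (d) together with the independence-of-decomposition argument above: any relation satisfying $\bigotimes_{NF}$ and respecting $\frak{s}$ must, by (c) and (d), agree on the outputs of the canonical construction; hence it coincides with the $NF$ defined here.
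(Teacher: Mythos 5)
The paper does not actually prove this statement: it is imported as a black box from \cite[Theorem 5.5.4]{jrsh875} (together with Definitions 5.2.1 and 5.2.6 there), so the only ``proof'' in the text is the citation. Your architecture --- decompose $M_1$ over $M_0$ into a chain of uniqueness triples, lift the chain over $M_2$, and verify $\bigotimes_{NF}$ --- is indeed the strategy of the cited construction, so you are reconstructing the right proof. But several load-bearing steps in your sketch either fail as stated or are only announced.

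The sharpest problem is monotonicity (b). You propose to ``refine a decomposition of $M_1$ so that $N_1$ appears as some $N_j$,'' but an arbitrary intermediate model $N_1$ with $M_0 \preceq N_1 \preceq M_1$ need not occur as a stage of \emph{any} decomposition of $M_1$ into uniqueness triples, so this step does not go through. The same difficulty already appears in your extension step: the existence property of $K^{3,uq}$ produces, for a basic $p \in S^{bs}(N_i)$, a uniqueness triple $(N_i,a_i,N_{i+1})$ with $N_{i+1}$ an \emph{abstract} extension, with no guarantee that it can be taken inside $M_1$; what the construction naturally yields is a decomposition of some $M_1'$ containing a copy of $M_1$. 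The source resolves both issues by defining $NF$ as the \emph{monotonic closure} of the decomposition-based relation $NF'$ and then re-verifying every axiom for the closure; that re-verification is where most of the work lies and it is absent here. Second, ``the respect of $\frak{s}$ is wired into the non-forking condition at each step'' is too quick: the construction only controls $tp(a_i,N_i',N_{i+1}')$ over the intermediate base $N_i$ for the distinguished elements $a_i$, whereas respecting the frame requires that for \emph{every} $a \in M_1 - M_0$ with $tp(a,M_0,M_1)$ basic, $tp(a,M_2,M_3)$ not fork over $M_0$; this needs its own argument, and it is one natural place where the conjugation hypothesis (which your sketch invokes only vaguely, for symmetry) must enter explicitly. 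Finally, your treatment of symmetry and of independence from the decomposition is a plan rather than a proof; in \cite{jrsh875} these occupy several sections. So: correct skeleton, but monotonicity, frame-respect, and symmetry are genuine gaps rather than routine verifications.
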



From now on we assume:
\begin{hypothesis}\label{hypothesis for bar{NF}}
\mbox{}
\begin{enumerate}
\item $K$ is categorical in $\lambda$,
\item $\frak{s}$ is a semi-good non-forking $\lambda$-frame, \item $\frak{s}$ satisfies the conjugation property and \item the class of uniqueness triples satisfies the existence property.
\end{enumerate}
\end{hypothesis}

In Definition \ref{definition of widehat{NF}}, we use the relation $NF$ to present a relation for: a model of size $\lambda$ is independent from a model of size $\lambda^+$ over a model of size $\lambda$ in a model of size $\lambda^+$.
\begin{definition}\label{definition of widehat{NF}}\label{5.14} Define a 4-ary relation $\widehat{NF}$ on $K$ by $$\widehat{NF}(N_0,N_1,M_0,\allowbreak M_1)$$ when the following hold:
\begin{enumerate}
\item $N_0,N_1$ are of cardinality $\lambda$, \item $M_0,M_1$ are of cardinality $\lambda^+$,
\item There are filtrations $\langle
N_{0,\alpha}:\alpha<\lambda^+ \rangle,\ \langle
N_{1,\alpha}:\alpha<\lambda^+ \rangle$ of $M_0,M_1$ respectively, such that $N_{0,0}=N_0$, $N_{1,0}=N_1$ and
for every $\alpha<\lambda^+$ we have $NF(N_{0,\alpha},N_{1,\alpha},N_{0,\alpha+1},N_{1,\alpha+1})$.
\end{enumerate}
\end{definition}

By \cite[Theorem  6.1.3]{jrsh875}:
\begin{fact} [basic properties of $\widehat{NF}$]
\label{5.15}\label{the widehat{NF}-properties} \mbox{}
\begin{enumerate}[(a)] \item Disjointness: If
$\widehat{NF}(N_0,N_1,M_0,M_1)$ then $N_1 \bigcap M_0=N_0$. \item
Monotonicity: Suppose $\widehat{NF}(N_0,N_1,M_0,M_1),\ N_0 \preceq
N^{*}_1 \preceq N_1,\ N_1^* \bigcup M_0 \allowbreak \subseteq
M^*_1 \preceq M_1$ and $M_1^* \in K_{\lambda^+}$. Then
$\widehat{NF}(N_0,N^{*}_1,M_0,M^*_1)$. \item Extension: Suppose
$n<2 \Rightarrow N_n \in K_\lambda,\ M_0 \in K_{\lambda^+},\ N_0
\preceq N_1,\ N_0 \preceq M_0,\ N_1 \bigcap M_0=N_0$. Then there
is a model $M_1$ such that $\widehat{NF}(N_0,N_1,\allowbreak
M_0,M_1)$. \item Weak Uniqueness: If $n<2 \Rightarrow
\widehat{NF}(N_0,N_1,M_0,M_{1,n})$, then there are $M,f_0,f_1$
such that $f_n$ is an embedding of $M_{1,n}$ into $M$ over $N_1
\bigcup M_0$. \item Respecting the frame: Suppose
$\widehat{NF}(M_0,M_1,N_0,N_1)$ and $tp(a,M_0,N_0) \in \allowbreak S^
{bs} \allowbreak (M_0)$. Then $tp(a,M_1,N_1)$ does not fork over
$M_0$.
\end{enumerate}
\end{fact}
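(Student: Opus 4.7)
The plan is to prove each clause by reducing, via the filtrations that witness $\widehat{NF}$, to the corresponding axiom in $\bigotimes_{NF}$. The template is: extract (or build) filtrations $\langle N_{0,\alpha}:\alpha<\lambda^+\rangle$ of $M_0$ and $\langle N_{1,\alpha}:\alpha<\lambda^+\rangle$ of $M_1$ with $NF(N_{0,\alpha},N_{1,\alpha},N_{0,\alpha+1},N_{1,\alpha+1})$ for every $\alpha$, do a transfinite induction of length $\lambda^+$ in which each successor step invokes the appropriate clause of $\bigotimes_{NF}$, and take unions at limits.

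For (a), fix witnessing filtrations and prove by induction on $\alpha<\lambda^+$ that $N_1\cap N_{0,\alpha}=N_0$. The base case is immediate; at limits use continuity of the filtration; at $\alpha=\beta+1$, clause (a) of $\bigotimes_{NF}$ applied to $NF(N_{0,\beta},N_{1,\beta},N_{0,\beta+1},N_{1,\beta+1})$ gives $N_{1,\beta}\cap N_{0,\beta+1}=N_{0,\beta}$, and since $N_1\subseteq N_{1,\beta}$ we get $N_1\cap N_{0,\beta+1}\subseteq N_{0,\beta}$, so the inductive hypothesis closes the argument. Then $N_1\cap M_0=\bigcup_\alpha(N_1\cap N_{0,\alpha})=N_0$. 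Clause (e) is similar: filtrate $N_0$ (the $\lambda^+$-sized model containing $a$) and find the first $\beta$ with $a\in M_{0,\beta+1}$; then $tp(a,M_{0,\beta},M_{0,\beta+1})$ is basic, so by the ``respects the frame'' hypothesis applied to $NF(M_{0,\beta},M_{1,\beta},M_{0,\beta+1},M_{1,\beta+1})$, the type $tp(a,M_{1,\beta},M_{1,\beta+1})$ does not fork over $M_{0,\beta}$; transitivity and monotonicity in $\frak{s}$ then yield that $tp(a,M_1,N_1)$ does not fork over $M_0$.

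Clauses (b) and (c) are straightforward filtration constructions: for (b) fix a filtration of $N_1^*$ cohering with $\langle N_{1,\alpha}\rangle$ and a filtration of $M_1^*$ sitting between $N_{1,\alpha}^*\cup N_{0,\alpha+1}$ and $N_{1,\alpha+1}$, then apply clause (b) of $\bigotimes_{NF}$ at each level; for (c) start with any filtration $\langle N_{0,\alpha}\rangle$ of $M_0$ with $N_{0,0}=N_0$, and build $\langle N_{1,\alpha}\rangle$ by induction, invoking clause (c) of $\bigotimes_{NF}$ at successors, taking unions at limits, and setting $M_1:=\bigcup_\alpha N_{1,\alpha}$. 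Clause (d) is the subtlest: after passing to a common club refinement one may assume that both $\widehat{NF}$-witnesses use the same filtration $\langle N_{0,\alpha}\rangle$ of $M_0$; then build by induction on $\alpha<\lambda^+$ a $\preceq$-increasing continuous sequence $\langle P_\alpha\rangle$ in $K_\lambda$ together with embeddings $f^a_\alpha:N_{1,\alpha}^a\to P_\alpha$ and $f^b_\alpha:N_{1,\alpha}^b\to P_\alpha$ fixing $N_1\cup N_{0,\alpha}$ pointwise, where at each successor stage the weak uniqueness clause (d) of $\bigotimes_{NF}$, transported via $f^a_\alpha,f^b_\alpha$ and closure of $NF$ under isomorphisms, provides the extension to $P_{\alpha+1}$. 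The unions $M:=\bigcup_\alpha P_\alpha$ and $f_n:=\bigcup_\alpha f^n_\alpha$ give the required joint embedding over $N_1\cup M_0$. The main obstacle in the whole proof is this bookkeeping for (d), in particular aligning the two filtrations of $M_0$ and ensuring that the two sides of the inductive joint embedding remain compatible over all of $N_1\cup N_{0,\alpha}$ at each stage.
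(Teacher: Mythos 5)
You should first note that the paper does not actually prove this Fact: it is imported verbatim from \cite[Theorem 6.1.3]{jrsh875}, so there is no internal proof to compare against. That said, your overall strategy --- witness $\widehat{NF}$ by filtrations and transfer each clause of $\bigotimes_{NF}$ level by level, using long transitivity and weak uniqueness of $NF$ --- is the right one, and clauses (a)--(d) are correct in outline. Two small caveats: in (c), clause (c) of Definition \ref{definition of NF} needs the disjointness $N_{1,\alpha}\cap N_{0,\alpha+1}=N_{0,\alpha}$ as a \emph{precondition}, so you must maintain the invariant $N_{1,\alpha}\cap M_0=N_{0,\alpha}$ by renaming the freshly chosen top model at each successor step; and in (d), passing to a common club refinement of the two filtrations of $M_0$ while preserving the step-by-step $NF$ conditions itself requires symmetry and long transitivity, which you do not mention there.

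Your argument for clause (e), however, has a genuine gap. You pick the first $\beta$ with $a\in M_{0,\beta+1}$ and assert that $tp(a,M_{0,\beta},M_{0,\beta+1})$ is basic; the hypothesis only gives that $tp(a,M_0,N_0)$ is basic, and an arbitrary extension of a basic type to a larger base need not be basic (the paper deliberately avoids assuming $S^{bs}=S^{na}$, cf.\ Remark \ref{local character almost implies continuity}). Worse, even granting basicness, applying ``$NF$ respects the frame'' to $NF(M_{0,\beta},M_{1,\beta},M_{0,\beta+1},M_{1,\beta+1})$ only yields that $tp(a,M_{1,\beta},M_{1,\beta+1})$ does not fork over $M_{0,\beta}$, and transitivity cannot upgrade this to non-forking over $M_0$: that would require knowing that $tp(a,M_{0,\beta},M_{0,\beta+1})$ does not fork over $M_0$, which is not among the hypotheses and can fail. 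The correct route --- the one the paper itself uses for the companion statement, Proposition \ref{a new version of widehat{NF} respects the frame} --- is to first apply symmetry and long transitivity of $NF$ to collapse the filtration and obtain $NF(M_0,M_{0,\beta+1},M_1,M_{1,\beta+1})$ with base $M_0$, note that $a\in M_{0,\beta+1}-M_0$ and $tp(a,M_0,M_{0,\beta+1})=tp(a,M_0,N_0)$ is basic, and then invoke ``$NF$ respects the frame'' once, directly over the base $M_0$, to conclude that $tp(a,M_1,M_{1,\beta+1})=tp(a,M_1,N_1)$ does not fork over $M_0$.
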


Proposition \ref{a new version of widehat{NF} respects the frame} says roughly that '$\widehat{NF}$ respects the frame $\frak{s}$'. But Proposition \ref{a new version of widehat{NF} respects the frame} and Fact \ref{the widehat{NF}-properties}(e) are different (one might say that `there is a symmetry between them'). 
In Proposition \ref{widehat{NF} respects independence}, we generalize Proposition \ref{a new version of widehat{NF} respects the frame}, proving that the relation $\widehat{NF}$ respects independence.
\begin{proposition}\label{a new version of widehat{NF} respects the frame}\label{a new version of respecting the frame}\label{second version of respecting the frame}
Suppose:
\begin{enumerate}
\item $M_0,M_1 \in K_\lambda$, \item $N_0,N_1 \in K_{\lambda^+}$, \item $\widehat{NF}(M_0,M_1,N_0,N_1)$, \item $a \in M_1-M_0$, \item $tp(a,M_0,M_1)$ is basic.
\end{enumerate}
Then $tp(a,N_0,N_1)$ does not fork over $M_0$. 
\end{proposition}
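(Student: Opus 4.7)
The plan is to reduce the statement to the analogous property of the small-model relation $NF$ by slicing through the filtrations of $N_0$ and $N_1$ supplied by $\widehat{NF}$, and to use symmetry of $NF$ in order to get the models into the right positions so that the ``respecting the frame'' property of $NF$ (Definition \ref{NF respects the frame} / Fact \ref{if the uniqueness triples satisfy the existence property then NF}) can be invoked.

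Concretely, let $\langle M_{0,\alpha}:\alpha<\lambda^+\rangle$ and $\langle M_{1,\alpha}:\alpha<\lambda^+\rangle$ be filtrations of $N_0$ and $N_1$ witnessing $\widehat{NF}(M_0,M_1,N_0,N_1)$, so $M_{0,0}=M_0$, $M_{1,0}=M_1$, and $NF(M_{0,\alpha},M_{1,\alpha},M_{0,\alpha+1},M_{1,\alpha+1})$ for every $\alpha<\lambda^+$. By the symmetry clause of $\bigotimes_{NF}$ (Definition \ref{definition of NF}(e)), this is equivalent to
\[
NF(M_{0,\alpha},M_{0,\alpha+1},M_{1,\alpha},M_{1,\alpha+1}) \qquad (\alpha<\lambda^+).
\]
Both sequences $\langle M_{0,\alpha}\rangle$ and $\langle M_{1,\alpha}\rangle$ are $\preceq$-increasing continuous (being filtrations), so long transitivity (Definition \ref{definition of NF}(f)) applied up to any fixed $\alpha^*<\lambda^+$ yields $NF(M_0,M_{0,\alpha^*},M_1,M_{1,\alpha^*})$. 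Applying symmetry once more gives $NF(M_0,M_1,M_{0,\alpha^*},M_{1,\alpha^*})$.

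Now I use that $NF$ respects $\frak{s}$: since $a\in M_1-M_0$ and $\ftp(a,M_0,M_1)$ is basic, respecting the frame yields that $\ftp(a,M_{0,\alpha^*},M_{1,\alpha^*})$ does not fork over $M_0$, for every $\alpha^*<\lambda^+$.

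Finally, I invoke the definition of non-forking for large models (Definitions \ref{preparation for forking for big models}--\ref{forking for big models}) with the chosen witness $M_0\in K_\lambda$: given any $N\in K_\lambda$ with $M_0\preceq N\preceq N_0$, pick $\alpha^*<\lambda^+$ with $N\preceq M_{0,\alpha^*}$ (possible because $N$ has cardinality $\lambda$ and $\langle M_{0,\alpha}\rangle$ is a filtration of $N_0$). Monotonicity of $\dnf$ together with the conclusion of the previous paragraph gives $\dnf(M_0,a,N,M_{1,\alpha^*})$, and since $M_{1,\alpha^*}\preceq N_1$, this yields $\dnf(M_0,a,N,N_1)$ in the extended sense. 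As $N$ was arbitrary, $\ftp(a,N_0,N_1)$ does not fork over $M_0$. The only delicate point is lining up the correct orientation of $NF$ via the two applications of symmetry so that long transitivity is applicable; once that is done, the rest is bookkeeping with monotonicity.
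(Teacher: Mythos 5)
Your proof is correct and follows essentially the same route as the paper's: extract the filtrations witnessing $\widehat{NF}$, combine symmetry and long transitivity of $NF$ to get $NF(M_0,M_1,M_{0,\alpha^*},M_{1,\alpha^*})$, invoke the ``respects the frame'' property, and finish with monotonicity. The only (immaterial) difference is that the paper applies monotonicity of $NF$ to shrink to an arbitrary $M'\preceq N_0$ before invoking respecting-the-frame, whereas you apply respecting-the-frame at level $\alpha^*$ and then use monotonicity of $\dnf$.
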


\begin{proof}
Let $M' \in K_\lambda$ with $M_0 \preceq M' \preceq N_0$. We should prove that $tp(a,M',N_1)$ does not fork over $M_0$.  By the definition of $\widehat{NF}$, there are filtrations $\langle M_{0,\alpha}:\alpha<\lambda^+ \rangle$ and $\langle M_{1,\alpha}:\alpha<\lambda^+ \rangle$ of $N_0$ and $N_1$ respectively, such that $M_{0,0}=M_0$, $M_{1,0}=M_1$ and $NF(M_{0,\alpha},M_{0,\alpha+1},M_{1,\alpha},M_{1,\alpha+1})$ holds for each $\alpha<\lambda^+$. Take $\alpha<\lambda^+$ such that $M' \subseteq M_{0,\alpha}$. Since $M' \preceq N_0$ and $M_{0,\alpha} \preceq N_0$ we have $M' \preceq M_{0,\alpha}$. By long transitivity (Definition \ref{definition of NF})(f), $NF(M_{0,0},M_{0,\alpha},M_{1,0},M_{1,\alpha})$, namely, $NF(M_0,M_{0,\alpha},M_1,M_{1,\alpha})$. So by monotonicity of $NF$ (Definition \ref{definition of NF}(b)),  $NF(M_0,M',M_1,M_{1,\alpha})$. Since the relation $NF$ respects the frame $\frak{s}$, it yields $tp(a,M',N_1)$ does not fork over $M_0$.
\end{proof}

We now define a binary relation $\preceq^{NF}_{\lambda^+}$ on
$K_{\lambda^+}$, that is based on the relation $\widehat{NF}$:
\begin{definition} \label{6.1}\label{6.4 in april}\label{definition of preceq^{NF}}
Suppose $M_0,M_1 \in K_{\lambda^+}$, $M_0 \preceq M_1$. Then $M_0
\preceq^{NF}_{\lambda^+} M_1$ means that $\widehat{NF}(N_0,N_1,M_0,M_1)$ for some $N_0,N_1 \in
K_\lambda$.
\end{definition}

By \cite[Proposition 6.1.6]{jrsh875} and \cite[Theorem 7.1.18(a)]{jrsh875}:
\begin{fact} \label{6.2}\label{6.5}\label{preceq^{NF}-properties}\label{basic properties of preceq^{NF}_{lambda^+}}
$(K_{\lambda^+},\preceq^{NF}_{\lambda^+})$ satisfies the following
properties:
\begin{enumerate}[(a)]
\item Suppose $M_0 \preceq M_1,\ n<2 \Rightarrow M_n \in
K_{\lambda^+}$. For $n<2$, let $\langle
N_{n,\epsilon}:\epsilon<\lambda^+ \rangle$ be a representation of
$M_n$. Then $M_0 \preceq^{NF}_{\lambda^+}M_1$ iff there is a club
$E \subseteq \lambda^+$ such that $(\epsilon<\zeta \wedge
\{\epsilon,\zeta\} \subseteq E) \Rightarrow
NF(N_{0,\epsilon},N_{0,\zeta},N_{1,\epsilon},N_{1,\zeta})$. \item
$\preceq^{NF}_{\lambda^+}$ is a partial order. \item If $M_0
\preceq M_1 \preceq M_2$ and $M_0 \preceq^{NF}_{\lambda^+} M_2$
then $M_0 \preceq^{NF}_{\lambda^+} M_1$. \item
If $\delta \in \lambda^{+2}$ is a limit
ordinal and $\langle M_\alpha:\alpha<\delta \rangle$ is a
$\preceq^{NF}_{\lambda^+}$-increasing continuous sequence, then
$M_0 \preceq^{NF}_{\lambda^+} \bigcup_{\alpha<\delta} M_\alpha$. Moreover, if $M_\alpha \in K^{sat}$ for each $\alpha<\delta$ then $\bigcup_{\alpha<\delta}M_\alpha \in K^{sat}$. \item $K_{\lambda^+}$ has
no $\preceq^{NF}_{\lambda^+}$-maximal model. 
\end{enumerate}
\end{fact}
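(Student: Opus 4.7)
The plan is to deduce all five clauses from Definition \ref{definition of widehat{NF}} together with the axioms of $NF$ (Definition \ref{definition of NF}) and the already-established basic properties of $\widehat{NF}$ in Fact \ref{the widehat{NF}-properties}. The single most useful axiom throughout will be long transitivity of $NF$; monotonicity and weak uniqueness will be what lets us move between different filtrations of the same models.

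For clause (a), the "if" direction is immediate: enumerate the club $E$ in increasing order as $\langle \alpha_\epsilon : \epsilon < \lambda^+\rangle$, so that $\langle N_{n,\alpha_\epsilon} : \epsilon < \lambda^+\rangle$ are filtrations of $M_n$ ($n=0,1$) along which $NF$ holds on successors; the definition of $\widehat{NF}$ is satisfied at the base pair. For the "only if" direction, start from any filtrations witnessing $M_0 \preceq^{NF}_{\lambda^+} M_1$; use the standard fact that two filtrations of the same $\lambda^+$-sized model agree on a club to line them up with the given representations; then apply monotonicity of $NF$ on the coarse witnessing instances and long transitivity of $NF$ to deduce $NF$ on the successive pairs of the given representations along the relevant club.

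Clauses (b) and (c) reduce cleanly to (a). Antisymmetry in (b) is inherited from $\preceq$; reflexivity holds because $NF(N_\alpha,N_{\alpha+1},N_\alpha,N_{\alpha+1})$ is available (disjointness is trivial, and existence/weak-uniqueness of $NF$ give it); for transitivity, take witnessing clubs $E_{01}$ and $E_{12}$ provided by (a), intersect them, and apply long transitivity of $NF$ to each successive pair of the common club, obtaining a witnessing club for $M_0 \preceq^{NF}_{\lambda^+} M_2$. For (c), choose a joint representation where the filtration of $M_1$ sits pointwise between the filtrations of $M_0$ and $M_2$; then the club $E$ given by (a) for $M_0 \preceq^{NF}_{\lambda^+} M_2$ also witnesses $M_0 \preceq^{NF}_{\lambda^+} M_1$ by monotonicity of $NF$.

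For clause (d), pick representations of the $M_\alpha$'s coherently (each refining the previous) and, for each successor stage $\alpha+1<\delta$, let $E_\alpha$ be a club given by (a) witnessing $M_\alpha \preceq^{NF}_{\lambda^+} M_{\alpha+1}$. Take a diagonal-style intersection of the $E_\alpha$'s; since we only need $\cf(\delta) \leq \lambda^+$, the resulting set is still a club in $\lambda^+$. Applying long transitivity of $NF$ along this club up to $\delta$ produces a club witnessing $M_0 \preceq^{NF}_{\lambda^+} \bigcup_{\alpha<\delta}M_\alpha$ via (a). The "moreover" part uses local character: any basic type over a $\lambda$-sized submodel of the union is based in some $M_\alpha$ and hence realized there (each $M_\alpha$ being in $K^{sat}$), so Fact \ref{fact existence of sat} applies. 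Finally, (e) follows by taking a filtration of $M$, extending each $N_\alpha$ to a proper $\preceq$-extension inside $K_\lambda$ (using no maximal model in $\frak{s}$), and iterating Fact \ref{the widehat{NF}-properties}(c) to build a proper $\preceq^{NF}_{\lambda^+}$-extension of $M$.

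The main obstacle is the passage between "some filtration witnesses $\widehat{NF}$" and "every sufficiently nice pair of filtrations witnesses $NF$ along a club" used in (a) and inherited by (b), (c), (d). The bookkeeping here — lining up two different representations on a club, invoking weak uniqueness to reconcile amalgams over the compared slices, and then combining monotonicity with long transitivity — is where all the real work of the proof lives.
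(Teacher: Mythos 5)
First, a point of comparison: the paper does not prove this statement at all --- it is recorded as a Fact and justified solely by the citation of \cite[Proposition 6.1.6]{jrsh875} and \cite[Theorem 7.1.18(a)]{jrsh875}. So the only question is whether your from-scratch argument actually establishes all five clauses. For (a), (b), (c) and (e) your plan is essentially the standard one and can be made to work, with one caveat worth flagging: the transitivity step in (b) is not literally ``long transitivity applied to each successive pair of the common club''. What clause (a) hands you is $NF(N_{i,\epsilon},N_{i,\zeta},N_{i+1,\epsilon},N_{i+1,\zeta})$, whereas long transitivity (with the two columns $\langle N_{i,\epsilon}:i\le 2\rangle$ and $\langle N_{i,\zeta}:i\le 2\rangle$ as its two sequences) needs $NF(N_{i,\epsilon},N_{i+1,\epsilon},N_{i,\zeta},N_{i+1,\zeta})$; you must invoke the symmetry axiom of Definition \ref{definition of NF}(e) to transpose the middle coordinates, and again at the end. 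The same transposition is needed in (d), where in addition, for $\lambda^+\le\delta<\lambda^{+2}$ the vertical chains $\langle N_{\alpha,\epsilon}:\alpha\le\delta\rangle$ have length exceeding $\lambda^+$ and so cannot be fed to long transitivity (whose sequences must remain in $K_\lambda$) without first passing to cofinal subchains of length at most $\lambda$; together with the diagonal intersection of $\lambda^+$ many clubs, this is real work that the sketch only gestures at.

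The genuine gap is in the ``moreover'' part of (d). You argue that a basic type over a $\lambda$-sized submodel of $\bigcup_{\alpha<\delta}M_\alpha$ ``is based in some $M_\alpha$ and hence realized there''. That reasoning presupposes that the $\lambda$-sized submodel itself sits inside some $M_\alpha$, which is guaranteed only when $\lambda<\cf(\delta)$ --- and that case is exactly the easy Proposition following Remark \ref{K sat is AEC iff}, which needs no hypothesis on the chain. For $\cf(\delta)\le\lambda$ (e.g.\ $\delta=\omega$) a model $N\in K_\lambda$ with $N\preceq\bigcup_{\alpha<\delta}M_\alpha$ need not be contained in any $M_\alpha$, and your argument says nothing. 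Worse, your argument for this clause never uses the hypothesis that the chain is $\preceq^{NF}_{\lambda^+}$-increasing; if it were correct it would show that $K^{sat}$ is closed under unions of arbitrary $\preceq$-increasing chains, i.e.\ that $(K^{sat},\preceq\restriction K^{sat})$ is always an AEC in $\lambda^+$ --- which is precisely the smoothness problem that the relation $\preceq^{NF}_{\lambda^+}$ was introduced to circumvent. The actual proof in \cite{jrsh875} uses the $NF$-machinery essentially: one builds a filtration of the union along which basic types are realized cofinally by transferring realizations across the $\widehat{NF}$-squares (using that $\widehat{NF}$ respects the frame, together with its extension and weak uniqueness properties), and only then applies Fact \ref{fact existence of sat}.
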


\begin{proposition}\label{remark preceq^{NF}}
Let $M_1,M_2$ be models of cardinality $\lambda^+$ with $M_1 \preceq M_2$. Then $M_1 \preceq^{NF}_{\lambda^+}M_2$ if and only if for every two filtrations $\langle M_{1,\alpha}:\alpha<\lambda^+ \rangle$ and $\langle M_{2,\alpha}:\alpha<\lambda^+ \rangle$ of $M_1$ and $M_2$ respectively, for some club $E$ of $\lambda^+$ for every $\alpha \in E$ we have $\widehat{NF}(M_{1,\alpha},M_{2,\alpha},M_1,M_2)$. 
\end{proposition}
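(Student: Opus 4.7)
My plan is to reduce this to Fact~\ref{preceq^{NF}-properties}(a), which already expresses $\preceq^{NF}_{\lambda^+}$ as a filtration-level club condition involving the pair-wise relation $NF$, and then to interpret the resulting club data as the filtrations required in Definition~\ref{definition of widehat{NF}}.

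The direction $(\Leftarrow)$ should be immediate from Definition~\ref{definition of preceq^{NF}}: since $M_1,M_2\in K_{\lambda^+}$ admit filtrations, I pick any such pair, apply the hypothesis to produce a club $E$, and for any $\alpha\in E$ read off $\widehat{NF}(M_{1,\alpha},M_{2,\alpha},M_1,M_2)$, which witnesses $M_1\preceq^{NF}_{\lambda^+}M_2$.

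For $(\Rightarrow)$, I would fix arbitrary filtrations $\langle M_{1,\alpha}:\alpha<\lambda^+\rangle$ and $\langle M_{2,\alpha}:\alpha<\lambda^+\rangle$ of $M_1,M_2$. Fact~\ref{preceq^{NF}-properties}(a) yields a club $E_0$ such that for all $\epsilon<\zeta$ in $E_0$, $NF(M_{1,\epsilon},M_{1,\zeta},M_{2,\epsilon},M_{2,\zeta})$. A standard coherence/continuity argument (using that $M_1\preceq M_2$ and both filtrations are continuous of size $\lambda$) provides a club $E_1$ on which $M_{1,\alpha}\preceq M_{2,\alpha}$; set $E:=E_0\cap E_1$. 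For a fixed $\alpha\in E$, I witness $\widehat{NF}(M_{1,\alpha},M_{2,\alpha},M_1,M_2)$ directly from Definition~\ref{definition of widehat{NF}} by the sub-filtration trick: enumerate $E\cap[\alpha,\lambda^+)$ as $\langle\gamma_\beta:\beta<\lambda^+\rangle$ increasingly and continuously with $\gamma_0=\alpha$, and set $N_{0,\beta}:=M_{1,\gamma_\beta}$, $N_{1,\beta}:=M_{2,\gamma_\beta}$. Continuity of $\langle\gamma_\beta\rangle$ transports continuity and cofinality from the original filtrations, so these two new sequences are filtrations of $M_1,M_2$ starting at $M_{1,\alpha},M_{2,\alpha}$.

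The only step that genuinely requires invoking something beyond bookkeeping is the successor condition in Definition~\ref{definition of widehat{NF}}, which asks for $NF(N_{0,\beta},N_{1,\beta},N_{0,\beta+1},N_{1,\beta+1})$, whereas Fact~\ref{preceq^{NF}-properties}(a) delivers the same four models in the order $NF(N_{0,\beta},N_{0,\beta+1},N_{1,\beta},N_{1,\beta+1})$. This is precisely the swap of the second and third arguments handled by the symmetry axiom for $NF$, Definition~\ref{definition of NF}(e). With that single reordering the successor condition is verified and $\widehat{NF}(M_{1,\alpha},M_{2,\alpha},M_1,M_2)$ follows, completing the proof. Beyond the argument-order swap, no real obstacle arises.
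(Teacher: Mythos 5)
Your proof is correct and follows essentially the same route as the paper's: the backward direction is immediate from Definition~\ref{definition of preceq^{NF}}, and the forward direction applies Fact~\ref{preceq^{NF}-properties}(a) and re-indexes the given filtrations along the club $E\cap[\alpha,\lambda^+)$ to witness $\widehat{NF}(M_{1,\alpha},M_{2,\alpha},M_1,M_2)$. Your explicit appeal to the symmetry axiom (Definition~\ref{definition of NF}(e)) to swap the second and third arguments of $NF$ addresses a detail the paper's proof leaves implicit, and it is the right fix.
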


\begin{proof}
 
On the one hand, suppose $M_1 \preceq^{NF}_{\lambda^+}M_2$. Let $\langle M_{1,\alpha}:\alpha<\lambda^+ \rangle$ and $\langle M_{2,\alpha}:\alpha<\lambda^+ \rangle$ be two filtrations of $M_1$ and $M_2$ respectively. By Fact \ref{preceq^{NF}-properties}(a), for some club $E$ of $\lambda^+$, for every $\epsilon,\zeta \in E$ if $\epsilon<\zeta$ then $NF(M_{1,\epsilon},M_{1,\zeta},M_{2,\epsilon},M_{2,\zeta})$. Let $\alpha \in E$. Then the filtrations $\langle M_{1,\epsilon}:\epsilon \in E-\alpha \rangle$ and $\langle M_{2,\epsilon}:\epsilon \in E-\alpha \rangle$ wittness that $\widehat{NF}(M_{1,\alpha},M_{2,\alpha},M_1,M_2)$. 

Conversely, we have to prove that $M_1 \preceq^{NF}_{\lambda^+} M_2$. Take filtrations $\langle M_{1,\alpha}:\alpha<\lambda^+ \rangle$ and $\langle M_{2,\alpha}:\alpha<\lambda^+ \rangle$ of $M_1$ and $M_2$ respectively. By assumption, for some club $E$ of $\lambda^+$ for every $\alpha \in E$, we have $\widehat{NF}(M_{1,\alpha},M_{2,\alpha},M_1,M_2)$. Take $\alpha \in E$. Since $\widehat{NF}(M_{1,\alpha},M_{2,\alpha},M_1,M_2)$, by Definition \ref{definition of preceq^{NF}}, $M_1 \preceq^{NF}_{\lambda^+}M_2$. 

\end{proof}

\section{Are the Relations $\preceq^{NF}_{\lambda^+}$ and $\preceq \restriction K_{\lambda^+}$ Equivalent?}  

The following question is open:
\begin{question}\label{equivalent?}
Let $\frak{s}$ be a semi-good non-forking $\lambda$-frame. Suppose that $(K_{\lambda^+},\preceq \restriction K_{\lambda^+})$ satisfies the amalgamation property and the class of uniqueness triples satisfies the existence property. Given $M,M^+ \in K_{\lambda^+}$. Is Statement \ref{statement equivalence} true?
\end{question}

\begin{statement}\label{statement equivalence}
$$M \preceq M^+ \text{ if and only if } M \preceq^{NF}_{\lambda^+}      M^+              $$
\end{statement}

Recall \cite[Definition 4.7]{jrprime}:
\begin{definition}
$\frak{s}$ is said to be \emph{successful semi-good$^+$} when Statement \ref{statement equivalence} holds for every $M,M^+ \in K^{sat}$.
\end{definition}
 
Here, in Theorem \ref{we can use NF with tameness for saturated models}, we prove that $\frak{s}$ is successful semi-good$^+$ assuming amalgamation in $\lambda^+$ and $(\lambda,\lambda^+)$-tameness for non-forking types over saturated models. In Theorem \ref{we can use NF}, we prove Statement \ref{statement equivalence}, for every $M,M^+ \in K_{\lambda^+}$, under a stronger assumption.



Before stating Theorem \ref{we can use NF}, we make several preparations. Fact \ref{fact density of basic types over models of greater cardinality} is a restatement of \cite[Theorem 2.6.8.a]{jrsh875}.
\begin{fact}\label{fact density of basic types over models of greater cardinality}
If $N,N^+$ are models of cardinality $\lambda$ at least with $N \prec N^+$ then there is an element $a \in N^+-N$ such that $tp(a,N,N^+)$ is basic. 
\end{fact}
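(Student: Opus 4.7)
The plan is to reduce the claim to the density axiom of $\frak{s}$ at cardinality $\lambda$ via an LST back-and-forth, and then lift the basic-type witness using the apparatus of Definitions \ref{preparation for forking for big models}--\ref{definition of basic types over models of greater cardinality}. First I would isolate small models in strict extension position. Pick any $b \in N^+ - N$. Using LST and a back-and-forth of length $\omega$, construct $\preceq$-increasing chains $\langle N_0^{(n)} : n<\omega \rangle$ inside $N$ and $\langle N_0^{+,(n)} : n<\omega \rangle$ inside $N^+$, each of cardinality $\lambda$, with $b \in N_0^{+,(0)}$ and, for every $n$, $N_0^{(n)} \subseteq N_0^{+,(n)}$ and $N_0^{+,(n)} \cap N \subseteq N_0^{(n+1)}$. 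Setting $N_0 := \bigcup_n N_0^{(n)}$ and $N_0^+ := \bigcup_n N_0^{+,(n)}$, closure under unions gives $N_0 \preceq N$ and $N_0^+ \preceq N^+$; coherence applied inside $N^+$ to $N_0 \subseteq N_0^+$ (both $\preceq N^+$) yields $N_0 \preceq N_0^+$. By construction $N_0^+ \cap N = N_0$ and $b \in N_0^+ - N_0$, so $N_0 \prec N_0^+$. Apply density in $\frak{s}$ to choose $a \in N_0^+ - N_0$ with $p_0 := tp(a, N_0, N_0^+) \in S^{bs}(N_0)$; non-algebraicity of $p_0$ combined with $N_0^+ \cap N = N_0$ yields $a \in N^+ - N$.

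It remains to verify that $tp(a, N, N^+)$ is basic. By Definition \ref{definition of basic types over models of greater cardinality} it suffices to exhibit $N_0$ as the small witness, which by Definitions \ref{forking for big models} and \ref{preparation for forking for big models} amounts to showing $\dnf(N_0, a, M, M^+)$ for each $M \in K_\lambda$ with $N_0 \preceq M \preceq N$ and a suitable $M^+ \in K_\lambda$ containing $\{a\} \cup M$ with $M^+ \preceq N^+$. Given such an $M$, I would use extension in $\frak{s}$ to take the non-forking extension $q \in S^{bs}(M)$ of $p_0$, realize it by some $a^\dagger \in M^\dagger \succeq M$ of size $\lambda$, and amalgamate $M^\dagger$ over $M$ with a small $M^+ \preceq N^+$ containing $M \cup \{a\}$ (produced by LST inside $N^+$). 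Uniqueness of non-forking extensions in $\frak{s}$ then identifies the two realizations of $p_0$ over $M$ inside the amalgam, yielding $tp(a, M, M^+) = q$ and hence $\dnf(N_0, a, M, M^+)$.

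The main obstacle will be this final identification: arranging the amalgamation in $\lambda$ so that the two realizations $a$ and $a^\dagger$ of $p_0$ are pinned down by uniqueness of non-forking extensions in $\frak{s}$, and checking that the resulting $\dnf$ assertion is independent of the auxiliary choice of $M^+$ (so that monotonicity transports it back to $N^+$). Beyond that, the construction is routine AEC bookkeeping and should reproduce the argument in \cite[Theorem 2.6.8.a]{jrsh875}.
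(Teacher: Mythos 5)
Your construction of the small pair $(N_0,N_0^+)$ with $N_0=N_0^+\cap N$, $N_0\prec N_0^+$, and the application of density in $\frak{s}$ to obtain $a\in N_0^+-N_0\subseteq N^+-N$ with $p_0=tp(a,N_0,N_0^+)\in S^{bs}(N_0)$ are fine. The gap is in the last step, exactly the one you flag as the main obstacle: nothing guarantees that this particular $a$ satisfies the definition of basicness over $N$ (Definition \ref{definition of basic types over models of greater cardinality}) with witness $N_0$, and in general it does not. Realizing a basic type over $N_0$ inside $N_0^+$ puts no constraint on how $a$ relates to the part of $N$ outside $N_0$. For instance, in a frame coming from the theory of an equivalence relation with infinitely many infinite classes, take $a\in N^+-N$ lying in the class of some $c\in N-N_0$; then $tp(a,N_0,N_0^+)$ is basic, but $tp(a,M,N^+)$ forks over $N_0$ for any $M\in K_\lambda$ with $N_0\preceq M\preceq N$ containing $c$, so $N_0$ is the wrong base (density still holds, but only via a base containing $c$). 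Your proposed repair by uniqueness is circular: to conclude $tp(a,M,M^+)=q$ from the fact that $a$ and $a^\dagger$ both realize $p_0$ over $N_0$, the uniqueness axiom requires as a hypothesis that $tp(a,M,M^+)$ does not fork over $N_0$, which is precisely what is to be proven. Equivalently, an amalgamation of $M^+$ and $M^\dagger$ over $M$ identifying $a$ with $a^\dagger$ exists if and only if the two types over $M$ are already equal, so no choice of amalgamation can force that equality.

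What is actually needed --- and what the paper relies on by citing \cite[Theorem 2.6.8]{jrsh875} rather than reproving the fact --- is a chain argument combined with the local character axiom. One builds increasing continuous chains $\langle N_i:i\leq\delta\rangle$ and $\langle N_i^+:i\leq\delta\rangle$ of models in $K_\lambda$ for a limit $\delta<\lambda^+$, with $N_i=N_i^+\cap N$, $N_i\preceq N$, $N_i^+\preceq N^+$ and a fixed $b\in N_0^+-N$, arranging at successor stages that for each of the at most $\lambda$ relevant elements of $N_i^+-N_i$ a potential forking witness $M\preceq N$ is absorbed into $N_{i+1}$; one then applies density in $\frak{s}$ to $N_\delta\prec N_\delta^+$ and local character to find $a$ and $i<\delta$ such that $tp(a,N_\delta,N_\delta^+)$ does not fork over $N_i$, and uses the catching-up to transfer non-forking over $N_i$ to every $M\in K_\lambda$ with $N_i\preceq M\preceq N$. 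Your two-model construction skips both the chain and the local character axiom, and without them the lift from ``basic over $N_0$'' to ``basic over $N$'' fails.
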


In Definition \ref{our pairs}, we define a partial order $(A,<_A)$, which playing an important role in the proof of Theorem \ref{we can use NF}. In Propositions \ref{importance of maximal} and \ref{<_A satisfies axiom c}, we present two properties of this partial order.

\begin{definition}\label{our pairs}
Define $A$ as the class of pairs, $(M_1,M^+_1)$ of models of cardinality $\lambda^+$ with $M_1 \preceq M^+_1$. 
Define a strict partial order, $<_A$
on $A$, by: $(M_1,M^+_1) <_A (M_2,M^+_2)$ when the following hold:
\begin{enumerate}
\item $M_1 \preceq^{NF}_{\lambda^+} M_2$, 
\item $M^+_1 \preceq M^+_2$,
\item $M_2 \cap M^+_1 \neq M_1$.
\end{enumerate}

\begin{displaymath}
\xymatrix{M_2 \ar[r]^{id} & M^+_2 \\ 
M_1 \ar[r]^{id} \ar[u]^{\preceq^{NF}_{\lambda^+}} & M^+_1 \ar[u]^{id}  
}
\end{displaymath}

\end{definition}

\begin{proposition}\label{importance of maximal}
Assume that $(K,\preceq)$ satisfies the strong $(\lambda,\lambda^+)$-tameness for non-forking types property. Let $(N,N^+) \in A$. If $(N,N^+)$ is $<_A$-maximal then $N=N^+$.
\end{proposition}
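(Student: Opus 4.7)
The plan is to prove the contrapositive: assuming $N \prec N^+$, I will construct $(N_2', N^+_2) \in A$ witnessing $(N,N^+) <_A (N_2', N^+_2)$, contradicting maximality.

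First, applying Fact \ref{fact density of basic types over models of greater cardinality} to the strict inclusion $N \prec N^+$, I pick $a \in N^+ - N$ with $p := tp(a, N, N^+) \in S^{bs}(N)$, and a model $M_0 \in K_\lambda$ with $M_0 \preceq N$ such that $p$ does not fork over $M_0$. Next I choose $M_1 \in K_\lambda$ extending $M_0$ with $M_1 \cap N = M_0$ and containing some $b \in M_1 - M_0$ that realizes $p \restriction M_0$, and invoke the extension property of $\widehat{NF}$ (Fact \ref{the widehat{NF}-properties}(c)) to find $N_2 \in K_{\lambda^+}$ with $\widehat{NF}(M_0, M_1, N, N_2)$. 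By Definition \ref{definition of preceq^{NF}}, this yields $N \preceq^{NF}_{\lambda^+} N_2$.

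The key step is to fuse $a$ and $b$ via an amalgamation. By Proposition \ref{a new version of respecting the frame}, the type $tp(b, N, N_2)$ does not fork over $M_0$; since it restricts to $p \restriction M_0$, uniqueness of non-forking extensions forces $tp(b, N, N_2) = p = tp(a, N, N^+)$. I then apply strong $(\lambda, \lambda^+)$-tameness for non-forking types with $M_0^- := M_0$, $a_1 := a$, $a_2 := b$: its hypotheses hold, so $(N, N^+, a) E^* (N, N_2, b)$. This yields an amalgamation $(f_1, f_2, N^+_2)$ of $N^+$ and $N_2$ over $N$ with $f_1(a) = f_2(b)$, which I may take of cardinality $\lambda^+$ and with $f_1 = \id_{N^+}$.

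Setting $N_2' := f_2[N_2]$, I get $a = f_2(b) \in N_2' \cap N^+$ while $a \notin N$, so $N_2' \cap N^+ \neq N$. Since $f_2$ fixes $N$ (and hence $M_0 \subseteq N$) pointwise, closure under isomorphism (Definition \ref{definition of NF}(g), lifted to $\widehat{NF}$) translates $\widehat{NF}(M_0, M_1, N, N_2)$ into $\widehat{NF}(M_0, f_2[M_1], N, N_2')$, giving $N \preceq^{NF}_{\lambda^+} N_2'$. Hence $(N_2', N^+_2) \in A$ and all three clauses of Definition \ref{our pairs} hold, witnessing $(N, N^+) <_A (N_2', N^+_2)$ and contradicting maximality. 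The main subtlety I foresee is this last transfer of $\widehat{NF}$ through $f_2$: it is the bridge between the purely combinatorial output of strong tameness (an amalgam identifying $a$ and $b$) and the structural conclusion required in the poset $A$.
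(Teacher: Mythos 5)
Your proof is correct and follows essentially the same route as the paper's: realize the basic type $tp(a,N,N^+)$ over a small base $M_0$ by some $b$ in a $\lambda$-sized extension, use the extension property of $\widehat{NF}$ to place $b$ in an $\widehat{NF}$-extension of $N$, apply Proposition \ref{a new version of respecting the frame} plus strong $(\lambda,\lambda^+)$-tameness for non-forking types to fuse $a$ and $b$, and transport $\widehat{NF}$ through the resulting embedding via closure under isomorphisms. One cosmetic remark: your intermediate claim that ``uniqueness of non-forking extensions'' gives $tp(b,N,N_2)=p$ over the $\lambda^+$-sized model $N$ is not available from the frame axioms alone (that is exactly what tameness is for), but it is also never used --- the application of strong tameness only needs agreement over $M_0$ and non-forking over $M_0$, which you verify independently.
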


\begin{proof}
Let $(N,N^+)$ be a pair in $A$ with $N \neq N^+$. We should prove that $(N,N^+)$ is not $<_A$-maximal. By Fact \ref{fact density of basic types over models of greater cardinality} (density of basic types over models of cardinality greater than $\lambda$), for some $a \in N^+-N$ the type $tp(a,N,N^+)$ is basic. So there is $N^- \in K_\lambda$ such that $N^- \preceq N$ and $tp(a,N,N^+)$ does not fork over $N^-$. For some $N^-_1 \in K_\lambda$ and some $b \in N^-_1$ we have $tp(b,N^-,N^-_1)=tp(a,N^-,N^+)$.
\begin{displaymath}
\xymatrix{b \in N^-_1 \ar[rr]^{f} && N_1 \ar[r]^{g} & N_1^+ \\ 
N^- \ar[rr]^{id}  \ar[u]^{id} && N \ar[r]^{id} \ar[u]^{\preceq^{NF}_{\lambda^+}}   & N^+ \ni a \ar[u]^{id}
}
\end{displaymath}
By Fact \ref{the widehat{NF}-properties}(c), for some amalgamation $(id \restriction N,f,N_1)$ of $N$ and $N^-_1$ over $N^-$ we have $\widehat{NF}(N^-,f[N^-_1],N,N_1)$. So by Proposition \ref{second version of respecting the frame}, $tp(f(b),N,N_1)$ does not fork over $N^-$. So by the strong $(\lambda,\lambda^+)$-tameness for non-forking types property, there is an amalgamation $(id \restriction N^+,g,N_1^+)$ of $N^+$ and $N_1$ over $N$ with $g(f(b))=a$. We have $$(N,N^+)<_A(g[N_1],N_1^+)$$ (because $N^+ \preceq N_1^+$, $N \preceq^{NF}_{\lambda^+} g[N_1]$ and $a \in g[N_1] \cap N^+-N$). 
\end{proof}

\begin{proposition}\label{<_A satisfies axiom c}
If $\langle (M_\alpha,M^+_\alpha):\alpha<\delta \rangle$ is a $<_A$-increasing continuous sequence of pairs in $A$ then $$(M_\beta,M^+_\beta)<_A(\bigcup_{\alpha<\delta}M_\alpha,\bigcup_{\alpha<\delta}M^+_\alpha)$$ for each $\beta<\delta$.
\end{proposition}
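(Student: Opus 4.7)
The plan is to verify the three defining clauses of $<_A$ from Definition \ref{our pairs} for the pair $(M_\beta,M_\beta^+)$ versus $(M_\delta,M_\delta^+)$, where I write $M_\delta:=\bigcup_{\alpha<\delta}M_\alpha$ and $M_\delta^+:=\bigcup_{\alpha<\delta}M_\alpha^+$. The statement is meaningful only when $\delta$ is a limit ordinal (otherwise, taking $\beta=\delta-1$ would contradict the strictness of $<_A$), so I implicitly work under that convention; one also takes $\delta<\lambda^{++}$, so that the unions lie in $K_{\lambda^+}$.

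First I would dispatch clause (2): because $(M_\alpha,M_\alpha^+)<_A(M_{\alpha+1},M_{\alpha+1}^+)$ forces $M_\alpha^+\preceq M_{\alpha+1}^+$, the sequence $\langle M_\alpha^+:\alpha<\delta\rangle$ is a $\preceq$-increasing continuous chain in $K$. By the AEC axioms on chains, $M_\beta^+\preceq M_\delta^+$. Similarly the sequence $\langle M_\alpha:\alpha<\delta\rangle$ is $\preceq$-increasing continuous, so $M_\beta\preceq M_\delta$ and, together with clause (2), the pair $(M_\delta,M_\delta^+)$ lies in $A$.

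Next I would handle clause (1): the restricted sequence $\langle M_\alpha:\beta\le\alpha<\delta\rangle$ is $\preceq^{NF}_{\lambda^+}$-increasing continuous (increasing by hypothesis, continuous because the original sequence is), and its union equals $M_\delta$. Applying Fact \ref{preceq^{NF}-properties}(d) to this subsequence, I obtain $M_\beta\preceq^{NF}_{\lambda^+}M_\delta$.

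Finally, for clause (3), the disjointness condition, I use that $\delta$ is a limit ordinal, so $\beta+1<\delta$. The hypothesis $(M_\beta,M_\beta^+)<_A(M_{\beta+1},M_{\beta+1}^+)$ supplies an element $c\in(M_{\beta+1}\cap M_\beta^+)\setminus M_\beta$; since $M_{\beta+1}\subseteq M_\delta$, the element $c$ also witnesses $M_\delta\cap M_\beta^+\neq M_\beta$. This completes the three clauses and hence $(M_\beta,M_\beta^+)<_A(M_\delta,M_\delta^+)$. There is no serious obstacle here — the argument is pure bookkeeping — the only thing to be careful about is keeping track of the index $\beta$ when invoking Fact \ref{preceq^{NF}-properties}(d), and observing that the non-triviality hypothesis at stage $\beta+1$ propagates to stage $\delta$ because unions only enlarge intersections.
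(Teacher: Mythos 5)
Your proof is correct and follows essentially the same route as the paper's: reduce to verifying the three clauses of Definition \ref{our pairs}, using smoothness for membership in $A$, Fact \ref{preceq^{NF}-properties}(d) for $M_\beta\preceq^{NF}_{\lambda^+}\bigcup_{\alpha<\delta}M_\alpha$, and the AEC chain axioms for $M_\beta^+\preceq\bigcup_{\alpha<\delta}M_\alpha^+$. You are in fact slightly more thorough than the paper, which leaves clause (3) (the non-triviality of the intersection) implicit, whereas you correctly observe that the witness at stage $\beta+1$ persists to the union.
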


\begin{proof}
Without loss of generality, $\beta=0$. We should prove that $(M_0,M^+_0)<_A(\bigcup_{\alpha<\delta}M_\alpha,\bigcup_{\alpha<\delta}M^+_\alpha)$.
By smoothness (one of the AEC's axioms), $\bigcup_{\alpha<\delta}M_\alpha \preceq \bigcup_{\alpha<\delta}M^+_\alpha$, so $(\bigcup_{\alpha<\delta}M_\alpha,\bigcup_{\alpha<\delta}M^+_\alpha) \in A$. By Fact \ref{basic properties of preceq^{NF}_{lambda^+}}(d), $M_0 \preceq^{NF}_{\lambda^+} \bigcup_{\alpha<\delta}M_\alpha$. By the definition of AEC, $M^+_0 \preceq \bigcup_{\alpha<\delta}M^+_\alpha$.
\end{proof}

\begin{theorem}\label{we can use NF}
Suppose:
\begin{enumerate}
\item Hypothesis \ref{hypothesis for bar{NF}},
\item $(K,\preceq)$ satisfies the amalgamation in $\lambda^+$ property and \item $(K,\preceq)$ satisfies the $(\lambda,\lambda^+)$-tameness for non-forking types property.
\end{enumerate}
Then for every two models $M,M^+$ of cardinality $\lambda^+$ the following holds: $$M \preceq M^+ \Leftrightarrow M \preceq^{NF}_{\lambda^+}M^+.$$
\end{theorem}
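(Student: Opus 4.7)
The $\Leftarrow$ direction is immediate from Definition~\ref{definition of preceq^{NF}}, which explicitly requires $M \preceq M^+$ as part of $M \preceq^{NF}_{\lambda^+} M^+$. For the $\Rightarrow$ direction, assume $M \preceq M^+$. First I would apply Proposition~\ref{strong iff amalgamation (for non-forking types)}: the hypotheses of amalgamation in $\lambda^+$ and $(\lambda,\lambda^+)$-tameness for non-forking types combine into the \emph{strong} $(\lambda,\lambda^+)$-tameness for non-forking types property, which is exactly what Proposition~\ref{importance of maximal} needs. This will let me use that proposition as the local engine of the whole argument.

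The plan is to construct a $<_A$-increasing continuous chain $\langle (M_\alpha, M_\alpha^+) : \alpha \leq \lambda^+ \rangle$ in $A$ starting from $(M_0,M_0^+) := (M, M^+)$, engineered so that $M^+ \subseteq M_{\lambda^+} := \bigcup_{\alpha<\lambda^+} M_\alpha$. Fix an enumeration $M^+ \setminus M = \{a_\gamma : \gamma < \mu\}$ for $\mu \leq \lambda^+$. Limit stages are handled by taking unions: Proposition~\ref{<_A satisfies axiom c} guarantees the union is a genuine $<_A$-extension of every earlier pair. At successor stages $\alpha+1$, I carry out the two-step amalgamation used in the proof of Proposition~\ref{importance of maximal}: pick a basic-type element $a \in M_\alpha^+ \setminus M_\alpha$, produce an $\widehat{NF}$-extension $N_1$ of the first coordinate realizing the appropriate non-forking type via Fact~\ref{the widehat{NF}-properties}(c), and then amalgamate $M_\alpha^+$ with $N_1$ over $M_\alpha$ using strong tameness, identifying the chosen witness with $a$. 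A bookkeeping scheme arranges the enumeration so that every $a_\gamma$ is placed into $M_{\gamma'}$ at some stage $\gamma' \leq \gamma + 1$.

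Once the chain is complete, $M^+ \subseteq M_{\lambda^+}$, and since $M^+ \preceq M_{\lambda^+}^+$ with $M_{\lambda^+} \preceq M_{\lambda^+}^+$, AEC coherence yields $M^+ \preceq M_{\lambda^+}$. By continuity of $\preceq^{NF}_{\lambda^+}$ along $<_A$-chains (Fact~\ref{basic properties of preceq^{NF}_{lambda^+}}(d)), $M = M_0 \preceq^{NF}_{\lambda^+} M_{\lambda^+}$. Combining $M \preceq M^+ \preceq M_{\lambda^+}$ with $M \preceq^{NF}_{\lambda^+} M_{\lambda^+}$, Fact~\ref{basic properties of preceq^{NF}_{lambda^+}}(c) delivers the desired $M \preceq^{NF}_{\lambda^+} M^+$. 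The main obstacle is the successor step: the construction in Proposition~\ref{importance of maximal} can only place basic-type elements into the first coordinate, yet a generic $a_\gamma \in M^+ \setminus M_\alpha$ need not have basic type over $M_\alpha$. The cleanest workaround is to invoke Remark~\ref{local character almost implies continuity} to assume WLOG that basic types coincide with non-algebraic types at level $\lambda$; this percolates to level $\lambda^+$ (every non-algebraic type gains a $\lambda$-sized base), making every $a \in M_\alpha^+ \setminus M_\alpha$ admissible and the enumeration uniform. Absent that simplification, one interleaves targeted additions with generic basic-type extensions guaranteed by Fact~\ref{fact density of basic types over models of greater cardinality}, and argues that the growing stock of size-$\lambda$ submodels of $M_\alpha$ eventually renders each $a_\gamma$ basic and hence absorbable.
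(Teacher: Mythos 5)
Your reduction to Proposition~\ref{strong iff amalgamation (for non-forking types)} and your use of Proposition~\ref{importance of maximal} as the ``local engine'' match the paper, and the final assembly ($M \preceq M^+ \preceq M_{\lambda^+}$ together with $M \preceq^{NF}_{\lambda^+} M_{\lambda^+}$ and Fact~\ref{basic properties of preceq^{NF}_{lambda^+}}(c)) is sound. But your global strategy --- a bookkeeping chain of length $\lambda^+$ that swallows all of $M^+$ into the first coordinates --- has a genuine gap at exactly the point you flag: the successor step of Proposition~\ref{importance of maximal} can only absorb an element $a$ whose type over the current first coordinate is \emph{basic}, i.e.\ does not fork over some $\lambda$-sized submodel, and a prescribed $a_\gamma \in M^+\setminus M_\alpha$ need not have this property. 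Neither repair works. Invoking Remark~\ref{local character almost implies continuity} to pass to $S^{bs}=S^{na}$ is not ``without loss of generality'' here: it replaces the frame $\frak{s}$, and with it the class of uniqueness triples, the relations $NF$, $\widehat{NF}$ and $\preceq^{NF}_{\lambda^+}$, and the tameness-for-non-forking-types hypothesis, all of which appear in the statement being proved; one would have to re-derive Hypothesis~\ref{hypothesis for bar{NF}} for the new frame and show the two versions of $\preceq^{NF}_{\lambda^+}$ agree (and the remark itself rests on an unpublished work-in-progress which the paper explicitly does not use). Even then, showing that every non-algebraic type over a $\lambda^+$-sized model acquires a $\lambda$-sized non-forking base is a separate local-character argument you do not supply. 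The second workaround is worse: enlarging $M_\alpha$ gives no mechanism whatsoever for the fixed type $tp(a_\gamma,M_\alpha,M^+_\alpha)$ to become basic, and Fact~\ref{fact density of basic types over models of greater cardinality} only produces \emph{some} basic-type element, with no control over which.

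The paper's proof avoids this obstacle entirely by never trying to capture a prescribed element. It shows it suffices to find a $<_A$-maximal pair above $(M,M^+)$ (maximality plus Proposition~\ref{importance of maximal} forces $N=N^+$), and obtains such a pair by contradiction: if no pair were maximal, one could build a $<_A$-increasing continuous chain of length $\lambda^{++}$, and comparing the two filtrations $\langle M_\alpha\rangle$ and $\langle M^+_\alpha\cap M_{\lambda^{++}}\rangle$ of the union yields some $\alpha$ with $M^+_\alpha\cap M_{\alpha+1}=M_\alpha$, contradicting clause (3) of Definition~\ref{our pairs}. At each step only \emph{some} basic-type element (supplied by density) needs to be absorbed, and termination comes from this cardinality argument rather than from exhausting $M^+$. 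If you want to salvage your construction, you should adopt this indirect termination argument rather than the catch-up bookkeeping.
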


\begin{proof}
If $M \preceq^{NF}_{\lambda^+}M^+$ then by definition $M \preceq M^+$. 

Conversely, suppose $M \preceq M^+$. Without loss of generality, $M \neq M^+$.
 
It is sufficient to find a pair $(N,N^+) \in A$ such that $(M,M^+)<_A(N,N^+)$ and $N=N^+$, because it yields $M^+ \preceq N$ and $M \preceq^{NF}_{\lambda^+}N$ and so by Fact \ref{preceq^{NF}-properties}(c), $M \preceq^{NF}_{\lambda^+}M^+$. By Proposition \ref{strong iff amalgamation (for non-forking types)}, the strong $(\lambda,\lambda^+)$-tameness for non-forking types property holds. Hence, by Proposition \ref{importance of maximal}, it is sufficient to find a pair $(N,N^+) \in A$ such that $(M,M^+)<_A(N,N^+)$ and $(N,N^+)$ is a  $<_A$-maximal pair in $A$. 
  
For the sake of a contradiction, assume that there is no $<_A$-maximal pair. We choose by induction on $\alpha<\lambda^{++}$ a pair $(M_\alpha,M^+_\alpha) \in A$ such that for every $\alpha<\lambda^{++}$, $(M_\alpha,M^+_\alpha) <_A (M_{\alpha+1},M^+_{\alpha+1})$ and for every limit $\alpha<\lambda^{++}$, $M_\alpha=\bigcup_{\beta<\alpha}M_\beta$ and $M^+_\alpha=\bigcup_{\beta<\alpha}M^+_\beta$ (so by Proposition \ref{<_A satisfies axiom c}, $(M_\beta,M^+_\beta)<_A(M_\alpha,M^+_\alpha)$ for each $\beta<\alpha$). Define $M_{\lambda^{++}}:=\bigcup_{\alpha<\lambda^{++}}M_\alpha$. The sequences $\langle M_\alpha:\alpha<\lambda^{++} \rangle$ and $\langle M^+_\alpha \cap M_{\lambda^{++}}:\alpha<\lambda^{++} \rangle$ are filtrations of $M_{\lambda^{++}}$. So for some $\alpha<\lambda^{++}$ (actually, for a club of $\alpha$'s) we have $M_\alpha=M^+_\alpha \cap M_{\lambda^{++}}$. So $$ M_\alpha \subseteq M^+_\alpha \cap M_{\alpha+1} \subseteq M^+_\alpha \cap M_{\lambda^{++}}=M_\alpha.$$
Therefore $M^+_\alpha \cap M_{\alpha+1}=M_\alpha$, which is impossible, because $(M_\alpha,M^+_\alpha) <_A (M_{\alpha+1},M^+_{\alpha+1})$. A contradiction. 
\end{proof}

Hypotheses (2) and (3) in Theorem \ref{we can use NF} relate to all the models of cardinality $\lambda^+$. Theorem \ref{we can use NF with tameness for saturated models} is one of the main theorems of the paper. It is a version of Theorem \ref{we can use NF}, where Hypotheses (2) and (3) relate to the saturated models (in $\lambda^+$ over $\lambda$) only. Before stating it, we make preparations.

The proof of Theorem \ref{we can use NF with tameness for saturated models} is similar to the  proof of Theorem \ref{we can use NF}, but more complicated. The main difficulty is in the proof of Proposition \ref{importance of maximal for saturated models}, the analogous of Proposition \ref{importance of maximal}. In the proof of Proposition \ref{importance of maximal}, we use $(\lambda,\lambda^+)$-tameness for non-forking types. But in order to apply the $(\lambda,\lambda^+)$-tameness for non-forking types over saturated models property, we should prove that the model $N$, appearing in the proof of Proposition \ref{importance of maximal}, is saturated in $\lambda^+$ over $\lambda$.

In order to overcome this difficulty, we replace the class of pairs $A$, by the class $B$ of pairs $(M_1,M_1^+) \in A$ such that  $M_1 \in K^{sat}$. Now we can apply the $(\lambda,\lambda^+)$-tameness for non-forking types over saturated models property. 

Unfortunately, a new problem arises: Not every $<_A$ extension of a pair in $B$ is in $B$. In order to solve this problem, we use the relation $\prec^+_{\lambda^+}$ and the $\prec^+_{\lambda^+}$-game. The relation $\prec^+_{\lambda^+}$ is defined in \cite[Definition 7.1.4]{jrsh875}, but only several properties of $\prec^+_{\lambda^+}$ are applied here, not its precise definition. The following fact exhibits the properties of $\prec^+_{\lambda^+}$, that are applied in the proof of Theorem \ref{we can use NF with tameness for saturated models}.
\begin{fact}\label{prec^+-properties}
There is a relation $\prec^+_{\lambda^+}$ on $K_{\lambda^+}$, satisfying the following properties:
\begin{enumerate}
\item for every $N_1 \in K_{\lambda^+}$ we can find $N_2$ such that $N_1\prec^+_{\lambda^+}N_2$, \item if $N_1\prec^+_{\lambda^+}N_2$ then $N_1 \preceq^{NF}_{\lambda^+} N_2$ and $N_2$ is saturated in $\lambda^+$ over $\lambda$ and \item Player 1 has a winning strategy in the $\prec^+_{\lambda^+}$-game (see Definition \ref{definition of the game} below).
\end{enumerate}
\end{fact}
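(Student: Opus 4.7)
The plan is to unpack the definition of $\prec^+_{\lambda^+}$ from \cite[Definition 7.1.4]{jrsh875} and verify each clause. A natural reading consistent with the rest of the paper is: $N_1 \prec^+_{\lambda^+} N_2$ when there exist filtrations $\langle N_{1,\alpha}:\alpha<\lambda^+\rangle$ of $N_1$ and $\langle N_{2,\alpha}:\alpha<\lambda^+\rangle$ of $N_2$ with $NF(N_{1,\alpha},N_{1,\beta},N_{2,\alpha},N_{2,\beta})$ for all $\alpha<\beta<\lambda^+$, together with a bookkeeping clause forcing every basic type over $N_{2,\alpha}$ to be realized in $N_{2,\alpha+1}$. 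With this formulation, clause (2) is essentially by inspection: the filtration witnessing $\prec^+_{\lambda^+}$ also witnesses $\widehat{NF}(N_{1,0},N_{2,0},N_1,N_2)$, so $N_1\preceq^{NF}_{\lambda^+}N_2$ by Definition \ref{definition of preceq^{NF}}, while the bookkeeping on $\langle N_{2,\alpha}\rangle$ exactly matches the hypothesis of Fact \ref{fact existence of sat}, giving $N_2\in K^{sat}$.

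For clause (1), given $N_1\in K_{\lambda^+}$, I would fix a filtration $\langle N_{1,\alpha}:\alpha<\lambda^+\rangle$ and construct $\langle N_{2,\alpha}:\alpha<\lambda^+\rangle$ by induction of length $\lambda^+$. At stage $0$, set $N_{2,0}:=N_{1,0}$ (or any $\preceq$-extension in $K_\lambda$). At a successor stage, use the extension property of $NF$ in Definition \ref{definition of non-forking relation on models}(c), produced by Fact \ref{if the uniqueness triples satisfy the existence property then NF}, to pick $N_{2,\alpha+1}\in K_\lambda$ with $NF(N_{1,\alpha},N_{1,\alpha+1},N_{2,\alpha},N_{2,\alpha+1})$; then interlace a standard $\lambda^+$-bookkeeping enumerating, before turn $\alpha$, all pairs (ordinal, basic type over some earlier $N_{2,\beta}$), and within the current step enlarge $N_{2,\alpha+1}$ so that the currently scheduled type gets realized. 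At limit stages, take unions; long transitivity (Definition \ref{definition of non-forking relation on models}(f)) preserves $NF$-compatibility of the two filtrations, so the construction continues. Setting $N_2:=\bigcup_{\alpha<\lambda^+}N_{2,\alpha}$, the resulting pair of filtrations witnesses $N_1\prec^+_{\lambda^+}N_2$.

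Clause (3) I would prove by presenting an explicit winning strategy for Player 1 in the $\prec^+_{\lambda^+}$-game (Definition \ref{definition of the game}, below). Player 1 maintains a $\prec^+_{\lambda^+}$-increasing chain and, by a diagonal schedule of the same kind used in clause (1), ensures that every challenge Player 2 can issue (realizing a basic type, amalgamating a previously presented triple, completing an $NF$-square over an earlier pair of filtration initial segments) gets discharged at some later turn. Continuity of $\widehat{NF}$ along $\preceq^{NF}_{\lambda^+}$-chains, which is Fact \ref{preceq^{NF}-properties}(d), guarantees that the union over all turns still lies in the $\prec^+_{\lambda^+}$-relation with the starting model, so Player 1 survives all limit stages.

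The main obstacle is clause (1) combined with clause (3): one must simultaneously preserve the $NF$-coherence of the two filtrations across $\lambda^+$ successor steps \emph{and} at every limit, while completing a $\lambda^+$-sized list of realization obligations. The bookkeeping itself is routine, but verifying that the $NF$-squares survive the limit unions is delicate and uses long transitivity plus monotonicity (Fact \ref{the widehat{NF}-properties}(b)); this is precisely the technology assembled in Chapter 7 of \cite{jrsh875}, and I expect the proof to conclude by pointing to the explicit construction there rather than re-deriving it.
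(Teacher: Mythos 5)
The paper does not prove this statement at all: it is a quoted \emph{Fact}, and the proof consists of three citations (Theorems 7.1.12(a), 7.1.10(a),(b) and 7.1.12(c) of \cite{jrsh875}), the relation itself being imported from \cite[Definition 7.1.4]{jrsh875}. Your decision to reconstruct the definition and re-derive the properties is therefore a genuinely different route, and it could in principle work, since the paper only ever uses the three listed properties and the game. However, your guessed definition has a concrete flaw. You require that every basic type over $N_{2,\alpha}$ be realized in $N_{2,\alpha+1}$. Under Hypothesis \ref{hypothesis 1} the frame is only \emph{semi}-good, so basic (almost) stability only gives $|S^{bs}(M)|\leq\lambda^+$ for $M\in K_\lambda$; when this bound is attained, no single $\preceq$-extension of $N_{2,\alpha}$ in $K_\lambda$ can realize all of $S^{bs}(N_{2,\alpha})$, so clause (1) (existence of $\prec^+_{\lambda^+}$-extensions) fails outright for your relation. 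The realization obligations must be spread across the whole filtration, exactly as in condition (3) of Fact \ref{fact existence of sat} (``realized in some $M_\beta$ with $\alpha<\beta<\lambda^+$''); your own bookkeeping in the construction for clause (1) implicitly does this, but then the object you build does not satisfy the definition you stated, so clauses (2) and (3) are being verified against the wrong relation.

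The sketch of clause (3) also does not engage with the game as actually defined. There is no ``Player 2'' and there are no ``challenges'' such as ``amalgamating a previously presented triple'': Player 0's only move at round $\alpha+1$ is to pick $N_{0,\alpha+1}$ with $N_{0,\alpha+1}\cap N_{1,\alpha}=N_{0,\alpha}$, and Player 1 must answer with $N_{1,\alpha+1}$ making the $NF$-square true. A winning strategy is: apply the extension property of $NF$ (Definition \ref{definition of non-forking relation on models}(c), using the disjointness hypothesis supplied by the rules), then enlarge $N_{1,\alpha+1}$ by monotonicity to realize the $\leq\lambda$ many scheduled types, where the schedule covers all of $\bigcup_\alpha S^{bs}(N_{1,\alpha})$ over the $\lambda^+$ rounds. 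The appeal to Fact \ref{preceq^{NF}-properties}(d) at limits is beside the point: the limit moves are forced unions, and what must be checked is that the two sequences produced by the play witness the (corrected) definition of $\prec^+_{\lambda^+}$, which is by construction plus long transitivity. Since all of this is precisely the content of \cite[\S 7.1]{jrsh875}, the honest fix is either to cite it, as the paper does, or to state your own definition in the eventually-realized form and redo clauses (2) and (3) against that definition.
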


\begin{proof}
Clauses (1)-(3) of Fact \ref{prec^+-properties} are restatements of Theorems 7.1.12(a), 7.1.10(a),(b) and 7.1.12(c) of \cite{jrsh875} respectively.
\end{proof}

We restate \cite[Definition 7.1.11]{jrsh875} as follows:
\begin{definition}\label{definition of the game}
The \emph{$\prec^+_{\lambda^+}$-game} is a game between two players, Player 0 and Player 1. The game has $\lambda^+$ rounds. In any round, the players
choose two models, $N_{0,\alpha},N_{1,\alpha}$ in $K_\lambda$ (usually, Player 0 chooses the model $N_{0,\alpha}$ and Player 1 chooses the model $N_{1,\alpha}$, but in the first round, Player 0 chooses both) with the following rules:

The first round: Player 0 chooses models $N_{0,0},N_{1,0} \in
K_\lambda$ with $N_{0,0} \preceq N_{1,0}$ and Player 1 does not do
anything.

The $\alpha$ round where $\alpha$ is limit: Player 0 must choose
$N_{0,\alpha}:=\bigcup_{\beta<\alpha} N_{0,\beta}$ and Player 1
must choose $N_{1,\alpha}:=\bigcup_{\beta<\alpha} N_{1,\beta}$.

The $\alpha+1$ round: Player 0 chooses a model $N_{0,\alpha+1}$
such that the following hold:
\begin{enumerate}
\item $N_{0,\alpha} \preceq N_{0,\alpha+1}$. \item $N_{0,\alpha+1}
\bigcap N_{1,\alpha}=N_{0,\alpha}$.
\end{enumerate}

After Player 0 chooses $N_{0,\alpha+1}$, Player 1 has to choose
$N_{1,\alpha+1}$ such that $NF(N_{0,\alpha},N_{1,\alpha},N_{0,\alpha+1},N_{1,\alpha+1})$.

At the end of the play, Player 1 wins the game if $$\bigcup_{\alpha<\lambda^+}
N_{0,\alpha} \prec^+_{\lambda^+} \bigcup_{\alpha<\lambda^+}
N_{1,\alpha}.$$ Otherwise Player 0 wins the game.

\emph{A position after $\alpha+\frac{1}{2}$ rounds} is a triple
$$(\alpha,\langle
N_{0,\beta}:\beta \leq \alpha+1 \rangle,\langle N_{1,\beta}:\beta
\leq \alpha \rangle)$$ that satisfies the following conditions:
\begin{enumerate}
\item $\alpha<\lambda^+$. \item $\langle N_{0,\beta}:\beta \leq
\alpha+1 \rangle,\ \langle N_{1,\beta}:\beta \leq \alpha \rangle$
are increasing continuous sequences of models in $K_\lambda$.
\item
$NF(N_{0,\beta},N_{1,\beta},N_{0,\beta+1},N_{1,\beta+1})$ for
$\beta<\alpha$. \item $N_{0,\alpha+1} \bigcap
N_{1,\alpha}=N_{0,\alpha}$.
\end{enumerate}

\emph{A strategy for Player 1} is a function $F$ that assigns a
model $N_{1,\alpha+1}$ satisfying $NF(N_{0,\alpha},N_{1,\alpha},N_{0,\alpha+1},N_{1,\alpha+1})$ to each position after $\alpha+\frac{1}{2}$ rounds.
 
\emph{A winning strategy for Player 1} is a
strategy for Player 1, such that if Player 1 acts by it, then he
wins the game, no matter what Player 0 does.
\end{definition}

We define a partial order $(B,<_B)$ such that $B$ is the class of pairs $(M_1,M_1^+) \in A$ with $M_1 \in K^{sat}$ and $<_B$ is the restriction of $<_A$ to $B$.
\begin{definition}\label{our pairs where the first is saturated}
Define $B$ as the class of pairs, $(M_1,M^+_1)$ of models of cardinality $\lambda^+$ such that $M_1$ is saturated over $\lambda$ and $M_1 \preceq M^+_1$. 
Define a strict partial order, $<_B$
on $B$, by: $(M_1,M^+_1) <_B (M_2,M^+_2)$ when the following hold:
\begin{enumerate}
\item $M_1 \preceq^{NF}_{\lambda^+} M_2$, 
\item $M^+_1 \preceq M^+_2$,
\item $M_2 \cap M^+_1 \neq M_1$.
\end{enumerate}
\end{definition}

Proposition \ref{importance of maximal for saturated models} is the analog of Proposition \ref{importance of maximal}.
\begin{proposition}\label{importance of maximal for saturated models}
Assume the strong $(\lambda,\lambda^+)$-tameness for non-forking types over saturated models property. Let $(N,N^+) \in B$. If $(N,N^+)$ is $<_B$-maximal then $N=N^+$.
\end{proposition}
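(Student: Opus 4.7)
The plan is to imitate the proof of Proposition~\ref{importance of maximal}, with one extra step via $\prec^+_{\lambda^+}$ to keep the extending pair in $B$. Assume $N \neq N^+$, and aim to produce $(\bar N, \bar N^+) \in B$ with $(N,N^+) <_B (\bar N, \bar N^+)$, contradicting $<_B$-maximality.

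First, by density of basic types above $\lambda$ (Fact~\ref{fact density of basic types over models of greater cardinality}), pick $a \in N^+ \setminus N$ with $p := tp(a,N,N^+)$ basic, and choose $N^- \in K_\lambda$ with $N^- \preceq N$ over which $p$ does not fork. Realize $p \restriction N^-$ by an element $b$ in some $N_1^- \in K_\lambda$ with $N^- \preceq N_1^-$ and, after renaming so that $N_1^- \cap N = N^-$, apply the extension property of $\widehat{NF}$ (Fact~\ref{the widehat{NF}-properties}(c)) to obtain $M_1 \in K_{\lambda^+}$ with $\widehat{NF}(N^-, N_1^-, N, M_1)$. By Definition~\ref{definition of preceq^{NF}} this yields $N \preceq^{NF}_{\lambda^+} M_1$, and Proposition~\ref{second version of respecting the frame} gives that $tp(b, N, M_1)$ does not fork over $N^-$.

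The key new move addresses the obstacle flagged in the paragraph preceding the proposition: the amalgam $M_1$ need not be saturated, so any extending pair built from it would fall outside $B$. To repair this, apply Fact~\ref{prec^+-properties}(1) to choose $M_1' \in K_{\lambda^+}$ with $M_1 \prec^+_{\lambda^+} M_1'$. By Fact~\ref{prec^+-properties}(2), $M_1' \in K^{sat}$ and $M_1 \preceq^{NF}_{\lambda^+} M_1'$; transitivity of $\preceq^{NF}_{\lambda^+}$ (Fact~\ref{preceq^{NF}-properties}(b)) then gives $N \preceq^{NF}_{\lambda^+} M_1'$, and monotonicity of non-forking yields that $tp(b, N, M_1')$ does not fork over $N^-$ and still agrees with $p$ on $N^-$.

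Finally, since $N \in K^{sat}$ (because $(N,N^+) \in B$), the strong $(\lambda,\lambda^+)$-tameness for non-forking types over saturated models hypothesis applies to the two non-forking extensions $tp(b,N,M_1')$ and $tp(a,N,N^+)$ of $p \restriction N^-$ and supplies an amalgamation $(id_{N^+}, g, \bar N^+)$ of $N^+$ and $M_1'$ over $N$ with $g(b) = a$. Setting $\bar N := g[M_1']$, saturation transfers across $g$, so $\bar N \in K^{sat}$; invariance of $\preceq^{NF}_{\lambda^+}$ under isomorphisms gives $N \preceq^{NF}_{\lambda^+} \bar N$; and $a = g(b) \in \bar N \cap N^+ \setminus N$ witnesses the third clause of $<_B$. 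Hence $(\bar N, \bar N^+) \in B$ and $(N,N^+) <_B (\bar N, \bar N^+)$, contradicting the assumed maximality. The principal obstacle throughout is the non-saturation of the $\widehat{NF}$-amalgam $M_1$, which is precisely what Fact~\ref{prec^+-properties} overcomes.
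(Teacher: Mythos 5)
Your proof is correct, but it takes a genuinely different route from the paper's. The paper attacks the saturation problem head-on: it builds the extension of $N$ containing the copy $b$ of $a$ by an explicit $\lambda^+$-length induction that plays the $\prec^+_{\lambda^+}$-game against a filtration $\langle N_{0,\alpha}:\alpha<\lambda^+\rangle$ of $N$, using the \emph{winning strategy} of Player 1 (Fact \ref{prec^+-properties}(3)) together with a system of embeddings $f_\alpha$; the outcome $f^*[N]\prec^+_{\lambda^+}N_1$ then simultaneously delivers saturation of $N_1$, the relation $f^*[N]\preceq^{NF}_{\lambda^+}N_1$, and (via the constructed filtrations) the instance of $\widehat{NF}$ needed to see that $tp(b,f^*[N],N_1)$ does not fork over the $\lambda$-sized base. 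You instead decouple the two tasks: you first obtain a possibly non-saturated $\widehat{NF}$-amalgam $M_1$ exactly as in Proposition \ref{importance of maximal} (extension property, Fact \ref{the widehat{NF}-properties}(c), plus Proposition \ref{second version of respecting the frame}), and only afterwards repair saturation by a single application of Fact \ref{prec^+-properties}(1)--(2) followed by transitivity of $\preceq^{NF}_{\lambda^+}$ (Fact \ref{preceq^{NF}-properties}(b)) and monotonicity of non-forking in the ambient model. This is shorter and more modular --- it uses $\prec^+_{\lambda^+}$ purely as a black box and never invokes the game or its winning strategy --- and it makes transparent that the only new ingredient beyond Proposition \ref{importance of maximal} is the existence of saturated $\preceq^{NF}_{\lambda^+}$-extensions. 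What the paper's construction buys in exchange is the stronger conclusion $f^*[N]\prec^+_{\lambda^+}N_1$ (Clause (1) of its subclaim), which is more than $\preceq^{NF}_{\lambda^+}$ plus saturation, though that extra strength is not needed for the maximality argument. All the individual steps you use (disjointification of $N_1^-$ over $N^-$, the passage from $tp(b,N,M_1)$ to $tp(b,N,M_1')$, the application of strong tameness over the saturated base $N$, and invariance of $\preceq^{NF}_{\lambda^+}$ and of $K^{sat}$ under the isomorphism $g$) are legitimate under Hypothesis \ref{hypothesis for bar{NF}}, so the argument stands.
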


\begin{proof}
Let $(N,N^+)$ be a pair in $B$ with $N \neq N^+$. We should prove that $(N,N^+)$ is not $<_B$-maximal.
\begin{displaymath}
\xymatrix{b \in N_{1,0} \ar[rr]^{id} && N_{1,\alpha} \ar[r]^{id} & N_{1,\alpha+1} \ar[rr]^{id} && N_1 \ar[r]^{g} & N_1^+ \\ 
N_{0,0} \ar[rr]^{id}  \ar[u]^{f_0=id} && N_{0,\alpha} \ar[r]^{id} \ar[u]^{f_{\alpha}}  & N_{0,\alpha+1} \ar[rr]^{id} \ar[u]^{f_{\alpha+1}} && N \ar[r]^{id} \ar[u]^{f_{\lambda^+}} \ar[u]_{\prec^+_{\lambda^+}}   & N^+ \ni a \ar[u]_{f^*}
}
\end{displaymath}
By Fact \ref{fact density of basic types over models of greater cardinality} (density of basic types over models of cardinality greater than $\lambda$), there is an element $a \in N^+-N$ such that $tp(a,N,N^+)$ is basic. Let $\langle N_{0,\alpha}:\alpha<\lambda^+ \rangle$ be a filtration of $N$. By Definition \ref{definition of basic types over models of greater cardinality}, for some $\alpha<\lambda^+$, $tp(a,N,N^+)$ does not fork over $N_{0,\alpha}$. So by renaming, without loss of generality, $tp(a,N,N^+)$ does not fork over $N_{0,0}$. Let $N_{1,0} \in K_\lambda$ and let $b \in N_{1,0}$ such that $tp(b,N_{0,0},N_{1,0})=tp(a,N_{0,0},N^+)$. 

Define $f_0:N_{0,0} \to N_{1,0}$ by $f_0(x)=x$.
Let $F$ be a winning strategy for Player 1 in the $\prec^+_{\lambda^+}$-game. We choose $N_{1,\alpha} \in K_\lambda$ and an injection $f_\alpha:N_{0,\alpha} \to N_{1,\alpha}$ by induction on $\alpha \in (0,\lambda^+)$ such that the following hold: 
\begin{enumerate}
\item $f_{\alpha+1}(x)=f_{\alpha}(x)$, for each $x \in N_{0,\alpha}$, \item $f_{\alpha+1}[N_{0,\alpha+1}] \cap N_{1,\alpha}=f_{\alpha}[N_{0,\alpha}]$, \item if $\alpha$ is limit then $f_\alpha=\bigcup_{\beta<\alpha}f_\beta$, 
\item $N_{1,\alpha+1}:=F(\alpha,\langle
f_\beta[N_{0,\beta}]:\beta \leq \alpha+1 \rangle,\langle N_{1,\beta}:\beta
\leq \alpha \rangle)$, \item if $\alpha$ is limit then $N_{1,\alpha}:=\bigcup_{\beta<\alpha}N_{1,\beta}$. 
\end{enumerate}

This induction can be described as a play of the $\prec^+_{\lambda^+}$-game, where at the $\alpha$ round, Player 0 chooses the model $f_\alpha[N_{0,\alpha}]$ explicitly, by choosing $f_\alpha$ (so when we refer to the definition of the $\prec^+_{\lambda^+}$-game, $f_\alpha[N_{0,\alpha}]$ stands for $N_{0,\alpha}$). Hence, the $\alpha$ round is as follows:
For $\alpha=0$, Player 0 chooses $N_{0,0}$ and $N_{1,0}$. For $\alpha$ limit, Player 0 chooses the model  $f_\alpha[N_{0,\alpha}]=\bigcup_{\beta<\alpha}f_\beta[N_{0,\beta}]$ and Player 1 chooses the model $N_{1,\alpha}:=\bigcup_{\beta<\alpha}N_{1,\beta}$ (see Clauses (3) and (5)). 

In the $\alpha+1$ round, Player 0 chooses the model $f_{\alpha+1}[N_{0,\alpha+1}]$ (explicitly, by choosing $f_{\alpha+1}$) such that Clauses (1) and (2) hold. It is a legal move: On the one hand, since $N_{0,\alpha} \preceq N_{0,\alpha+1}$, Clause (1) yields $f_\alpha[N_{0,\alpha}]=f_{\alpha+1}[N_{0,\alpha}] \preceq f_{\alpha+1}[N_{0,\alpha+1}]$, so Condition (1) of a legal move for Player 0 holds. On the other hand, Clause (2) is Condition (2) of a legal move for Player 0. Now $N_{1,\alpha+1}:=F(\alpha,\langle
f_\beta[N_{0,\beta}]:\beta \leq \alpha+1 \rangle,\langle N_{1,\beta}:\beta \leq \alpha \rangle)$ is the choice of Player 1.  

Define $f_{\lambda^+}:=\bigcup_{\alpha<\lambda^+}f_\alpha$ and $N_1:=\bigcup_{\alpha<\lambda^+}N_{1,\alpha}$.  

Since $F$ is a winning strategy, $NF(f_\alpha[N_{0,\alpha}],f_{\alpha+1}[N_{0,\alpha+1}],N_{1,\alpha},N_{1,\alpha+1})$ holds, for every $\alpha<\lambda^+$. So $f_\alpha[N_{0,\alpha}] \preceq N_{1,\alpha}$ for each $\alpha<\lambda^+$. Since the sequences $\langle f_\alpha[N_{0,\alpha}]:\alpha<\lambda^+ \rangle$ and $\langle N_{1,\alpha}:\alpha<\lambda^+ \rangle$ are increasing  and continuous (by Clauses (3) and (5)), they witness that $\widehat{NF}(N_{0,0},N_{1,0},f_{\lambda^+}[N],N_1)$ (note that $f_{\lambda^+}[N_{0,0}]=N_{0,0}$). So by Proposition \ref{a new version of respecting the frame} $tp(b,f_{\lambda^+}[N],N_1)$ does not fork over $N_{0,0}$. 


Let $f^*$ be a function with domain $N^+$ extending $f_{\lambda^+}$. Then the types $tp(b,f^*[N],N_1)$ and $tp(f^*(a),f^*[N],f^*[N^+])$ do not fork over $f^*[N_{0,0}]=N_{0,0}$. But the types over $N_{0,0}$ are equal:

$$tp(b,N_{0,0},N_1)=tp(a,N_{0,0},N^+)=tp(f^*(a),N_{0,0},f^*[N^+]).$$
Since $(N,N^+) \in B$, we have $N \in K^{sat}$, so $f^*[N] \in K^{sat}$. Hence, by the strong $(\lambda,\lambda^+)$-tameness for non-forking types over saturated models property, there is an amalgamation of $N_1$ and $f^*[N^+]$ over $f^*[N]$ such that the images of $b$ and $f^*(a)$ coincide. Equivalently, for some model $N_1^+ \in K_{\lambda^+}$, for some embedding $g:N_1 \to N_1^+$ fixing $f^*[N]$ pointwise we have $g(b)=f^*(a)$ and the following diagram commutes:
\begin{displaymath} 
\xymatrix{b \in N_1 \ar[r]^{g} & N_1^+ \\
N \ar[r]^{id} \ar[u]^{f_{\lambda^+}} & N^+ \ni a \ar[u]^{f^*}
}
\end{displaymath}

Since the relation $<_B$ is closed under isomorphisms, it is sufficient to prove that $(f^*[N],f^*[N^+])<_B(g[N_1],N_1^+)$. But it follows by Clauses (2)-(5) of the following subclaim:
\begin{subclaim} 
\mbox{}\\
\begin{enumerate}
\item $f^*[N] \prec^+_{\lambda^+} N_1$, \item $f^*[N] \preceq^{NF}_{\lambda^+} g[N_1]$, \item $g[N_1] \in K^{sat}$, \item $f^*[N^+] \preceq N_1^+$ and \item $g(b)=f^*(a) \in f^*[N^+] \cap g[N_1]-f^*[N]$.
\end{enumerate}
\end{subclaim}

\begin{proof}
\mbox{}\\
\begin{enumerate}
\item Since $F$ is a winning strategy, Player 1 wins the game. Therefore $f^*[N] \prec^+_{\lambda^+}N_1$.
\item By Clause (1) and Fact \ref{prec^+-properties}(2), $f^*[N] \preceq^{NF}_{\lambda^+} N_1$. Since $g$ fixes $f^*[N]$ pointwise and the relation $\preceq^{NF}_{\lambda^+}$ is closed under isomorphism, $f^*[N]=g[f^*[N]] \preceq^{NF}_{\lambda^+} g[N_1]$.
\item By Clause (1) and Fact \ref{prec^+-properties}(2), $N_1 \in K^{sat}$. Since $K^{sat}$ is closed under isomorphisms, $g[N_1] \in K^{sat}$. \item Obvious. \item Obvious.   
\end{enumerate}
\end{proof}
 
The proof of Proposition \ref{importance of maximal for saturated models} is completed.
\end{proof}

Proposition \ref{<_B satisfies axiom c} is analogous to Proposition \ref{<_A satisfies axiom c}. 
\begin{proposition}\label{<_B satisfies axiom c}
If $\langle (M_\alpha,M^+_\alpha):\alpha<\delta \rangle$ is a $<_B$-increasing continuous sequence of pairs in $B$ then $$(M_\beta,M^+_\beta)<_B(\bigcup_{\alpha<\delta}M_\alpha,\bigcup_{\alpha<\delta}M^+_\alpha)$$ for each $\beta<\delta$.
\end{proposition}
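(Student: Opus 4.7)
The plan is to mirror the proof of Proposition \ref{<_A satisfies axiom c}, with the single extra task of checking that the union model $\bigcup_{\alpha<\delta} M_\alpha$ is still saturated in $\lambda^+$ over $\lambda$, so that the pair $(\bigcup_{\alpha<\delta} M_\alpha,\bigcup_{\alpha<\delta} M^+_\alpha)$ actually lives in $B$ and not merely in $A$. As in Proposition \ref{<_A satisfies axiom c}, I would reduce to $\beta=0$ and then separately verify each of the three clauses defining $<_B$, together with the membership in $B$.

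First I would observe that from clause (1) of $<_B$, the sequence $\langle M_\alpha:\alpha<\delta\rangle$ is $\preceq^{NF}_{\lambda^+}$-increasing and continuous (continuity of the original sequence passes to each coordinate), and from the definition of $B$ each term $M_\alpha$ lies in $K^{sat}$. Hence the moreover part of Fact \ref{basic properties of preceq^{NF}_{lambda^+}}(d) gives $\bigcup_{\alpha<\delta}M_\alpha \in K^{sat}$, and its first part gives $M_0 \preceq^{NF}_{\lambda^+} \bigcup_{\alpha<\delta} M_\alpha$, which is clause (1) of $<_B$. Clause (2) is immediate: $\langle M^+_\alpha:\alpha<\delta\rangle$ is $\preceq$-increasing continuous, so $M^+_0 \preceq \bigcup_{\alpha<\delta}M^+_\alpha$ by the AEC axioms, and smoothness also supplies $\bigcup_{\alpha<\delta}M_\alpha \preceq \bigcup_{\alpha<\delta}M^+_\alpha$, confirming that the limit pair lies in $B$.

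For clause (3), I would use the hypothesis $(M_0,M^+_0) <_B (M_1,M^+_1)$: since we always have $M_0 \subseteq M_1 \cap M^+_0$, the inequality $M_1 \cap M^+_0 \neq M_0$ means $M_1 \cap M^+_0 \supsetneq M_0$. Picking any $x \in (M_1 \cap M^+_0)\setminus M_0$ and noting $M_1 \subseteq \bigcup_{\alpha<\delta}M_\alpha$ yields $x \in \bigl(\bigcup_{\alpha<\delta}M_\alpha\bigr) \cap M^+_0 \setminus M_0$, so this intersection strictly contains $M_0$ and in particular differs from it. This establishes clause (3) and completes the verification.

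No serious obstacle is expected here: the only point that is not pure formality beyond Proposition \ref{<_A satisfies axiom c} is the saturation of the union, and that is handled completely by the moreover part of Fact \ref{basic properties of preceq^{NF}_{lambda^+}}(d). The argument is just a matter of packaging this together with the coordinate-wise continuity inherited from the $<_B$-chain.
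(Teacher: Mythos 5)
Your proof is correct and follows essentially the same route as the paper: reduce to $\beta=0$, obtain the $<_A$-clauses exactly as in Proposition \ref{<_A satisfies axiom c}, and get saturation of $\bigcup_{\alpha<\delta}M_\alpha$ from the moreover part of Fact \ref{basic properties of preceq^{NF}_{lambda^+}}(d) applied to the $\preceq^{NF}_{\lambda^+}$-increasing continuous sequence $\langle M_\alpha:\alpha<\delta\rangle$ of models in $K^{sat}$. The only difference is presentational: the paper invokes Proposition \ref{<_A satisfies axiom c} as a black box, whereas you re-verify the clauses directly, and your explicit check of clause (3) via an element of $(M_1\cap M^+_0)\setminus M_0$ is in fact slightly more careful than the paper's treatment, which leaves that clause implicit.
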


\begin{proof}
Without loss of generality, $\beta=0$. We should prove that $(M_0,M^+_0)<_B(\bigcup_{\alpha<\delta}M_\alpha,\bigcup_{\alpha<\delta}M^+_\alpha)$. By Proposition \ref{<_A satisfies axiom c}, $(M_0,M^+_0)<_A(\bigcup_{\alpha<\delta}M_\alpha,\bigcup_{\alpha<\delta}M^+_\alpha)$. It remains to show that $\bigcup_{\alpha<\delta}M_\alpha$ is a saturated model in $\lambda^+$ over $\lambda$.

$\langle M_\alpha:\alpha<\delta \rangle$ is a $\preceq^{NF}_{\lambda^+}$-increasing and continuous sequence of models in $K^{sat}$. So by \cite[Theorem 7.18(a)]{jrsh875}, $\bigcup_{\alpha<\delta}M_\alpha \in K^{sat}$. Hence, $(\bigcup_{\alpha<\delta}M_\alpha,\bigcup_{\alpha<\delta}M^+_\alpha) \in B$. Proposition \ref{<_B satisfies axiom c} is proved.
\end{proof}

Now we prove Theorem \ref{0}:
\begin{theorem}\label{we can use NF with tameness for saturated models}\label{the main theorem of the paper}
Suppose:
\begin{enumerate}
\item Hypothesis \ref{hypothesis for bar{NF}}, \item every saturated model in $\lambda^+$ is an amalgamation base and \item $(K,\preceq)$ satisfies the $(\lambda,\lambda^+)$-tameness for non-forking types over saturated models property.
\end{enumerate}
Then for every two models $M,M^+ \in K^{sat}$ the following holds: $$M \preceq M^+ \Leftrightarrow M \preceq^{NF}_{\lambda^+}M^+.$$
\end{theorem}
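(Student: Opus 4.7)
The easy direction is immediate from Definition \ref{definition of preceq^{NF}}: if $M \preceq^{NF}_{\lambda^+} M^+$ then $M \preceq M^+$. For the converse, suppose $M,M^+ \in K^{sat}$ with $M \preceq M^+$. The plan is to mirror the proof of Theorem \ref{we can use NF}, but working inside the partial order $(B,<_B)$ rather than $(A,<_A)$, so that the tameness hypothesis over saturated models becomes applicable. First, from hypotheses (2) and (3) together with amalgamation in $\lambda$ (supplied by $\frak{s}$), Proposition \ref{... implies strong for saturated} gives us the \emph{strong} $(\lambda,\lambda^+)$-tameness for non-forking types over saturated models property, which is what Proposition \ref{importance of maximal for saturated models} requires.

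The reduction step: since $M \in K^{sat}$ and $M \preceq M^+$, the pair $(M,M^+)$ already lies in $B$. It suffices to produce a $<_B$-extension $(N,N^+) \in B$ of $(M,M^+)$ which is $<_B$-maximal. Indeed, Proposition \ref{importance of maximal for saturated models} then forces $N=N^+$, giving both $M \preceq^{NF}_{\lambda^+} N$ (from clause (1) of Definition \ref{our pairs where the first is saturated}) and $M^+ \preceq N^+ = N$. Combining these through Fact \ref{preceq^{NF}-properties}(c) (monotonicity of $\preceq^{NF}_{\lambda^+}$ from above inside $\preceq$) yields $M \preceq^{NF}_{\lambda^+} M^+$, as desired.

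It remains to produce a $<_B$-maximal extension. Suppose toward contradiction that no such extension exists. Then starting from $(M_0,M^+_0) := (M,M^+)$, I would choose by induction on $\alpha<\lambda^{++}$ a $<_B$-increasing continuous chain $\langle (M_\alpha,M^+_\alpha) : \alpha < \lambda^{++}\rangle$ in $B$: at successors, pick any strict $<_B$-extension (which exists by our contradiction hypothesis); at limits, take unions, noting that Proposition \ref{<_B satisfies axiom c} guarantees the limit lies in $B$ and is a strict $<_B$-extension of each earlier pair. Setting $M_{\lambda^{++}} := \bigcup_{\alpha < \lambda^{++}} M_\alpha$, both $\langle M_\alpha : \alpha<\lambda^{++}\rangle$ and $\langle M^+_\alpha \cap M_{\lambda^{++}} : \alpha<\lambda^{++}\rangle$ are filtrations of $M_{\lambda^{++}}$, so they agree on a club; for any $\alpha$ in the club,
\[
M_\alpha \;\subseteq\; M^+_\alpha \cap M_{\alpha+1} \;\subseteq\; M^+_\alpha \cap M_{\lambda^{++}} \;=\; M_\alpha,
\]
forcing $M^+_\alpha \cap M_{\alpha+1} = M_\alpha$, which contradicts clause (3) of $(M_\alpha,M^+_\alpha) <_B (M_{\alpha+1},M^+_{\alpha+1})$.

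The only genuine obstacle in this argument is Proposition \ref{importance of maximal for saturated models}, which has already been proved in the paper using the $\prec^+_{\lambda^+}$-game machinery from Fact \ref{prec^+-properties}; that is precisely where saturation of the newly constructed extension is secured, allowing strong tameness over saturated models to be invoked. Given that proposition and Proposition \ref{<_B satisfies axiom c}, the present theorem reduces to the chain-of-length-$\lambda^{++}$ bookkeeping sketched above, exactly parallel to Theorem \ref{we can use NF}.
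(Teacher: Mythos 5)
Your proof is correct and follows essentially the same route as the paper's: it reduces to finding a $<_B$-maximal extension of $(M,M^+)$ via Proposition \ref{... implies strong for saturated} and Proposition \ref{importance of maximal for saturated models}, and then runs the $\lambda^{++}$-chain contradiction from Theorem \ref{we can use NF} with Proposition \ref{<_B satisfies axiom c} handling limits. The only cosmetic omission is the trivial case $M=M^+$ (handled in the paper by a ``without loss of generality''), which is immediate since $\preceq^{NF}_{\lambda^+}$ is a partial order.
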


\begin{proof}
Let $M,M^+$ be two saturated models in $\lambda^+$ over $\lambda$. If $M \preceq^{NF}_{\lambda^+}M^+$ then by definition $M \preceq M^+$. 

Conversely, assume that $M \preceq M^+$. As in the proof of Theorem \ref{we can use NF}, it is sufficient to find a pair $(N,N^+) \in B$ such that $(M,M^+)<_B(N,N^+)$ and $N=N^+$. By Proposition \ref{... implies strong for saturated} and Clauses (2) and (3), $(K,\preceq)$ satisfies the strong $(\lambda,\lambda^+)$-tameness for non-forking types over saturated models property. Hence, by Proposition \ref{importance of maximal for saturated models}, it is sufficient to find a pair $(N,N^+) \in B$ such that $(M,M^+)<_B(N,N^+)$ and $(N,N^+)$ is $<_B$-maximal pair in $B$.

We now can complete the proof of Theorem \ref{we can use NF with tameness for saturated models} as in the proof of Theorem \ref{we can use NF} (where Proposition \ref{importance of maximal for saturated models} replaces Proposition \ref{importance of maximal} and Proposition \ref{<_B satisfies axiom c} replaces Proposition \ref{<_A satisfies axiom c}).
\end{proof}

Corollary \ref{corollary 1} is a special case of Proposition \ref{strong iff amalgamation (for non-forking types)}. We present it, in order to show a new proof of the fact, that assuming Hypothesis \ref{hypothesis for bar{NF}}, the strong $(\lambda,\lambda^+)$-tameness for non-forking types implies the amalgamation property. 
\begin{corollary}\label{corollary 1}
Suppose:
\begin{enumerate}
\item Hypothesis \ref{hypothesis for bar{NF}} and \item $(K,\preceq)$ satisfies the strong $(\lambda,\lambda^+)$-tameness for non-forking types property.
\end{enumerate}
Then $K$ satisfies the amalgamation property in $\lambda^+$.
\end{corollary}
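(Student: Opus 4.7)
The strategy is to show that every $\preceq$-extension in $K_{\lambda^+}$ is in fact a $\preceq^{NF}_{\lambda^+}$-extension, and then invoke the amalgamation property of $(K_{\lambda^+},\preceq^{NF}_{\lambda^+})$, which is the design goal of the relation $\preceq^{NF}_{\lambda^+}$ and is known from \cite{jrsh875} under Hypothesis \ref{hypothesis for bar{NF}}. In other words, the desired amalgamation in $(K_{\lambda^+},\preceq)$ is obtained by ``transporting'' an amalgamation that $\preceq^{NF}_{\lambda^+}$ already provides.

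\textbf{Step 1 (every $\preceq$-extension is a $\preceq^{NF}_{\lambda^+}$-extension).} Given $M,M^+\in K_{\lambda^+}$ with $M\preceq M^+$ (the case $M=M^+$ is trivial), I follow the argument of Theorem \ref{we can use NF} almost verbatim. Work in the partial order $(A,<_A)$ of Definition \ref{our pairs} and try to extend $(M,M^+)$ to a $<_A$-maximal pair. If no such maximal extension existed, one could build a $<_A$-increasing continuous chain $\langle (M_\alpha,M^+_\alpha):\alpha<\lambda^{++}\rangle$ starting at $(M,M^+)$, where at successor steps one uses Proposition \ref{<_A satisfies axiom c} to guarantee continuity, and then the filtration argument at $M_{\lambda^{++}}:=\bigcup_{\alpha<\lambda^{++}}M_\alpha$ forces $M^+_\alpha\cap M_{\alpha+1}=M_\alpha$ for a club of $\alpha$, contradicting $(M_\alpha,M^+_\alpha)<_A(M_{\alpha+1},M^+_{\alpha+1})$. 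Hence a $<_A$-maximal extension $(N,N^+)$ of $(M,M^+)$ exists, and Proposition \ref{importance of maximal} — whose sole hypothesis is the strong $(\lambda,\lambda^+)$-tameness for non-forking types that we are assuming — yields $N=N^+$. Thus $M^+\preceq N$ and $M\preceq^{NF}_{\lambda^+}N$, and by Fact \ref{preceq^{NF}-properties}(c) we conclude $M\preceq^{NF}_{\lambda^+}M^+$. The key point is that the only use of amalgamation in $\lambda^+$ in the proof of Theorem \ref{we can use NF} is via Proposition \ref{strong iff amalgamation (for non-forking types)} to \emph{deduce} strong tameness from tameness plus amalgamation; since strong tameness is our direct hypothesis here, this step is bypassed.

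\textbf{Step 2 (amalgamation).} Let $M_0,M_1,M_2\in K_{\lambda^+}$ with $M_0\preceq M_1$ and $M_0\preceq M_2$. By Step 1, $M_0\preceq^{NF}_{\lambda^+}M_1$ and $M_0\preceq^{NF}_{\lambda^+}M_2$. The amalgamation property of $(K_{\lambda^+},\preceq^{NF}_{\lambda^+})$ (established in \cite{jrsh875}) provides a model $M_3\in K_{\lambda^+}$ and embeddings $f_i:M_i\to M_3$ ($i=1,2$) fixing $M_0$ pointwise, with $M_0\preceq^{NF}_{\lambda^+}M_3$ and $f_i[M_i]\preceq^{NF}_{\lambda^+}M_3$. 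Since $\preceq^{NF}_{\lambda^+}$ refines $\preceq$, the triple $(f_1,f_2,M_3)$ is an amalgamation of $M_1$ and $M_2$ over $M_0$ in $(K_{\lambda^+},\preceq)$.

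\textbf{Main obstacle.} The real content lies entirely in Step 1, and the main thing one must verify carefully is that each ingredient used there — density of basic types (Fact \ref{fact density of basic types over models of greater cardinality}), $\widehat{NF}$-extension (Fact \ref{the widehat{NF}-properties}(c)), strong tameness, the cardinality/filtration argument at $\lambda^{++}$, and Fact \ref{preceq^{NF}-properties}(c) — is available under Hypothesis \ref{hypothesis for bar{NF}} alone, without appeal to amalgamation in $\lambda^+$. In particular, the $\widehat{NF}$-extension step that produces $<_A$-extensions in the proof of Proposition \ref{importance of maximal} operates purely at the level of $\widehat{NF}$ (where only amalgamation in $\lambda$, which is part of $\frak{s}$, is needed), so no circularity arises.
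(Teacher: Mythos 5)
Your proposal is correct and follows essentially the same route as the paper: the paper's proof likewise observes that amalgamation in $\lambda^+$ enters the proof of Theorem \ref{we can use NF} only through the derivation of strong tameness from tameness, concludes that $\preceq \restriction K_{\lambda^+}$ and $\preceq^{NF}_{\lambda^+}$ coincide under the corollary's hypotheses, and then imports the amalgamation property of $(K_{\lambda^+},\preceq^{NF}_{\lambda^+})$ from \cite[Corollary 7.1.17(a)]{jrsh875}. (One cosmetic slip: Proposition \ref{<_A satisfies axiom c} is what handles the \emph{limit} stages of the $<_A$-chain; the successor stages come from the assumption, toward a contradiction, that no $<_A$-maximal pair exists.)
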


\begin{proof}
By \cite[Corollary 7.1.17(a)]{jrsh875}, $(K_{\lambda^+},\preceq^{NF}_{\lambda^+} \restriction K_{\lambda^+})$ satisfies the amalgamation property. In the proof of Theorem \ref{we can use NF}, we do not use the amalgamation in $\lambda^+$ property, only the strong $(\lambda,\lambda^+)$-tameness for non-forking types property. So by the proof of Theorem \ref{we can use NF}, the corollary holds.
\end{proof}

Corollary \ref{corollary 2} is a special case of Proposition \ref{strong iff amalgamation (for non-forking types over saturated models)}. It is analogous to Corollary \ref{corollary 1}. While in the proof of Corollary \ref{corollary 1}, we use the proof of Theorem \ref{we can use NF}, here, we use the proof of Theorem \ref{the main theorem of the paper}.
\begin{corollary}\label{corollary 2}\label{tameness implies every saturated model is an amalgamation base}
Suppose:
\begin{enumerate}
\item Hypothesis \ref{hypothesis for bar{NF}} and \item $(K,\preceq)$ satisfies the strong $(\lambda,\lambda^+)$-tameness for non-forking types over saturated models property.
\end{enumerate}
Then every model $M \in K^{sat}$ is an amalgamation base.
\end{corollary}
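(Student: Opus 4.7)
The plan is to mirror the structure of the proof of Corollary \ref{corollary 1}, but using Theorem \ref{the main theorem of the paper} in place of Theorem \ref{we can use NF}. First I would invoke \cite[Corollary 7.1.17(a)]{jrsh875} to obtain that $(K_{\lambda^+}, \preceq^{NF}_{\lambda^+})$ satisfies the amalgamation property. In particular, every saturated model in $\lambda^+$ over $\lambda$ is an amalgamation base with respect to $\preceq^{NF}_{\lambda^+}$.

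Next, I would observe that the proof of Theorem \ref{the main theorem of the paper} uses its hypothesis (2) (``every saturated model is an amalgamation base'') only at one point: to invoke Proposition \ref{... implies strong for saturated} and thereby upgrade $(\lambda,\lambda^+)$-tameness for non-forking types over saturated models to the \emph{strong} version. Since here the strong form is assumed outright, this step can be skipped, and the chain Proposition \ref{importance of maximal for saturated models} $\to$ Proposition \ref{<_B satisfies axiom c} $\to$ maximal chain argument goes through verbatim. Consequently, for every $M, M^+ \in K^{sat}$ with $M \preceq M^+$ one concludes $M \preceq^{NF}_{\lambda^+} M^+$.

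To finish, let $M_0, M_1, M_2 \in K^{sat}$ with $M_0 \preceq M_1$ and $M_0 \preceq M_2$. By the previous paragraph, $M_0 \preceq^{NF}_{\lambda^+} M_1$ and $M_0 \preceq^{NF}_{\lambda^+} M_2$. Applying the amalgamation property of $\preceq^{NF}_{\lambda^+}$ from step one yields an amalgamation of $M_1$ and $M_2$ over $M_0$ in the sense of $\preceq^{NF}_{\lambda^+}$, which is \emph{a fortiori} an amalgamation in the sense of $\preceq$ since $\preceq^{NF}_{\lambda^+} \subseteq\, \preceq \restriction K_{\lambda^+}$. Hence $M_0$ is an amalgamation base.

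The main obstacle is the bookkeeping in the second step: one must verify that in the proof of Theorem \ref{the main theorem of the paper}, hypothesis (2) is genuinely used only to produce strong tameness, and not implicitly elsewhere (for example in Proposition \ref{<_B satisfies axiom c}, which relies on \cite[Theorem 7.18(a)]{jrsh875} for the closure of $K^{sat}$ under $\preceq^{NF}_{\lambda^+}$-chains, or in the construction of the maximal $<_B$-chain). A careful reading shows that these ingredients depend only on Hypothesis \ref{hypothesis for bar{NF}} together with strong tameness, so the factorization through strong tameness is legitimate and the argument carries over.
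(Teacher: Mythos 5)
Your overall strategy is the same as the paper's: re-run Theorem \ref{the main theorem of the paper} with hypotheses (2) and (3) replaced by the assumed strong $(\lambda,\lambda^+)$-tameness for non-forking types over saturated models (the paper performs exactly this substitution, so your ``bookkeeping'' worry in the last paragraph is resolved the way you hope), conclude $M \preceq M^+ \Rightarrow M \preceq^{NF}_{\lambda^+} M^+$ for $M, M^+ \in K^{sat}$, and then transfer $\preceq^{NF}_{\lambda^+}$-amalgamation to $\preceq$-amalgamation. That part is sound.

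However, there is a genuine gap in your final step: you only amalgamate $M_1, M_2 \in K^{sat}$ over $M_0$. The conclusion of the corollary is that every $M_0 \in K^{sat}$ is an amalgamation base, which here means that \emph{arbitrary} extensions $M_1, M_2 \in K_{\lambda^+}$ of $M_0$ (not necessarily saturated) can be amalgamated over $M_0$ --- this is how the paper's own proof opens (``Suppose $M_0 \in K^{sat}$, $M_1, M_2 \in K_{\lambda^+}$''), and it is the reading needed for the corollary to feed back into hypotheses such as ``every saturated model in $\lambda^+$ is an amalgamation base.'' Your argument as written establishes only amalgamation within $K^{sat}$. The repair is short and is exactly what the paper does: first extend $M_1$ and $M_2$ to saturated models $M_1^+, M_2^+ \in K^{sat}$ (by Proposition \ref{extending to K sat}, or \cite[Theorem 7.1.12.a]{jrsh875}), note $M_0 \preceq M_1^+$ and $M_0 \preceq M_2^+$ by transitivity, apply your argument to get $M_0 \preceq^{NF}_{\lambda^+} M_1^+$ and $M_0 \preceq^{NF}_{\lambda^+} M_2^+$, amalgamate $M_1^+$ and $M_2^+$ over $M_0$ in the $\preceq^{NF}_{\lambda^+}$ sense, and finally restrict the two embeddings to $M_1$ and $M_2$. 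Without this extension step the proof proves a strictly weaker statement.
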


\begin{proof}
Suppose $M_0 \in K^{sat}$, $M_1,M_2 \in K_{\lambda^+}$, $M_0 \preceq M_1$ and $M_0 \preceq M_2$. We have to amalgamate $M_1,M_2$ over $M_0$. By \cite[Theorem 7.1.12.a]{jrsh875}, we can find $M_1^+,M_2^+ \in K^{sat}$ such that $M_1 \preceq M_1^+$ and $M_2 \preceq M_2^+$. Since the relation $\preceq$ is transitive, we have $M_0 \preceq M_1^+$ and $M_0 \preceq M_2^+$. Therefore by Theorem \ref{we can use NF with tameness for saturated models} (where we replace hypotheses (2) and (3) of this theorem, by the strong $(\lambda,\lambda^+)$-tameness for non-forking types over saturated models property) and Assumption (2), $M_0 \preceq^{NF}_{\lambda^+} M_1^+$ and $M_0 \preceq^{NF}_{\lambda^+} M_2^+$. So, by \cite[Theorem 7.1.18.c]{jrsh875}, we can find an amalgamation $(f_1,f_2,M_3)$ of $M_1^+,M_2^+$ over $M_0$. Hence, $(f_1 \restriction M_1,f_2 \restriction M_2,M_3)$ is an amalgamation of $M_1$ and $M_2$ over $M_0$.
\end{proof}

In Corollaries \ref{the relations are equivalent iff amalgamation holds} and \ref{the relations are equivalent iff amalgamation holds for saturated models}, we do not assume strong tameness, but tameness only.
\begin{corollary}\label{the relations are equivalent iff amalgamation holds}\label{corollary 3}
Suppose:
\begin{enumerate}
\item Hypothesis \ref{hypothesis for bar{NF}} and \item $(K,\preceq)$ satisfies the $(\lambda,\lambda^+)$-tameness for non-forking types property.
\end{enumerate}

The following conditions are equivalent:
\begin{enumerate}[(a)] 
\item the relations $\preceq^{NF}_{\lambda^+}$ and $\preceq \restriction K_{\lambda^+}$ are equivalent. 
\item the amalgamation property in $\lambda^+$ holds, \item $(K,\preceq)$ satisfies the strong $(\lambda,\lambda^+)$-tameness for non-forking types property.
\end{enumerate}
\end{corollary}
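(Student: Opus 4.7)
My plan is to establish the cycle $(c) \Rightarrow (a) \Rightarrow (b) \Rightarrow (c)$, each implication being a short repackaging of a result already proved earlier in the paper.

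For $(c) \Rightarrow (a)$, I would first apply Proposition \ref{strong tameness implies amalgamation (for non-forking types)} to pull the amalgamation property in $\lambda^+$ out of strong $(\lambda,\lambda^+)$-tameness for non-forking types. Combined with the standing $(\lambda,\lambda^+)$-tameness for non-forking types hypothesis of the corollary, all the hypotheses of Theorem \ref{we can use NF} are then met, and its conclusion is precisely (a). For $(b) \Rightarrow (c)$, the desired implication is exactly Remark \ref{tameness for non-forking types + amalgamation implies strong tameness for non-forking types}: the conjunction of amalgamation in $\lambda^+$ and $(\lambda,\lambda^+)$-tameness for non-forking types yields the strong version.

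The remaining direction $(a) \Rightarrow (b)$ uses the fact cited in the proof of Corollary \ref{corollary 1}, namely \cite[Corollary 7.1.17(a)]{jrsh875}, that $(K_{\lambda^+}, \preceq^{NF}_{\lambda^+})$ satisfies the amalgamation property. Assuming (a), the relations $\preceq^{NF}_{\lambda^+}$ and $\preceq \restriction K_{\lambda^+}$ coincide on $K_{\lambda^+}$, so any amalgam witnessed with respect to $\preceq^{NF}_{\lambda^+}$ is simultaneously an amalgam with respect to $\preceq \restriction K_{\lambda^+}$, and the amalgamation property in $\lambda^+$ transfers to $(K_{\lambda^+}, \preceq \restriction K_{\lambda^+})$.

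No step presents a genuine obstacle; the corollary is a packaging of Theorem \ref{we can use NF}, the easy direction of Proposition \ref{strong iff amalgamation (for non-forking types)}, and the amalgamation theorem for $\preceq^{NF}_{\lambda^+}$ from \cite{jrsh875}. The only mildly delicate point is in $(a) \Rightarrow (b)$, where one must observe that the embeddings witnessing $\preceq^{NF}_{\lambda^+}$-amalgamation qualify as $\preceq$-embeddings because the two relations agree—an observation that is immediate from (a).
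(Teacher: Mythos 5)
Your proposal is correct and follows essentially the same route as the paper: the same three-implication cycle, with $(a)\rightarrow(b)$ via the amalgamation property of $(K_{\lambda^+},\preceq^{NF}_{\lambda^+})$ from \cite[Corollary 7.1.17(a)]{jrsh875}, $(b)\rightarrow(c)$ via Remark \ref{tameness for non-forking types + amalgamation implies strong tameness for non-forking types}, and $(c)\rightarrow(a)$ via Theorem \ref{we can use NF}. Your only (harmless) variation is in $(c)\rightarrow(a)$, where you first extract amalgamation in $\lambda^+$ from Proposition \ref{strong tameness implies amalgamation (for non-forking types)} so as to invoke Theorem \ref{we can use NF} exactly as stated, whereas the paper's cited proof simply applies that theorem directly (observing elsewhere, in Corollary \ref{corollary 1}, that its proof only needs the strong tameness hypothesis).
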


\begin{proof}
\mbox{}\\
$(a) \rightarrow (b)$: By \cite[Corollary 7.1.17(a)]{jrsh875}.\\ $(b) \rightarrow (c)$: Obvious. It is similar to Remark \ref{tameness + amalgamation implies strong tameness}.\\ $(c) \rightarrow (a)$: By Theorem \ref{we can use NF}.
\end{proof}

\begin{corollary}\label{the relations are equivalent iff amalgamation holds for saturated models}\label{corollary 4}
Suppose:
\begin{enumerate}
\item Hypothesis \ref{hypothesis for bar{NF}} and \item $(K,\preceq)$ satisfies the $(\lambda,\lambda^+)$-tameness for non-forking types over saturated models property.
\end{enumerate}

The following conditions are equivalent:
\begin{enumerate}[(a)] 
\item the relations $\preceq^{NF}_{\lambda^+} \restriction K^{nice}$ and $\preceq \restriction K^{nice}$ are equivalent. 
\item $(K^{nice},\preceq^{NF}_{\lambda^+} \restriction K^{nice})$ satisfies the amalgamation property, \item $(K,\preceq)$ satisfies the strong $(\lambda,\lambda^+)$-tameness for non-forking types over saturated models property.
\end{enumerate}
\end{corollary}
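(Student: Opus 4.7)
The plan is to follow the same three-way cycle as in the proof of Corollary \ref{the relations are equivalent iff amalgamation holds}, establishing (c) $\Rightarrow$ (a) $\Rightarrow$ (b) $\Rightarrow$ (c), but with every ingredient replaced by its saturated-models analogue.

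For (c) $\Rightarrow$ (a), I would simply invoke Theorem \ref{the main theorem of the paper}. By Proposition \ref{strong iff amalgamation (for non-forking types over saturated models)}, strong $(\lambda,\lambda^+)$-tameness for non-forking types over saturated models is equivalent to the conjunction of ``every saturated model in $\lambda^+$ over $\lambda$ is an amalgamation base'' and the ordinary tameness supplied by hypothesis (2). Thus (c) delivers exactly hypotheses (2) and (3) of Theorem \ref{the main theorem of the paper}, and that theorem's conclusion is (a).

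For (a) $\Rightarrow$ (b), I would argue structurally (not really using (a)): the amalgamation property of $(K_{\lambda^+},\preceq^{NF}_{\lambda^+})$ given by \cite[Corollary 7.1.17(a)]{jrsh875} produces an amalgam $M_3' \in K_{\lambda^+}$ of any $\preceq^{NF}_{\lambda^+}$-span in $K^{nice}$. One then $\prec^+_{\lambda^+}$-extends $M_3'$ using Fact \ref{prec^+-properties}(1)--(2) to obtain $M_3 \in K^{sat}$ with $M_3' \preceq^{NF}_{\lambda^+} M_3$; by transitivity of $\preceq^{NF}_{\lambda^+}$ (Fact \ref{basic properties of preceq^{NF}_{lambda^+}}(b)) the two $\preceq^{NF}_{\lambda^+}$-embeddings into $M_3'$ compose into $\preceq^{NF}_{\lambda^+}$-embeddings into $M_3 \in K^{nice}$.

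For (b) $\Rightarrow$ (c), by Proposition \ref{strong iff amalgamation (for non-forking types over saturated models)} and hypothesis (2) it suffices to show that every $M_0 \in K^{nice}$ is a $\preceq$-amalgamation base in $K_{\lambda^+}$. Given $M_0 \preceq M_1,M_2$ with $M_1,M_2 \in K_{\lambda^+}$, I would extend each $M_i$ to some $M_i^+ \in K^{nice}$ with $M_0 \preceq^{NF}_{\lambda^+} M_i^+$ (using Fact \ref{the widehat{NF}-properties}(c) to produce a $\widehat{NF}$-extension over a filtration of $M_0$, then Fact \ref{prec^+-properties} to further $\prec^+_{\lambda^+}$-extend into $K^{sat}$), apply (b) to amalgamate $M_1^+$ and $M_2^+$ over $M_0$, and restrict.

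The main obstacle is (c) $\Rightarrow$ (a), whose content is exactly Theorem \ref{the main theorem of the paper}: that proof relies on the delicate partial-order argument with the class $B$, Player~$1$'s winning strategy in the $\prec^+_{\lambda^+}$-game, and a careful exploitation of strong tameness over saturated models to obtain a $<_B$-maximal extension. The remaining two implications are formal once the structural properties of $\preceq^{NF}_{\lambda^+}$ and $\prec^+_{\lambda^+}$ from \cite{jrsh875} are in hand.
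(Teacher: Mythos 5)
Your overall architecture coincides with the paper's: the same cycle $(a)\rightarrow(b)\rightarrow(c)\rightarrow(a)$, with $(c)\rightarrow(a)$ delegated to Theorem \ref{the main theorem of the paper} (exactly as the paper does, and your use of Proposition \ref{strong iff amalgamation (for non-forking types over saturated models)} to unpack (c) into that theorem's hypotheses is correct), and $(a)\rightarrow(b)$ obtained from \cite[Corollary 7.1.17(a)]{jrsh875}; your extra step of pushing the amalgam into $K^{sat}$ via Fact \ref{prec^+-properties} and transitivity of $\preceq^{NF}_{\lambda^+}$ is correct and is more detail than the paper gives.

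The gap is in $(b)\rightarrow(c)$. You propose to extend each $M_i \in K_{\lambda^+}$ with $M_0 \preceq M_i$ to some $M_i^+ \in K^{sat}$ satisfying $M_i \preceq M_i^+$ and $M_0 \preceq^{NF}_{\lambda^+} M_i^+$, ``using Fact \ref{the widehat{NF}-properties}(c)''. That extension property only extends a $\lambda$-sized model $N_1$ over a $\lambda^+$-sized $M_0$; it gives no way to absorb a prescribed $\lambda^+$-sized $\preceq$-extension $M_i$ of $M_0$ into a $\preceq^{NF}_{\lambda^+}$-extension of $M_0$, and a filtration-by-filtration iteration runs into the disjointness requirement $N_1\cap M_0=N_0$ and produces embeddings that fix only a $\lambda$-sized piece of $M_0$, not $M_0$ itself. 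Worse, if such an $M_i^+$ existed then Fact \ref{basic properties of preceq^{NF}_{lambda^+}}(c) would immediately yield $M_0 \preceq^{NF}_{\lambda^+} M_i$ --- that is, condition (a) --- from Hypothesis \ref{hypothesis for bar{NF}} alone, with no use of (b) or of tameness; this would trivialize the corollary and make Theorem \ref{the main theorem of the paper} unnecessary. Passing from $M_0 \preceq M_i$ to a $\preceq^{NF}_{\lambda^+}$-comparison is precisely the hard content of the main theorem (the $<_B$-maximality argument with the $\prec^+_{\lambda^+}$-game), and the paper's own Corollary \ref{tameness implies every saturated model is an amalgamation base} has to invoke Theorem \ref{the main theorem of the paper} for exactly this step. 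So your $(b)\rightarrow(c)$ does not close. (For what it is worth, the paper dismisses $(b)\rightarrow(c)$ as ``Obvious'', presumably by analogy with Remark \ref{tameness for non-forking types + amalgamation implies strong tameness for non-forking types}; but since (b) only provides amalgamation for $\preceq^{NF}_{\lambda^+}$-spans inside $K^{sat}$ rather than $\preceq$-amalgamation over saturated models, that analogy also requires an argument --- your instinct that something must actually be proved here is right, but the particular construction you offer does not supply it.)
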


\begin{proof}
\mbox{}\\
$(a) \rightarrow (b)$: By \cite[Corollary 7.1.17(a)]{jrsh875}.\\ $(b) \rightarrow (c)$: Obvious.
\\ $(c) \rightarrow (a)$: By Theorem \ref{we can use NF with tameness for saturated models}.
\end{proof}

\section{Proving Continuity}

In this section, we get the $(\lambda,\lambda^+)$-continuity of serial independence property, using the relation $\preceq^{NF}_{\lambda^+}$.

In Proposition \ref{a rectangle with NF} we amalgamate two sequences of models such that $NF$ holds. In Proposition \ref{widehat{NF} with independence} we amalgamate models such that the relation $\widehat{NF}$ holds and in addition we get independence. Proposition \ref{widehat{NF} with independence} and the weak uniqueness property of the relation $\widehat{NF}$ yield Proposition \ref{widehat{NF} respects independence}. Proposition \ref{widehat{NF} respects independence} says that the relation $\widehat{NF}$ respects independence, in some sense. Using Proposition \ref{widehat{NF} respects independence}, we prove Theorem \ref{continuity of serial independence}. This theorem presents sufficient conditions for the $(\lambda,\lambda^+)$-continuity of serial independence property.

The proof of Proposition \ref{a rectangle of models with NF} is similar to known proofs (\cite[Proposition 3.1.10]{jrsh875}, for example), but for completeness, we give it. Proposition \ref{a rectangle with NF of width 2} is a special case of Proposition \ref{a rectangle of models with NF}. But in the proof of Proposition \ref{a rectangle of models with NF}, we apply Proposition \ref{a rectangle with NF of width 2}.
The proof of Proposition \ref{a rectangle of models with NF} is similar to known proofs (\cite[Proposition 3.1.10]{jrsh875}, for example), but for completeness, we give it. Proposition \ref{a rectangle with NF of width 2} is a special case of Proposition \ref{a rectangle of models with NF}. But in the proof of Proposition \ref{a rectangle of models with NF}, we apply Proposition \ref{a rectangle with NF of width 2}.
\begin{proposition}\label{a rectangle with NF of width 2}
Let $\alpha^* \leq \lambda^+$ and let $\langle M_\alpha:\alpha<\alpha^* \rangle$ be an increasing continuous sequence of models of cardinality $\lambda$. Let $N \in K_\lambda$ with $M_0 \preceq N$. Then we can find an increasing continuous sequence, $\langle N_\alpha:\alpha<\alpha^* \rangle$ of models in $K_\lambda$ and an embedding $f:N \to N_0$ fixing $M_0$ pointwise, such that $NF(M_\alpha,N_\alpha,M_{\alpha+1},N_{\alpha+1})$ holds for each $\alpha$ satisfying $\alpha+1<\alpha^*$.
\begin{displaymath}
\xymatrix{N \ar[r]^{f} & N_0 \ar[rr] && N_\alpha \ar[r]^{id} & N_{\alpha+1}\\
& M_0 \ar[ul]^{id} \ar[rr] \ar[u]^{id} && M_\alpha \ar[u]^{id} \ar[r]^{id} & M_{\alpha+1} \ar[u]^{id}
}
\end{displaymath} 
\end{proposition}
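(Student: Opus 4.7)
The plan is to build the sequence $\langle N_\alpha : \alpha < \alpha^* \rangle$ by induction on $\alpha$, carrying along a bookkeeping invariant that makes the disjointness hypothesis of the $NF$-extension axiom (Definition \ref{definition of NF}(c)) automatic at every successor stage. Writing $U := \bigcup_{\beta < \alpha^*} M_\beta$ as a set (so $|U| \leq \lambda^+$), the invariant I would carry is
\[
N_\alpha \cap U = M_\alpha,
\]
which immediately yields $N_\alpha \cap M_{\alpha+1} = M_\alpha$, exactly the disjointness needed by extension of $NF$.

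For the base case $\alpha = 0$, since $|U| \leq \lambda^+$ and the set-theoretic universe is unrestricted, I would pick a set $V$ of cardinality $\lambda$ with $M_0 \subseteq V$ and $V \setminus M_0$ disjoint from $U \setminus M_0$, and then transport the $\preceq$-structure of $N$ along any bijection $N \to V$ that is the identity on $M_0$. This produces a model $N_0 \in K_\lambda$ with $M_0 \preceq N_0$, an isomorphism $f : N \to N_0$ fixing $M_0$ pointwise, and $N_0 \cap U = M_0$, establishing the invariant at $\alpha = 0$.

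For the successor step $\alpha + 1$, the invariant gives the disjointness $N_\alpha \cap M_{\alpha+1} = M_\alpha$, so Definition \ref{definition of NF}(c) produces some $N'_{\alpha+1} \in K_\lambda$ with $NF(M_\alpha, N_\alpha, M_{\alpha+1}, N'_{\alpha+1})$. Any elements of $N'_{\alpha+1} \setminus (N_\alpha \cup M_{\alpha+1})$ that happen to lie in $U$ must be relocated: I compose $N'_{\alpha+1}$ with an isomorphism that is the identity on $N_\alpha \cup M_{\alpha+1}$ and sends the remaining elements off of $U$. By Definition \ref{definition of NF}(g), the resulting $N_{\alpha+1}$ still satisfies $NF(M_\alpha, N_\alpha, M_{\alpha+1}, N_{\alpha+1})$, and by construction $N_{\alpha+1} \cap U = (N_\alpha \cup M_{\alpha+1}) \cap U = M_{\alpha+1}$, preserving the invariant. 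For a limit stage $\alpha < \lambda^+$, I set $N_\alpha := \bigcup_{\beta<\alpha} N_\beta$; this lies in $K_\lambda$ by closure of the AEC under chains of length $<\lambda^+$, satisfies $M_\alpha \preceq N_\alpha$ by smoothness, and the invariant $N_\alpha \cap U = \bigcup_{\beta<\alpha} M_\beta = M_\alpha$ follows from the continuity of both sequences.

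The main obstacle is precisely this disjointness bookkeeping, since without it the extension axiom of $NF$ cannot be invoked at successor stages. The device of maintaining $N_\alpha \cap U = M_\alpha$ throughout, combined with the closure of $K$ and $NF$ under isomorphisms (so that we may freely relocate new elements outside $U$), handles it uniformly; everything else in the construction then reduces to a routine application of the extension axiom together with closure of $K$ under unions of short chains.
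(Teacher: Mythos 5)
Your proof is correct and follows essentially the same route as the paper's: an induction applying the extension axiom of $NF$ at successor stages and taking unions at limits, with closure of $K$ and $NF$ under isomorphisms used to secure the disjointness hypothesis required by extension. The only difference is organizational: you keep the tower $\langle M_\alpha:\alpha<\alpha^*\rangle$ fixed and rename the $N$-side at every step (maintaining the invariant $N_\alpha\cap U=M_\alpha$), whereas the paper instead carries embedded copies $g_\alpha[M_\alpha]$ of the $M$-tower inside the $N$-tower and undoes all the renaming at once at the end via a single injection $h$ extending $(\bigcup_{\alpha}g_\alpha)^{-1}$.
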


\begin{proof}
We choose $N_\alpha \in K_\lambda$ and an embedding $g_\alpha:M_\alpha \to N_\alpha$ by induction on $\alpha<\alpha^*$ such that for every $\alpha<\alpha^*$, the following hold:
\begin{enumerate}
\item $N \preceq N_0$, \item if $\alpha+1<\alpha^*$ then $N_\alpha \preceq N_{\alpha+1}$, $g_\alpha \subseteq g_{\alpha+1}$ and $NF(g_\alpha[M_\alpha],g_{\alpha+1}[M_{\alpha+1} \allowbreak ],N_\alpha,N_{\alpha+1})$ and \item if $\alpha$ is limit then $N_\alpha=\bigcup_{\beta<\alpha}N_\alpha$ and $g_\alpha=\bigcup_{\beta<\alpha}g_\beta$. 
\end{enumerate}

\begin{displaymath}
\xymatrix{N \ar[r]^{id} & N_0 \ar[rr]^{id} && N_\alpha \ar[r]^{id} & N_{\alpha+1}\\
& M_0 \ar[ul]^{id} \ar[rr]^{id} \ar[u]^{g_0} && M_\alpha \ar[u]^{g_\alpha} \ar[r]^{id} & M_{\alpha+1} \ar[u]^{g_{\alpha+1}}
}
\end{displaymath}
Now let $h$ be an injection of $\bigcup_{\alpha<\alpha^*}N_\alpha$ extending $(\bigcup_{\alpha<\alpha^*}g_\alpha)^{-1}$. The sequence $\langle h[N_\alpha]:\alpha<\alpha^* \rangle$ and the embedding $h \restriction N$ are as needed.
\end{proof}

\begin{proposition} \label{a rectangle of models with NF}\label{a rectangle with NF}
Let $\alpha^*,\beta^*$ be two ordinals $\leq \lambda^+$ and let $\langle M_{a,\alpha}:\alpha<\alpha^* \rangle$ and $\langle M_{b,\alpha}:\alpha<\beta^* \rangle$ be two increasing continuous sequences of models of cardinality $\lambda$ such that $M_{a,0}=M_{b,0}$. Then there is a `rectangle of models' $\{M_{\alpha,\beta}:\alpha<\alpha^*,\beta<\beta^* \}$ and a sequence $\{f_\beta:\beta<\beta^* \}$ such that for every $\alpha<\alpha^*$ and $\beta<\beta^*$ the following hold:
\begin{enumerate}
\item $M_{\alpha,\beta} \in K_\lambda$, \item if $\alpha+1<\alpha^*$ then $M_{\alpha,\beta} \preceq M_{\alpha+1,\beta}$, \item if $\beta=\gamma+1$ then $M_{\alpha,\gamma} \preceq M_{\alpha,\gamma+1}$, \item if $\alpha$ is a limit ordinal then $M_{\alpha,\beta}=\bigcup_{\alpha'<\alpha}M_{\alpha',\beta}$, \item if $\beta$ is a limit ordinal then $M_{\alpha,\beta}=\bigcup_{\beta'<\beta}M_{\alpha,\beta'}$,  
\item $f_\beta$ is an isomorphism of $M_{b,\beta}$ onto $M_{0,\beta}$, \item if $\beta=0$ then $M_{\alpha,\beta}=M_{a,\alpha}$ and $f_\beta$ is the identity on $M_{a,0}=M_{b,0}$, \item $f_{\beta'} \subseteq f_\beta$ for every $\beta'<\beta$, \item if $\beta$ is a limit ordinal then $f_\beta=\bigcup_{\beta'<\beta}f_{\beta'}$, \item if $\beta=\gamma+1$ then $NF(M_{\alpha,\gamma},M_{\alpha,\beta},M_{\alpha+1,\gamma},M_{\beta,\gamma+1})$, unless $\alpha+1=\alpha^*$.
\end{enumerate}  
\end{proposition}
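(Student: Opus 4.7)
The proof is by induction on $\beta<\beta^*$, constructing at each stage the ``$\beta$-th column'' $\langle M_{\alpha,\beta}:\alpha<\alpha^*\rangle$ together with the isomorphism $f_\beta$. At the base step $\beta=0$, set $M_{\alpha,0}:=M_{a,\alpha}$ and $f_0:=\id_{M_{b,0}}$; conditions (1),(2),(4),(6),(7) hold immediately and the remaining conditions apply only to $\beta>0$. For limit $\beta$, set $M_{\alpha,\beta}:=\bigcup_{\beta'<\beta}M_{\alpha,\beta'}$ and $f_\beta:=\bigcup_{\beta'<\beta}f_{\beta'}$; continuity of $\langle M_{b,\beta'}:\beta'<\beta\rangle$ yields $\dom(f_\beta)=M_{b,\beta}$, and the union of an increasing chain of isomorphisms between two increasing chains in the AEC is itself an isomorphism, so $f_\beta$ is an isomorphism onto $M_{0,\beta}$.

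The crux is the successor step $\beta=\gamma+1$. By renaming the elements of $M_{b,\gamma+1}\setminus M_{b,\gamma}$ to be fresh, we obtain $N^*\in K_\lambda$ with $M_{0,\gamma}\preceq N^*$ and an isomorphism $\tilde f:M_{b,\gamma+1}\to N^*$ extending $f_\gamma$. We then apply Proposition \ref{a rectangle with NF of width 2} to the sequence $\langle M_{\alpha,\gamma}:\alpha<\alpha^*\rangle$ and the model $N^*$, obtaining an increasing continuous sequence $\langle N_\alpha:\alpha<\alpha^*\rangle$ with $M_{\alpha,\gamma}\preceq N_\alpha$ and $NF(M_{\alpha,\gamma},N_\alpha,M_{\alpha+1,\gamma},N_{\alpha+1})$ for every $\alpha$ with $\alpha+1<\alpha^*$. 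Setting $M_{\alpha,\gamma+1}:=N_\alpha$ and $f_{\gamma+1}:=\tilde f$ delivers all of the required conditions, \emph{provided} we can arrange that $N_0=N^*$ (literal equality, not merely $N^*\preceq N_0$), so that $f_{\gamma+1}$ is an isomorphism \emph{onto} $M_{0,\gamma+1}$ as required by (6).

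This equality is the main obstacle. It is obtained by inspecting the construction underlying Proposition \ref{a rectangle with NF of width 2}: at its base step we may choose $g_0:=\id_{M_{0,\gamma}}$ and $N_0:=N^*$, and extend the final renaming $h$ of $\bigcup_\alpha N_\alpha$ to be the identity on $N^*$. Well-definedness of this extension rests on the identities $N_0\cap M_{\alpha,\gamma}=M_{0,\gamma}$ for every $\alpha<\alpha^*$, which follow from long transitivity (Definition \ref{definition of NF}(f)) combined with symmetry (Definition \ref{definition of NF}(e)) applied to the successive $NF$-relations, together with disjointness (Definition \ref{definition of NF}(a)). With this arrangement, conditions (6) and (8) hold since $f_{\gamma+1}=\tilde f$ is an isomorphism onto $N^*=M_{0,\gamma+1}$ extending $f_\gamma$; condition (10) is precisely the $NF$-relation produced by Proposition \ref{a rectangle with NF of width 2}; and the remaining conditions (monotonicity in $\alpha$ and $\beta$, continuity at limits in both coordinates) follow directly from the construction and the inductive hypothesis.
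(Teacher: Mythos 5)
Your argument follows the same route as the paper's proof: induction on $\beta$ with the base, limit, and successor cases handled exactly as there, and with the successor case resting on Proposition \ref{a rectangle with NF of width 2}. The one point where you go beyond the paper is in arranging $N_0=N^*$ so that $f_{\gamma+1}$ is an isomorphism \emph{onto} $M_{0,\gamma+1}$ as clause (6) literally demands --- the paper simply sets $f_\beta:=f$ for the embedding $f:M_{b,\gamma+1}\to N_0$ supplied by that proposition and leaves surjectivity unaddressed --- and your freshness-plus-disjointness argument (long transitivity yielding $NF(M_{0,\gamma},g_\alpha[M_{\alpha,\gamma}],N^*,N_\alpha)$ and hence $g_\alpha[M_{\alpha,\gamma}]\cap N^*=M_{0,\gamma}$, so the final renaming can be taken to fix $N^*$ pointwise) is a correct way to secure it.
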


\begin{displaymath}
\xymatrix{M^a_{\alpha+1}=M_{\alpha+1,0} 
\ar[rrr]^{id} &&& M_{\alpha+1,\beta} \ar[r]^{id} & M_{\alpha+1,\beta+1} \\
M^a_{\alpha}=M_{\alpha,0} \ar[u]^{id}
\ar[rrr]^{id} &&& M_{\alpha,\beta} \ar[r]^{id} \ar[u]^{id} & M_{\alpha,\beta+1} \ar[u]^{id} \\ \\
M^a_{0}=M_{0,0} \ar[uu]^{id}
\ar[rrr]^{id} &&& M_{0,\beta} \ar[r]^{id} \ar[uu]^{id} & M_{0,\beta+1} \ar[uu]^{id} \\
M^b_0 \ar[u]^{f_0} &&& M^b_\beta \ar[u]^{f_\beta} & M^b_{\beta+1} \ar[u]^{f_{\beta+1}}
}
\end{displaymath}

\begin{proof}
For $n=1,2,..,10$, let (n)$_\beta$ denote Clause n for $\beta$.
We choose by induction on $\beta<\beta^*$ a set of models $\{M_{\alpha,\beta}:\alpha<\alpha^*\}$ and a function $f_\beta$ such that Clauses (1)$_\beta$-(10)$_\beta$ hold. 

\emph{Case a:} $\beta=0$. In this case, Clause (7) determines the construction. Clauses (1)$_{\beta}$,(2)$_{\beta}$,(4)$_{\beta}$ and (6)$_{\beta}$ hold by an assumption. Clauses (3)$_{\beta}$,(5)$_{\beta}$,\allowbreak (8)$_{\beta}$ (9)$_{\beta}$ and (10)$_{\beta}$ are not relevant. 

\emph{Case b:} $\beta$ is a limit ordinal. In this case, Clauses (3)$_\beta$,(7)$_\beta$ and (10)$_\beta$ are not relevant. By Clause (5)$_\beta$, we must choose $M_{\alpha,\beta}=\bigcup_{\beta'<\beta}M_{\alpha,\beta'}$ and by Clause (9)$_\beta$, we must choose $f_\beta=\bigcup_{\beta'<\beta}f_{\beta'}$. Easily, Clauses (1)$_\beta$,(6)$_\beta$ and (8)$_\beta$ hold. 

Let us prove that Clause (2)$_\beta$ holds. Fix $\alpha$ with $\alpha+1<\alpha^*$. By Clause (3)$_{\beta'}$ for $\beta'<\beta$, the sequences $\langle M_{\alpha,\beta'}:\beta'<\beta \rangle$ and $\langle M_{\alpha+1,\beta'}:\beta'<\beta \rangle$ are increasing. By Clause (5)$_{\beta'}$ for $\beta'<\beta$, these sequences are continuous. So by smoothness $M_{\alpha,\beta} \preceq M_{\alpha+1,\beta}$, namely, Clause (2)$_\beta$ holds. 

It remains to prove that Clause (4)$_\beta$, namely, to show that if $\alpha$ is limit then $M_{\alpha,\beta}=\bigcup_{\alpha'<\alpha}M_{\alpha',\beta}$. On the one hand, if $x \in M_{\alpha',\beta}$ for some $\alpha'<\alpha$ then by the definition of $M_{\alpha',\beta}$, $x \in M_{\alpha',\beta'}$ for some $\beta'<\beta$. By the induction hypothesis, namely, (4)$_{\beta'}$, $M_{\alpha',\beta'} \subseteq M_{\alpha,\beta'}$. But $M_{\alpha,\beta'} \subseteq M_{\alpha,\beta}$. Whence, $\bigcup_{\alpha'<\alpha}M_{\alpha',\beta} \subseteq M_{\alpha,\beta}$. On the other hand, if $x \in M_{\alpha,\beta}$ then $x \in M_{\alpha,\beta'}$ for some $\beta'<\beta$. Therefore by (4)$_{\beta'}$, $x \in M_{\alpha',\beta'}$ for some $\alpha'<\alpha$. But $M_{\alpha',\beta'} \subseteq M_{\alpha',\beta}$. Whence, $M_{\alpha,\beta} \subseteq \bigcup_{\alpha'<\alpha}M_{\alpha',\beta}$.

\emph{Case c:} $\beta=\gamma+1$. We apply Proposition \ref{a rectangle with NF of width 2}, where $M_{\alpha,\gamma}$ stands for $M_\alpha$. By this proposition, we can find an increasing continuous sequence of models, $\langle N_\alpha:\alpha<\alpha^* \rangle$ of models in $K$ of cardinality $\lambda$ and an embedding $f:M^b_{\gamma+1} \to N_0$ such that $f_\gamma \subseteq f$ and for each $\alpha<\alpha^*$ we have $NF(M_{\alpha,\gamma},N_\alpha,M_{\alpha+1,\gamma},N_{\alpha+1})$. Define $M_{\alpha,\beta}:=N_\alpha$ and $f_\beta:=f$. Hence, we have found an increasing continuous sequence of models, $\langle M_{\alpha,\beta}:\alpha<\alpha^* \rangle$ of models in $K$ of cardinality $\lambda$ and an embedding $f_{\beta}:M^b_{\gamma+1} \to M_{0,\beta}$ such that $f_\gamma \subseteq f_\beta$ and for each $\alpha<\alpha^*$ we have $NF(M_{\alpha,\gamma},M_{\alpha,\beta},M_{\alpha+1,\gamma},M_{\alpha+1,\beta})$. So (1)$_\beta$,(2)$_\beta$, (4)$_\beta$, (6)$_\beta$, (8)$_\beta$ and (10)$_\beta$ hold. But (10)$_\beta$ yields (3)$_\beta$ and (5)$_\beta$, (7)$_\beta$, (9)$_\beta$ are irrelevant for a successor $\beta$. 
\end{proof}

Suppose $\langle M_\alpha,a_\alpha:\alpha<\alpha^* \rangle ^\frown \langle M_{\alpha^*} \rangle$ is independent and $M_0$ is of cardinality $\lambda$. Suppose that $M_0 \preceq N_0 \in K_{\lambda^+}$. Proposition \ref{widehat{NF} with independence} says that in such a case, we can amalgamate $M_{\alpha^*}$ and $N_0$ over $M_0$ such that the sequence $\langle a_\alpha:\alpha<\alpha^* \rangle$ will be independent. Moreover, we can choose the amalgamation such that $\widehat{NF}$ will hold.
\begin{proposition} \label{widehat{NF} with independence}
If 
\begin{enumerate}
\item $\alpha^*<\lambda^+$,
\item $M_\alpha \in K_\lambda$ for every $\alpha \leq \alpha^*$, \item the sequence $\langle M_\alpha,a_\alpha:\alpha<\alpha^* \rangle ^\frown \langle M_{\alpha^*} \rangle$ is independent and \item $M_0 \preceq N_0 \in K_{\lambda^+}$
\end{enumerate}
 then for some $N_1 \in K_{\lambda^+}$ and some embedding $f:N_0 \to N_1$ fixing $M_0$ pointwise, the sequence $\langle a_\alpha:\alpha<\alpha^* \rangle$ is independent in $(f[N_0],N_1)$ and  $\widehat{NF}(M_0,M_{\alpha^*},f[N_0],N_1)$ holds. 
\begin{displaymath}
\xymatrix{M_{\alpha^*} \ar[r] & N_1 \\
 M_\alpha \ar[u]^{id} \\
M_0 \ar[u]^{id} \ar[r]_{id} & N_0 \ar[uu]_{f}
}
\end{displaymath}
\end{proposition}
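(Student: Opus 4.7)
The plan is to build $N_1$ and $f$ by running the rectangle construction of Proposition~\ref{a rectangle with NF} with one dimension being the given sequence of independent models and the other being a filtration of $N_0$. First I fix a filtration $\langle N_{0,\beta}:\beta<\lambda^+\rangle$ of $N_0$ with $N_{0,0}=M_0$. I then invoke Proposition~\ref{a rectangle with NF} on the sequence $\langle M_\alpha:\alpha\le\alpha^*\rangle$ (of length $\alpha^*+1\le\lambda^+$) and the sequence $\langle N_{0,\beta}:\beta<\lambda^+\rangle$, obtaining a rectangle of models $\{M_{\alpha,\beta}:\alpha\le\alpha^*,\ \beta<\lambda^+\}$ in $K_\lambda$ and a $\subseteq$-increasing family of embeddings $f_\beta:N_{0,\beta}\to M_{0,\beta}$, with $M_{\alpha,0}=M_\alpha$, $f_0=\id_{M_0}$, and $NF(M_{\alpha,\gamma},M_{\alpha,\gamma+1},M_{\alpha+1,\gamma},M_{\alpha+1,\gamma+1})$ for all admissible $\alpha,\gamma$.

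Now put $f:=\bigcup_{\beta<\lambda^+}f_\beta$ and $N_1:=\bigcup_{\beta<\lambda^+}M_{\alpha^*,\beta}$. Then $f$ is an embedding of $N_0$ into the model $f[N_0]=\bigcup_{\beta<\lambda^+}M_{0,\beta}$ fixing $M_0$ pointwise, and $N_1\in K_{\lambda^+}$. To see $\widehat{NF}(M_0,M_{\alpha^*},f[N_0],N_1)$ I use the filtrations $\langle M_{0,\beta}\rangle$ and $\langle M_{\alpha^*,\beta}\rangle$. For each $\beta<\lambda^+$, applying symmetry of $NF$ to the rectangle's quadruples and then long transitivity along the $\alpha$-coordinate on the two increasing continuous sequences $\langle M_{\alpha,\beta}:\alpha\le\alpha^*\rangle$ and $\langle M_{\alpha,\beta+1}:\alpha\le\alpha^*\rangle$ yields $NF(M_{0,\beta},M_{\alpha^*,\beta},M_{0,\beta+1},M_{\alpha^*,\beta+1})$, and a final use of symmetry puts this in the form required by Definition~\ref{definition of widehat{NF}}.

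For the independence of $\langle a_\alpha:\alpha<\alpha^*\rangle$ in $(f[N_0],N_1)$, I take as witnessing chain $M'_\alpha:=\bigcup_{\beta<\lambda^+}M_{\alpha,\beta}$ for $0<\alpha\le\alpha^*$. Clauses~(2)--(5) of Proposition~\ref{a rectangle with NF} make $\langle M'_\alpha:\alpha\le\alpha^*\rangle$ increasing continuous with $M'_0=f[N_0]$ and $M'_{\alpha^*}=N_1$. Exactly the same long-transitivity argument as above (now applied for each $\alpha<\alpha^*$) produces $\widehat{NF}(M_\alpha,M_{\alpha+1},M'_\alpha,M'_{\alpha+1})$, so by Proposition~\ref{a new version of widehat{NF} respects the frame} the type $\tp(a_\alpha,M'_\alpha,M'_{\alpha+1})$ does not fork over $M_\alpha$. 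Combining this with the hypothesis that $\tp(a_\alpha,M_\alpha,M_{\alpha+1})$ does not fork over $M_0$, transitivity of non-forking (for types over models of cardinality $\ge\lambda$) gives non-forking over $M_0$, and then monotonicity, using $M_0\preceq f[N_0]\preceq M'_\alpha$, upgrades this to non-forking over $f[N_0]$. That $a_\alpha\in M'_{\alpha+1}\setminus M'_\alpha$ follows because long transitivity plus disjointness (axiom~(a) of $NF$) gives $M_{\alpha,\beta}\cap M_{\alpha+1}=M_\alpha$ for each $\beta<\lambda^+$, so $a_\alpha\in M_{\alpha+1}\setminus M_\alpha$ cannot lie in $\bigcup_\beta M_{\alpha,\beta}=M'_\alpha$.

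The main obstacle will be the bookkeeping between the two cardinalities: the non-forking statement delivered by $\widehat{NF}$ respecting the frame is a statement about non-forking over the small base $M_\alpha\in K_\lambda$, whereas the conclusion demands non-forking over $f[N_0]\in K_{\lambda^+}$. The step that bridges this gap is the combined use of transitivity (to pass from non-forking over $M_\alpha$ to non-forking over $M_0$) and monotonicity (to pass from $M_0$ up to $f[N_0]$), and getting this chain right, in the setting where the quadruple $\widehat{NF}(M_\alpha,M_{\alpha+1},M'_\alpha,M'_{\alpha+1})$ really does hold, is the only subtle point of the argument.
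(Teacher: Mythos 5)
Your proposal is correct and follows essentially the same route as the paper's proof: the rectangle of Proposition \ref{a rectangle with NF} built from $\langle M_\alpha:\alpha\le\alpha^*\rangle$ against a filtration of $N_0$, row-unions $M'_\alpha=\bigcup_\beta M_{\alpha,\beta}$ giving $\widehat{NF}(M_\alpha,M_{\alpha+1},M'_\alpha,M'_{\alpha+1})$ and hence (via Proposition \ref{a new version of widehat{NF} respects the frame}, transitivity and monotonicity) the required independence over $f[N_0]$, and the column-wise long-transitivity-plus-symmetry argument to witness $\widehat{NF}(M_0,M_{\alpha^*},f[N_0],N_1)$. Your explicit check that $a_\alpha\notin M'_\alpha$ via disjointness is a small point the paper leaves implicit, but the argument is the same.
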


\begin{proof}
Let $\langle M^b_\beta:\beta<\lambda^+ \rangle$ be a filtration of $N_0$ such that $M^b_0=M_0$. For every $\alpha<\alpha^*$ define $M^a_\alpha=:M_\alpha$. By Proposition \ref{a rectangle with NF}, there is a `rectangle of models' $\{M_{\alpha,\beta}:\alpha<\alpha^*, \beta<\lambda^+ \}$ and a set of embeddings $\{f_\beta:\beta<\lambda^+\}$ satisfiying Clauses (1)-(10) of Proposition \ref{a rectangle with NF}. Define $M_{\alpha,\lambda^+}=:\bigcup_{\beta<\lambda^+}M_{\alpha,\beta}$ for every $\alpha<\alpha^*$. 

\begin{displaymath}
\xymatrix{M_{\alpha^*} \ar[r]^{id} & M_{\alpha^*,\beta} \ar[r]^{id} & M_{\alpha^*,\beta+1} \ar[r]^{id} & N_1\\
a_\alpha \in M_{\alpha+1} \ar[u]^{id} \ar[r]^{id} & M_{\alpha+1,\beta} \ar[r]^{id}  \ar[u]^{id} & M_{\alpha+1,\beta+1} \ar[r]^{id} \ar[u]^{id} & M_{\alpha+1,\lambda^+} \ar[u]^{id} \ni a_\alpha \\
 M_\alpha \ar[u]^{id} \ar[r]^{id} & M_{\alpha,\beta} \ar[r]^{id} \ar[u]^{id} & M_{\alpha,\beta+1} \ar[r]^{id}  \ar[u]^{id} & M_{\alpha,\lambda^+}  \ar[u]^{id} \\
M_0 \ar[u]^{id} \ar[r]_{id} & M_{0,\beta} \ar[r]^{id} \ar[u]^{id}  & M_{0,\beta+1} \ar[u]^{id} \ar[r]^{id} & M_{0,\lambda^+} \ar[u]^{id} \\
M^b_0 \ar[u]^{=} \ar[r]_{id} & M^b_{\beta} \ar[u]^{f_{\beta}} \ar[r]_{id} & M^b_{\beta+1} \ar[u]^{f_{\beta+1}} \ar[r]_{id} & N_0 \ar[u]^{f} }
\end{displaymath}

$\langle M_{\alpha,\lambda^+}:\alpha \leq \alpha^* \rangle$ is an increasing continuous sequence of models of cardinality $\lambda^+$. By Clauses (3),(5) and (10) of Proposition \ref{a rectangle with NF}, we have $\widehat{NF}(M_\alpha,M_{\alpha+1},M_{\alpha,\lambda^+},M_{\alpha+1,\lambda^+})$. But the relation $\widehat{NF}$ respects the frame and $tp(a_\alpha,M_\alpha,M_{\alpha+1})$ is basic. So $tp(a_\alpha,M_{\alpha,\lambda^+},M_{\alpha+1,\lambda^+})$ does not fork over $M_\alpha$. By the definition of independence, $tp(a_\alpha,M_\alpha,M_{\alpha+1})$ does not fork over $M_0$. By transitivity, $tp(a_\alpha,M_{\alpha,\lambda^+},M_{\alpha+1,\lambda^+})$ does not fork over $M_0$. Therefore by monotonicity, it does not fork over $M_{0,\lambda^+}$. Hence, the sequence $\langle M_{\alpha,\lambda^+},a_\alpha:\alpha<\alpha^* \rangle ^\frown \langle M_{\alpha^*,\lambda^+} \rangle$ is independent.

Define $N_1:=\bigcup_{\alpha<\alpha^*}M_{\alpha,\lambda^+}$ and $f:=\bigcup_{\beta<\lambda^+}f_\beta$. 

It remains to prove that $\widehat{NF}(M_0,M_{\alpha^*},f[N_0],N_1)$ holds. It can be proven, by using the long transitivity of the relation $\widehat{NF}$, but we prefer to prove it in another way. Define $M_{\alpha^*,\beta}:=\bigcup_{\alpha<\alpha^*}M_{\alpha,\beta}$ for every $\beta<\lambda^+$. By Clauses (2),(4) and (10) of Proposition \ref{a rectangle with NF}, we have $NF(M_{0,\beta},M_{0,\beta+1},M_{\alpha^*,\beta},M_{\alpha^*,\beta+1})$ for each $\beta<\lambda^+$, or equivalently, $NF(f_\beta[M^b_{\beta}],f_{\beta+1}[M^b_{\beta+1}],M_{\alpha^*,\beta},M_{\alpha^*,\beta+1})$. Since the sequences $\langle f_\beta[M^b_\beta]:\beta<\lambda^+ \rangle$ and $\langle M_{\alpha^*,\beta}:\beta<\lambda^+ \rangle$ are increasing and continuous, $NF(M_0,M_{\alpha^*},f[N_0],N_1)$ holds.
\end{proof}
 
The following proposition says that the relation $\widehat{NF}$ respects independence.  
\begin{proposition}\label{widehat{NF} implies independence}\label{widehat{NF} respects independence}
Let $\alpha^*<\lambda^+$. Let $M_0^-$ and $M_1^-$ be models of cardinality $\lambda$ and let $M_0$ and $M_1$ be models of cardinality $\lambda^+$ satisfying $\widehat{NF}(M_0^-,M_1^-,M_0,M_1)$. Then every independent sequence, $\langle a_\alpha:\alpha<\alpha^* \rangle$, in $(M_0^-,M_1^-)$ is independent in $(M_0,M_1)$ as well.
\end{proposition}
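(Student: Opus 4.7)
My plan is to combine Proposition \ref{widehat{NF} with independence} with the weak uniqueness property of $\widehat{NF}$ (Fact \ref{5.15}(d)) in order to transfer the given independence from an auxiliary amalgam onto the prescribed $M_1$.

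First, fix a witness sequence $\langle N_\alpha : \alpha \leq \alpha^* \rangle$ for the independence of $\langle a_\alpha \rangle$ in $(M_0^-, M_1^-)$, with $N_0 = M_0^-$. As a preliminary step I would reduce to the case where $M_1^- \preceq N_{\alpha^*}$. For successor $\alpha^* = \beta+1$ this is immediate by replacing $N_{\alpha^*}$ with a $\lambda$-sized amalgam of the old $N_{\alpha^*}$ and $M_1^-$ over $M_0^-$; non-forking of $tp(a_\beta, N_\beta, N_{\alpha^*})$ over $M_0^-$ is preserved under $\preceq$-extension by monotonicity of $\dnf$. For limit $\alpha^*$ I would simply append an extra ``dummy'' step: by density and extension of basic types for $\frak{s}$, pick an element $a_{\alpha^*}$ and a model $N^+ \in K_\lambda$ containing both $N_{\alpha^*}$ and $M_1^-$ such that $tp(a_{\alpha^*}, N_{\alpha^*}, N^+)$ is basic and does not fork over $M_0^-$; independence of the extended sequence clearly implies independence of the original one.

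Next, apply Proposition \ref{widehat{NF} with independence} to the (possibly extended) witness sequence, using the $\lambda^+$-model $M_0$ as the ``$N_0$'' of that proposition. This yields a model $N_1' \in K_{\lambda^+}$ and an embedding $f : M_0 \to N_1'$ fixing $M_0^-$ pointwise, such that $\widehat{NF}(M_0^-, N_{\alpha^*}, f[M_0], N_1')$ holds and $\langle a_\alpha \rangle$ is independent in $(f[M_0], N_1')$, the witnesses $\widetilde{M}_\alpha$ of this independence lying inside $N_1'$ (by inspection of the rectangle $\{M_{\alpha,\beta}\}$ in the proof of that proposition). Renaming so that $f = \mathrm{id}$, we now have $M_0 \preceq N_1'$. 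By monotonicity of $\widehat{NF}$ (Fact \ref{5.15}(b)), taking $M_1^-$ in place of $N_{\alpha^*}$ (using $M_0^- \preceq M_1^- \preceq N_{\alpha^*}$) and $N_1'$ itself in the fourth coordinate (since $M_1^- \cup M_0 \subseteq N_1'$), we extract $\widehat{NF}(M_0^-, M_1^-, M_0, N_1')$.

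Finally, I would apply weak uniqueness of $\widehat{NF}$ (Fact \ref{5.15}(d)) to $\widehat{NF}(M_0^-, M_1^-, M_0, M_1)$ and $\widehat{NF}(M_0^-, M_1^-, M_0, N_1')$: this furnishes a model $M^*$ with embeddings $g_0 : N_1' \to M^*$ and $g_1 : M_1 \to M^*$, both fixing $M_1^- \cup M_0$ pointwise. After renaming along $g_1$ we may assume $M_1 \preceq M^*$. Setting $\widetilde{M}'_\alpha := g_0[\widetilde{M}_\alpha]$, we have $\widetilde{M}'_0 = g_0[M_0] = M_0$ and $g_0(a_\alpha) = a_\alpha$ (since each $a_\alpha \in M_1^-$, which $g_0$ fixes); invariance of $\dnf$ under isomorphism transfers the non-forking conditions, so $\langle \widetilde{M}'_\alpha, a_\alpha : \alpha < \alpha^* \rangle ^\frown \langle \widetilde{M}'_{\alpha^*} \rangle$ is independent over $M_0$, with $\widetilde{M}'_{\alpha^*} \preceq g_0[N_1'] \preceq M^*$ and $M_1 \preceq M^*$. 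This witnesses independence of $\langle a_\alpha \rangle$ in $(M_0, M_1)$, as required. The main obstacle I expect is the preliminary absorption of $M_1^-$ into $N_{\alpha^*}$ for limit $\alpha^*$; the dummy-step trick circumvents it cleanly provided one checks the required basic type exists, which is routine given density and extension in the semi-good frame $\frak{s}$.
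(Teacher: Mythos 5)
Your proposal is correct and follows essentially the same route as the paper's own proof: apply Proposition \ref{widehat{NF} with independence} to produce an auxiliary amalgam in which the sequence is independent and $\widehat{NF}$ holds, then use weak uniqueness of $\widehat{NF}$ (Fact \ref{the widehat{NF}-properties}(d)) to map that amalgam over $M_1^-\cup M_0$ into an extension of $M_1$. Your preliminary step absorbing $M_1^-$ into the top witness model $N_{\alpha^*}$ (followed by monotonicity of $\widehat{NF}$) is a detail the paper's proof leaves implicit, and it is handled correctly.
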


\begin{proof}
By Proposition \ref{widehat{NF} with independence}, we can find $N_1 \in K_{\lambda^+}$ and an embedding $f:M_0 \to N_1$ such that $\widehat{NF}(M_0^-,M_1^-,f[M_0],N_1)$ holds and the sequence $\langle a_\alpha:\alpha<\alpha^* \rangle$ is independent in $(f[M_0],N_1)$. 

By weak uniqueness of $\widehat{NF}$ (Fact \ref{the widehat{NF}-properties}(d)), there is a model $N_2 \in K_{\lambda^+}$ and an embedding $g:N_1 \to N_2$ such that the following diagram commutes:
\begin{displaymath}
\xymatrix{& N_1 \ar[rr]^{g} && N_2 \\
M_0 \ar[ru]^{f} \ar[rr]^{id} && M_1 \ar[ru]^{id} \\
M_0^- \ar[u]^{id} \ar[r]^{id} & M_1^- \ar[uu]^{id} \ar[ru]^{id}
}
\end{displaymath}

So $g(f(x))=x$, for every $x \in M_0$. Hence, the sequence is independent in $(M_0,N_2)$, or equivalently in $(M_0,M_1)$.  
\end{proof}

The following theorem already generalized by Boney and Vasey \cite[Corollary 4.10 and Remark 4.11]{bo6va4}: they eliminated Hypotheses (2) and (3).
\begin{theorem}\label{continuity of serial independence}\label{if the class of uniqueness triples satisfies the existence property then continuity}
Suppose:
\begin{enumerate}
\item $\frak{s}$ is a semi-good non-forking $\lambda$-frame \item $\frak{s}$ satisfies the conjugation property, \item $K^{3,uq}$ satisfies the existence property, \item $(K,\preceq)$ satisfies the amalgamation in $\lambda^+$ property and \item $(K,\preceq)$ satisfies the $(\lambda,\lambda^+)$-tameness for non-forking types property.
\end{enumerate}

Then the $(\lambda,\lambda^+)$-continuity of serial independence property holds.
\end{theorem}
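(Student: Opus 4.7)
The plan is to derive the $(\lambda,\lambda^+)$-continuity of serial independence from Proposition \ref{widehat{NF} respects independence}, using Theorem \ref{we can use NF} to translate $\preceq$ into $\preceq^{NF}_{\lambda^+}$ at cardinality $\lambda^+$. First note that conjugation together with amalgamation in $\lambda$ forces categoricity in $\lambda$: given $M_1\preceq M_2$ in $K_\lambda$, density and extension in $\frak{s}$ yield a basic type $p_1$ over $M_1$ with non-forking extension $p_2$ over $M_2$, so conjugation supplies an isomorphism $M_1\cong M_2$; amalgamating arbitrary $M,M'\in K_\lambda$ into a common $M''\in K_\lambda$ then gives $M\cong M''\cong M'$. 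Hence all clauses of Hypothesis \ref{hypothesis for bar{NF}} hold, and Theorem \ref{we can use NF}, together with the machinery of $\widehat{NF}$ and $\preceq^{NF}_{\lambda^+}$, is available.

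Fix $\beta^*<\lambda^+$, models $M\preceq N$ in $K_{\lambda^+}$, a filtration $\langle M_\alpha:\alpha<\lambda^+\rangle$ of $M$, and a sequence $\langle a_\beta:\beta<\beta^*\rangle$ that is independent in $(M_\alpha,N)$ for every $\alpha<\lambda^+$. By Theorem \ref{we can use NF}, $M\preceq^{NF}_{\lambda^+}N$. I construct a filtration $\langle N_\alpha:\alpha<\lambda^+\rangle$ of $N$ with $M_\alpha\preceq N_\alpha$ for every $\alpha$: at successor stages pick $N_{\alpha+1}\in K_\lambda$ with $M_{\alpha+1}\cup N_\alpha\subseteq N_{\alpha+1}\preceq N$ via the $LST$ axiom, and take unions at limits. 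By Proposition \ref{remark preceq^{NF}}, there is a club $E\subseteq\lambda^+$ such that $\widehat{NF}(M_\alpha,N_\alpha,M,N)$ holds for every $\alpha\in E$. Since $|\beta^*|\leq\lambda$, I can choose $\alpha\in E$ large enough that $\{a_\beta:\beta<\beta^*\}\subseteq N_\alpha$.

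Unfolding Definition \ref{definition of independence}(b), a witness to independence of $\langle a_\beta\rangle$ in $(M_\alpha,N)$ is an increasing continuous sequence $\langle M^*_\gamma:\gamma\leq\beta^*\rangle$ with $M^*_0=M_\alpha$ together with a model $N^+$ satisfying: $a_\beta\in M^*_{\beta+1}-M^*_\beta$ and $tp(a_\beta,M^*_\beta,M^*_{\beta+1})$ does not fork over $M_\alpha$ for every $\beta<\beta^*$, $N\preceq N^+$, and $M^*_{\beta^*}\preceq N^+$. Because $N_\alpha\preceq N\preceq N^+$ and $\{a_\beta\}\subseteq N_\alpha-M_\alpha$, the very same data witness independence of $\langle a_\beta\rangle$ in $(M_\alpha,N_\alpha)$. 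Since $M_\alpha,N_\alpha\in K_\lambda$ and $M,N\in K_{\lambda^+}$, Proposition \ref{widehat{NF} respects independence} applied to $\widehat{NF}(M_\alpha,N_\alpha,M,N)$ delivers independence of $\langle a_\beta:\beta<\beta^*\rangle$ in $(M,N)$, which is the required conclusion.

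The main (modest) obstacle is the alignment of the two filtrations: one must choose $\langle N_\alpha\rangle$ above $\langle M_\alpha\rangle$ so that Proposition \ref{remark preceq^{NF}} produces a club on which $\widehat{NF}(M_\alpha,N_\alpha,M,N)$ actually holds, and then verify that the witness to independence in $(M_\alpha,N)$ really does transfer to $(M_\alpha,N_\alpha)$ in virtue of $N_\alpha\preceq N$. Once these bookkeeping points are in place, the substantive content is supplied by Proposition \ref{widehat{NF} respects independence} together with the identification $M\preceq N\Leftrightarrow M\preceq^{NF}_{\lambda^+}N$ coming from Theorem \ref{we can use NF}.
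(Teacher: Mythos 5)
Your proof is correct and follows essentially the same route as the paper's: apply Theorem \ref{we can use NF} to get $M\preceq^{NF}_{\lambda^+}N$, use Proposition \ref{remark preceq^{NF}} to find a club on which $\widehat{NF}(M_\alpha,N_\alpha,M,N)$ holds, pick $\alpha$ in the club with $\{a_\beta:\beta<\beta^*\}\subseteq N_\alpha$, and conclude by Proposition \ref{widehat{NF} respects independence}. Your extra care in aligning the filtrations, transferring the independence witness from $(M_\alpha,N)$ to $(M_\alpha,N_\alpha)$, and verifying Hypothesis \ref{hypothesis for bar{NF}} only makes explicit steps the paper leaves implicit.
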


\begin{proof}
Let $\beta^*<\lambda^+$, $M_1 \in K_{\lambda^+}$, $M_1 \preceq M_2 \in K_{\lambda^+}$ and let $\langle M_{1,\alpha}:\alpha<\lambda^+ \rangle$ be a filtration of $M_1$. Suppose $\langle a_\beta:\beta<\beta^* \rangle$ is independent in $(M_{1,\alpha},M_2)$ for each $\alpha<\lambda^+$. We have to prove that $\langle a_\beta:\beta<\beta^* \rangle$ is independent in $(M_1,M_2)$.

By Theorem \ref{we can use NF}, $M_1 \preceq^{NF}_{\lambda^+}M_2$. Let $\langle M_{2,\alpha}:\alpha<\lambda^+ \rangle$ be a filtration of $M_2$. By Remark \ref{remark preceq^{NF}}, there is a club $E$ of $\lambda^+$ such that for every $\alpha \in E$, $\widehat{NF}(M_{1,\alpha},M_{2,\alpha},M_1,M_2)$.   Define $J=:\{a_\beta:\beta<\beta^*\}$. $J \subseteq M_2$. Since $|J|<\lambda^+$, for some $\alpha \in E$ we have $J \subseteq M_{2,\alpha}$. But 
$\widehat{NF}(M_{1,\alpha},N_{2,\alpha},M_1,M_2)$. So by Proposition \ref{widehat{NF} implies independence}, $\langle a_\beta:\beta<\beta^* \rangle$ is independent in $(M_1,M_2)$.
\end{proof}

Theorem \ref{continuity of serial independence for saturated models} is the analog of Theorem \ref{continuity of serial independence}, where we restrict ourselves to the saturated models.
\begin{theorem}\label{continuity of serial independence for saturated models}
Suppose:
\begin{enumerate}
\item $\frak{s}$ is a semi-good non-forking $\lambda$-frame \item $\frak{s}$ satisfies the conjugation property, \item $K^{3,uq}$ satisfies the existence property, \item Every saturated model in $\lambda^+$ over $\lambda$ is an amalgamation base and \item $(K,\preceq)$ satisfies the $(\lambda,\lambda^+)$-tameness for non-forking types over saturated models property.
\end{enumerate}

Then the $(\lambda,\lambda^+)$-continuity of serial independence for saturated models property holds.
\end{theorem}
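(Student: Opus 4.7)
The plan is to mimic the proof of Theorem \ref{continuity of serial independence}, replacing the appeal to Theorem \ref{we can use NF} by an appeal to Theorem \ref{we can use NF with tameness for saturated models} (that is, Theorem \ref{the main theorem of the paper}). Everything else is the same skeleton, with the models $M_1, M_2$ now required to lie in $K^{sat}$.

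Fix $\beta^* < \lambda^+$, take $M_1, M_2 \in K^{sat}$ with $M_1 \preceq M_2$, a filtration $\langle M_{1,\alpha} : \alpha < \lambda^+ \rangle$ of $M_1$, and a sequence $\langle a_\beta : \beta < \beta^* \rangle$ which is independent in $(M_{1,\alpha}, M_2)$ for every $\alpha < \lambda^+$. We must show it is independent in $(M_1, M_2)$. First I would observe that Hypotheses (1)--(3) of the theorem supply Hypothesis \ref{hypothesis for bar{NF}} (semi-goodness, conjugation, and existence of uniqueness triples, together with categoricity in $\lambda$ which is built into the semi-good setup), and Hypotheses (4)--(5) are exactly what Theorem \ref{we can use NF with tameness for saturated models} requires in addition. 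Applying that theorem to $M_1, M_2 \in K^{sat}$ with $M_1 \preceq M_2$ therefore yields $M_1 \preceq^{NF}_{\lambda^+} M_2$.

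Next I would pick an arbitrary filtration $\langle M_{2,\alpha} : \alpha < \lambda^+ \rangle$ of $M_2$. By Proposition \ref{remark preceq^{NF}}, there is a club $E \subseteq \lambda^+$ such that $\widehat{NF}(M_{1,\alpha}, M_{2,\alpha}, M_1, M_2)$ holds for every $\alpha \in E$. Since $|\{a_\beta : \beta < \beta^*\}| \leq |\beta^*| < \lambda^+$ and the filtration of $M_2$ is continuous and covers $M_2$, one can choose $\alpha \in E$ large enough so that $\{a_\beta : \beta < \beta^*\} \subseteq M_{2,\alpha}$. By hypothesis the sequence $\langle a_\beta : \beta < \beta^* \rangle$ is independent in $(M_{1,\alpha}, M_2)$; since $M_{1,\alpha} \preceq M_{2,\alpha} \preceq M_2$ and all the $a_\beta$ already lie in $M_{2,\alpha}$, monotonicity of independence shows it is in fact independent in $(M_{1,\alpha}, M_{2,\alpha})$.

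Finally I would invoke Proposition \ref{widehat{NF} respects independence} with $M_0^- := M_{1,\alpha}$, $M_1^- := M_{2,\alpha}$, $M_0 := M_1$, $M_1 := M_2$: the hypothesis $\widehat{NF}(M_{1,\alpha}, M_{2,\alpha}, M_1, M_2)$ combined with independence of $\langle a_\beta \rangle$ in $(M_{1,\alpha}, M_{2,\alpha})$ gives independence of $\langle a_\beta \rangle$ in $(M_1, M_2)$, as required. I do not expect any real obstacle: the entire argument is the literal transcription of the proof of Theorem \ref{continuity of serial independence}, with Theorem \ref{we can use NF} swapped for its saturated-model counterpart. The only point that warrants a brief sentence is checking that all the ingredients used — Proposition \ref{remark preceq^{NF}}, Proposition \ref{widehat{NF} respects independence}, and Theorem \ref{we can use NF with tameness for saturated models} — remain available under Hypotheses (1)--(5) of the present theorem, and indeed Hypotheses (1)--(3) are precisely Hypothesis \ref{hypothesis for bar{NF}} while Hypotheses (4)--(5) furnish exactly what Theorem \ref{we can use NF with tameness for saturated models} demands.
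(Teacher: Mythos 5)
Your proposal is correct and is essentially identical to the paper's own proof, which is stated in one line as ``similar to the proof of Theorem \ref{continuity of serial independence}, but applying Theorem \ref{we can use NF with tameness for saturated models} in place of Theorem \ref{we can use NF}''. You have simply written out the details of that substitution (including the minor monotonicity step needed before invoking Proposition \ref{widehat{NF} respects independence}), which the paper leaves implicit.
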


\begin{proof}
Similar to the proof of Theorem \ref{continuity of serial independence}, but here we apply Theorem \ref{we can use NF with tameness for saturated models} in place of Theorem \ref{we can use NF}.
\end{proof}

\section{Getting Good Non-Forking $\lambda^+$-Frames}
Recall the definition of the third candidate of a good non-forking $\lambda^+$-frame (of the second version).
\begin{definition}
$$\frak{s^+}=:(K^{sat},\preceq^{NF}_{\lambda^+} \restriction K^{sat},S^{bs}_{\lambda^+} \restriction K^{sat},\dnf \restriction (K^{sat},\preceq^{NF}_{\lambda^+})),$$ 
where $\dnf \restriction (K^{sat},\preceq^{NF}_{\lambda^+})$ is the set of quadruples, $(M_0,M_1,a,M_2)$ in $\dnf_{\lambda^+}$ such that $M_0,M_1,M_2 \in K^{sat}$, $M_0 \preceq^{NF}_{\lambda^+}$ and $M_1 \preceq^{NF}_{\lambda^+} M_2$.
\end{definition}

In this section, we present sufficient conditions for $\frak{s}_{\lambda^+}$ and $\frak{s^+}$ being good non-forking $\lambda^+$-frames.

In Corollary \ref{corollary a good non-forking frame}, we present sufficient conditions for $\frak{s}_{\lambda^+}$ being a good non-forking $\lambda^+$-frame. Boney and Vasey  \cite[Theorem 2.1]{bo6va4} have already generalized Corollary \ref{corollary a good non-forking frame}, eliminating two hypotheses:
\begin{enumerate}
\item $\frak{s}$ satisfies the conjugation property and \item the class of uniqueness triples satisfies the existence property.
\end{enumerate}

\begin{corollary}\label{corollary a good non-forking frame}
Suppose:
\begin{enumerate}
\item $(K,\preceq)$ satisfies the joint embedding and amalgamation properties in $\lambda$ and in $\lambda^+$,
\item $\frak{s}=(K,\preceq,S^{bs},\dnf)$ is a semi-good non-forking $\lambda$-frame,
\item $\frak{s}$ satisfies the conjugation property,  
\item $(K,\preceq)$ satisfies $(\lambda,\lambda^+)$-tameness for non-forking types and 
\item the class of uniqueness triples satisfies the existence property.
\end{enumerate}
Then $\frak{s}_{\lambda^+}$ is a good non-forking $\lambda^+$-frame.
\end{corollary}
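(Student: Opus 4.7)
The plan is to assemble the corollary directly from the two major results already built up in the paper, namely Theorem \ref{continuity of serial independence} (which extracts continuity of serial independence from tameness, amalgamation, conjugation, and existence of uniqueness triples) and Theorem \ref{continuity implies good frame} (which produces a good non-forking $\lambda^+$-frame minus joint embedding and amalgamation from tameness plus continuity of independence of pairs). Hypothesis (1) then supplies the two remaining axioms, so no further work is required.

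First, I would verify that the hypotheses of Theorem \ref{continuity of serial independence} are all in force. Hypotheses (2), (3), (4), (5) of the corollary match hypotheses (1), (2), (3), (5) of Theorem \ref{continuity of serial independence} verbatim, while the amalgamation in $\lambda^+$ property required as hypothesis (4) of Theorem \ref{continuity of serial independence} is part of hypothesis (1) of the corollary. Invoking Theorem \ref{continuity of serial independence}, I obtain the $(\lambda,\lambda^+)$-continuity of serial independence property; in particular, the $(\lambda,\lambda^+)$-continuity of independence of sequences of length $2$ holds.

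Next, I would apply Theorem \ref{continuity implies good frame}. Its three hypotheses are: $\frak{s}$ is a semi-good non-forking $\lambda$-frame (corollary hypothesis (2)); the $(\lambda,\lambda^+)$-tameness for non-forking types property (corollary hypothesis (4)); and the $(\lambda,\lambda^+)$-continuity of independence of sequences of length $2$, which was just obtained. The conclusion of that theorem is that $\frak{s}_{\lambda^+}$ satisfies every axiom of a good non-forking $\lambda^+$-frame except possibly joint embedding and amalgamation in $\lambda^+$.

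Finally, by hypothesis (1) of the corollary, $(K,\preceq)$ satisfies the joint embedding and amalgamation properties in $\lambda^+$, so $\frak{s}_{\lambda^+}$ satisfies all the axioms of a good non-forking $\lambda^+$-frame. This completes the proof. The only conceptual point to check is the routing of the amalgamation-in-$\lambda^+$ hypothesis through Theorem \ref{continuity of serial independence}; there is no genuine obstacle, because that theorem is precisely what allows one to avoid assuming continuity directly. Once the two theorems are chained, the corollary follows mechanically.
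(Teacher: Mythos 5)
Your proposal is correct and follows exactly the paper's own route: invoke Theorem \ref{continuity of serial independence} to obtain the $(\lambda,\lambda^+)$-continuity of serial independence property, feed the length-$2$ case into Theorem \ref{continuity implies good frame}, and supply joint embedding and amalgamation in $\lambda^+$ from hypothesis (1). The only blemish is a trivial mislabeling in your hypothesis correspondence (corollary (4), tameness, matches the theorem's hypothesis (5), while corollary (5), uniqueness triples, matches its hypothesis (3)), which does not affect the argument.
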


\begin{proof}
By Theorem \ref{if the class of uniqueness triples satisfies the existence property then continuity}, the $(\lambda,\lambda^+)$-continuity of serial independence property holds. Hence, by Theorem \ref{continuity implies good frame} and Condition (1) (here), $\frak{s}_{\lambda^+}$ is a good non-forking $\lambda^+$-frame.
\end{proof}

\begin{remark}
We skip two additional proofs of Corollary \ref{corollary a good non-forking frame}:

(a) Since the relations $\preceq \restriction K_{\lambda^+}$ and $\preceq ^{NF}_{\lambda^+}$ are equivalent and the relation $\preceq \restriction K_{\lambda^+}$ satisfies smoothness, the relation $\preceq^{NF}_{\lambda^+}$ satisfies smoothness. So by the proof of \cite[Theorem 10.1.9]{jrsh875}, $\frak{s}_{\lambda^+}$ is a good non-forking $\lambda^+$-frame (\cite[Theorem 10.1.9]{jrsh875} relates to $K^{sat}$, but we use its proof only).

(b) Since the relations $\preceq \restriction K_{\lambda^+}$ and $\preceq ^{NF}_{\lambda^+}$ are equivalent, by the proof of \cite[Proposition 10.1.12]{jrsh875}, $\frak{s}_{\lambda^+}$ satisfies the symmetry axiom. Hence, by Theorem \ref{symmetry implies good non-forking frame}, $\frak{s}_{\lambda^+}$ is a good non-forking $\lambda^+$-frame. 
\end{remark}

In order to apply the methods of \cite{shh}.III, we should study $\frak{s^+}$, not $\frak{s}_{\lambda^+}$! In Corollary \ref{corollary a good non-forking frame assuming tameness for non-forking types over saturated models}, we present sufficient conditions for $\frak{s^+}$ being a good non-forking $\lambda^+$-frame.

While Hypotheses (1) and (4) of Corollary \ref{corollary a good non-forking frame} relate to all the models of cardinality $\lambda^+$, in Theorem \ref{corollary a good non-forking frame assuming tameness for non-forking types over saturated models}, they relate to the models in $K^{sat}$ only!

Let us compare the hypotheses of Theorem \ref{corollary a good non-forking frame assuming tameness for non-forking types over saturated models} with the hypotheses in \cite[Theorem 11.1.5]{jrsh875}. On the one hand, in \cite[Theorem 11.1.5]{jrsh875} we assume that there are not many models of cardinality $\lambda^{++}$ (in addition to set theoretical hypotheses), in order to prove (the existence of uniqueness triples and) that the relation $\preceq^{NF}_{\lambda^+}$ satisfies smoothness. Here, we (assume that the class of uniqueness triples satisfies the existence property and) get smoothness by the equivalence between the relations. So we do not have to assume that there are not many models of cardinality $\lambda^{++}$. On the other hand, here, we assume that every saturated model in $\lambda^+$ over $\lambda$ is an amalgamation base and that the $(\lambda,\lambda^+)$-tameness for non-forking types over saturated models property holds. 

Now we prove Theorem \ref{1}:
\begin{theorem}\label{corollary a good non-forking frame assuming tameness for non-forking types over saturated models}
Suppose:
\begin{enumerate}
\item $(K,\preceq)$ satisfies the joint embedding and amalgamation properties in $\lambda$ (actually, implied by (2)),
\item $\frak{s}=(K,\preceq,S^{bs},\dnf)$ is a semi-good non-forking $\lambda$-frame,
\item $\frak{s}$ satisfies the conjugation property,  
\item every saturated model in $\lambda^+$ over $\lambda$ is an amalgamation base,
\item $(K,\preceq)$ satisfies the $(\lambda,\lambda^+)$-tameness for non-forking types over saturated models property and 
\item the class of uniqueness triples satisfies the existence property.
\end{enumerate}
Then $\frak{s^+}$ is a good non-forking $\lambda^+$-frame.
\end{theorem}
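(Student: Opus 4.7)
The plan is to reduce Theorem \ref{1} to the main theorem (Theorem \ref{the main theorem of the paper}) and then invoke the previously established criteria for $\frak{s}^{sat}$. The conjugation property in hypothesis (2) encodes categoricity of $K$ in $\lambda$, so Hypothesis \ref{hypothesis for bar{NF}} is in force, and combined with hypotheses (3) and (4), every assumption of Theorem \ref{the main theorem of the paper} is satisfied. Hence for all $M,M^+ \in K^{sat}$ one has $M \preceq M^+$ iff $M \preceq^{NF}_{\lambda^+} M^+$. This identifies $\preceq \restriction K^{sat}$ with $\preceq^{NF}_{\lambda^+} \restriction K^{sat}$, and in turn identifies $\dnf \restriction (K^{sat},\preceq^{NF}_{\lambda^+})$ with $\dnf_{\lambda^+} \restriction K^{sat}$. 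Thus $\frak{s^+}$ and $\frak{s}^{sat}$ are literally the same quadruple, so it suffices to show that $\frak{s}^{sat}$ is a good non-forking $\lambda^+$-frame.

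Next I would verify that $(K^{sat},\preceq \restriction K^{sat})$ is an AEC in $\lambda^+$. By Remark \ref{K sat is AEC iff}, the only point to check is closure of $K^{sat}$ under unions of $\preceq$-increasing continuous chains. By the equivalence just established, every such chain is a $\preceq^{NF}_{\lambda^+}$-increasing continuous chain of saturated models, and Fact \ref{basic properties of preceq^{NF}_{lambda^+}}(d) then places its union in $K^{sat}$. With this in hand, Theorem \ref{continuity of serial independence for saturated models} (whose hypotheses are precisely (1)--(5) of the present theorem) delivers the $(\lambda,\lambda^+)$-continuity of $K^{sat}$-independence of sequences of length $2$. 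Applying Theorem \ref{continuity implies good frame for sat} then yields that $\frak{s}^{sat}$ is a good non-forking $\lambda^+$-frame minus amalgamation in $\lambda^+$.

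It remains to supply amalgamation in $\lambda^+$ within $K^{sat}$. Given $M_0,M_1,M_2 \in K^{sat}$ with $M_0 \preceq M_i$ for $i=1,2$, hypothesis (3) asserts that $M_0$ is an amalgamation base in $(K,\preceq)$, yielding an amalgam $(f_1,f_2,M_3)$ of $M_1,M_2$ over $M_0$; Proposition \ref{extending to K sat} extends $M_3$ to some $M_3' \in K^{sat}$, and $(f_1,f_2,M_3')$ is the desired amalgam inside $\frak{s}^{sat}$. The main obstacle will be the smoothness step in the second paragraph: without the identification of the two relations on $K^{sat}$ furnished by Theorem \ref{the main theorem of the paper}, there is no evident reason for $(K^{sat},\preceq)$ to be an AEC in $\lambda^+$, and the entire reduction via $\frak{s}^{sat}$ would collapse. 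It is precisely at this step that the full strength of the tameness for non-forking types over saturated models hypothesis is consumed.
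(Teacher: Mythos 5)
Your proposal is correct and follows essentially the same route as the paper's first proof: invoke Theorem \ref{the main theorem of the paper} to identify $\preceq\restriction K^{sat}$ with $\preceq^{NF}_{\lambda^+}\restriction K^{sat}$ (hence $\frak{s^+}=\frak{s}^{sat}$ and, via Fact \ref{basic properties of preceq^{NF}_{lambda^+}}(d), smoothness of $(K^{sat},\preceq\restriction K^{sat})$), then apply Theorems \ref{continuity of serial independence for saturated models} and \ref{continuity implies good frame for sat}, and finally obtain amalgamation from the amalgamation-base hypothesis together with Proposition \ref{extending to K sat}. The only cosmetic issue is that you cite the hypotheses by the numbering of Theorem \ref{1} rather than of the restated theorem (the conjugation property and the amalgamation-base assumption are items (3) and (4) there), but the argument itself is sound.
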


We exhibit three proofs:

\begin{proof}
By Theorem \ref{continuity of serial independence for saturated models}, the $(\lambda,\lambda^+)$-continuity of serial $K^{sat}$-independence property holds. By Fact \ref{preceq^{NF}-properties}(d), $(K^{sat},\preceq^{NF}_{\lambda^+} \restriction K^{sat})$ is closed under unions of increasing continuous sequences of cardinality less than $\lambda^{++}$. By Theorem \ref{the main theorem of the paper}, the relations $\preceq \restriction K^{sat}$ and $\preceq^{NF}_{\lambda}$ are equivalent. So $(K^{sat},\preceq \restriction K^{sat})$ is closed under unions of increasing continuous sequences of cardinality less than $\lambda^{++}$. Therefore by Remark \ref{K sat is AEC iff}, $(K^{sat},\preceq \restriction K^{sat})$ is an AEC in $\lambda^+$. So by Theorem \ref{continuity implies good frame for sat}, $\frak{s}^{sat}$ is a good non-forking $\lambda^+$-frame minus amalgamation in $\lambda^+$. But by Theorem \ref{the main theorem of the paper}, $\frak{s}^{sat}$ is $\frak{s}^+$. By Clause (4), $\frak{s}^+$ satisfies the amalgamation property in $\lambda^+$. 
\end{proof}

\begin{proof}
By Theorem \ref{we can use NF with tameness for saturated models}, the relations $\preceq \restriction K^{sat}$ and $\preceq^{NF}_{\lambda^+} \restriction K^{sat}$ are equivalent. Since the relation $\preceq \restriction K^{sat}$ satisfies smoothness (because $(K,\preceq)$ is an AEC), so too the relation $\preceq^{NF}_{\lambda^+} \restriction K^{sat}$. So by \cite[Theorem 10.1.9]{jrsh875}, $\frak{s^+}$ is a good non-forking $\lambda^+$-frame.  
\end{proof}

\begin{proof}
Since the relations $\preceq \restriction K^{sat}$ and $\preceq^{NF}_{\lambda^+} \restriction K^{sat}$ are 
equivalent, by the proof of \cite[Proposition 10.1.12]{jrsh875}, $\frak{s}^{sat}$ (namely, $\frak{s^+}$) satisfies the symmetry axiom. Hence, by Theorem \ref{symmetry implies good-frame for sat}, $\frak{s^+}$ is a good non-forking $\lambda^+$-frame. 
\end{proof}

\section{Meanings of Galois-Types}
In this section, we show that even if the relations $\preceq^{NF}_{\lambda^+}$ and $\preceq$ are not equivalent, the definitions of `a type' in the different AECs considered here, coincide.

Let $M_0,M_1$ be two models in $K_{\lambda^+}$. Suppose that $M_0 \preceq^{NF}_{\lambda^+} M_1$ and $a$ is an element in $M_1-M_0$. Naturally, we have in mind two meanings of `the galois-type of $a$'. On the one hand, we can consider the galois-type of $a$ in $M_1$ over $M_0$ in the context of $(K,\preceq)$. On the other hand, we can consider the galois-type of $a$ in $M_1$ over $M_0$ in the context of $(K,\preceq^{NF}_{\lambda^+})$. Moreover, if we consider restricting ourselves to the saturated models in $\lambda^+$ over $\lambda$, we get two additional meanings of `the type of $a$'.

Formally, a galois-type is defined (at the beginning of Section 3) as an equivalence class of a relation, $E$. $E$ is defined as the transitive closure of a relation, $E^*$. But $E^*$ depends on the class of models and the relation between the models.

Theorem \ref{the notions of type coincide} says that in four specific pairs, $(K',\preceq')$, the definition of $E^*$ coincides. So the four meanings of `the galois-type of $a$' coincide. 
 
 \cite[Proposition 10.1.4]{jrsh875} is similar to Theorem \ref{the notions of type coincide} but the proof of \cite[Proposition 10.1.4]{jrsh875} is wrong. 

\begin{definition}\label{the definition of E^*}
Let $K' \subseteq K$ and let $\preceq'$ be a binary relation on $K'$ (or on $K$) such that if $M_0 \preceq' M_1$ then $M_0 \preceq M_1$. We define $K^{3,K',\preceq'}:=\{(M_0,M_1,a):M_0,M_1 \in K',\ M_0 \preceq' M_1$ and $a \in M_1 -M_0\}$. 

We define a binary relation $E^{*,K',\preceq'}$ on the class of triples $K^{3,K',\preceq'}$ by: $(M_0,M_1,a_1)E^{*,K',\preceq'}(M_0,M_2,a_2)$ (in words `$(M_0,M_1,a_1)$ and $(M_0,M_2,a_2)$ are of the same galois-type in $(K',\preceq')$') if and only if there is an amalgamation $(f_1,f_2,M_3)$ of $M_1$ and $M_2$ over $M_0$ such that $M_3$ is in $K'$, $f_1[M_1] \preceq' M_3$, $f_2[M_2] \preceq' M_3$ and $f_1(a_1)=f_2(a_2)$. 
\end{definition}

\begin{example}
The following examples are of interest for us: 
\begin{enumerate}
\item $K^{3,K_{\lambda^+},\preceq }=\{(M_0,M_1,a):M_0,M_1 \in K_{\lambda^+},\ M_0 \preceq M_1,\ M_0 \preceq M_2$ and $a \in M_1-M_0\}$, 
\item $K^{3,K^{sat},\preceq}=\{(M_0,M_1,a) \in K^{3}_{\lambda^+}:M_0,M_1 \in K^{sat}\}$, 
\item $K^{3,K_{\lambda^+},\preceq^{NF}_{\lambda^+}}=\{(M_0,M_1,a) \in K^{3}_{\lambda^+}:M_0 \preceq^{NF}_{\lambda^+} M_1$ and $M_0 \preceq^{NF}_{\lambda^+} M_1 \}$, 
\item $K^{3,K^{sat},\preceq^{NF}_{\lambda^+}}=K^{3,K^{sat},\preceq} \cap K^{3,K_{\lambda^+},\preceq^{NF}_{\lambda^+}}$.
\end{enumerate}
\end{example}

\begin{proposition}\label{a completion for section 2}
Let $M_0,M_1$ and $M_2$ be three saturated models in $\lambda^+$ over $\lambda$. Let $a_1 \in M_1-M_0$ and let $a_2 \in M_2-M_0$. Then $(M_0,M_1,a_1)E^{*,K,\preceq} (M_0,M_2,a_2)$ holds if and only if $(M_0,M_1,a_1)E^{*,K^{sat},\preceq} (M_0,M_2,a_2)$ holds. 
\end{proposition}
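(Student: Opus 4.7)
The proof splits according to which direction is being proved, with one direction being essentially trivial and the other requiring only a small extension construction.

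The reverse implication, from $(M_0,M_1,a_1)E^{*,K^{sat},\preceq}(M_0,M_2,a_2)$ to $(M_0,M_1,a_1)E^{*,K,\preceq}(M_0,M_2,a_2)$, is immediate: every witness of the former is already a witness of the latter. Precisely, if $(f_1,f_2,M_3)$ is an amalgamation of $M_1$ and $M_2$ over $M_0$ with $M_3\in K^{sat}$, $f_i[M_i]\preceq M_3$, and $f_1(a_1)=f_2(a_2)$, then since $K^{sat}\subseteq K$ and the partial order is the same, the very same triple witnesses the $E^{*,K,\preceq}$-relation.

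For the forward direction, suppose $(f_1,f_2,M_3)$ is an amalgamation of $M_1$ and $M_2$ over $M_0$ with $M_3\in K$, $f_i[M_i]\preceq M_3$ for $i=1,2$, and $f_1(a_1)=f_2(a_2)$. The plan is to replace $M_3$ by a saturated extension of the right cardinality. First I apply the L\"owenheim--Skolem--Tarski axiom (recall $LS(K,\preceq)\leq\lambda$) to find $M_3'\preceq M_3$ with $f_1[M_1]\cup f_2[M_2]\subseteq M_3'$ and $\|M_3'\|=\lambda^+$. Since $f_i[M_i]\subseteq M_3'$ and both $f_i[M_i]$ and $M_3'$ are $\preceq$-substructures of $M_3$, the coherence axiom of AEC yields $f_i[M_i]\preceq M_3'$. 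Next, by Proposition \ref{extending to sat} (whose hypothesis is implied by the standing Hypothesis \ref{hypothesis 1}), there is $M_3''\in K^{sat}$ with $M_3'\preceq M_3''$. Transitivity of $\preceq$ gives $f_i[M_i]\preceq M_3''$ for $i=1,2$. Hence $(f_1,f_2,M_3'')$ is an amalgamation of $M_1$ and $M_2$ over $M_0$ inside $K^{sat}$ equalizing $a_1$ and $a_2$, so $(M_0,M_1,a_1)E^{*,K^{sat},\preceq}(M_0,M_2,a_2)$ as required.

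There is no real obstacle; the delicate point, really just bookkeeping, is to verify that one can find $M_3'$ of cardinality \emph{exactly} $\lambda^+$ (so that Proposition \ref{extending to sat} is applicable) while still containing both $f_1[M_1]$ and $f_2[M_2]$, which have size $\lambda^+$. This is handled by the standard form of the L\"owenheim--Skolem axiom stating that for any $A\subseteq M_3$ one obtains $M_3'\preceq M_3$ with $A\subseteq M_3'$ and $\|M_3'\|\leq\max(|A|,LS(K,\preceq))=\lambda^+$, and one can always enlarge $M_3'$ to have cardinality exactly $\lambda^+$ since there is no maximal model in $K_\lambda$ (so we can pad up inside $M_3$ using again L\"owenheim--Skolem).
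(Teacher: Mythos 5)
Your proof is correct and follows essentially the same route as the paper: the $K^{sat}$-to-$K$ direction is immediate from the inclusion, and the converse is obtained by extending the amalgam to a member of $K^{sat}$ via Proposition \ref{extending to sat}. The only difference is that you first cut the amalgam down to cardinality $\lambda^+$ with L\"owenheim--Skolem and coherence before extending, whereas the paper applies Proposition \ref{extending to K sat} to $M_3$ directly (implicitly taking $M_3 \in K_{\lambda^+}$); your extra step is harmless and in fact slightly more careful.
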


\begin{proof}
If $(M_0,M_1,a_1)E^{*,K^{sat},\preceq} (M_0,M_2,a_2)$ then by definition $(M_0,M_1,a_1) \allowbreak E^{*,K,\preceq} (M_0,M_2,a_2)$. Conversely, suppose $(M_0,M_1,a_1)E^{*,K,\preceq} (M_0,M_2,a_2)$. So is an amalgamation $(f_1,f_2,M_3)$ of $M_1$ and $M_2$ over $M_0$ such that $f_1(a_1)=f_2(a_2)$. By Proposition \ref{extending to K sat}, there is a model $M_4 \in K^{sat}$ with $M_3 \preceq M_4$. So the amalgamation $(f_1,f_2,M_4)$ is as needed.   
\end{proof}

We restate \cite[Proposition 7.1.13.b]{jrsh875} as follows:
\begin{fact}\label{the version of tameness in 875}
Suppose:
\begin{enumerate}
\item $M_0 \prec^+_{\lambda^+} M_1$, \item $M_0 \prec^+_{\lambda^+} M_2$, \item $a_1 \in M_1-M_0$, \item $a_2 \in M_2-M_0$, \item for each $N \in K_\lambda$ with $N \preceq M_0$, we have $tp(a_1,N,M_1)=tp(a_2,N,M_2)$. 
\end{enumerate}
Then there is an isomorphism $f:M_1 \to M_2$ fixing $M_0$ pointwise, such that $f_1(a_1)=a_2$.
\end{fact}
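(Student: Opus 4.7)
The plan is to prove the conclusion by a back-and-forth isomorphism construction of length $\lambda^+$, exploiting that $M_1$ and $M_2$ are saturated in $\lambda^+$ over $\lambda$ by Fact \ref{prec^+-properties}(2), together with Hypothesis (5) and the $NF$-structure supplied by the winning strategy of Player 1 in the $\prec^+_{\lambda^+}$-game (Fact \ref{prec^+-properties}(3)).

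First I would unpack the $\prec^+_{\lambda^+}$-hypotheses. From $M_0 \prec^+_{\lambda^+} M_1$ I extract filtrations $\langle N_{0,\alpha}^{1}:\alpha<\lambda^+\rangle$ of $M_0$ and $\langle N_{1,\alpha}:\alpha<\lambda^+\rangle$ of $M_1$ witnessing the game-play, and similarly $\langle N_{0,\alpha}^{2}\rangle$, $\langle N_{2,\alpha}\rangle$ for $M_2$; intersecting two clubs of $\lambda^+$ lets me assume $N_{0,\alpha}^{1}=N_{0,\alpha}^{2}=:N_{0,\alpha}$, and by a further shift I may assume $a_1\in N_{1,0}$, $a_2\in N_{2,0}$. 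By construction, $NF(N_{0,\alpha},N_{0,\alpha+1},N_{i,\alpha},N_{i,\alpha+1})$ holds for $i=1,2$ and all $\alpha<\lambda^+$.

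Next comes the base case. Hypothesis (5) applied at $N_{0,0}$ yields $\operatorname{tp}(a_1,N_{0,0},M_1)=\operatorname{tp}(a_2,N_{0,0},M_2)$, hence an amalgamation of $N_{1,0}$ and $N_{2,0}$ over $N_{0,0}$ identifying $a_1$ with $a_2$. Using $\lambda^+$-over-$\lambda$ saturation of $M_2$ (equivalently, model-homogeneity at that level), I can realize this amalgam inside $M_2$, obtaining $N_{2,0}^{*}\preceq M_2$ in $K_\lambda$ and an isomorphism $f_0\colon N_{1,0}\to N_{2,0}^{*}$ fixing $N_{0,0}$ pointwise and sending $a_1$ to $a_2$. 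Then I run a standard back-and-forth: choose increasing continuous $\langle N_{1,\alpha}^{*},N_{2,\alpha}^{*},f_\alpha\rangle$ with $N_{i,\alpha}\preceq N_{i,\alpha}^{*}\preceq M_i$, $f_\alpha\colon N_{1,\alpha}^{*}\to N_{2,\alpha}^{*}$ an isomorphism fixing $N_{0,\alpha}$ pointwise and extending all previous $f_\beta$, with bookkeeping that absorbs every element of $M_1$ and $M_2$ by stage $\lambda^+$. At each successor stage I alternately use saturation of $M_2$ to realize the type of a new $M_1$-element and saturation of $M_1$ to realize the type of a new $M_2$-element; at limits I take unions. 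The union $f=\bigcup_{\alpha<\lambda^+}f_\alpha$ is the required isomorphism $M_1\to M_2$ fixing $M_0$ pointwise with $f(a_1)=a_2$.

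The main obstacle is maintaining, at every successor step, that the extension of $f_\alpha$ still fixes the newly absorbed portion $N_{0,\alpha+1}\setminus N_{0,\alpha}$ of $M_0$ pointwise. This reduces to comparing $f_\alpha$ with an \emph{a priori} different amalgamation coming from the next application of Hypothesis (5) at $N_{0,\alpha+1}$. Two ingredients are needed to reconcile these: Hypothesis (5) is available at \emph{every} $N_{0,\alpha}$ (not only at $N_{0,0}$), and the $NF$-uniqueness granted by the winning strategy, together with the long transitivity and weak uniqueness clauses of $\widehat{NF}$ (Fact \ref{the widehat{NF}-properties}(d),(f)), force the two candidate extensions to agree up to an inner automorphism that can be absorbed into the back-and-forth. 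This is precisely where the strength of $\prec^+_{\lambda^+}$ over mere $\preceq^{NF}_{\lambda^+}$ is used, and is the technical heart of the proof.
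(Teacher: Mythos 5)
First, a point of order: the paper does not prove this statement. It is introduced with ``We restate [Proposition 7.1.13.b of \cite{jrsh875}] as follows'' and is used as an imported Fact, so there is no in-paper proof to compare yours against; the actual argument lives in Jarden--Shelah \cite{jrsh875}, where it is in essence a uniqueness-of-$NF$-limit-extensions statement. Your sketch does have the right ingredients for such a proof: seed the map by applying Hypothesis (5) at one $\lambda$-sized submodel to identify $a_1$ with $a_2$, then propagate along the filtrations using the $NF$-structure supplied by Player 1's winning strategy.

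However, the step you yourself flag as ``the technical heart'' is genuinely missing, and it is exactly where a saturation-based back-and-forth breaks down. At stage $\alpha+1$ you must extend $f_\alpha$ so that it fixes $N_{0,\alpha+1}$ pointwise. The only tool that achieves this is weak uniqueness of $NF$ (Definition \ref{definition of NF}(d)): if $NF(N_{0,\alpha},N_{0,\alpha+1},N_{1,\alpha},N_{1,\alpha+1})$ and $NF(N_{0,\alpha},N_{0,\alpha+1},f_\alpha[N_{1,\alpha}],N')$ both hold, the two amalgams embed jointly over $N_{0,\alpha+1}\cup N_{1,\alpha}$. But your $N_{2,\alpha}^{*}=f_\alpha[N_{1,\alpha}^{*}]$ was produced by model-homogeneity of $M_2$, and nothing guarantees that it sits $NF$-independently from $N_{0,\alpha+1}$ over $N_{0,\alpha}$ inside $M_2$: the game gives $NF$ only for the designated filtration $\langle N_{2,\alpha}:\alpha<\lambda^+\rangle$, not for arbitrary $\lambda$-sized submodels of $M_2$ containing $N_{0,\alpha}$. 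So weak uniqueness cannot be applied to your candidate extension, and the assertion that the two extensions ``agree up to an inner automorphism that can be absorbed into the back-and-forth'' is precisely the unproven claim. The repair is to drop saturation entirely and run the construction inside the $NF$-structure: arrange inductively that $f_\alpha$ carries the $M_1$-side filtration onto (a club-shift of) the $M_2$-side filtration, and use $NF$-symmetry, long transitivity and weak uniqueness at each successor to extend over $N_{0,\alpha+1}$. Hypothesis (5) is then needed only once, at the base, to match $a_1$ with $a_2$; its availability at every $N_{0,\alpha}$ contributes nothing further, contrary to what you suggest. A small additional slip: you cite Fact \ref{the widehat{NF}-properties}(f), but that fact has only clauses (a)--(e); long transitivity is a clause of Definition \ref{definition of NF} for $NF$, and is not stated for $\widehat{NF}$ in this paper.
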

[A note: \cite[Proposition 7.1.17.b]{jrsh875} is a wrong application of \cite[Proposition 7.1.13.b]{jrsh875}. In order to fix it, we should add to its assumptions the assumption that $M_0 \preceq^{NF}_{\lambda^+} M_1$ and $M_0 \preceq^{NF}_{\lambda^+} M_2$].

\begin{proposition}\label{preparation for `the notions of type coincide'}
Suppose $(M_0,M_1,a_1) E^{*,K_{\lambda^+},\preceq} (M_0,M_2,a_2)$. Then we can find an amalgamation $(f_1,f_2,M_3)$ such that the following hold:
\begin{enumerate} 
\item  $M_3 \in K^{sat}$, \item $f_1(a_1)=f_2(a_2)$ and \item if $M_0 \preceq^{NF}_{\lambda^+} M_1$ and $M_0 \preceq^{NF}_{\lambda^+} M_2$ then $f_1[M_1] \preceq^{NF}_{\lambda^+} M_3$ and $f_2[M_2] \preceq^{NF}_{\lambda^+} M_3$.
\end{enumerate} 
\end{proposition}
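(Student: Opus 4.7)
The starting point is to turn the $E^*$-hypothesis into concrete data: by the definition of $E^{*,K_{\lambda^+},\preceq}$, there is an amalgamation $(g_1,g_2,N)$ of $M_1,M_2$ over $M_0$ with $g_1(a_1)=g_2(a_2)$ and $N\in K_{\lambda^+}$. After renaming inside $N$ (using invariance of $\preceq^{NF}_{\lambda^+}$ under isomorphisms), we may assume that $g_1,g_2$ are inclusions and $a_1=a_2=:a\in M_1\cap M_2\subseteq N$. For clauses (1) and (2) alone, Proposition~\ref{extending to sat} produces $M_3\in K^{sat}$ with $N\preceq M_3$, and taking $f_i$ to be the inclusion $g_i$ suffices; so the substantive content of the proposition is clause (3).

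Assume then that $M_0\preceq^{NF}_{\lambda^+}M_i$ for $i=1,2$. The plan is to build $M_3$ as the union of a $\preceq$-increasing continuous sequence $\langle M_{3,\alpha}:\alpha<\lambda^+\rangle$ in $K_\lambda$, simultaneously witnessing the two $\widehat{NF}$-relations required and playing Player 1's winning strategy in the $\prec^+_{\lambda^+}$-game (Fact~\ref{prec^+-properties}(3)) to force $M_3\in K^{sat}$. Fix filtrations $\langle M_{0,\alpha}\rangle,\langle M_{1,\alpha}\rangle,\langle M_{2,\alpha}\rangle,\langle N_\alpha\rangle$ of $M_0,M_1,M_2,N$ respectively, with $a\in M_{1,0}\cap M_{2,0}$. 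By Fact~\ref{basic properties of preceq^{NF}_{lambda^+}}(a), after restricting to a club and renaming, we may assume that for every $\alpha<\lambda^+$ one has $M_{0,\alpha}\preceq M_{i,\alpha}\preceq N_\alpha$ for $i=1,2$, and that $NF(M_{0,\alpha},M_{0,\alpha+1},M_{i,\alpha},M_{i,\alpha+1})$ holds. Set $M_{3,0}:=N_0$. At a successor stage $\alpha+1$, use the extension axiom of $NF$ (Definition~\ref{definition of NF}(c)), together with symmetry and long transitivity (clauses (e),(f) of Definition~\ref{definition of NF}), to produce $M_{3,\alpha+1}\in K_\lambda$ extending $M_{3,\alpha}\cup N_{\alpha+1}$ and satisfying $NF(M_{i,\alpha},M_{i,\alpha+1},M_{3,\alpha},M_{3,\alpha+1})$ for both $i=1,2$; this is possible because the two $NF$-extensions from $M_{1,\alpha}$ and $M_{2,\alpha}$ can be coherently amalgamated over the common sub-base $M_{0,\alpha}$. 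At limit stages, take unions. To force saturation of $M_3$, weave Player 1's winning $\prec^+_{\lambda^+}$-strategy into the construction by letting the $M_{3,\alpha}$ play the role of Player 1's moves over the sequence $N_\alpha$ (viewed as Player 0's moves).

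Setting $M_3:=\bigcup_\alpha M_{3,\alpha}$ and $f_i:=g_i$, clause (2) is immediate since $a\in N\preceq M_3$; clause (1) follows from Player 1 winning the $\prec^+_{\lambda^+}$-game (hence $N\prec^+_{\lambda^+}M_3$, so $M_3\in K^{sat}$ by Fact~\ref{prec^+-properties}(2)); and clause (3) holds because, for each $i\in\{1,2\}$, the filtrations $\langle M_{i,\alpha}\rangle$ and $\langle M_{3,\alpha}\rangle$ together with the successor-step $NF$-squares witness $\widehat{NF}(M_{i,0},M_{3,0},M_i,M_3)$, giving $M_i\preceq^{NF}_{\lambda^+}M_3$ by Definition~\ref{definition of preceq^{NF}}.

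The hard part is the successor step: producing a single $M_{3,\alpha+1}$ that witnesses $NF(M_{i,\alpha},M_{i,\alpha+1},M_{3,\alpha},M_{3,\alpha+1})$ simultaneously for $i=1,2$, while also extending $N_{\alpha+1}$ and respecting Player 1's prescribed move in the $\prec^+_{\lambda^+}$-game. This is a two-sided $NF$-amalgamation above the common base $M_{0,\alpha}$, of the same flavor as the successor step underlying amalgamation for $(K_{\lambda^+},\preceq^{NF}_{\lambda^+})$ in \cite[Theorem 7.1.18(c)]{jrsh875}; symmetry of $NF$, combined with long transitivity and weak uniqueness, lets one glue the two local $NF$-squares over $M_{0,\alpha}$ into a single extension inside one ambient $M_{3,\alpha+1}$.
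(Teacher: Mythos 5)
There is a genuine gap, and it is exactly at the step you yourself flag as ``the hard part'': the successor stage of your construction cannot be carried out from the listed axioms of $NF$, and in fact your overall strategy proves too much. You insist on keeping the original amalgam $N$ and the original embeddings, so that in the end $M_i\preceq N\preceq M_3$ and $M_i\preceq^{NF}_{\lambda^+}M_3$ for $i=1,2$. By Fact \ref{basic properties of preceq^{NF}_{lambda^+}}(c) this would immediately yield $M_i\preceq^{NF}_{\lambda^+}N$ for the \emph{arbitrary} $\preceq$-amalgam $N$ handed to you by the $E^*$-hypothesis (e.g.\ take $M_1=M_2$ and $N$ any $\preceq$-extension of it). That is essentially Statement \ref{statement equivalence}, which the paper treats as open (Question \ref{equivalent?}) and only establishes later under tameness/saturation hypotheses; it certainly does not follow from $\bigotimes_{NF}$ alone. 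Concretely, the two squares $NF(M_{1,\alpha},M_{1,\alpha+1},M_{3,\alpha},M_{3,\alpha+1})$ and $NF(M_{2,\alpha},M_{2,\alpha+1},M_{3,\alpha},M_{3,\alpha+1})$ have \emph{different bases} $M_{1,\alpha}\neq M_{2,\alpha}$; weak uniqueness (Definition \ref{definition of NF}(d)) only reconciles two squares over the same triple, long transitivity only concatenates squares sharing a side, and monotonicity (Definition \ref{definition of NF}(b)) fixes the base, so none of them ``glues the two local $NF$-squares over $M_{0,\alpha}$'' into one ambient $M_{3,\alpha+1}$ that also contains the prescribed $N_{\alpha+1}$. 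The analogy with the amalgamation proof for $(K_{\lambda^+},\preceq^{NF}_{\lambda^+})$ is misleading: there one builds the amalgam freely, with no pre-existing $\preceq$-amalgam that must survive into the final model.

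The paper's proof avoids this trap by \emph{discarding} the given amalgam (its $M_3$, your $N$) rather than growing it. It takes two separate $\prec^+_{\lambda^+}$-extensions $f_1[M_1]\prec^+_{\lambda^+}M_1^+$ and $f_2[M_2]\prec^+_{\lambda^+}M_2^+$ (each automatically a saturated $\preceq^{NF}_{\lambda^+}$-extension by Fact \ref{prec^+-properties}), observes that $f_1(a_1)$ and $f_2(a_2)$ realize the same type over every $N\in K_\lambda$ with $N\preceq M_0$ (via the discarded amalgam), and then invokes the uniqueness statement Fact \ref{the version of tameness in 875} to produce an isomorphism $g:M_1^+\to M_2^+$ over $M_0$ with $g(f_1(a_1))=f_2(a_2)$. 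The final amalgamation is $(g\circ f_1,f_2,M_2^+)$: the embedding of $M_1$ is changed, and the original $N$ does not embed into the result. If you want to salvage your argument, you must allow yourself to re-amalgamate in this way rather than build over $N$.
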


\begin{proof}
By the definition of $E^{*,K_{\lambda^+},\preceq}$, there is an amalgamation $(f_1,f_2,M_3)$ of $M_1$ and $M_2$ over $M_0$ such that $f_1(a_1)=f_2(a_2)$. If $M_0 \npreceq^{NF}_{\lambda^+} M_1$ or $M_0 \npreceq^{NF}_{\lambda^+} M_2$ then Condition (3) is irrelevant. In this case, we apply Proposition \ref{extending to K sat}, replacing $M_3$ by a model $M_3^+$ such that $M_3 \preceq M_3^+$ and $M_3^+ \in K^{sat}$.

Suppose $M_0 \preceq^{NF}_{\lambda^+} M_1$ and $M_0 \preceq^{NF}_{\lambda^+} M_2$.  By Fact \ref{prec^+-properties}(1), we can take $M_1^+,M_2^+$ such that $f_1[M_1] \prec^+ M_1^+$ and $f_2[M_2] \prec^+ M_2^+$. 
\begin{displaymath}
\xymatrix{& M_1^+ \ar[rd]^{g} \\
a_1 \in M_1 \ar[r]^{f_1} \ar[ru]^{f_1} & M_3 & M_2^+  \\
M_0 \ar[r]^{id} \ar[u]^{id} & M_2 \ni a_2 \ar[ru]^{f_2} \ar[u]^{f_2} \\
N \ar[u]^{id} 
}
\end{displaymath}

For every $N \in K_\lambda$ with $N \preceq M_0$, we have $tp(f_1(a_1),N,M_1^+)=tp(f_1(a_1),\allowbreak N,f_1[M_1])=tp(f_1(a_1),N,M_3)=tp(f_2(a_2),N,M_3)=tp(f_2(a_2),N,f_2[M_2])\allowbreak =tp(f_2(a_2),N \allowbreak ,M_2^+)$. So by Fact \ref{the version of tameness in 875}, we can find an isomorphism $g:M_1^+ \to M_2^+$ fixing $M_0$ pointwise such that $g(f_1(a_1))=f_2(a_2)$. Now \linebreak $(g \circ f_1,f_2,M_2^+)$ is an amalgamation of $M_1$ and $M_2$ over $M_0$ as needed (by Fact \ref{prec^+-properties}(2), $g \circ f_1[M_1] \preceq^{NF}_{\lambda^+} g[M_1^+]=M_2^+$, $f_2[M_2] \preceq^{NF}_{\lambda^+} M_2^+$ and $M_2^+ \in K^{sat}$).
\end{proof}

By the following theorem, $E^{*,K^{sat},\preceq},E^{*,K_{\lambda^+},\preceq^{NF}_{\lambda^+}}$ and $E^{*,K^{sat},\preceq^{NF}_{\lambda^+}}$ are the restrictions of $E^{*,K,\preceq}$ to $K^{3,K^{sat},\preceq},K^{3,K_{\lambda^+},\preceq^{NF}_{\lambda^+}}$ and $K^{3,K^{sat},\preceq^{NF}_{\lambda^+}}$, respectively.

\begin{theorem}\label{the notions of type coincide}
The definitions of `galois-type' in the following contexts coincide: $\{(K_{\lambda^+},\preceq),(K_{\lambda^+} \allowbreak ,\preceq^{NF}_{\lambda^+}),(K^{sat},\preceq),(K^{sat},\preceq^{NF}_{\lambda^+})\}$.

More precisely: Let $(K',\preceq')$ and $(K'',\preceq'')$ be two pairs in $\{(K_{\lambda^+},\preceq),(K_{\lambda^+} \allowbreak ,\preceq^{NF}_{\lambda^+}),(K^{sat},\preceq),(K^{sat},\preceq^{NF}_{\lambda^+})\}$. Let $(M_0,M_1,a_1),(M_0,M_2,a_2)$ be two triples in $K^{3,K',\preceq'} \cap K^{3,K'',\preceq''}$. Then $$(M_0,M_1,a_1) E^{3,K',\preceq'}(M_0,M_2,a_2) \Leftrightarrow (M_0,M_1,a_1) E^{3,K'',\preceq''}(M_0,M_2,a_2).$$
\end{theorem}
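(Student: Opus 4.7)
The central tool is Proposition \ref{preparation for `the notions of type coincide'}: any amalgamation witnessing $E^{*,K_{\lambda^+},\preceq}$ can be upgraded to one whose ambient model lies in $K^{sat}$ and which respects $\preceq^{NF}_{\lambda^+}$ whenever the NF-relation holds between $M_0$ and the $M_i$. My plan is to establish the coincidence of the four $E^*$-relations on their common intersection, and then to lift this to the galois-type relations (transitive closures).

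The key claim is the following: for each pair $(K'',\preceq'') \in \{(K_{\lambda^+},\preceq),(K_{\lambda^+},\preceq^{NF}_{\lambda^+}),(K^{sat},\preceq),(K^{sat},\preceq^{NF}_{\lambda^+})\}$ and any two triples $(M_0,M_1,a_1), (M_0,M_2,a_2) \in K^{3,K'',\preceq''}$,
\[
(M_0,M_1,a_1)\,E^{*,K_{\lambda^+},\preceq}\,(M_0,M_2,a_2) \Leftrightarrow (M_0,M_1,a_1)\,E^{*,K'',\preceq''}\,(M_0,M_2,a_2).
\]
The ``$\Leftarrow$'' direction is immediate, since any amalgamation witnessing the right-hand side has $M_3 \in K'' \subseteq K_{\lambda^+}$ and $f_i[M_i] \preceq'' M_3$, hence $f_i[M_i] \preceq M_3$. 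For ``$\Rightarrow$'', I would invoke Proposition \ref{preparation for `the notions of type coincide'} to obtain an amalgamation $(f_1,f_2,M_3)$ with $M_3 \in K^{sat}$, $f_1(a_1)=f_2(a_2)$, and---in the case $M_0 \preceq^{NF}_{\lambda^+} M_i$ for $i=1,2$---with $f_i[M_i] \preceq^{NF}_{\lambda^+} M_3$. A case check then verifies that this amalgamation witnesses $E^{*,K'',\preceq''}$: clause (3) of Proposition \ref{preparation for `the notions of type coincide'} is needed exactly when $\preceq''$ is $\preceq^{NF}_{\lambda^+}$, and the membership of the two triples in the corresponding triple-class supplies the required NF-hypothesis.

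From the key claim, applied twice through the pivot relation $E^{*,K_{\lambda^+},\preceq}$, one concludes that $E^{*,K',\preceq'}$ and $E^{*,K'',\preceq''}$ agree on $K^{3,K',\preceq'}\cap K^{3,K'',\preceq''}$. Lifting to the transitive closures is the remaining step: for the three pairs other than $(K_{\lambda^+},\preceq)$ the relation $E^*$ is already transitive (for the two NF-pairs by \cite[Corollary 7.1.17(a)]{jrsh875}, and for $(K^{sat},\preceq)$ by combining the key claim with Proposition \ref{extending to K sat}), so $E^*$ coincides with its transitive closure there. For the remaining pair $(K_{\lambda^+},\preceq)$, a chain in $E^{*,K_{\lambda^+},\preceq}$ joining two triples of the intersection can be routed step-by-step, via the key claim, through one of the better-behaved pairs, where transitivity collapses it to a single $E^*$-step.

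The principal difficulty is the case analysis inside the key claim: one must verify, for each of the four choices of $(K'',\preceq'')$, that the amalgam produced by Proposition \ref{preparation for `the notions of type coincide'} sits in the correct class and respects the correct order relation. The NF-clauses are the delicate part; it is precisely the assumption that both triples lie in the $\preceq^{NF}_{\lambda^+}$-version of the triple-class which activates clause (3) of that proposition.
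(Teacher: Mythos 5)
Your proposal is correct and follows the same route as the paper, whose entire proof is the single line ``By Proposition \ref{preparation for `the notions of type coincide'}''; you have simply spelled out the case analysis and the transitive-closure lifting that the paper leaves implicit. The extra care you take with the passage from $E^*$ to its transitive closure (routing chains through the amalgamation-closed pairs) is a genuine gap in the paper's one-line argument that your write-up fills.
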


\begin{proof}
By Proposition \ref{preparation for `the notions of type coincide'}.
\end{proof}

Note that in the proof of \cite[Conclusion II.8.7]{shh}, the equivalence between the meanings of galois-types was not used (we quote: `the equality of types there is a formal, not the true one'). Theorem \ref{the notions of type coincide} gives a way to simplify the proof of \cite[Conclusion II.8.7]{shh}. But since it is not a new result, we will not elaborate.

\section{Continuations}
Several results and ideas that appear in the current paper, were already applied. In \cite{jrprime}, we apply Theorem \ref{0}, proving that the class of primeness triples satisfies the existence property. We should study connections between the results here and the results of \cite[III]{shh}. 
Vasey \cite{vas6} continues Theorem \ref{we can use NF} \cite[Fact 11.15]{vas6}, doing an important step toward a proof of the categoricity conjecture.

\subsection*{Acknowledgments}

First of all, I would like to thank God.

I wish to thank my wife for her
support.


I wish to acknowledge wholeheartedly Prof David Webb for doing an outstanding work on improving the paper.
 
I would like to thank Efrat Taub, Sebastien Vasey, Will Boney and Prof Rami Grossberg for their comments and suggestions.


\begin{thebibliography}{10}

 
\bibitem {babook}
John T.Baldwin.
\newblock Categoricity.
\newblock A book in preparation.
\newblock preprint available at
\texttt{http://www.math.uic.edu/~jbaldwin/put/AEClec.pdf}
  
\bibitem {bo2}
Will Boney.
\newblock Tameness from large cardinal axioms.
\newblock Journal of Symbolic Logic Vol 79, No 4, 1092-1119. 
\newblock December 2014.

\bibitem {bo3}
 Will Boney. 
\newblock Tameness and extending frames.
\newblock preprint available at
\texttt{arXiv preprint arXiv:1308.6006}
\newblock (2013).

\bibitem {bo2gr}
Will Boney and Rami Grossberg. 
\newblock Forking in short and tame AECs.
\newblock prepritnavailable at
\texttt{arXiv preprint arXiv:1306.6562} 
\newblock (2013).


\bibitem {bo5grkova}
Will Boney, Rami Grossberg, Alexei Kolesnikov and Sebastien Vasey. 
\newblock Canonical forking in AECs. 
\newblock preprint available at
\texttt{ arXiv preprint arXiv:1404.1494}
\newblock (2014).

\bibitem {bo13un}
Will Boney and Spencer Unger.
\newblock Large cardinal axioms from tameness in AECs. 
\newblock Preprint available at
\texttt{http://math.harvard.edu/~wboney/} 



\bibitem {bo6va4}
Will Boney and Sebastien Vasey.
\newblock Tameness and frames revisited.
  
\bibitem {grva25}
Rami Grossberg and Monica VanDieren.
\newblock Galois-stability for tame abstract elementary classes.
\newblock Journal of Mathematical Logic  6, No. 1 (2006), 25-49.

\bibitem {jrsh875}
Adi Jarden and Saharon Shelah.
\newblock Non-forking frames in Abstract Elementray Classes.
\newblock Annals of Pure and Applied Logic.
\newblock Volume 164, Issue 3, March 2013, Pages 135-191 
\newblock DOI:10.1016/j.apal.2012.09.007



\bibitem {jrsh940}
Adi Jarden and Saharon Shelah.
\newblock Weakening the local character.
\newblock Work in progress.

\bibitem {jrsh966}
Adi Jarden and Saharon Shelah.
\newblock Existence of uniqueness triples without stability.
\newblock Work in progress.


\bibitem {jrprime}
\newblock Adi Jarden.
\newblock Primeness triples in non-forking frames.
\newblock Work in progress.


\bibitem {jrsi3}
Adi Jarden and Alon Sitton.
\newblock Independence of Sets Without Stability.
\newblock The Journal of Symbolic Logic.
\newblock Volume 78, Number 2, June 2013
\newblock DOI: 10.2178/jsl.7802140
  
      
\bibitem {shh}
Saharon Shelah.
\newblock Classification Theory for Abstract Elementary Classes.
\newblock Studies in Logic. College Publications.
www.collegepublications.co.uk, 2009.
\newblock Binds together papers 88r,300,600,705,838 with
introduction E53.

\bibitem {sh394}
Saharon Shelah.
\newblock Categoricity for abstract elementary classes with amalgamation.
\newblock Annals of Pure and Applied Logic 98 (1999) 261-294
   

\bibitem {vavas10}
Monica VanDiern and Sevastien Vasey.
\newblock Transferring symmetry downward and application.
\newblock Preprint available at 
\texttt{http://arxiv.org/pdf/1508.03252v2.pdf}   


\bibitem {vas3}
Sebastien Vasey.
\newblock Forking and superstability in tame AECs.
\newblock Work in progress.
\newblock Preprint available at 
\texttt{http://math.cmu.edu/~svasey/papers}0

\bibitem {vas6}
Sevastien Vasey. 
\newblock Independence in abstract elementary classes.
\newblock Submitted.
\newblock Preprint available at 
\texttt{http://math.cmu.edu/~svasey/}

 
\end{thebibliography}
\end{document}